\newlist{hypothenum}{enumerate}{3}
\setlist[hypothenum,1]{label=(\roman*)}
\newcolumntype{C}{>{\centering\arraybackslash}X} 
\setlist[tablenotes]{label=\tnote{\alph*},ref=\alph*,itemsep=\z@,topsep=\z@skip,partopsep=\z@skip,parsep=\z@,itemindent=\z@,labelindent=\tabcolsep,labelsep=.2em,leftmargin=*,align=left,before={\footnotesize}}
\newcommand{\innerneedspace}{\vspace{\dimexpr-\baselineskip-\parskip\relax}\needspace}
\numberwithin{equation}{section}
\theoremstyle{plain}
\newtheorem*{mainthm}{Main Theorem}
\newtheorem{theorem}{Theorem}[section]
\newtheorem{proposition}[theorem]{Proposition}
\newtheorem{lemma}[theorem]{Lemma}
\newtheorem{corollary}[theorem]{Corollary}
\newtheorem{assumption}[theorem]{Assumption}
\newtheorem{schema}[theorem]{Schema}
\theoremstyle{definition}
\newtheorem{definition}[theorem]{Definition}
\newtheorem{example}[theorem]{Example}
\theoremstyle{remark}
\newtheorem{remark}[theorem]{Remark}
\newcommand{\eps}{\varepsilon}
\newcommand{\setsuch}[2]{\left\{ #1 \; \middle| \; #2 \right\}}
\newcommand{\restr}[2]{{\left. #1 \right|}_{#2}}
\newcommand{\subl}{{\mathsmaller{<}}}
\newcommand{\subg}{{\mathsmaller{>}}}
\newcommand{\sube}{{\mathsmaller{=}}}
\newcommand{\suble}{{\mathsmaller{\leq}}}
\newcommand{\subge}{{\mathsmaller{\geq}}}
\newcommand{\subenz}{\Bumpeq}
\newcommand{\subll}{{\mathsmaller{\ll}}}
\newcommand{\subgg}{{\mathsmaller{\gg}}}
\newcommand{\subap}{{\mathsmaller{\approx}}}
\newcommand{\sublap}{{\mathsmaller{\lesssim}}}
\newcommand{\subgap}{{\mathsmaller{\gtrsim}}}
\newcommand{\transl}{t}
\newcommand{\extra}{a}
\newcommand{\ext}{\mathsf{\Lambda}}
\newcommand{\bigo}{\mathcal{O}}
\DeclareMathOperator{\Conv}{Conv}
\DeclareMathOperator{\Aff}{Aff}
\DeclareMathOperator{\Id}{Id}
\DeclareMathOperator{\Ad}{Ad}
\DeclareMathOperator{\Stab}{Stab}
\DeclareMathOperator{\GL}{GL}
\DeclareMathOperator{\SL}{SL}
\DeclareMathOperator{\PSL}{PSL}
\DeclareMathOperator{\SO}{SO}
\DeclareMathOperator{\PSO}{PSO}
\DeclareMathOperator{\Orth}{O}
\newcommand{\fundef}[5]{
\entrymodifiers={+!!<0pt,\fontdimen22\textfont2>}
\xymatrix@R=3pt{\llap{$#1$\;\;} {#2} \ar@{->}[r] & {#3} \\ {#4} \ar@{|->}[r] & {#5}}
} 
\newcommand{\noqed}{\renewcommand{\qedsymbol}{}}
\newcommand{\ie}{i.e.\ }
\newcommand{\eg}{e.g.\ }
\newcommand{\longlongrightarrow}{\xrightarrow{\hspace*{1cm}}}
\newcommand{\jordan}{\operatorname{Jd}}
\newcommand{\cartan}{\operatorname{Ct}}
\begin{document}

\title{Proper affine actions: a sufficient criterion}
\author{Ilia Smilga}
              
\maketitle

\begin{abstract}
For a semisimple real Lie group $G$ with a representation $\rho$ on a finite-dimensional real vector space $V$, we give a sufficient criterion on~$\rho$ for existence of a group of affine transformations of $V$ whose linear part is Zariski-dense in $\rho(G)$ and that is free, nonabelian and acts properly discontinuously on~$V$. This new criterion is more general than the one given in the author's previous paper \emph{Proper affine actions in non-swinging representations} (submitted; available at~\url{arXiv:1605.03833}), insofar as it also deals with ``swinging'' representations. When $G$ is split, almost all the irreducible representations of~$G$ that have~$0$ as a weight satisfy this criterion. We conjecture that it is actually a necessary and sufficient criterion.
\end{abstract}


\tableofcontents

\section{Introduction}

\subsection{Background and motivation}
\label{sec:background}

The present paper is part of a larger effort to understand discrete groups $\Gamma$ of affine transformations (subgroups of the affine group $\GL_n(\mathbb{R}) \ltimes \mathbb{R}^n$) acting properly discontinuously on the affine space $\mathbb{R}^n$. The case where $\Gamma$ consists of isometries (in other words, $\Gamma \subset \Orth_n(\mathbb{R}) \ltimes \mathbb{R}^n$) is well-understood: a classical theorem by Bieberbach says that such a group always has an abelian subgroup of finite index.

We say that a group $G$ acts \emph{properly discontinuously} on a topological space $X$ if for every compact $K \subset X$, the set $\setsuch{g \in G}{g K \cap K \neq \emptyset}$ is finite. We define a \emph{crystallographic} group to be a discrete group $\Gamma \subset \GL_n(\mathbb{R}) \ltimes \mathbb{R}^n$ acting properly discontinuously and such that the quotient space $\mathbb{R}^n / \Gamma$ is compact. In \cite{Aus64}, Auslander conjectured that any crystallographic group is virtually solvable, that is, contains a solvable subgroup of finite index. Later, Milnor \cite{Mil77} asked whether this statement is actually true for any affine group acting properly discontinuously. The answer turned out to be negative: Margulis \cite{Mar83, Mar87} gave a nonabelian free group of affine transformations with linear part Zariski-dense in $\SO(2, 1)$, acting properly discontinuously on $\mathbb{R}^3$. On the other hand, Fried and Goldman \cite{FG83} proved the Auslander conjecture in dimension 3 (the cases $n=1$ and $2$ are easy). Recently, Tomanov \cite{Tom16} proved it in dimension $n \leq 5$, and independently Abels, Margulis and Soifer \cite{AMS12} proved it in dimension $n \leq 6$.  See \cite{AbSur} for a survey of already known results.

Margulis's breakthrough was soon followed by the construction of other counterexamples to Milnor's conjecture. Most of these counterexamples have been free groups. (Only recently, Danciger, Gu{\'e}ritaud and Kassel~\cite{DGK20} found examples of affine groups acting properly discontinuously that are neither virtually solvable nor virtually free.) The author focuses on the case of free groups, asking the following question. Consider a semisimple real Lie group~$G$; for every representation~$\rho$ of~$G$ on a finite-dimensional real vector space~$V$, we may consider the affine group~$G \ltimes V$. Does that affine group contain a nonabelian free subgroup with linear part Zariski-dense in~$G$ and acting properly discontinuously on~$V$? (More precisely, for which values of~$G$ and~$V$ is the answer positive?)

Here is a summary of previous work on this question:
\begin{itemize}
\item Margulis's original work~\cite{Mar83, Mar87} gave a positive answer for $G = \SO^+(2,1)$ acting on~$V = \mathbb{R}^3$.
\item Abels, Margulis and Soifer generalized this, giving~\cite{AMS02} a positive answer for $G = \SO^+(2n+2,2n+1)$ acting on~$V = \mathbb{R}^{4n+3}$ (for every integer $n \geq 0$).
\item They showed later~\cite{AMS11} that for all other values of $p$ and~$q$, the answer for $G = \SO^+(p,q)$ acting on~$V = \mathbb{R}^{p+q}$ is negative.
\item The author of this paper gave~\cite{Smi14} a positive answer for any noncompact semisimple real Lie group~$G$ acting (by the adjoint representation) on its Lie algebra~$\mathfrak{g}$.
\item Recently, he gave~\cite{Smi16} a simple algebraic criterion on~$G$ and~$V$ guaranteeing that the answer is positive. However, this criterion included an additional assumption about~$V$ (namely that the representation is ``non-swinging'': see Definition~\ref{simpifying_assumptions_def}.\ref{itm:non-swinging_def}), which is not in fact necessary.
\end{itemize}
This paper gives a better sufficient condition on~$G$ and~$V$ for the answer to be positive, that gets rid of this ``non-swinging'' assumption. The author conjectures that the new condition is in fact necessary, thus giving a complete classification of such counterexamples.

In order to state this condition, we need to introduce a few classical notations.

\subsection{Basic notations}
\label{sec:lie}

For the remainder of the paper, we fix a semisimple real Lie group~$G$; let $\mathfrak{g}$~be its Lie algebra. Let us introduce a few classical objects related to~$\mathfrak{g}$ and~$G$ (defined for instance in Knapp's book \cite{Kna96}, though our terminology and notation differ slightly from his).

We choose in $\mathfrak{g}$:
\begin{itemize}
\item a Cartan involution $\theta$. Then we have the corresponding Cartan decomposition $\mathfrak{g} = \mathfrak{k} \oplus \mathfrak{q}$, where we call $\mathfrak{k}$ the space of fixed points of $\theta$ and $\mathfrak{q}$ the space of fixed points of $-\theta$. We call $K$ the maximal compact subgroup with Lie algebra $\mathfrak{k}$.
\item a \emph{Cartan subspace} $\mathfrak{a}$ compatible with $\theta$ (that is, a maximal abelian subalgebra of $\mathfrak{g}$ among those contained in $\mathfrak{q}$). We set $A := \exp \mathfrak{a}$.
\item a system $\Sigma^+$ of positive restricted roots in $\mathfrak{a}^*$. Recall that a \emph{restricted root} is a nonzero element $\alpha \in \mathfrak{a}^*$ such that the restricted root space
\[\mathfrak{g}^\alpha := \setsuch{Y \in \mathfrak{g}}{\forall X \in \mathfrak{a},\; [X, Y] = \alpha(X)Y}\]
is nontrivial. They form a root system $\Sigma$; a system of positive roots $\Sigma^+$ is a subset of $\Sigma$ contained in a half-space and such that $\Sigma = \Sigma^+ \sqcup -\Sigma^+$. (Note that in contrast to the situation with ordinary roots, the root system $\Sigma$ need not be reduced; so in addition to the usual types, it can also be of type~$BC_n$.)

We call~$\Pi$ the set of simple restricted roots in~$\Sigma^+$. We call
\[\mathfrak{a}^{++} := \setsuch{X \in \mathfrak{a}}{\forall \alpha \in \Sigma^+,\; \alpha(X) > 0}\]
the (open) dominant Weyl chamber of $\mathfrak{a}$ corresponding to~$\Sigma^+$, and
\[\mathfrak{a}^{+} := \setsuch{X \in \mathfrak{a}}{\forall \alpha \in \Sigma^+,\; \alpha(X) \geq 0} = \overline{\mathfrak{a}^{++}}\]
the closed dominant Weyl chamber.
\end{itemize}
Then we call:
\begin{itemize}
\item $M$ the centralizer of $\mathfrak{a}$ in $K$, $\mathfrak{m}$ its Lie algebra.
\item $L$ the centralizer of $\mathfrak{a}$ in $G$, $\mathfrak{l}$ its Lie algebra. It is clear that $\mathfrak{l} = \mathfrak{a} \oplus \mathfrak{m}$, and well known (see \eg \cite{Kna96}, Proposition 7.82a) that $L = MA$.
\item $\mathfrak{n}^+$ (resp. $\mathfrak{n}^-$) the sum of the restricted root spaces of $\Sigma^+$ (resp. of $-\Sigma^+$), and $N^+ := \exp(\mathfrak{n}^+)$ and $N^- := \exp(\mathfrak{n}^-)$ the corresponding Lie groups.
\item $\mathfrak{p}^+ := \mathfrak{l} \oplus \mathfrak{n}^+$ and $\mathfrak{p}^- := \mathfrak{l} \oplus \mathfrak{n}^-$ the corresponding minimal parabolic subalgebras, $P^+ := LN^+$ and $P^- := LN^-$ the corresponding minimal parabolic subgroups.
\item $W := N_G(A)/Z_G(A)$ the restricted Weyl group.
\item $w_0$ the \emph{longest element} of the Weyl group, that is, the unique element such that $w_0(\Sigma^+) = \Sigma^-$. It is clearly an involution.
\end{itemize}

See Examples 2.3 and 2.4 in the author's earlier paper~\cite{Smi14} for working through these definitions in the cases $G = \PSL_n(\mathbb{R})$ and~$G = \PSO^+(n, 1)$.

Finally, if $\rho$~is a representation of~$G$ on a finite-dimensional real vector space~$V$, we call:
\begin{itemize}
\item the \emph{restricted weight space} in~$V$ corresponding to a form~$\lambda \in \mathfrak{a}^*$ the space
\[V^\lambda := \setsuch{v \in V}{\forall X \in \mathfrak{a},\; X \cdot v = \lambda(X)v};\]
\item a \emph{restricted weight} of the representation~$\rho$ any form~$\lambda \in \mathfrak{a}^*$ such that the corresponding weight space is nonzero.
\end{itemize}

\begin{remark}
The reader who is unfamiliar with the theory of noncompact semisimple real Lie groups may focus on the simpler case where $G$~is \emph{split}, \ie its Cartan subspace~$\mathfrak{a}$ is actually a Cartan subalgebra (just a maximal abelian subalgebra, without any additional hypotheses). In that case the restricted roots are just roots, the restricted weights are just weights, and the restricted Weyl group is just the usual Weyl group. Also the algebra~$\mathfrak{m}$ vanishes and $M$~is a discrete group.
\end{remark}

\subsection{Statement of main result}

Let $\rho$~be any representation of~$G$ on a finite-dimensional real vector space~$V$. Without loss of generality, we may assume that $G$~is connected and acts faithfully. We may then identify the abstract group~$G$ with the linear group~$\rho(G) \subset \GL(V)$. Let~$V_{\Aff}$~be the affine space corresponding to~$V$. The group of affine transformations of~$V_{\Aff}$ whose linear part lies in~$G$ may then be written~$G \ltimes V$ (where $V$~stands for the group of translations). Here is the main result of this paper.
\needspace{\baselineskip}
\begin{mainthm}
Suppose that $\rho$ satisfies the following conditions:
\begin{hypothenum}
\item \label{itm:main_condition} there exists a vector $v \in V$ such that:
\begin{enumerate}[label=(\alph*)]
\item \label{itm:fixed_by_l} $\forall l \in L,\; l(v) = v$, and
\item \label{itm:not_fixed_by_w0} $\tilde{w}_0(v) \neq v$, where $\tilde{w}_0$ is any representative in~$G$ of~$w_0 \in N_G(A)/Z_G(A)$;
\end{enumerate}
\end{hypothenum}
Then there exists a subgroup~$\Gamma$ in the affine group~$G \ltimes V$ whose linear part is Zariski-dense in~$G$ and that is free, nonabelian and acts properly discontinuously on the affine space corresponding to~$V$.
\end{mainthm}
(Note that the choice of the representative~$\tilde{w}_0$ in~\ref{itm:main_condition}\ref{itm:not_fixed_by_w0} does not matter, precisely because by~\ref{itm:main_condition}\ref{itm:fixed_by_l} the vector~$v$ is fixed by~$L = Z_G(A)$.)

For almost the whole duration of this paper, we will actually work with a much larger class of representations~$\rho$: the only restriction we will impose on~$\rho$ is to have zero as a restricted weight (Assumption~\ref{zero_is_a_weight}), which is anyway a necessary condition for~\ref{itm:main_condition}\ref{itm:fixed_by_l} above (and is equivalent when $G$ is split). We will not require $\rho$ to satisfy the hypotheses of the Main Theorem until the very last section, where the Main Theorem is proved.

When $G$ is split, the author, together with Bruno Le Floch, has moreover explicitly classified the irreducible representations satisfying this condition, in a separate paper~\cite{LFlSm}:
\begin{theorem}[\cite{LFlSm}]
Assume that $G$ is split. Let $\mathfrak{g} = \mathfrak{g}_1 \oplus \cdots \oplus \mathfrak{g}_n$ be its decomposition into simple factors; let $\rho$~be an irreducible representation of~$G$ with highest weight~$\lambda$. Then:
\begin{hypothenum}
\item $\rho$ satisfies condition~\ref{itm:main_condition}\ref{itm:fixed_by_l} of the Main Theorem if and only if $\lambda$ is a $\mathbb{Z}$-linear combination of roots of~$\mathfrak{g}$;
\item if this is the case, then $\rho$ also satisfies the whole condition~\ref{itm:main_condition}, except if $\lambda$ is of the form
\begin{equation}
\label{eq:w0_purely_positive_characterization}
\lambda = \sum_{j = 1}^n k_j p_{i, j} \varpi_{i, j} \quad\text{with, for all } j:
\begin{cases}
\varpi_{i, j} \text{ some fundamental weight of } \mathfrak{g}_j; \\
0 \leq k_j \leq m_{i, j}; \\
\prod_{j = 1}^n \sigma_{i, j}^{k_j} = +1,
\end{cases}
\end{equation}
where $p_{i, j} \in \mathbb{N}$, $m_{i, j} \in \{0, 1, 2, \infty\}$ and~$\sigma_{i, j} \in \{\pm 1\}$ are the constants corresponding to the weight~$\varpi_i$ of~$\mathfrak{g}_j$ in Table~1 in~\cite{LFlSm}.
\end{hypothenum} 
\end{theorem}
(The interesting part here is~(ii); the statement~(i) is trivial: see Remark~\ref{radical_characterization}.) We are currently working on extending this classification to non-split groups.

\begin{remark}
\label{reduction_to_irreducible}
It is sufficient to prove the Main Theorem in the case where $\rho$~is irreducible. Indeed, we may decompose~$\rho$ into a direct sum of irreducible representations, and then observe that:
\begin{itemize}
\item if some representation $\rho_1 \oplus \cdots \oplus \rho_k$ has a vector $(v_1, \ldots, v_k)$ that satisfies conditions \ref{itm:fixed_by_l} and~\ref{itm:not_fixed_by_w0} from the Main Theorem, then at least one of the vectors~$v_i$ must satisfy conditions~\ref{itm:fixed_by_l} and~\ref{itm:not_fixed_by_w0};
\item if $V = V_1 \oplus V_2$, and a subgroup $\Gamma \subset G \ltimes V_1$ acts properly on~$V_1$, then its image~$i(\Gamma)$ by the canonical inclusion $i: G \ltimes V_1 \to G \ltimes V$ still acts properly on~$V$.
\end{itemize}
\end{remark}

Here are a few examples. Items 1 and~2 show that all the previously-known examples do fall under the scope of this theorem; but actually, these were already covered by~\cite{Smi16}. Item~3 is the simplest previously-unknown example that our paper brings to light:
\begin{example}~
\begin{enumerate}
\item For $G = \SO^+(r+1, r)$, the standard representation (acting on~$V = \mathbb{R}^{2r+1}$) satisfies these conditions when $r$~is odd (see Remark~3.11 and Examples~4.22.1.b and~10.2.1 in~\cite{Smi16} for details). So Theorem~A from~\cite{AMS02} is a particular case of our theorem.

Note that this group is split, so we can also look at this through the prism of~\cite{LFlSm}. Its root system is of type~$B_r$, and its standard representation has highest weight~$\varpi_1$ when $r > 1$ and $2\varpi_1$ when $r = 1$ (in Bourbaki's numbering). Of course it is a $\mathbb{Z}$-linear combination of roots (it is a root itself). We then read off Table~1 in~\cite{LFlSm} that for $\mathfrak{g} = B_r$ and $i = 1$, we have: $p_1 = 1$ when $r > 1$ and $2$ when $r = 1$; $m_1 = \infty$; and $\sigma_1 = (-1)^r$. So indeed $\lambda = \varpi_1$ is of the form~\eqref{eq:w0_purely_positive_characterization} when $r$~is even, but is not when $r$~is odd.
\item If the real semisimple Lie group~$G$ is noncompact, the adjoint representation satisfies these conditions (see Remark~3.11 and Examples~4.22.3 and~10.2.2 in~\cite{Smi16} for details). So the main theorem of~\cite{Smi14} is a particular case of our theorem.
\item Take $G = \SL_3(\mathbb{R})$ acting on~$V = S^3 \mathbb{R}^3$ (see Example~3.9 in~\cite{Smi16} for details). The group is then split, so that $\mathfrak{l} = \mathfrak{a}$ and the set of $L$-invariant vectors is precisely the zero (restricted) weight space~$V^0$. The representation~$V$ has dimension~$10$; the zero weight space has dimension~$1$: it is spanned by the vector~$e_1 \cdot e_2 \cdot e_3$, where $(e_1, e_2, e_3)$ is the canonical basis of~$\mathbb{R}^3$. A representative~$\tilde{w_0} \in \SL_3(\mathbb{R})$ of~$w_0$ is given by
\[\tilde{w_0} = \begin{pmatrix}
0 & 0 & -1 \\
0 & 1 & 0 \\
1 & 0 & 0 \end{pmatrix}.\]
Clearly this element acts nontrivially on the vector~$e_1 \cdot e_2 \cdot e_3$.
\end{enumerate}
\end{example}
\begin{remark}
When $G$~is compact, no representation can satisfy these conditions: indeed in that case $L$ is the whole group~$G$ and condition~\ref{itm:main_condition}\ref{itm:fixed_by_l} fails. So for us, only noncompact groups are interesting. This is not surprising: indeed, any compact group acting on a vector space preserves a positive-definite quadratic form, and so falls under the scope of Bieberbach's theorem.
\end{remark}

\subsection{Strategy of the proof}
A central pillar of this paper consists in the following ``proposition template'':
\begin{schema}
\label{proposition_template}
Let $g$ and~$h$ be two elements of ``the appropriate group'', such that:
\begin{itemize}
\item both $g$ and~$h$ are ``regular'';
\item the pair $(g, h)$ has a ``non-degenerate'' geometry;
\item both $g$ and~$h$ have sufficient ``contraction strength''.
\end{itemize}
\needspace{\baselineskip}
In that case:
\begin{hypothenum}
\item \label{itm:template_regular} the product~$gh$ is still ``regular'';
\item \label{itm:template_attracting} its ``attracting geometry'' is close to that of~$g$;
\item \label{itm:template_repelling} its ``repelling geometry'' is close to that of~$h$; 
\item \label{itm:template_contraction} its ``contraction strength'' is close to the product of those of $g$ and~$h$;
\item \label{itm:template_dynamics} its ``asymptotic dynamics'' is close to the sum of those of $g$ and~$h$.
\end{hypothenum}
\end{schema}
We prove three different versions of this statement (with some slight variations, especially concerning asymptotic dynamics), all of which involve a different set of definitions for the concepts in scare quotes (see Table~\ref{tab:notions}):

\begin{table}
\begin{threeparttable}
\bgroup
\def\arraystretch{1.3}
\newlength{\firstcolumn}
\settowidth{\firstcolumn}{Group where $g$ lives}
\begin{tabularx}{\textwidth}{p{\firstcolumn}|>{\setlength\hsize{0.93\hsize}}CCC}
 & \rule[-\baselineskip]{0pt}{2\baselineskip}{\scshape Proximal case} & \rule[-\baselineskip]{0pt}{2\baselineskip}{\scshape Linear case} & \rule[-\baselineskip]{0pt}{2\baselineskip}{\scshape Affine case} \\ \hline
Group where $g$~lives
  & \rule{0pt}{1.2\baselineskip} $\bm{\GL(E)}$ (for~some~real vector~space~$E$) \rule[-0.5\baselineskip]{0pt}{\baselineskip}
  & $\bm{G}$
  & $\bm{G \ltimes V}$\\ \hline
Parameter
  & (none)
  & some subset $\bm{\Pi_X}$ of simple restr. roots (sec.~\ref{sec:parabolics})
  & \rule{0pt}{1.2\baselineskip} some extreme, generically symmetric $\bm{X_0} \in \mathfrak{a}^+$ (sec.~\ref{sec:choice})\rule[-0.5\baselineskip]{0pt}{\baselineskip}\\ \hline
``Regularity''
  & \rule{0pt}{1.2\baselineskip}{\bfseries Proximality} (Def.~\ref{proximal_definition})\rule[-0.5\baselineskip]{0pt}{\baselineskip}
  & \rule{0pt}{1.2\baselineskip}{\bfseries $\bm{X}$-regularity} (Def.~\ref{X-regular_definition})\rule[-0.5\baselineskip]{0pt}{\baselineskip}
  & \rule{0pt}{1.2\baselineskip}$\bm{\rho}$-{\bfseries regularity} (Def.~\ref{strong_regularity_definition})\rule[-0.5\baselineskip]{0pt}{\baselineskip}\\ \hline
\multicolumn{1}{r|}{attracting}
  & $\bm{E^s_g} \in \mathbb{P}(E)$
  & $\bm{y^{X, +}_g} \in G/P^+_X$
  & \rule{0pt}{1.2\baselineskip}$\bm{A^\subgap_g}$, essentially\tnotex{tnote:essentially}~$\in G \ltimes V / P^+_{X_0} \ltimes V^\subge_0$ \\
``Geometry'':
  & (Def.~\ref{attracting_repelling_space_definition})
  & (Def.~\ref{geometry_definition})
  & (Def.~\ref{ideal_dynamical_spaces_definition})\\
\multicolumn{1}{r|}{repelling}
  & $\bm{E^u_g} \in \mathbb{P}(E^*)$
  & $\bm{y^{X, -}_g} \in G/P^-_X$
  & $\bm{A^\sublap_g}$, essentially\tnotex{tnote:essentially}~$\in G \ltimes V / P^-_{X_0} \ltimes V^\suble_0$\rule[-0.5\baselineskip]{0pt}{\baselineskip} \\ \hline
``Non-degeneracy''
  & {\bfseries Def.~\ref{proximal_C_non_deg}}\rule[-0.5\baselineskip]{0pt}{1.7\baselineskip}
  & {\bfseries Def.~\ref{linear-C-non-deg-definition}}\rule[-0.5\baselineskip]{0pt}{1.7\baselineskip}
  & {\bfseries Def.~\ref{regular_definition}}\rule[-0.5\baselineskip]{0pt}{1.7\baselineskip} \\ \hline
``Contr. strength''
  & $\bm{\tilde{s}(g)}$ (Def.~\ref{s_tilde_definition})\rule[-0.5\baselineskip]{0pt}{1.7\baselineskip}
  & $\bm{\vec{s}_X(g)}$ (Def.~\ref{vec_s_definition})\rule[-0.5\baselineskip]{0pt}{1.7\baselineskip}
  & $\bm{s_{X_0}(g)}$ (Def.~\ref{s_definition})\rule[-0.5\baselineskip]{0pt}{1.7\baselineskip}\\ \hline
``Asymp. dynamics''
  & possibly\tnotex{tnote:possibly} the $\log$ of the spectral radius~$r(g)$ (Def.~\ref{proximal_definition})
  & roughly\tnotex{tnote:roughly} the {\bfseries Jordan projection} $\bm{\mathrm{Jd}(g)}$ (Def.~\ref{Jd_Ct_definition}) 
  & \rule{0pt}{1.2\baselineskip}roughly\tnotex{tnote:roughly} the {\bfseries Jordan projection} $\bm{\mathrm{Jd}(g)}$ together with the {\bfseries Margulis invariant} $\bm{M(g)}$ (Def.~\ref{margulis_invariant})
\end{tabularx}
\vspace{5mm}
\begin{tablenotes}
  \item\label{tnote:essentially} See the last point of Remark~\ref{affine_flag_varieties}.
  \item\label{tnote:possibly} In the proximal case, there are at least two reasonable definitions of asymptotic dynamics. Another one would be the logarithm of the \emph{spectral gap}~$\frac{|\lambda_1|}{|\lambda_2|}$, defined in the same place.
  \item\label{tnote:roughly} Technically, the statement of Schema~\ref{proposition_template} only holds for a further projection of~$\jordan(g)$, given by the subset of coordinates~$\alpha_i(\jordan(g))$ for~$i \not\in \Pi_X$; for the remaining coordinates we only get an inequality. However Proposition~\ref{regular_product_qualitative} allows us to circumvent this problem. Also if $\Pi_{X_0} = \emptyset$, which is often the case (see Example~\ref{simplifying_assumptions_example}.\ref{itm:abundant_classification} and Remark~\ref{X0_examples}.\ref{itm:abundant_implications}), this issue does not arise at all.
\end{tablenotes}
\egroup
\end{threeparttable}
\caption{Possible meanings of notions appearing in Schema~\ref{proposition_template}, with references to the corresponding definitions.}
\label{tab:notions}
\end{table}

\begin{itemize}
\item the ``proximal'' version is Proposition~\ref{proximal_product};
\item the ``linear'' version is Proposition~\ref{intrinsic_regular_product} (for the ``main part'', \ie points \ref{itm:template_regular} to~\ref{itm:template_contraction} in Schema~\ref{proposition_template}) and Proposition~\ref{jordan_additivity} (for the asymptotic dynamics);
\item the ``affine'' version is Proposition~\ref{regular_product} (ii) and~(iii) (for the main part) and Proposition~\ref{invariant_additivity_only} (for the asymptotic dynamics).
\end{itemize}
For the last two versions, the definitions in question also depend on a parameter fixed once and for all (given in the second line of Table~\ref{tab:notions}). To give a first intuition, very roughly, the ``geometry'' of an element has to do with its eigenvectors, its ``contraction strength'' has to do with its singular values, and its ``asymptotic dynamics'' has to do with the moduli of its eigenvalues. (In the ``affine'' case, though, the asymptotic dynamics additionally depend on the translation part of the element.) The ``asymptotic dynamics'' terminology is explained by Gelfand's formula~\eqref{eq:Gelfand}.

Here is the relationship between these three results:
\begin{itemize}
\item The ``proximal'' version is used as a stepping stone to prove the other two versions. This result is not new: very similar lemmas were proved in \cite{Ben96}, \cite{Ben97} and~\cite{AMS02}, and this exact statement was proved in the author's previous papers~\cite{Smi14, Smi16}. We briefly recall it in Section~\ref{sec:proximal_maps_bis}.
\item The ``linear'' version is also used as a stepping stone to prove the affine version, but not in a completely straightforward way. In fact, the main part of the linear version, after being reformulated as Proposition~\ref{regular_product}~(i), is only involved in the proof of the additivity of the Margulis invariant (\ie the affine version for the asymptotic dynamics). On the other hand, the linear version for the asymptotic dynamics is already necessary to prove the main part of the affine version. It is a slight generalization of a result by Benoist~\cite{Ben96}. We cover it in Section~\ref{sec:linear_regular_maps}.
\item The ``affine'' version is the key result of this paper. Just defining the affine concepts keeps us busy for a long time (Sections~\ref{sec:choice}--\ref{sec:rho_regular}), and proving the results also takes a fair amount of work (Sections \ref{sec:quantitative_properties_of_products} and~\ref{sec:additivity}). Once we have it, it takes only five pages to prove our Main Theorem.
\end{itemize}

The proof has a lot in common with the author's previous papers~\cite{Smi14, Smi16}, and ultimately builds upon an idea that was introduced in Margulis's seminal paper~\cite{Mar83}. Let us now present a few highlights of what distinguishes this paper from the previous works.

\begin{itemize}
\item The main difference lies in the treatment of the dynamical spaces. As long as we only worked in ``non-swinging'' representations (see Definition~\ref{simpifying_assumptions_def}.\ref{itm:non-swinging_def}), we could associate to every element of~$G \ltimes V$ that is ``regular'' (in the appropriate sense) a decomposition of~$V$ into three so-called dynamical subspaces
\[V = V^\subg_g \oplus V^\sube_g \oplus V^\subl_g,\]
all stable by~$g$ and on which $g$ acts with eigenvalues respectively of modulus~$> 1$, of modulus~$1$ and of modulus~$< 1$. In the general case, this is no longer possible: we need to enlarge the neutral subspace~$V^\sube_g$ to an ``approximately neutral'' subspace~$V^\subap_g$. The eigenvalues of~$g$ on that subspace are, in some rather weak sense, ``not too far'' from~$1$, but will still grow exponentially (in the group we will eventually construct). The decomposition now becomes
\[V = V^\subgg_g \oplus V^\subap_g \oplus V^\subll_g.\]

This forces us to completely change our point of view. In fact, we now define these ``approximate dynamical spaces'' in a purely algebraic way, by focusing on the dynamics that~$g$ \emph{would} have on them, \emph{if} it were conjugate to (the exponential of) some reference element~$X_0$ of the Weyl chamber~$\mathfrak{a}^+$. For this reason, we actually call them the \emph{ideal dynamical spaces} (see Definition~\ref{ideal_dynamical_spaces_definition}). Only by imposing an additional condition on~$g$ (``asymptotic contraction'', see Definition~\ref{strong_regularity_definition}) can we ensure that the ``ideal'' dynamical spaces become indeed the ``approximate'' dynamical spaces.

\item The linear version of Schema~\ref{proposition_template} never explicitly appeared in the author's previous papers: so far, we have been able to simply present the linear theory as a particular case of the affine theory. (Even Proposition~6.11 in~\cite{Smi16}, which seems almost identical to Proposition~\ref{jordan_additivity} in the current paper, relies in fact on the affine versions of the properties). However this becomes untenable in the case of swinging representations, as the relationship between affine contraction strength and linear contraction strength becomes less straightforward (see Section~\ref{sec:affine_and_linear}). This led us to develop the linear theory on its own; we think that Propositions \ref{intrinsic_regular_product} and~\ref{jordan_additivity} might have some interest independently of the remainder of the paper.

We must however point out that these results are not completely new. The particular case where $\Pi_X = \emptyset$ is due to Benoist~\cite{Ben96, Ben97}; the case where $\Pi_X$~is arbitrary is an easy generalization which also relies on the tools developed by Benoist, and is well-known to experts in the field. It might seem at first sight that the general case is actually mentioned explicitly in~\cite{Ben97}, but this is not so: see Remark~6.16 in~\cite{Smi16}.

\item The central argument of the paper, namely the proof of Proposition~\ref{invariant_additivity_only}, has been completely overhauled. Though still highly technical, it is now much less messy: it is more symmetric, and the organization of the proof better reflects the separation of the ideas it involves. Even if we forget the fact that it works in a more general setting, it is definitely an improvement compared to the proof given in~\cite{Smi16}.
\end{itemize}

\subsection{Plan of the paper}
In Section~\ref{sec:algebraic_prelim}, we give a few algebraic results and introduce some notations related to metric properties and estimates. Most of these results are well-known.

In Section~\ref{sec:proximal_maps_bis}, we recall the definitions of the proximal versions of the key properties and the proximal version of Schema~\ref{proposition_template}. This is also well-known.

In Section~\ref{sec:linear_regular_maps}, we define the linear versions of the key properties, then prove the linear version of Schema~\ref{proposition_template}. This section clarifies and expands the results mentioned in Section~6 of~\cite{Smi16}. Ultimately, most of the definitions and ideas here are due to Benoist~\cite{Ben96, Ben97}.

In Section~\ref{sec:choice}, we choose an element $X_0 \in \mathfrak{a}^+$ that will be used to define the affine versions of the key properties. This generalizes Section~3 in~\cite{Smi16}.

In Section~\ref{sec:X_0_constructions}, we present some preliminary constructions involving~$X_0$, necessary to pave the way for the next section's definitions. We also introduce some (elementary) formalism that expresses affine spaces in terms of vector spaces. Part of the material is borrowed from Sections 4.1 and~4.2 in~\cite{Smi16}.

In Section~\ref{sec:rho_regular}, we define the affine versions of the key properties. This generalizes Sections 4.3--5.2 in~\cite{Smi16}, but introduces several new ideas.

In Section~\ref{sec:quantitative_properties_of_products}, we prove the main part of the affine version of Schema~\ref{proposition_template}. This generalizes Section~7 in~\cite{Smi16}.

Section~\ref{sec:additivity} contains the key part of the proof. We prove the asymptotic dynamics part of the affine version of Schema~\ref{proposition_template} --- in other terms, approximate additivity of the Margulis invariants. This generalizes Section~8 in~\cite{Smi16} and Section~4 in~\cite{Smi14}; the proof is based on the same idea, but has been considerably rewritten.

Section~\ref{sec:induction} uses induction to extend the results of the two previous sections to products of an arbitrary number of elements. It is a straightforward generalization of Section~9 in~\cite{Smi16} and Section~5 in~\cite{Smi14}

Section~\ref{sec:construction} contains the proof of the Main Theorem. It is an almost straightforward generalization of Section~10 in~\cite{Smi16} and Section~6 in~\cite{Smi14}.

\subsection{Acknowledgments}
I am very grateful to my Ph.D. advisor, Yves Benoist, who introduced me to this exciting and fruitful subject, and gave invaluable help and guidance in my initial work on this project.

I would also like to thank Bruno Le Floch for some interesting discussions, which in particular helped me gain more insight about weights of representations.

Thanks to the Yale University (and my colleagues from there), where almost all of the work on this paper has been conducted, for the wonderful working conditions.

\section{Preliminaries}
\label{sec:algebraic_prelim}
In Subsection~\ref{sec:eigenvalues_in_different_representations}, for any element $g \in G$, we give a formula for the eigenvalues and singular values of the linear maps~$\rho(g)$ where $\rho$~is an arbitrary representation of~$G$. This is nothing more than a reminder of Subsection~2.1 in~\cite{Smi16}.

In Subsection~\ref{sec:restricted_weights}, we present some properties of restricted weights of a real finite-dimensional representation of a real semisimple Lie group. This is mostly a reminder of Subsection~2.2 in~\cite{Smi16}.

In Subsection~\ref{sec:parabolics}, we give some basic results from the theory of parabolic subgroups and subalgebras (not necessarily minimal). This is a development on Subsection~3.4 in~\cite{Smi16}.

In Subsection~\ref{sec:metric_conventions}, we give some notation conventions related to metric properties and estimates (mostly borrowed from the author's earlier papers).

\subsection{Eigenvalues in different representations}
\label{sec:eigenvalues_in_different_representations}

In this subsection, we express the eigenvalues and singular values of a given element $g \in G$ acting in a given representation~$\rho$, exclusively in terms of the structure of~$g$ in the abstract group~$G$ (respectively its Jordan decomposition and its Cartan decomposition). This is just a reminder of Subsection~2.1 in~\cite{Smi16}; the result itself was certainly well-known even before that.

\begin{proposition}[Jordan decomposition]
\label{jordan_decomposition}
Let $g \in G$. There exists a unique decomposition of~$g$ as a product $g = g_h g_e g_u$, where:
\begin{itemize}
\item $g_h$~is conjugate in~$G$ to an element of~$A$ (\emph{hyperbolic});
\item $g_e$~is conjugate in~$G$ to an element of~$K$ (\emph{elliptic});
\item $g_u$~is conjugate in~$G$ to an element of~$N^+$ (\emph{unipotent});
\item these three maps commute with each other.
\end{itemize}
\end{proposition}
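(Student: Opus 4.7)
The plan is to prove this by first obtaining the decomposition at the level of linear algebra in a faithful representation, then descending back into $G$, and finally exploiting the standard structure theory of $G$ to produce the conjugating elements.

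First I would fix an arbitrary faithful finite-dimensional real representation $V$ of $G$ (such a representation exists since $G$ is semisimple) and apply the classical real multiplicative Jordan decomposition in $\GL(V)$: this yields a unique factorization $\rho(g) = s_h s_e u$, where $s_h$ is $\mathbb{R}$-diagonalizable with strictly positive eigenvalues, $s_e$ is $\mathbb{C}$-diagonalizable with eigenvalues of modulus~$1$, $u$ is unipotent, and the three commute pairwise. Uniqueness of the components asserted in the proposition will follow from the uniqueness in $\GL(V)$ together with faithfulness.

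Second, I would show that each of $s_h, s_e, u$ lies in $\rho(G)$, so that it comes from a unique element of $G$. The cleanest way is to repeat the argument for $\Ad(g)\in \GL(\mathfrak{g})$: the Jordan components of a Lie algebra automorphism are again Lie algebra automorphisms (the condition of preserving the bracket is a polynomial one, and each Jordan component is a polynomial in $\Ad(g)$), so they belong to $\Aut(\mathfrak{g})$; since $\Ad(G)$ is the identity component of $\Aut(\mathfrak{g})$ (and finite index is harmless because the Jordan components depend continuously on $g$), they in fact lie in $\Ad(G)$. Lifting through the finite covering $G\to\Ad(G)$ then produces $g_h,g_e,g_u\in G$ with $g=g_hg_eg_u$, still commuting. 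This is essentially Mostow's theorem on the Jordan decomposition in a real algebraic group.

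Third, I would establish the conjugacy statements using the structure theory recalled in Section~\ref{sec:lie}. For $g_h$: it is the exponential of some $X\in\mathfrak{g}$ with $\ad(X)$ $\mathbb{R}$-diagonalizable; after conjugating by a suitable element of $K$ we can put $X\in\mathfrak{q}$, and then, since any two maximal abelian subspaces of $\mathfrak{q}$ are $K$-conjugate, move $X$ into $\mathfrak{a}$, so that $g_h$ becomes conjugate to an element of $A=\exp\mathfrak{a}$. For $g_e$: being semisimple with eigenvalues of modulus~$1$ in every representation, it generates a subgroup with compact closure; this closure lies in some maximal compact subgroup, and all maximal compact subgroups of $G$ are conjugate to $K$. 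For $g_u$: by Jacobson--Morozov we can write $g_u=\exp Y$ with $Y$ nilpotent, embed $Y$ in an $\mathfrak{sl}_2$-triple, and then a standard argument (diagonalizing the neutral element of the triple against $\mathfrak{a}$ and choosing a compatible system of positive roots) puts $Y$ into $\mathfrak{n}^+$.

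The main obstacle is the second step: showing that the linear-algebraic Jordan components actually belong to $G$, rather than merely to $\GL(V)$. In the algebraic-group setting one could invoke the fact that Jordan decomposition commutes with morphisms of algebraic groups, but in the real analytic setting (with $G$ possibly disconnected or not matrix-algebraic) this requires the Lie-algebraic argument through $\Ad$ and the careful lifting along the finite-kernel cover $G\to\Ad(G)$. Once this is in hand, the remaining steps are essentially direct applications of the structure theory already summarized in the preliminaries.
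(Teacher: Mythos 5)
The paper does not actually prove this proposition: it quotes it from Eberlein's book (Theorem~2.19.24, combined with Proposition~2.19.18 and Theorem~2.19.16 to pass to the representation-independent formulation). Your plan is essentially the standard proof underlying that citation --- multiplicative Jordan decomposition in a linear group, transfer to $\operatorname{Aut}(\mathfrak{g})$ via $\Ad$, lifting along the finite-kernel cover $G \to \Ad(G)$, and structure-theoretic arguments for the three conjugacy statements --- so the route is sound in outline and more self-contained than the paper's.

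However, three of your justifications would not survive as written. First, semisimplicity alone does not give a faithful finite-dimensional representation (the universal cover of $\SL_2(\mathbb{R})$ has none); you are rescued only by the paper's standing assumption that $G$ is identified with $\rho(G) \subset \GL(V)$, which also supplies the finite center your covering argument needs. Second, preserving the Lie bracket is not a linear condition in the operator, so the fact that the semisimple and unipotent parts are polynomials in $\Ad(g)$ does not make them automorphisms --- and the hyperbolic and elliptic factors are not polynomials in $\Ad(g)$ at all. The standard argument is via the grading: an automorphism $\phi$ satisfies $[\mathfrak{g}^\lambda, \mathfrak{g}^\mu] \subset \mathfrak{g}^{\lambda\mu}$ on complexified eigenspaces, so the operator scaling $\mathfrak{g}^\lambda$ by $|\lambda|$ (resp.\ by $\lambda/|\lambda|$) preserves the bracket, and the unipotent part is then a quotient of two automorphisms. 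Third, the Jordan components do \emph{not} depend continuously on $g$ (semisimple elements degenerate to unipotent ones), so continuity cannot place them in the identity component $\operatorname{Aut}(\mathfrak{g})_e = \Ad(G)$; instead, observe that the hyperbolic and unipotent parts lie on one-parameter subgroups of $\operatorname{Aut}(\mathfrak{g})$ (scale $\mathfrak{g}^\lambda$ by $|\lambda|^t$, resp.\ exponentiate a nilpotent derivation), hence lie in the identity component, and then the elliptic part $s_e = \Ad(g)\, u^{-1} s_h^{-1}$ does too. With these repairs, and with the lifting and commutation-up-to-center details you flag carried out (the central discrepancy can be absorbed into the elliptic factor, since $Z(G) \subset K$ and multiplying by a central element of finite order preserves ellipticity), your argument does prove the proposition; those details are precisely the content of the Eberlein results the paper cites.
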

\begin{proof}
This is well-known, and given for example in \cite{Ebe96}, Theorem~2.19.24. Note however that the latter theorem uses representation-dependent definitions of a hyperbolic, elliptic or unipotent element (applied to the case of the adjoint representation). To state the theorem with the representation-agnostic definitions that we used, we need to apply Proposition~2.19.18 and Theorem~2.19.16 from the same book.
\end{proof}

\begin{proposition}[Cartan decomposition]
Let $g \in G$. Then there exists a decomposition of~$g$ as a product~$g = k_1 a k_2$, with~$k_1, k_2 \in K$ and~$a = \exp(X)$ with~$X \in \mathfrak{a}^{+}$. Moreover, the element~$X$ is uniquely determined by~$g$.
\end{proposition}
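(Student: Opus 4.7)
The statement is the classical $KAK$ or polar Cartan decomposition of a real semisimple Lie group; it appears as Theorem~7.39 in Knapp~\cite{Kna96}, and in practice I would simply cite it, in line with the paper's other appeals to Knapp. For a self-contained sketch, I would split the argument into existence and uniqueness.

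For existence, I would proceed in two stages. First, establish the ``infinitesimal'' polar decomposition $G = K \cdot \exp(\mathfrak{q})$. The cleanest route is to equip the symmetric space $G/K$ with the $G$-invariant Riemannian metric induced by the restriction of (a positive multiple of) the Killing form to $\mathfrak{q}$; this makes $G/K$ a Hadamard manifold, so the map $Y \mapsto \exp(Y) \cdot eK$ --- which is the Riemannian exponential at the basepoint --- is a diffeomorphism $\mathfrak{q} \xrightarrow{\sim} G/K$. For $g \in G$ we may then write $g \cdot eK = \exp(Y) \cdot eK$ for a unique $Y \in \mathfrak{q}$; this gives $g \in \exp(\mathfrak{q}) K$, and after rewriting $\exp(Y)\, k = k \exp(\operatorname{Ad}(k^{-1}) Y)$ we get $g = k' \exp(Y')$ with $k' \in K$ and $Y' \in \mathfrak{q}$. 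Second, use that $\mathfrak{a}$ is a maximal abelian subspace of $\mathfrak{q}$: a standard result says that the $\operatorname{Ad}(K)$-orbit of any element of $\mathfrak{q}$ meets $\mathfrak{a}$, and the restricted Weyl group $W = N_K(\mathfrak{a})/Z_K(\mathfrak{a})$ acts on $\mathfrak{a}$ with $\mathfrak{a}^+$ as a strict fundamental domain. So there is $k_0 \in K$ with $\operatorname{Ad}(k_0)\, Y' =: X \in \mathfrak{a}^+$; together with $\exp(Y') = k_0^{-1} \exp(X) k_0$ this yields $g = k_1 \exp(X) k_2$ with $k_1 = k' k_0^{-1}$ and $k_2 = k_0$.

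For uniqueness of $X$, let $\Theta$ denote the global involution of $G$ integrating $\theta$; it fixes $K$ pointwise and sends $\exp(Y)$ to $\exp(-Y)$ for every $Y \in \mathfrak{q}$. A one-line calculation gives
\[
\Theta(g)^{-1}\, g \;=\; k_2^{-1}\, \exp(2X)\, k_2,
\]
so the $K$-conjugacy class of $\exp(2X)$ is determined by $g$. It remains to observe that two elements of $\mathfrak{a}^+$ which are $\operatorname{Ad}(K)$-conjugate must coincide --- a standard consequence of the fact that any two Cartan subspaces in $\mathfrak{q}$ are $K$-conjugate and that $\mathfrak{a}^+$ is a strict fundamental domain for $W$ --- whence $X$ is uniquely determined by $g$.

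The only real obstacles are the two foundational structural facts invoked above: the diffeomorphism $K \times \mathfrak{q} \to G$, $(k,Y) \mapsto k \exp(Y)$, and the $K$-conjugacy of Cartan subspaces together with the identification $W = N_K(\mathfrak{a})/Z_K(\mathfrak{a})$. Both are treated in full in Knapp, so rather than reproving them I would just quote Theorem~7.39 of~\cite{Kna96}.
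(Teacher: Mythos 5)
Your proposal is correct and matches the paper, which likewise disposes of this classical statement by citing Theorem~7.39 of Knapp~\cite{Kna96}. The additional sketch you give (polar decomposition $G = K\exp(\mathfrak{q})$, conjugation into $\mathfrak{a}^+$ via $\operatorname{Ad}(K)$, and uniqueness via the global Cartan involution) is the standard argument and is sound, but it is not needed beyond the citation.
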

\begin{proof}
This is a classical result; see \eg Theorem~7.39 in~\cite{Kna96}.
\end{proof}

\begin{definition}
\label{Jd_Ct_definition}
For every element~$g \in G$, we define:
\begin{itemize}
\item the \emph{Jordan projection} of~$g$ (sometimes also known as the Lyapunov projection), written~$\jordan(g)$, to be the unique element of the closed dominant Weyl chamber~$\mathfrak{a}^{+}$ such that the hyperbolic part $g_h$ (from the Jordan decomposition $g = g_h g_e g_u$ given above) is conjugate to~$\exp(\jordan(g))$;
\item the \emph{Cartan projection} of~$g$, written~$\cartan(g)$, to be the element~$X$ from the Cartan decomposition given above.
\end{itemize}
\end{definition}

To talk about singular values, we need to introduce a Euclidean structure. We are going to use a special one. 
\begin{lemma}
\label{K-invariant}
Let $\rho_*$ be some real representation of~$G$ on some space~$V_*$. There exists a $K$-invariant positive-definite quadratic form~$B_*$ on~$V_*$ such that all the restricted weight spaces are pairwise $B_*$-orthogonal.
\end{lemma}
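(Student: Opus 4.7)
The plan is to invoke Weyl's unitary trick. First, complexify: set $V_*^{\mathbb{C}} := V_* \otimes_{\mathbb{R}} \mathbb{C}$, extend the Lie algebra action linearly to $\rho_*^{\mathbb{C}} \colon \mathfrak{g}_{\mathbb{C}} \to \mathfrak{gl}(V_*^{\mathbb{C}})$, and consider the compact real form $\mathfrak{u} := \mathfrak{k} \oplus i \mathfrak{q}$ of $\mathfrak{g}_{\mathbb{C}}$. Since $\mathfrak{g}$ is semisimple, $\mathfrak{u}$ is a compact Lie algebra; after passing if necessary to the simply connected cover of $G_{\mathbb{C}}$, it integrates to a compact Lie group $U$ acting on $V_*^{\mathbb{C}}$, and $K$ sits inside $U$.

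By averaging any positive-definite Hermitian inner product on $V_*^{\mathbb{C}}$ over the compact group $U$, I obtain a $U$-invariant positive-definite Hermitian form $H$. Differentiating $U$-invariance yields that $\rho_*^{\mathbb{C}}(Y)$ is $H$-skew-Hermitian for every $Y \in \mathfrak{u}$; applied to $Y = iX$ with $X \in \mathfrak{q}$, this shows that $\rho_*(X)$ itself is $H$-Hermitian. Hence the commuting family $\rho_*(\mathfrak{a})$ consists of $H$-self-adjoint operators, so its simultaneous eigenspaces---which are exactly the complexifications $V_*^\lambda \otimes_{\mathbb{R}} \mathbb{C}$ of the restricted weight spaces---are pairwise $H$-orthogonal.

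Now define $B_*(v, w) := \Re H(v, w)$ for $v, w \in V_*$. Because $H$ is Hermitian, $B_*$ is $\mathbb{R}$-bilinear and symmetric; because $H(v, v)$ is real and positive for $v \neq 0$, $B_*$ is positive definite; because $K \subset U$ preserves $H$, $B_*$ is $K$-invariant; and the $H$-orthogonality of the complexified weight spaces restricts immediately to $B_*$-orthogonality of the real weight spaces $V_*^\lambda$ in $V_*$, which is exactly the conclusion of the lemma.

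The only technical point worth flagging is the construction of the compact group $U$ acting on $V_*^{\mathbb{C}}$ compatibly with the original $G$-action on $V_*$, which a priori requires lifting to a simply connected cover; this is routine semisimple Lie theory, so I do not anticipate a genuine obstacle. The conceptual content of the argument is the unitary trick itself: mere $K$-invariance of a form on $V_*$ is in general not enough to orthogonalize the restricted weight spaces, and enlarging $K$ to $U$ via the imaginary direction $i\mathfrak{q}$ is exactly what forces $\rho_*(\mathfrak{a})$ to act by self-adjoint, simultaneously orthogonally diagonalizable operators.
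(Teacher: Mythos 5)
Your argument is correct, and it fills in what the paper only cites: the paper's ``proof'' of Lemma~\ref{K-invariant} is a reference to Lemma~5.33.a) of Benoist--Quint, whereas you give the standard self-contained proof via Weyl's unitary trick (complexify, pass to the compact real form $\mathfrak{u} = \mathfrak{k} \oplus i\mathfrak{q}$, average, take real parts). All the key verifications go through: infinitesimal $\mathfrak{u}$-invariance of the averaged Hermitian form $H$ makes each $\rho_*(X)$, $X \in \mathfrak{a} \subset \mathfrak{q}$, $H$-self-adjoint, so the commuting family $\rho_*(\mathfrak{a})$ has pairwise $H$-orthogonal simultaneous eigenspaces with real eigenvalues, and these are exactly the complexified restricted weight spaces; $B_* := \Re H$ then has all the stated properties. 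The one spot I would tighten is the assertion that ``$K$ sits inside $U$'': after lifting to the simply connected cover of $G_{\mathbb{C}}$ there is no literal inclusion of $K$ into $U$, only a common Lie algebra $\mathfrak{k} \subset \mathfrak{u}$ acting on $V_*^{\mathbb{C}}$. The clean way out is to note that $G$ (hence $K$) is connected in the paper's setting, so $K$-invariance of $B_*$ follows from the infinitesimal statement that every $\rho_*(Y)$, $Y \in \mathfrak{k}$, is $H$-skew-Hermitian (equivalently, that $\rho_*(K)$ is contained in the compact image of $U$ in $\GL(V_*^{\mathbb{C}})$, since $K$ is generated by $\exp \mathfrak{k}$). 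With that phrasing the covering issue you flag disappears entirely, and your closing remark is apt: plain $K$-invariance alone does not force orthogonality of the weight spaces, and it is precisely the extra $i\mathfrak{q}$-direction of $\mathfrak{u}$ that makes $\rho_*(\mathfrak{a})$ act self-adjointly.
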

We want to reserve the plain notation~$\rho$ for the ``default'' representation, to be fixed once and for all at the beginning of Section~\ref{sec:choice}. We use the notation~$\rho_*$ so as to encompass both this representation~$\rho$ and the representations~$\rho_i$ defined in Proposition~\ref{fundamental_real_representation}.

\begin{proof}
See Lemma~5.33.a) in~\cite{BenQui}.
\end{proof}

\begin{example}
If $\rho_* = \Ad$~is the adjoint representation, then $B_*$~is the form~$B_\theta$ given by
\[\forall X, Y \in \mathfrak{g},\quad B_{\theta}(X, Y) = -B(X, \theta Y)\]
(see~(6.13) in~\cite{Kna96}), where $B$~is the Killing form and $\theta$~is the Cartan involution.
\end{example}

Recall that the \emph{singular values} of a map~$g$ in a Euclidean space are defined as the square roots of the eigenvalues of $g^* g$, where $g^*$ is the adjoint map. The largest and smallest singular values of~$g$ then give respectively the operator norm of~$g$ and the reciprocal of the operator norm of~$g^{-1}$.

\begin{proposition}
\label{eigenvalues_and_singular_values_characterization}
Let $\rho_*: G \to \GL(V_*)$ be any representation of~$G$ on some vector space~$V_*$; let $\lambda_*^1, \ldots, \lambda_*^{d_*}$ be the list of all the restricted weights of~$\rho_*$, repeated according to their multiplicities. Let $g \in G$; then:
\begin{hypothenum}
\item The list of the moduli of the eigenvalues of~$\rho_*(g)$ is given by
\[\left( e^{\lambda_*^i(\jordan(g))} \right)_{1 \leq i \leq d_*}.\]
\item The list of the singular values of~$\rho_*(g)$, with respect to a $K$-invariant Euclidean norm~$B_*$ on~$V_*$ that makes the restricted weight spaces of~$V_*$ pairwise orthogonal (such a norm exists by Lemma~\ref{K-invariant}), is given by
\[\left( e^{\lambda_*^i(\cartan(g))} \right)_{1 \leq i \leq d_*}.\]
\end{hypothenum}
\end{proposition}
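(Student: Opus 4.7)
My plan is to handle the two parts separately, exploiting the Jordan and Cartan decompositions respectively, together with the fact that on a weight space $V_*^{\lambda}$, the element $\exp(X) \in A$ acts as multiplication by the scalar $e^{\lambda(X)}$ (this follows directly from the definition of $V_*^\lambda$, passing from the Lie algebra action to the group action via the exponential).

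For part (i), I would start from the Jordan decomposition $g = g_h g_e g_u$ of Proposition~\ref{jordan_decomposition}. Since these three factors commute in $G$, so do the linear maps $\rho_*(g_h), \rho_*(g_e), \rho_*(g_u)$. Observe that $\rho_*(g_h)$ is conjugate in $\GL(V_*)$ to $\rho_*(\exp \jordan(g))$, which preserves the restricted weight space decomposition $V_* = \bigoplus_i V_*^{\lambda_*^i}$ and acts on $V_*^{\lambda_*^i}$ by the scalar $e^{\lambda_*^i(\jordan(g))}$; consequently $\rho_*(g_h)$ is diagonalizable over $\mathbb{R}$, and the multiset of moduli of its eigenvalues is exactly $(e^{\lambda_*^i(\jordan(g))})_i$. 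Next, $\rho_*(g_e)$ is conjugate to an element of $\rho_*(K)$, which is contained in a compact group and hence has eigenvalues of modulus $1$; and $\rho_*(g_u)$, being conjugate to an element of $\rho_*(N^+)$, is unipotent, hence with all eigenvalues equal to $1$. Decomposing $V_* \otimes \mathbb{C}$ into eigenspaces of $\rho_*(g_h)$ (which is diagonalizable) and noting that these eigenspaces are stable under $\rho_*(g_e)$ and $\rho_*(g_u)$ by commutativity, the eigenvalues of $\rho_*(g)$ on the $\mu$-eigenspace of $\rho_*(g_h)$ are of the form $\mu \cdot \nu$ with $|\nu| = 1$; this yields the claim.

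For part (ii), I would use the Cartan decomposition $g = k_1 a k_2$ with $a = \exp(\cartan(g))$. Since $B_*$ is $K$-invariant, the operators $\rho_*(k_1)$ and $\rho_*(k_2)$ are $B_*$-orthogonal, so
\[\rho_*(g)^* \rho_*(g) \;=\; \rho_*(k_2)^{-1}\, \rho_*(a)^* \rho_*(a)\, \rho_*(k_2),\]
which shows that the singular values of $\rho_*(g)$ equal those of $\rho_*(a)$. It now suffices to identify the singular values of $\rho_*(a)$. Since the restricted weight spaces $V_*^{\lambda_*^i}$ are pairwise $B_*$-orthogonal and $\rho_*(a) = \rho_*(\exp \cartan(g))$ acts on $V_*^{\lambda_*^i}$ as the positive real scalar $e^{\lambda_*^i(\cartan(g))}$, the operator $\rho_*(a)$ is $B_*$-self-adjoint and positive; thus its singular values coincide with its eigenvalues, namely $(e^{\lambda_*^i(\cartan(g))})_i$ counted with the correct multiplicities.

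There is no real obstacle here: both statements reduce to well-known facts about compact, unipotent and split-torus actions, and the only place that requires care is the commutativity argument in part~(i), where one must be a bit pedantic about combining a diagonalizable factor with two factors whose eigenvalues all lie on the unit circle, to conclude that only the hyperbolic part contributes to the moduli of the eigenvalues of the product.
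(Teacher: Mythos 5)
Your proof is correct, and it follows exactly the standard route that the paper itself defers to (the text only cites Proposition~2.6 of~\cite{Smi16}): the Jordan decomposition with commuting hyperbolic, elliptic and unipotent factors for the eigenvalue moduli, and the Cartan decomposition together with $K$-invariance of~$B_*$ for the singular values. The only point left tacit is that $V_*$ is the direct sum of its restricted weight spaces (so that the list has the right length), but this is the standard semisimplicity of the $\mathfrak{a}$-action already implicit in Lemma~\ref{K-invariant}, so there is no real gap.
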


\begin{proof}
This is also completely straightforward. See Proposition~2.6 in~\cite{Smi16}.
\end{proof}

\subsection{Properties of restricted weights}
\label{sec:restricted_weights}

In this subsection, we introduce a few properties of restricted weights of real finite-dimensional representations. (Proposition~\ref{convex_hull_and_inequalities} is actually a general result about Coxeter groups.) The corresponding theory for ordinary weights is well-known: see for example Chapter~V in~\cite{Kna96}. This is mostly a reminder of Subsection~2.2 from~\cite{Smi16}; the only addition is Lemma~\ref{roots_contained_in_weights}.

Let $\alpha_1, \ldots, \alpha_r$ be an enumeration of the set~$\Pi$ of simple restricted roots generating~$\Sigma^+$. For every~$i$, we set
\begin{equation}
\alpha'_i := \begin{cases}
2\alpha_i &\text{ if $2\alpha_i$ is a restricted root} \\
\alpha_i &\text{ otherwise.}
\end{cases}
\end{equation}
For every index~$i$ such that $1 \leq i \leq r$, we define the $i$-th \emph{fundamental restricted weight}~$\varpi_i$ by the relationship
\begin{equation}
2\frac{\langle \varpi_i, \alpha'_j \rangle}{\| \alpha'_j \|^2} = \delta_{ij}
\end{equation}
for every $j$ such that $1 \leq j \leq r$.

By abuse of notation, we will often allow ourselves to write things such as ``for all~$i$ in some subset $\Pi' \subset \Pi$, $\varpi_i$ satisfies...'' (tacitly identifying the set~$\Pi'$ with the set of indices of the simple restricted roots that are inside).

In the following proposition, for any subset $\Pi' \subset \Pi$, we denote:
\begin{itemize}
\item by~$W_{\Pi'}$ the Weyl subgroup of type~$\Pi'$:
\begin{equation}
W_{\Pi'} := \langle s_\alpha \rangle_{\alpha \in \Pi'};
\end{equation}
\item by~$\mathfrak{a}^{+}_{\Pi'}$ the fundamental domain for the action of~$W_{\Pi'}$ on~$\mathfrak{a}$:
\begin{equation}
\mathfrak{a}^{+}_{\Pi'} := \setsuch{X \in \mathfrak{a}}
{\forall \alpha \in \Pi',\; \alpha(X) \geq 0},
\end{equation}
which is a kind of prism whose base is the dominant Weyl chamber of~$W_{\Pi'}$.
\end{itemize}
\begin{proposition}
\label{convex_hull_and_inequalities}
Take any $\Pi' \subset \Pi$, and let $X$, $Y$ be any two vectors in $\mathfrak{a}^{+}_{\Pi'}$. Then the system of linear inequalities
\[\begin{cases}
\forall i \in \mathrlap{\Pi',}\qquad\qquad \varpi_i(Y) \leq \varpi_i(X) \\
\forall i \in \mathrlap{\Pi \setminus \Pi',}\qquad\qquad \varpi_i(Y) = \varpi_i(X)
\end{cases}\]
is satisfied if and only if the vector~$Y$ lies in the convex hull of the orbit of~$X$ by~$W_{\Pi'}$.
\end{proposition}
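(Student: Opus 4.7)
The plan is to prove the ``if'' and ``only if'' directions separately, with the ``only if'' direction reduced (via an orthogonal decomposition) to the classical theorem describing the convex hull of a finite reflection-group orbit.

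\emph{``If'' direction.} First I would establish the standard Coxeter-group fact that for any $\Pi'$-dominant $X$ and any $w \in W_{\Pi'}$, the difference $X - wX$ is a non-negative combination of the simple roots in $\Pi'$ (by induction on the length of $w$ in $W_{\Pi'}$). Combined with the observation that $\varpi_i(\alpha_j) \geq 0$ for every $i, j$ (immediate from the defining relation for $\varpi_i$, since $\alpha_j$ is a positive multiple of $\alpha'_j$), this yields $\varpi_i(wX) \leq \varpi_i(X)$ for every $i$ and every $w \in W_{\Pi'}$. For $i \notin \Pi'$ one further notes that $s_\alpha \varpi_i = \varpi_i$ for every generator $s_\alpha$ of $W_{\Pi'}$ (because $\langle \varpi_i, \alpha \rangle = 0$), hence $\varpi_i(wX) = \varpi_i(X)$ exactly. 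Taking convex combinations gives the full system for any $Y \in \Conv(W_{\Pi'} \cdot X)$.

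\emph{``Only if'' direction.} I would decompose $\mathfrak{a} = \mathfrak{a}_{\Pi'} \oplus \mathfrak{a}_{\Pi'}^\perp$ orthogonally (with respect to the $W$-invariant inner product on $\mathfrak{a}$), where $\mathfrak{a}_{\Pi'}$ is the span of $\{H_{\alpha'_j}\}_{j \in \Pi'}$. Since $\{H_{\alpha'_j}\}_{j=1}^r$ is precisely the basis of $\mathfrak{a}$ dual to $\{\varpi_i\}_{i=1}^r$, the subspace $\mathfrak{a}_{\Pi'}$ coincides with the common kernel of the $\varpi_i$ for $i \notin \Pi'$. Each generator $s_{\alpha_j}$ with $j \in \Pi'$ reflects across $\ker \alpha_j \supset \mathfrak{a}_{\Pi'}^\perp$, so $W_{\Pi'}$ fixes $\mathfrak{a}_{\Pi'}^\perp$ pointwise and acts faithfully as a finite reflection group on $\mathfrak{a}_{\Pi'}$. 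Writing $X = X_1 + X_2$, $Y = Y_1 + Y_2$ accordingly, the equality constraints $\varpi_i(Y) = \varpi_i(X)$ for $i \notin \Pi'$ force $Y_2 = X_2$, while the remaining inequalities translate directly to $\varpi_i^{\Pi'}(Y_1) \leq \varpi_i^{\Pi'}(X_1)$ between the $\Pi'$-dominant vectors $X_1, Y_1 \in \mathfrak{a}_{\Pi'}$, where $\varpi_i^{\Pi'} := \varpi_i|_{\mathfrak{a}_{\Pi'}}$ is checked to be the $i$-th fundamental weight of the sub-root-system $\Pi'$. It thus suffices to prove $Y_1 \in \Conv(W_{\Pi'} \cdot X_1)$ inside $\mathfrak{a}_{\Pi'}$.

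For this classical reduced statement I would use the support-function characterization of the convex hull: $Y_1 \in \Conv(W_{\Pi'} \cdot X_1)$ iff $\xi(Y_1) \leq \max_{w \in W_{\Pi'}} \xi(wX_1)$ for every $\xi \in \mathfrak{a}_{\Pi'}^*$. Let $\xi^+$ denote the unique $\Pi'$-dominant $W_{\Pi'}$-conjugate of $\xi$. Applying the first-direction lemma (in both $\mathfrak{a}_{\Pi'}$ and its dual) gives $\max_w \xi(wX_1) = \xi^+(X_1)$ and $\xi(Y_1) \leq \xi^+(Y_1)$; and expanding $\xi^+$ as a non-negative combination of the $\varpi_i^{\Pi'}$, the hypothesis linearly yields $\xi^+(Y_1) \leq \xi^+(X_1)$. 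Chaining, $\xi(Y_1) \leq \max_w \xi(wX_1)$, as required. The main technical obstacle is the orthogonal-decomposition bookkeeping that cleanly separates the ``cross-section'' condition $Y_2 = X_2$ from the genuine convex-hull condition inside the sub-system; once that is in place, both the initial Coxeter-length lemma and the final support-function argument are essentially formal.
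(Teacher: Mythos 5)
Your proof is correct, and it follows essentially the same route as the paper: the paper's proof merely cites the classical case $\Pi' = \Pi$ (Hall, Proposition~8.44) together with a reduction to that case given in the author's earlier paper, and your orthogonal decomposition $\mathfrak{a} = \mathfrak{a}_{\Pi'} \oplus \mathfrak{a}_{\Pi'}^{\perp}$ (forcing $Y_2 = X_2$ and reducing to the sub-root-system $\Pi'$) is exactly that reduction, with the classical statement additionally reproved via the standard dominance/support-function argument. The only difference is that your write-up is self-contained where the paper defers to references.
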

\begin{proof}
For the particular case~$\Pi' = \Pi$, this is well known: see \eg \cite{Hall15}, Proposition~8.44.

The general case can easily be reduced to this particular case: see \cite{Smi16}, Proposition~2.7.
\end{proof}

\begin{proposition}
\label{restr_weight_lattice}
Every restricted weight of every representation of $\mathfrak{g}$ is a linear combination of fundamental restricted weights with integer coefficients.
\end{proposition}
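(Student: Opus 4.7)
My plan is to reduce the claim to the standard integrality of $\mathfrak{sl}_2$-weights, via a root $\mathfrak{sl}_2$-subalgebra associated to each $\alpha'_j$.

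First, I would translate the conclusion into a dual statement. For each $j \in \{1, \ldots, r\}$, let $H_j \in \mathfrak{a}$ be the element characterized by $\beta(H_j) = 2\langle \beta, \alpha'_j \rangle / \|\alpha'_j\|^2$ for all $\beta \in \mathfrak{a}^*$. Since the simple restricted roots $\alpha_1, \ldots, \alpha_r$ form a basis of $\mathfrak{a}^*$, so do the $\alpha'_j$, and hence $H_1, \ldots, H_r$ form a basis of $\mathfrak{a}$. The defining relation of the fundamental restricted weights rewrites as $\varpi_i(H_j) = \delta_{ij}$, so $(\varpi_i)$ is the basis of $\mathfrak{a}^*$ dual to $(H_j)$. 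It follows that a form $\mu \in \mathfrak{a}^*$ lies in $\bigoplus_i \mathbb{Z}\varpi_i$ if and only if $\mu(H_j) \in \mathbb{Z}$ for every $j$; it therefore suffices to establish this integrality for every restricted weight $\mu$.

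Next, I would invoke the structural fact that for each $j$ there exists an $\mathfrak{sl}_2(\mathbb{R})$-subalgebra $\mathfrak{s}_j \subset \mathfrak{g}$ with standard triple $(H_j, E_j, F_j)$, where $E_j \in \mathfrak{g}^{\alpha'_j}$ and $F_j \in \mathfrak{g}^{-\alpha'_j}$. The crucial point is that $\alpha'_j$ is itself a restricted root, being either $\alpha_j$ or $2\alpha_j$, both of which lie in $\Sigma$ by construction. One builds such a triple by picking a nonzero $E_j \in \mathfrak{g}^{\alpha'_j}$ and taking $F_j$ essentially proportional to $-\theta E_j$: the bracket $[E_j, -\theta E_j]$ lies in $\mathfrak{l} = \mathfrak{a} \oplus \mathfrak{m}$ (since it commutes with $\mathfrak{a}$), and a Killing-form computation, using positivity of $B_\theta(E_j, E_j)$, shows its $\mathfrak{a}$-component to be a positive multiple of $H_j$; the possible $\mathfrak{m}$-component is then absorbed by the classical root-$\mathfrak{sl}_2$ construction (see Knapp~\cite{Kna96}, Chapter VI). Given any finite-dimensional representation of $\mathfrak{g}$, its restriction to $\mathfrak{s}_j$ decomposes, after complexification, into irreducible $\mathfrak{sl}_2(\mathbb{C})$-modules in which $H_j$ acts with integer eigenvalues; hence $H_j$ already acts with integer eigenvalues on the original real space. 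If $v \in V^\mu \setminus \{0\}$, then $H_j \cdot v = \mu(H_j) v$, so $\mu(H_j) \in \mathbb{Z}$, and by the first paragraph $\mu$ is an integer combination of the $\varpi_i$.

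The main technical obstacle is the second step: producing an $\mathfrak{sl}_2$-triple whose Cartan element is exactly $H_j \in \mathfrak{a}$, cleanly handling the $\mathfrak{m}$-component of $[E_j, F_j]$ uniformly in the reduced and non-reduced (type $BC_n$) cases. This is precisely the place where the definition of $\alpha'_j$ (distinguishing divisible and indivisible simple roots) pays off, and it is a classical but non-trivial piece of real semisimple structure theory; once granted, the rest is a direct application of weight duality and standard $\mathfrak{sl}_2$-representation theory.
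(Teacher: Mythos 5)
Your argument is correct, but it is not the route the paper takes: the paper does not prove Proposition~\ref{restr_weight_lattice} at all, it simply quotes Proposition~5.8 of Borel--Tits~\cite{BT65} (with the correction in~\cite{BT72compl}), a statement proved there in the setting of rational representations of reductive algebraic groups. Your proof is a self-contained Lie-algebra argument: dualize the definition of the $\varpi_i$ to reduce the claim to $\mu(H_j)\in\mathbb{Z}$ for the coroots $H_j$ of the $\alpha'_j$, build a root $\mathfrak{sl}_2$-triple with Cartan element $H_j$ from a nonzero $E_j\in\mathfrak{g}^{\alpha'_j}$ and $\theta E_j$, and invoke integrality of $\mathfrak{sl}_2(\mathbb{C})$-weights after complexification. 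This is sound, and it correctly isolates the one genuinely delicate point: in the non-reduced ($BC$) case one must test against the coroot of the \emph{divisible} root $2\alpha_j$, which is exactly what the paper's normalization of $\varpi_i$ via $\alpha'_j$ encodes, and which your choice of $\mathfrak{s}_j$ inside $\mathfrak{g}^{\pm 2\alpha_j}$ delivers. One simplification you missed: the ``$\mathfrak{m}$-component'' you worry about in the last paragraph does not exist, since $\theta[E_j,\theta E_j]=-[E_j,\theta E_j]$ places $[E_j,\theta E_j]$ in the $(-1)$-eigenspace of $\theta$, and $\mathfrak{l}\cap\mathfrak{q}=\mathfrak{a}$ because $\mathfrak{m}\subset\mathfrak{k}$; so $[E_j,\theta E_j]$ lies in $\mathfrak{a}$ outright and the Killing-form computation already gives the triple (this is Knapp's standard construction). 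Compared with the paper's citation, your approach buys a short, elementary and representation-theoretically transparent proof at the Lie-algebra level, at the cost of reproving a classical fact rather than reusing the stronger Borel--Tits statement.
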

\begin{proof}
This is a particular case of Proposition~5.8 in~\cite{BT65}. For a correction concerning the proof, see also Remark~5.2 in~\cite{BT72compl}.
\end{proof}

\begin{proposition}
\label{highest_restr_weight}
If $\rho_*$ is an irreducible representation of $\mathfrak{g}$, there is a unique restricted weight~$\lambda_*$ of~$\rho_*$, called its \emph{highest restricted weight}, such that no element of the form $\lambda_* + \alpha_i$ with $1 \leq i \leq r$ is a restricted weight of~$\rho_*$.
\end{proposition}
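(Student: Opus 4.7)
The plan is to use the standard partial order on restricted weights together with an irreducibility/PBW argument, as in the classical complex case.

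\emph{Existence.} I would define a partial order on $\mathfrak{a}^*$ by declaring $\mu \preceq \lambda$ iff $\lambda - \mu \in \sum_{i=1}^{r} \mathbb{Z}_{\geq 0}\, \alpha_i$. Since $V_*$ is finite-dimensional, its set of restricted weights is finite, so the set of weights admits at least one maximal element $\lambda_*$ with respect to $\preceq$. By maximality, $\lambda_* + \alpha_i$ cannot be a weight for any simple $\alpha_i$, since it would be strictly larger.

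\emph{Uniqueness.} Suppose $\lambda_*$ is any weight satisfying the stated condition, pick a nonzero $v \in V_*^{\lambda_*}$, and consider the submodule $U(\mathfrak{g}) v$. By irreducibility this equals $V_*$, and by PBW it equals $U(\mathfrak{n}^-) U(\mathfrak{l}) U(\mathfrak{n}^+) v$. The key step is to show $\mathfrak{n}^+ v = 0$: once this is granted, $U(\mathfrak{n}^+) v = \mathbb{R} v$, and since $\mathfrak{l}$ centralizes $\mathfrak{a}$ it preserves $V_*^{\lambda_*}$, so $V_* = U(\mathfrak{n}^-) \cdot V_*^{\lambda_*}$. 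Then every weight of $V_*$ is of the form $\lambda_* - \sum n_i \alpha_i$ with $n_i \in \mathbb{Z}_{\geq 0}$, i.e. $\preceq \lambda_*$. A second weight $\mu_*$ satisfying the same condition would similarly dominate all weights, so $\mu_* \preceq \lambda_*$ and $\lambda_* \preceq \mu_*$, forcing $\lambda_* = \mu_*$.

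\emph{The main obstacle.} The delicate point is proving $\mathfrak{n}^+ v = 0$ from the hypothesis, which only concerns simple roots. For simple $\alpha_i$ this is immediate: $\mathfrak{g}^{\alpha_i} v \subset V_*^{\lambda_* + \alpha_i} = 0$. For a non-simple positive root $\alpha$, one cannot argue directly from the weight hypothesis (since a priori $\lambda_* + \alpha$ could be a weight even when $\lambda_* + \alpha_i$ is not). Instead I would invoke the fact that $\mathfrak{n}^+$ is generated as a Lie algebra by the simple restricted root spaces $\mathfrak{g}^{\alpha_i}$ — a standard structural result valid even when the restricted root system is of type $BC_n$, since then $\mathfrak{g}^{2\alpha_i} \subset [\mathfrak{g}^{\alpha_i}, \mathfrak{g}^{\alpha_i}]$. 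Then for $X \in \mathfrak{g}^{\alpha_i}$ and $Y \in \mathfrak{g}^{\alpha_j}$, the relation $[X,Y] v = X(Yv) - Y(Xv) = 0$, iterated, shows that the whole Lie subalgebra generated by the simple root spaces annihilates $v$, so $\mathfrak{n}^+ v = 0$.

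Everything else is formal bookkeeping; the only subtlety worth flagging is the $BC_n$ case, where I would need to cite (or verify from the classification of real rank-one subalgebras of $\mathfrak{g}$) that $\mathfrak{g}^{2\alpha_i}$ does lie in $[\mathfrak{g}^{\alpha_i}, \mathfrak{g}^{\alpha_i}]$, to ensure that restricting attention to simple root spaces really does generate all of $\mathfrak{n}^+$.
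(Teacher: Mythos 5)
Your argument is correct, but it follows a genuinely different route from the paper's. The paper proves this proposition in one line by reducing it to the ordinary (non-restricted) highest weight theorem (\cite{Kna96}, Theorem~5.5(d)) and passing to restrictions; the cost there is hidden in the dictionary between restricted weights of~$\rho_*$ and ordinary weights of a complexification with compatibly chosen Cartan subalgebra and ordering, but no structure theory of restricted root spaces is needed. You instead redo highest-weight theory directly at the restricted level: a maximal element for the dominance order gives existence, and a cyclic-vector/PBW argument for $\mathfrak{g} = \mathfrak{n}^- \oplus \mathfrak{l} \oplus \mathfrak{n}^+$ gives uniqueness once $\mathfrak{n}^+ \cdot v = 0$ is known. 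You correctly isolate the crux: passing from $\mathfrak{g}^{\alpha_i} v = 0$ (simple roots only) to $\mathfrak{n}^+ v = 0$ requires that $\mathfrak{n}^+$ be generated as a Lie algebra by the simple restricted root spaces. Note, though, that this has more content than the $BC_n$ check $\mathfrak{g}^{2\alpha_i} \subset [\mathfrak{g}^{\alpha_i}, \mathfrak{g}^{\alpha_i}]$ you single out: because restricted root spaces have multiplicities, one needs the bracket surjectivity for all non-simple positive restricted roots (equivalently $[\mathfrak{g}^{\alpha}, \mathfrak{g}^{\beta}] = \mathfrak{g}^{\alpha+\beta}$ whenever $\alpha$, $\beta$, $\alpha+\beta$ are restricted roots, or the statement $\mathfrak{g}_{k+1} = [\mathfrak{g}_1, \mathfrak{g}_k]$ for the height grading of a real semisimple Lie algebra); for instance in type $B_2$ one also needs $[\mathfrak{g}^{e_1-e_2}, \mathfrak{g}^{e_2}] = \mathfrak{g}^{e_1}$. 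This is indeed a standard theorem (generation in degree one of the nilradical of the minimal parabolic), so with a citation for it your proof is complete. What your route buys is a self-contained argument that never leaves the restricted setting and shows along the way that every restricted weight of~$\rho_*$ lies below~$\lambda_*$ in the dominance order, a fact the paper only recovers later through Proposition~\ref{convex_hull}; what the paper's route buys is brevity and complete avoidance of structural facts about restricted root spaces.
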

\begin{remark}
In contrast to the situation with non-restricted weights, the highest restricted weight is not always of multiplicity~$1$; nor is a representation uniquely determined by its highest restricted weight.
\end{remark}
\begin{proof}
This easily follows from the existence and uniqueness of the ordinary (non-restricted) highest weight, given for example in \cite{Kna96}, Theorem~5.5~(d).
\end{proof}

\begin{proposition}
\label{convex_hull}
Let $\rho_*$ be an irreducible representation of $\mathfrak{g}$; let $\lambda_*$ be its highest restricted weight. Let $\Lambda_{\lambda_*}$ be the restricted root lattice shifted by~$\lambda_*$:
\[\Lambda_{\lambda_*} := \setsuch{\lambda_* + c_1 \alpha_1 + \cdots + c_r \alpha_r}{c_1, \ldots, c_r \in \mathbb{Z}}.\]
Then the set of restricted weights of~$\rho_*$ is exactly the intersection of the lattice $\Lambda_{\lambda_*}$ with the convex hull of the orbit $\setsuch{w(\lambda_*)}{w \in W}$ of~$\lambda_*$ by the restricted Weyl group.
\end{proposition}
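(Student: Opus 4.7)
The plan is to establish the equality by proving the two inclusions separately, treating the easy inclusion (``every restricted weight lies in $\Lambda_{\lambda_*} \cap \Conv(W \cdot \lambda_*)$'') first, and then the harder converse by reducing to the classical result for ordinary weights of complex semisimple Lie algebras.

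For the easy inclusion, I would first note that the set of restricted weights of $\rho_*$ is $W$-invariant: indeed, any representative $\tilde{w} \in N_G(A)$ of $w \in W$ sends $V_*^\mu$ bijectively onto $V_*^{w(\mu)}$. Next, by a standard ``highest weight'' argument (applied in the real setting: the cyclic $\mathfrak{g}$-module generated by a highest weight vector $v_{\lambda_*} \in V_*^{\lambda_*}$ coincides with $V_*$ by irreducibility, and is spanned by vectors obtained by applying monomials in the $\mathfrak{g}^{-\alpha_i}$'s to $v_{\lambda_*}$), every restricted weight $\mu$ satisfies $\lambda_* - \mu = \sum_{i=1}^r n_i \alpha_i$ with integers $n_i \geq 0$; this gives both that $\mu \in \Lambda_{\lambda_*}$ and, after checking the elementary fact that $\varpi_i(\alpha_j) \geq 0$ for all $i, j$ (immediate from the defining relation of $\varpi_i$ and the computation of $\alpha_j$ in terms of $\alpha'_j$), that $\varpi_i(\mu) \leq \varpi_i(\lambda_*)$ for every dominant $\mu$. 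Combining this with $W$-invariance and applying Proposition~\ref{convex_hull_and_inequalities} with $\Pi' = \Pi$, we conclude that every restricted weight lies in $\Conv(W \cdot \lambda_*)$.

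For the converse, the goal is to show that any $\mu \in \Lambda_{\lambda_*} \cap \Conv(W \cdot \lambda_*)$ is a restricted weight of $\rho_*$. Again by $W$-invariance of both sides, it suffices to treat dominant $\mu$. I would reduce to the classical statement (see e.g.\ Humphreys, Theorem~21.3, or Bourbaki, Groupes et algèbres de Lie, Chap.~VIII §7.2) for the complexification: complexify the representation to get a (not necessarily irreducible) complex representation $\rho_* \otimes \mathbb{C}$ of $\mathfrak{g}_{\mathbb{C}}$, and fix a Cartan subalgebra $\mathfrak{h}_{\mathbb{C}}$ of $\mathfrak{g}_{\mathbb{C}}$ containing $\mathfrak{a}$ and compatible with the choice of positive restricted roots. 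The restricted weights of $\rho_*$ are then precisely the restrictions to $\mathfrak{a}$ of the ordinary weights of any irreducible summand of $\rho_* \otimes \mathbb{C}$; moreover $\lambda_*$ is the restriction of the ordinary highest weight of such a summand. Using that ordinary roots restrict to elements of $\Sigma \cup \{0\}$ and that the classical theorem exactly describes the ordinary weight set, the task reduces to a combinatorial exercise: any dominant element of $\Lambda_{\lambda_*} \cap \Conv(W \cdot \lambda_*)$ can be lifted to a dominant element of the ordinary shifted weight lattice, inside the ordinary convex hull, hence appears as (the restriction of) an ordinary weight.

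The main obstacle I expect is the last reduction step, because the restricted Weyl group $W$ is in general a proper quotient of the ordinary Weyl group, and the correspondence between restricted and ordinary dominant chambers requires some care (in particular, several ordinary weights may restrict to the same restricted weight, and the root system $\Sigma$ can be non-reduced of type $BC_n$, so one must be careful with the distinction between $\alpha_i$ and $\alpha'_i$ when computing lattices). Nevertheless, since both sides of the claimed equality are expressed entirely in terms of $\mathfrak{a}^*$ and $W$, and both the shifted lattice and the convex hull are compatible with the restriction map from $\mathfrak{h}_{\mathbb{C}}^*$ (this is essentially Proposition~\ref{restr_weight_lattice} together with the $W$-equivariance of restriction), this reduction does go through. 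Alternatively, one could invoke the statement directly as a known result from the Borel--Tits theory of representations of reductive groups over arbitrary fields, in the same spirit as the reference cited in Proposition~\ref{restr_weight_lattice}.
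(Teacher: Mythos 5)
Your overall strategy --- deduce the restricted statement from the classical lattice-plus-convex-hull description of ordinary weights by passing to the restriction --- is the same as the paper's, whose proof consists precisely of this remark together with citations (Hall, Theorem~10.1, and Helgason for one inclusion); your treatment of the easy inclusion (the cyclic-module argument giving $\lambda_* - \mu \in \sum_i \mathbb{Z}_{\geq 0}\,\alpha_i$, $W$-invariance, and Proposition~\ref{convex_hull_and_inequalities} with $\Pi' = \Pi$) is correct and more detailed than the paper's.

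The hard inclusion, however, has a genuine gap at exactly the point you flag. The claim that a dominant $\mu \in \Lambda_{\lambda_*} \cap \Conv(W\lambda_*)$ ``can be lifted to a dominant element of the ordinary shifted weight lattice inside the ordinary convex hull'' is not a routine compatibility statement: given the classical theorem, it is logically \emph{equivalent} to the inclusion you are trying to prove. What restriction gives you for free is that it maps the ordinary hull onto $\Conv(W\lambda_*)$ and the ordinary shifted lattice onto $\Lambda_{\lambda_*}$; but the fibre of the ordinary hull over $\mu$ is merely a nonempty polytope, and neither Proposition~\ref{restr_weight_lattice} nor $W$-equivariance guarantees that this slice meets the ordinary shifted lattice. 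Concretely, the naive lift $\Lambda - \sum_i n_i \beta_i$ (where $\lambda_* - \mu = \sum_i n_i \alpha_i$ and $\beta_i$ is an ordinary simple root restricting to $\alpha_i$) has the right lattice class and the right restriction but may leave the ordinary hull, and showing that it can be corrected by an element of the ordinary root lattice killed by restriction is precisely the unproved step. A standard way to close the gap without any lifting is to prove the string (saturation) property directly for restricted weights: for a restricted root $\alpha$ choose $X \in \mathfrak{g}^\alpha$, $Y \in \mathfrak{g}^{-\alpha}$ with $H_\alpha := [X, Y] \in \mathfrak{a}$ and $\alpha(H_\alpha) = 2$; since $Y$ maps $V_*^\mu$ into $V_*^{\mu - \alpha}$ and $H_\alpha$ acts on $V_*^\mu$ by $\mu(H_\alpha)$, finite-dimensional $\mathfrak{sl}_2$-theory shows that $V_*^{\mu - \alpha} \neq 0$ whenever $\mu(H_\alpha) > 0$ (a vector annihilated by $Y$ cannot have positive $H_\alpha$-eigenvalue). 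With this, the usual minimal-element argument from the classical proof runs entirely inside $\mathfrak{a}^*$ and yields that every dominant point of $\Lambda_{\lambda_*} \cap \Conv(W\lambda_*)$ is a restricted weight. Alternatively, your fallback of citing the known restricted-weight result is in effect what the paper does.
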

\begin{proof}
Once again, this follows from the corresponding result for non restricted weights (see \eg \cite{Hall15}, Theorem~10.1) by passing to the restriction. In the case of restricted weights, one of the inclusions is stated in~\cite{Hel08}, Proposition~4.22.
\end{proof}


\begin{proposition}
\label{fundamental_real_representation}
For every index~$i$ such that $1 \leq i \leq r$, there exists an irreducible representation~$\rho_i$ of~$G$ on a space~$V_i$ whose highest restricted weight is equal to~$n_i \varpi_i$ (for some positive integer~$n_i$) and has multiplicity~$1$.
\end{proposition}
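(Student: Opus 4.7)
The plan is to construct $\rho_i$ on the complex side by appealing to highest weight theory, and then descend to a real representation. First, extend the Cartan subspace $\mathfrak{a}$ to a full Cartan subalgebra $\mathfrak{h} = \mathfrak{a} \oplus \mathfrak{t}$ of $\mathfrak{g}$, where $\mathfrak{t}$ is a maximal abelian subalgebra of $\mathfrak{m}$. Let $\Delta$ be the root system of $(\mathfrak{g}_\mathbb{C}, \mathfrak{h}_\mathbb{C})$; then $\Delta = \Delta_0 \sqcup (\Delta \setminus \Delta_0)$, where $\Delta_0 = \{ \alpha \in \Delta : \alpha|_\mathfrak{a} = 0 \}$ is the root system of $\mathfrak{m}_\mathbb{C}$. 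Choose a system $\Delta^+$ of positive ordinary roots compatible with $\Sigma^+$, so that restriction sends $\Delta^+ \setminus \Delta_0$ to $\Sigma^+$, and the simple roots in $\Delta \setminus \Delta_0$ restrict to simple restricted roots in $\Pi$ (possibly several ordinary simple roots mapping to the same restricted one, as encoded by the Satake diagram).

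Next, I would invoke the Cartan--Helgason theorem: for each $i$ there exist a positive integer $n_i$ and a $\Delta^+$-dominant integral weight $\lambda_i \in \mathfrak{h}_\mathbb{C}^*$ such that $\lambda_i|_\mathfrak{a} = n_i \varpi_i$ and $\lambda_i|_\mathfrak{t} = 0$. Let $V_i^\mathbb{C}$ be the complex irreducible $\mathfrak{g}_\mathbb{C}$-representation with highest weight $\lambda_i$.

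To verify that the highest restricted weight $n_i \varpi_i$ has multiplicity one in $V_i^\mathbb{C}$, observe that any ordinary weight $\mu$ of $V_i^\mathbb{C}$ satisfies $\lambda_i - \mu = \sum_{\alpha \in \Pi_\Delta} c_\alpha \alpha$ with $c_\alpha \in \mathbb{Z}_{\geq 0}$. The constraint $\mu|_\mathfrak{a} = n_i \varpi_i = \lambda_i|_\mathfrak{a}$ becomes $\sum c_\alpha (\alpha|_\mathfrak{a}) = 0$; but each $\alpha|_\mathfrak{a}$ for $\alpha \in \Pi_\Delta \setminus \Delta_0$ is a simple restricted root, and these lie in the positive cone spanned by $\Pi$, so $c_\alpha = 0$ for all $\alpha \in \Pi_\Delta \setminus \Delta_0$. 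Hence $\lambda_i - \mu \in \mathbb{Z}_{\geq 0} \Delta_0^+$, which means the weight space at the restricted weight $n_i \varpi_i$ equals the $\mathfrak{m}_\mathbb{C}$-submodule of $V_i^\mathbb{C}$ generated by the highest weight vector. This submodule is the irreducible $\mathfrak{m}_\mathbb{C}$-module with highest weight $\lambda_i|_\mathfrak{t} = 0$, namely the trivial one-dimensional representation.

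Finally, I would descend to a real representation $\rho_i$ by taking the unique real irreducible representation whose complexification is $V_i^\mathbb{C}$ (if $V_i^\mathbb{C}$ is of real type) or $V_i^\mathbb{C} \oplus \overline{V_i^\mathbb{C}}$ (otherwise). The complex dimension of the restricted weight space at $n_i \varpi_i$ in $\rho_i \otimes_\mathbb{R} \mathbb{C}$ equals the real dimension of the corresponding restricted weight space in $\rho_i$, so the multiplicity-one property is preserved in the real-type case. The \textbf{main obstacle} is the possibility that $V_i^\mathbb{C}$ is of complex or quaternionic type, in which case the restricted weight $n_i \varpi_i$ appears with multiplicity one in each of $V_i^\mathbb{C}$ and $\overline{V_i^\mathbb{C}}$, giving total multiplicity $2$ in $\rho_i$. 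This can be circumvented by replacing $n_i$ by a suitable multiple (e.g., passing to a sufficiently large tensor power, or exploiting the fact that representations with $\lambda|_\mathfrak{t} = 0$ are spherical, hence admit real models by classical results on the $L^2$-theory of $G/K$).
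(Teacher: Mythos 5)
Your route is genuinely different from the paper's: the paper disposes of this statement by citation (Theorem~7.2 of Tits' paper on irreducible rational representations, restated as Lemma~2.5.1 in Benoist's work), whereas you reconstruct the representation from scratch via the Cartan--Helgason theorem. That is a legitimate and more self-contained path, and most of it goes through: the choice of a compatible positive system, the existence of $n_i$ and of the extension $\lambda_i$ with $\lambda_i|_{\mathfrak{t}}=0$, and the observation that any weight $\mu$ with $\mu|_{\mathfrak{a}}=n_i\varpi_i$ differs from $\lambda_i$ by a nonnegative combination of simple roots vanishing on~$\mathfrak{a}$ are all correct. One small leap: from that weight computation you conclude ``which means'' the restricted highest weight space is the $\mathfrak{m}_{\mathbb{C}}$-module generated by the highest weight vector. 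That does not follow from the weight statement alone; you need the (standard, but worth one line) argument that any $\mathfrak{m}_{\mathbb{C}}$-highest-weight vector inside that space is killed by \emph{all} positive root vectors --- those outside $\Delta_0$ because they raise the restricted weight past its maximum, those in $\Delta_0^+$ by assumption --- hence is proportional to the unique $\mathfrak{g}_{\mathbb{C}}$-highest-weight vector; complete reducibility over $\mathfrak{m}_{\mathbb{C}}$ then forces the space to be the single constituent generated by $v_{\lambda_i}$, which is trivial one-dimensional since $\lambda_i|_{\mathfrak{t}}=0$.

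The real weak point is the final descent to a real form, which you yourself flag. As stated, ``representations with $\lambda|_{\mathfrak{t}}=0$ are spherical'' is false (for split $G$ every $\lambda$ satisfies this, yet the standard representation of $\mathrm{SL}_2(\mathbb{R})$ has no $\mathrm{SO}(2)$-fixed vector); sphericity requires in addition the Cartan--Helgason integrality condition on $\lambda|_{\mathfrak{a}}$ with respect to the restricted roots, and it is precisely this condition that should dictate your choice of $n_i$. The appeal to ``the $L^2$-theory of $G/K$'' is also off target, since finite-dimensional spherical representations do not occur in $L^2(G/K)$ for noncompact $G$; and the alternative fix by ``a sufficiently large tensor power'' is not justified as written (tensor powers are not irreducible, and it is not clear the Cartan component changes type). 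The clean repair is short: once $n_i$ is chosen so that $V_i^{\mathbb{C}}$ is spherical, its conjugate $\overline{V_i^{\mathbb{C}}}$ is again spherical with the same highest restricted weight, hence isomorphic to $V_i^{\mathbb{C}}$ by the uniqueness part of Cartan--Helgason; the resulting antilinear intertwiner preserves the one-dimensional space of $K$-fixed vectors, so its square is a positive scalar, and after rescaling it is an invariant real structure. (This also settles integration to the given group $G$: the center acts trivially on a spherical representation, so it factors through the adjoint group and pulls back to $G$.) With those two repairs your argument is complete; alternatively, the multiplicity-one and real-rationality statements are exactly what the cited theorem of Tits provides in one stroke.
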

\begin{proof}
This follows from the general Theorem~7.2 in~\cite{Tits71}. This is also stated as Lemma~2.5.1 in~\cite{Ben97}. 
%
%
%
%
\end{proof}
\begin{example}~
\begin{enumerate}
\item If $G = \SL_n(\mathbb{R})$, we may take $V_i = \ext^i \mathbb{R}^n$, so that $\rho_i$~is the $i$-th exterior power of the standard representation of~$G$ on~$\mathbb{R}^n$.
\item More generally, if~$G$ is split, then all restricted weight spaces correspond to ordinary weight spaces, hence have dimension~$1$. So we may simply take every $n_i$ to be~$1$, so that the $\rho_i$'s are precisely the fundamental representations.
\end{enumerate}
\end{example}

\begin{lemma}
\label{fund_repr_other_weights}
Fix an index~$i$ such that $1 \leq i \leq r$. Then all restricted weights of~$\rho_i$ other than $n_i \varpi_i$ have the form
\[n_i \varpi_i - \alpha_i - \sum_{j=1}^r c_j \alpha_j,\]
with $c_j \geq 0$ for every $j$.
\end{lemma}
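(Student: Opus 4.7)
The plan is to combine Proposition~\ref{convex_hull} with an elementary Cauchy--Schwarz argument. By Proposition~\ref{convex_hull}, every restricted weight $\mu$ of $\rho_i$ lies in the intersection of the affine lattice $\Lambda_{n_i\varpi_i} = n_i\varpi_i + \mathbb{Z}\cdot\Pi$ with the convex hull $\Conv(W \cdot n_i\varpi_i)$. We may therefore write $\mu = n_i\varpi_i - \sum_{j=1}^r d_j\alpha_j$ with $d_j \in \mathbb{Z}$; the content of the lemma is that, whenever $\mu \neq n_i\varpi_i$, we have $d_i \geq 1$ and $d_j \geq 0$ for $j \neq i$.

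The non-negativity $d_j \geq 0$ for all $j$ follows from the standard fact that, for any dominant $\lambda$ and any $w \in W$, one has $\lambda - w\lambda \in \mathbb{R}_{\geq 0}\cdot\Sigma^+ \subset \mathbb{R}_{\geq 0}\cdot\Pi$ (proved by induction on the length of $w$, using that $\lambda - s_\alpha\lambda = 2\langle\lambda,\alpha\rangle\|\alpha\|^{-2}\alpha \in \mathbb{R}_{\geq 0}\alpha$ whenever $\alpha \in \Sigma^+$ and $\lambda$ is dominant). Consequently $\Conv(W \cdot n_i\varpi_i) \subset n_i\varpi_i - \mathbb{R}_{\geq 0}\cdot\Pi$, and the linear independence of the simple roots combined with integrality forces $d_j \in \mathbb{Z}_{\geq 0}$.

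The key step is $d_i \geq 1$. From the definition of $\varpi_i$, $\langle\varpi_i,\alpha'_j\rangle = 0$ for $j \neq i$ and $\langle\varpi_i,\alpha'_i\rangle > 0$; since each $\alpha'_j$ is a positive multiple of $\alpha_j$, this gives $\langle\varpi_i,\alpha_j\rangle = 0$ for $j \neq i$ and $\langle\varpi_i,\alpha_i\rangle > 0$. Pairing $\mu$ against $\varpi_i$ therefore yields
\[
\langle\varpi_i,\mu\rangle \;=\; n_i\|\varpi_i\|^2 \;-\; d_i\,\langle\varpi_i,\alpha_i\rangle .
\]
On the other hand, the $W$-orbit $W \cdot n_i\varpi_i$ sits on a sphere of radius $n_i\|\varpi_i\|$, so Cauchy--Schwarz gives, for every $w \in W$,
\[
\langle\varpi_i,\, w\cdot n_i\varpi_i\rangle \;=\; n_i\,\langle\varpi_i,w\varpi_i\rangle \;\leq\; n_i\|\varpi_i\|^2,
\]
with equality iff $w\varpi_i = \varpi_i$, i.e.\ iff $w \in \Stab_W(\varpi_i) = W_{\Pi\setminus\{\alpha_i\}} = \Stab_W(n_i\varpi_i)$. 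Thus the linear functional $\langle\varpi_i,\cdot\rangle$ attains its maximum over $\Conv(W\cdot n_i\varpi_i)$ only at the single point $n_i\varpi_i$. For $\mu \neq n_i\varpi_i$ this gives the strict inequality $\langle\varpi_i,\mu\rangle < n_i\|\varpi_i\|^2$, which together with $\langle\varpi_i,\alpha_i\rangle > 0$ forces $d_i > 0$, hence $d_i \geq 1$ by integrality.

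There is no substantial obstacle: the only point requiring care is the distinction between $\alpha_i$ and $\alpha'_i$ in the non-reduced case (type $BC$), but since $\alpha'_i$ is always a positive multiple of $\alpha_i$, this only affects constants and not signs, so the Cauchy--Schwarz argument goes through uniformly.
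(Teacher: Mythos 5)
Your proof is correct, and it follows essentially the same route as the argument the paper relies on (the citation to Lemma~2.13 of~\cite{Smi16}): write the weight as $n_i\varpi_i$ minus a nonnegative integral combination of simple restricted roots via Proposition~\ref{convex_hull} and dominance, then use that $\varpi_i$ is orthogonal to the $\alpha_j$ with $j \neq i$ together with a Cauchy--Schwarz equality analysis on $\Conv(W \cdot n_i\varpi_i)$ to force a strictly positive coefficient on~$\alpha_i$. The only cosmetic point is your parenthetical induction sketch for the standard fact $\lambda - w\lambda \in \mathbb{R}_{\geq 0}\cdot\Pi$: the induction step needs $w = w's_\alpha$ with $\alpha$ simple and $\ell(w) = \ell(w')+1$, so that $\lambda - w\lambda = (\lambda - w'\lambda) + \frac{2\langle\lambda,\alpha\rangle}{\|\alpha\|^2}\,w'\alpha$ with $w'\alpha \in \Sigma^+$, rather than applying the reflection identity to the (possibly non-dominant) intermediate point; the fact itself is of course standard.
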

\begin{proof}
This is the last remark in section~2.5 of~\cite{Ben97}. For a proof, see Lemma~2.13 in~\cite{Smi16}.
\end{proof}
\begin{lemma}
\label{roots_contained_in_weights}
Assume that the Lie algebra~$\mathfrak{g}$ is simple, and that its restricted root system has a simply-laced diagram. Let $\rho_*$ be an irreducible representation of~$G$; let~$\Omega_*$ be the set of its restricted weights. Assume that we have
\[\{0\} \subsetneq \Omega_*\]
(\ie the representation has~$0$ as a restricted weight, and at least one nonzero restricted weight). Then we have $\Sigma \subset \Omega_*$.
\end{lemma}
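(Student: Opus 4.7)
The plan is to argue by contradiction, showing that if no root were a weight, then the zero weight space $V_*^0$ would have to be all of~$V_*$, violating irreducibility together with the assumption that there is a nonzero weight. The key algebraic input is that each root space~$\mathfrak{g}^\alpha$ maps the zero weight space into $V_*^\alpha$, while the Levi $\mathfrak{l} = \mathfrak{a} \oplus \mathfrak{m}$ preserves $V_*^0$.

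More precisely, I would first note that for any $Y \in \mathfrak{g}^\alpha$ and any $v \in V_*^0$, the identity $X \cdot(Yv) = [X,Y]v + Y \cdot(Xv) = \alpha(X) Yv$ for every $X \in \mathfrak{a}$ shows that $\mathfrak{g}^\alpha \cdot V_*^0 \subset V_*^\alpha$. Similarly, because $[\mathfrak{m},\mathfrak{a}] = 0$, the subalgebra $\mathfrak{l}$ preserves~$V_*^0$ (and of course $\mathfrak{a}$ acts by zero on it). Now suppose, for contradiction, that no root of $\Sigma$ belongs to~$\Omega_*$. Then $V_*^\alpha = 0$ for every $\alpha \in \Sigma$, so $\mathfrak{g}^\alpha \cdot V_*^0 = 0$ for every such~$\alpha$. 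Combining this with the fact that $\mathfrak{l} \cdot V_*^0 \subset V_*^0$ and using the restricted root space decomposition $\mathfrak{g} = \mathfrak{l} \oplus \bigoplus_{\alpha \in \Sigma} \mathfrak{g}^\alpha$, we conclude that $\mathfrak{g} \cdot V_*^0 \subset V_*^0$. Hence $V_*^0$ is a nonzero $\mathfrak{g}$-submodule; by irreducibility of~$\rho_*$, we must have $V_*^0 = V_*$. But then every element of $\mathfrak{a}$ acts as zero on $V_*$, so $\Omega_* = \{0\}$, contradicting the hypothesis $\{0\} \subsetneq \Omega_*$.

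Therefore some root $\alpha_0 \in \Sigma$ belongs to~$\Omega_*$. To upgrade ``some root'' to ``all roots,'' I invoke two standard facts: the set $\Omega_*$ is invariant under the restricted Weyl group~$W$, and, because $\mathfrak{g}$ is simple, its restricted root system $\Sigma$ is irreducible; being irreducible and simply-laced, $\Sigma$ consists of a single $W$-orbit. Hence the $W$-orbit of $\alpha_0$ is all of~$\Sigma$, so $\Sigma \subset \Omega_*$.

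There is essentially no hard step here; the only subtlety is to make sure that ``some root is a weight'' is the correct conclusion of the irreducibility argument, and that the simply-laced + simple hypothesis really gives transitivity of~$W$ on~$\Sigma$. The argument does not use the convex-hull description (Proposition~\ref{convex_hull}) directly, but only the module-theoretic characterization of the restricted weight spaces; so no fine combinatorial geometry in the Weyl chamber is needed.
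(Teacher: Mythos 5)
Your argument is correct, and it reaches the conclusion by a genuinely different route for the key step. The paper establishes that $\Omega_*$ contains a restricted root by a combinatorial ``minimal level'' argument: using Proposition~\ref{convex_hull}, every weight is written as a sum of restricted roots, and for a nonzero weight of minimal level one shows $\lambda - \alpha_{(l)}$ is again a weight, forcing the minimal level to be~$1$. You instead argue module-theoretically: if no root were a weight, then $\mathfrak{g}^\alpha \cdot V_*^0 \subset V_*^\alpha = 0$ for all $\alpha \in \Sigma$, while $\mathfrak{l} = \mathfrak{a} \oplus \mathfrak{m}$ preserves $V_*^0$, so the restricted root space decomposition makes $V_*^0$ a nonzero invariant subspace; irreducibility (note this uses that $G$ is connected, the paper's standing assumption, so $\mathfrak{g}$-invariance gives $G$-invariance) forces $V_*^0 = V_*$ and hence $\Omega_* = \{0\}$, a contradiction. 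This is more elementary: it avoids the highest-weight/convex-hull machinery of Proposition~\ref{convex_hull} entirely and only uses the elementary bracket computation $\mathfrak{g}^\alpha \cdot V_*^\lambda \subset V_*^{\lambda+\alpha}$, whereas the paper's argument stays within the combinatorics of the weight set (which it has already set up and reuses elsewhere). The concluding step is identical in both proofs: $W$-invariance of $\Omega_*$ together with transitivity of $W$ on roots of equal length in the (irreducible, since $\mathfrak{g}$ is simple) simply-laced restricted root system upgrades one root to all of $\Sigma$. So both the existence step and the transitivity step are sound in your version; no gap.
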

Note that the only case where $0$ is the \emph{only} restricted weight is when the image of~$G$ by~$\rho$ is a compact group (for simple~$G$, this means that either $G$~itself is compact or $\rho$~is trivial.)
\begin{proof}
First let us show that $\Omega_*$ contains at least one restricted root. By Proposition~\ref{convex_hull}, every restricted weight~$\lambda$ of~$\rho_*$ may be written as a sum
\begin{equation}
\label{eq:weight_as_sum_of_roots}
\lambda = \alpha_{(1)} + \cdots + \alpha_{(l)},
\end{equation}
where $\alpha_{(1)}, \ldots, \alpha_{(l)}$ are some restricted roots. Define the \emph{level} of~$\lambda$ to be the smallest integer~$l$ for which such a decomposition is possible.

Let $\lambda$~be an element of~$\Omega_* \setminus \{0\}$ whose level~$l$ is the smallest possible (it exists since $\Omega_*$ is not reduced to~$\{0\}$), and consider its decomposition~\eqref{eq:weight_as_sum_of_roots}. Then for every~$i \neq l$, we have $\langle \alpha_{(i)}, \alpha_{(l)} \rangle \geq 0$: indeed otherwise, $\alpha_{(i)} + \alpha_{(l)}$ would be a restricted root, so we could combine them together to produce a decomposition of~$\lambda$ of length~$l-1$. Hence we have
\[\frac{2\langle \lambda, \alpha_{(l)} \rangle}{\langle \alpha_{(l)}, \alpha_{(l)} \rangle}
= \sum_{i=1}^l \frac{2\langle \alpha_{(i)}, \alpha_{(l)} \rangle}{\langle \alpha_{(l)}, \alpha_{(l)} \rangle}
\geq \frac{2\langle \alpha_{(l)}, \alpha_{(l)} \rangle}{\langle \alpha_{(l)}, \alpha_{(l)} \rangle}
= 2,\]
which implies that
\[\lambda - \alpha_{(l)} \in \Conv(\{\lambda, s_{\alpha_{(l)}} \lambda\}) \subset \Conv(W \lambda).\]
From Proposition~\ref{convex_hull}, it follows that~$\lambda - \alpha_{(l)}$ is still a restricted weight of~$\rho_*$; but its level is now~$l-1$. Since $l$~is by assumption the smallest nonzero level, necessarily we have $l = 1$, and $\lambda$ is indeed a restricted root.

Now it is well-known (see Problem~2.11 in~\cite{Kna96}) that for a simple Lie algebra, $W$~acts transitively on the set of restricted roots having the same length as~$\lambda$. Since the restricted root system of~$\mathfrak{g}$ has a simply-laced diagram, all of the restricted roots have the same length. Hence the orbit of~$\lambda$ by~$W$ is the whole set~$\Sigma$; we conclude that $\Sigma \subset \Omega_*$. \qedhere
\end{proof}

\subsection{Parabolic subgroups and subalgebras}
\label{sec:parabolics}

In this subsection we recall the well-known theory of parabolic subalgebras and subgroups. We begin by defining them, as well as the Levi subalgebra and subgroup of a given type and the corresponding subset of~$\Pi$ and subgroup of~$W$; so far we follow Subsection~3.4 in~\cite{Smi16}. But now we go further, giving a few propositions relating these different objects; in particular the generalized Bruhat decomposition (Lemma~\ref{generalized_Bruhat}).

A parabolic subgroup (or subalgebra) is usually defined in terms of a subset~$\Pi'$ of the set~$\Pi$ of simple restricted roots. We find it more convenient however to use a slightly different language. To every such subset corresponds a facet of the Weyl chamber, given by intersecting the walls corresponding to elements of~$\Pi'$. We may exemplify this facet by picking some element~$X$ in it that does not belong to any subfacet. Conversely, for every~$X \in \mathfrak{a}^+$, we define the corresponding subset
\begin{equation}
\Pi_{X} := \setsuch{\alpha \in \Pi}{\alpha(X) = 0}.
\end{equation}
The parabolic subalgebras and subgroups of type~$\Pi_{X}$ can then be very conveniently rewritten in terms of~$X$, as follows.
\begin{definition}
For every $X \in \mathfrak{a}^+$, we define:
\begin{itemize}
\item $\mathfrak{p}_{X}^+$ and $\mathfrak{p}_{X}^-$ the parabolic subalgebras of type~$X$, and~$\mathfrak{l}_{X}$ the Levi subalgebra of type~$X$:
\[\mathfrak{p}_{X}^+ := \mathfrak{l} \oplus \bigoplus_{\alpha(X) \geq 0} \mathfrak{g}^\alpha;\]
\[\mathfrak{p}_{X}^- := \mathfrak{l} \oplus \bigoplus_{\alpha(X) \leq 0} \mathfrak{g}^\alpha;\]
\[\mathfrak{l}_{X} := \mathfrak{l} \oplus \bigoplus_{\alpha(X) = 0} \mathfrak{g}^\alpha.\]

\item $P_{X}^+$ and $P_{X}^-$ the corresponding parabolic subgroups, and~$L_{X}$ the Levi subgroup of type~$X$:
\[P_{X}^+ := N_G(\mathfrak{p}_{X}^+);\]
\[P_{X}^- := N_G(\mathfrak{p}_{X}^-);\]
\[L_{X} := P_{X}^+ \cap P_{X}^-.\]
\end{itemize}
\end{definition}

The following statement is well-known:
\begin{proposition}
\label{centralizer_of_x}
We have $L_X = Z_G(X)$.
\end{proposition}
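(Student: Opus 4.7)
The plan is to prove the two inclusions separately. The easy direction $Z_G(X) \subseteq L_X$ will follow from preservation of eigenspaces: if $g$ commutes with $X$, then $\Ad(g)$ commutes with the operator $\operatorname{ad}(X)$ on~$\mathfrak{g}$, hence preserves each of its eigenspaces; since the subalgebras $\mathfrak{p}_X^{\pm}$ are direct sums of such eigenspaces (those with non-negative, respectively non-positive, eigenvalues), $g$ normalizes both of them and therefore belongs to $L_X$.

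For the reverse inclusion, I would fix $g \in L_X$, set $Y := \Ad(g) X$, and aim to show $Y = X$. The strategy will be to establish $Y \in \mathfrak{a}^+$: since $Y$ is $G$-conjugate to $X$ and any element of~$\mathfrak{a}$ in the $G$-orbit of~$X$ also lies in its $W$-orbit, the fundamental-domain property of~$\mathfrak{a}^+$ will force $Y = X$.

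First, I would check that $X$ lies in the center of $\mathfrak{l}_X$: indeed $\mathfrak{l} = \mathfrak{a} \oplus \mathfrak{m}$ centralizes $\mathfrak{a} \ni X$, and $[X, Z] = \alpha(X) Z = 0$ for $Z \in \mathfrak{g}^\alpha$ with $\alpha(X) = 0$. Since $\Ad(g)$ induces a Lie-algebra automorphism of~$\mathfrak{l}_X$, it preserves the center, so $Y \in Z(\mathfrak{l}_X)$. Being hyperbolic (as a $G$-conjugate of~$X$) and central in~$\mathfrak{l}_X$ (hence commuting with $\mathfrak{a} \subseteq \mathfrak{l}_X$), $Y$ lies in $Z_\mathfrak{g}(\mathfrak{a}) = \mathfrak{l} = \mathfrak{a} \oplus \mathfrak{m}$; but elements of the compact factor~$\mathfrak{m}$ have purely imaginary adjoint eigenvalues, so the hyperbolic part of the decomposition $Y = Y_{\mathfrak{a}} + Y_{\mathfrak{m}}$ must satisfy $Y_{\mathfrak{m}} = 0$, giving $Y \in \mathfrak{a}$.

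Finally I would verify $\alpha(Y) \geq 0$ for each $\alpha \in \Pi$. If $\alpha \in \Pi_X$, centrality of~$Y$ in~$\mathfrak{l}_X$ together with $\mathfrak{g}^\alpha \subseteq \mathfrak{l}_X$ forces $[Y, \mathfrak{g}^\alpha] = 0$; since $Y \in \mathfrak{a}$, this gives $\alpha(Y) = 0$. If $\alpha \notin \Pi_X$, then $\mathfrak{g}^\alpha$ is contained in $\mathfrak{n}_X^+ := \bigoplus_{\beta(X) > 0} \mathfrak{g}^\beta$, which is $\Ad(g)$-stable because $g \in P_X^+$; thus $\operatorname{ad}(Y)|_{\mathfrak{n}_X^+}$ and $\operatorname{ad}(X)|_{\mathfrak{n}_X^+}$ are conjugate via $\Ad(g)|_{\mathfrak{n}_X^+}$, so they share the same multiset of eigenvalues. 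Since $X, Y \in \mathfrak{a}$, both operators are diagonal with respect to the weight decomposition, with eigenvalues $\beta(X)$ and $\beta(Y)$ on each $\mathfrak{g}^\beta$ (each counted with multiplicity $\dim \mathfrak{g}^\beta$, for $\beta$ ranging over roots with $\beta(X) > 0$). Matching the two multisets and using that all $\beta(X)$ are strictly positive forces every $\beta(Y)$ to be strictly positive as well, and in particular $\alpha(Y) > 0$. The main technical step is precisely this spectral comparison on~$\mathfrak{n}_X^+$, which provides the positivity rigidity needed to pin $Y$ down in the specific Weyl chamber of~$X$ rather than some other chamber of~$\mathfrak{a}$.
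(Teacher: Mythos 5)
Your proof is correct, but it takes a genuinely different route from the paper's. The paper's own argument is essentially a citation: Propositions 7.82--7.83 of \cite{Kna96} identify $L_X$ with $Z_G(\mathfrak{a}_X)$, where $\mathfrak{a}_X$ is the intersection of the walls through $X$, and the remaining equality $Z_G(\mathfrak{a}_X) = Z_G(X)$ is obtained by comparing Lie algebras and controlling the component groups via $Z_G(\cdot) = M\, Z_G(\cdot)_e$. You instead prove the two inclusions by hand: $Z_G(X) \subset L_X$ because $\Ad(g)$ commutes with $\operatorname{ad}(X)$ and hence preserves the sums of its nonnegative and nonpositive eigenspaces, which are exactly $\mathfrak{p}_X^{\pm}$; and $L_X \subset Z_G(X)$ by showing $Y = \Ad(g)X \in \mathfrak{a}^{+}$ and using that $\Ad(G)$-conjugate elements of $\mathfrak{a}$ are $W$-conjugate together with the fact that $\mathfrak{a}^{+}$ meets each $W$-orbit exactly once. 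Your route is self-contained and sidesteps the component-group subtlety entirely, at the cost of being longer; the paper's is two lines given the references. Three steps in your write-up lean on standard facts that deserve explicit justification: (a) $Y_{\mathfrak{m}} = 0$ is the uniqueness of the hyperbolic--elliptic decomposition, or directly: $\operatorname{ad}(Y_{\mathfrak{m}}) = \operatorname{ad}(Y) - \operatorname{ad}(Y_{\mathfrak{a}})$ is a difference of commuting $\mathbb{R}$-diagonalizable operators with purely imaginary spectrum, hence zero; (b) the $\Ad(g)$-invariance of $\mathfrak{n}_X^{+} := \bigoplus_{\beta(X)>0}\mathfrak{g}^\beta$ for $g \in P_X^{+}$ needs one line --- e.g.\ $\mathfrak{n}_X^{+}$ is the nilradical of $\mathfrak{p}_X^{+}$, or more simply its Killing-orthogonal complement, and $\Ad(g)$ preserves both $\mathfrak{p}_X^{+}$ and the Killing form; (c) the statement that an element of $\mathfrak{a}$ in the $\Ad(G)$-orbit of $X$ lies in $W \cdot X$ is precisely the uniqueness underlying Definition~\ref{Jd_Ct_definition} of the Jordan projection (via \cite{Ebe96}), so citing it is consistent with the paper's toolkit. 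None of these is a genuine gap; with those references added, your argument stands as a valid alternative proof.
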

\begin{proof}
First note that by combining Propositions 7.83~(b), (e) and 7.82~(a) in~\cite{Kna96}, we get that $L_X = Z_G(\mathfrak{a}_X)$, where
\[\mathfrak{a}_X := \setsuch{Y \in \mathfrak{a}}{\forall \alpha \in \Pi_X,\; \alpha(Y) = 0}\]
is the intersection of all walls of the Weyl chamber containing~$X$. It remains to show that
\[Z_G(\mathfrak{a}_X) = Z_G(X).\]

Clearly the Lie algebra of both groups is equal to~$\mathfrak{l}_X$; hence their identity components $Z_G(\mathfrak{a}_X)_e$ and~$Z_G(X)_e$ are also equal. But by combining Propositions 7.25 and~7.33 in~\cite{Kna96}, it follows that $Z_G(X) = M Z_G(X)_e$ and similarly for~$Z_G(\mathfrak{a}_X)$. The conclusion follows.
\end{proof}

An object closely related to these parabolic subgroups (see Corollary~\ref{parabolic_Bruhat_decomposition}, the Bruhat decomposition for parabolic subgroups) is the stabilizer of~$X$ in the Weyl group:
\begin{definition}
For any $X \in \mathfrak{a}^{+}$, we set
\[W_{X} := \setsuch{w \in W}{wX = X}.\]
\end{definition}
\begin{remark}
\label{W_vs_Pi}
The group~$W_X$ is also closely related to the set~$\Pi_X$. Indeed, it follows immediately that a simple restricted root~$\alpha$ belongs to~$\Pi_X$ if and only if the corresponding reflection~$s_\alpha$ belongs to~$W_X$. Conversely, it is well-known (Chevalley's lemma, see \eg \cite{Kna96}, Proposition~2.72) that these reflections actually generate the group~$W_X$.

Thus $W_{X}$~is actually the same thing as~$W_{\Pi'}$ (as defined before Proposition~\ref{convex_hull_and_inequalities}) where we substitute~$\Pi' = \Pi_{X}$.
\end{remark}
\begin{example}
To help understand the conventions we are taking, here are the extreme cases:
\begin{enumerate}
\item If $X$ lies in the open Weyl chamber~$\mathfrak{a}^{++}$, then:
\begin{itemize}
\item $P^+_X = P^+$ is the minimal parabolic subgroup; $P^-_X = P^-$; $L_X = L$;
\item $\Pi_X = \emptyset$;
\item $W_X = \{\Id\}$.
\end{itemize}
\item If $X=0$, then:
\begin{itemize}
\item $P^+_X = P^-_X = L_X = G$;
\item $\Pi_X = \Pi$;
\item $W_X = W$.
\end{itemize}
\end{enumerate}
\end{example}

We will finish this subsection by giving a result (Corollary~\ref{parabolic_Bruhat_decomposition}) that links $W_X$ with~$P^\pm_X$. But the more general Proposition~\ref{generalized_Bruhat} will be useful to us in its own right.
\begin{definition}
\label{top_bottom_subset}
Let $\Omega$ be a set of elements of~$\mathfrak{a}^*$. We say that $\Xi$ is:
\begin{itemize}
\item a \emph{top-subset} of~$\Omega$ if $\phantom{\text{bottom}}\quad   \forall \alpha \in \Sigma^+,\quad (\Xi + \alpha) \cap \Omega \;\subset\; \Xi$;
\item a \emph{bottom-subset} of~$\Omega$ if $\phantom{\text{top}}\quad\; \forall \alpha \in \Sigma^+,\quad (\Xi - \alpha) \cap \Omega \;\subset\; \Xi$.
\end{itemize} 
\end{definition}
\begin{example}
In the set of restricted roots~$\Sigma$, the positive restricted roots form a top-subset and the negative restricted roots form a bottom-subset.
\end{example}
\begin{lemma}[Generalized Bruhat decomposition]
\label{generalized_Bruhat}
Let $\rho_*$ be some real representation of~$G$ on some space~$V_*$; let $\Omega_*$ be the set of restricted weights of~$\rho_*$. For every subset $\Xi \subset \Omega_*$, we set
\begin{equation}
V^\Xi_* := \bigoplus_{\lambda \in \Xi} V^\lambda_*.
\end{equation}
Then we have:
\[\begin{cases}
\Stab_G \left( V^\Xi_* \right) = P^+ \Stab_W(\Xi) P^+ &\text{ if $\Xi$ is a top-subset of~$\Omega_*$;} \\
\Stab_G \left( V^\Xi_* \right) = P^- \Stab_W(\Xi) P^- &\text{ if $\Xi$ is a bottom-subset of~$\Omega_*$.}
\end{cases}\]
\end{lemma}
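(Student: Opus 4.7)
I would prove the top-subset case in detail; the bottom-subset case follows by replacing $\Sigma^+$, $\mathfrak{n}^+$, $N^+$, $P^+$ with their negative analogues everywhere (which reverses all inclusions in Definition~\ref{top_bottom_subset}). The strategy is to prove the two inclusions separately: the inclusion $P^+ \Stab_W(\Xi) P^+ \subset \Stab_G(V^\Xi_*)$ is direct from the weight-raising property of $\mathfrak{n}^+$, and the reverse inclusion is extracted from the classical Bruhat decomposition $G = \bigsqcup_{w \in W} P^+ \tilde{w} P^+$ (which holds over $\mathbb{R}$ for the minimal parabolic, see e.g.\ Knapp).

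\textbf{Easy inclusion.} I would show separately that $P^+$ and any representative $\tilde{w}$ of $w \in \Stab_W(\Xi)$ stabilize $V^\Xi_*$. First, $L = Z_G(\mathfrak{a})$ commutes with every $X \in \mathfrak{a}$, so it preserves each individual weight space $V^\lambda_*$, and a fortiori preserves $V^\Xi_*$. Next, for $Y \in \mathfrak{g}^\alpha$ with $\alpha \in \Sigma^+$ and $v \in V^\lambda_*$, the commutation $[X,Y] = \alpha(X)Y$ gives the classical weight-raising relation $Y \cdot v \in V^{\lambda+\alpha}_*$, so exponentiating, $\exp(Y) \cdot V^\lambda_* \subset \bigoplus_{k \geq 0} V^{\lambda+k\alpha}_*$; by the top-subset hypothesis, each $V^{\lambda+k\alpha}_*$ that is nonzero sits inside $V^\Xi_*$. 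Hence $N^+$, and thus $P^+ = LN^+$, stabilize $V^\Xi_*$. Finally, for $w \in W$ with representative $\tilde{w} \in N_G(A)$, a short calculation using $\tilde{w}^{-1} X \tilde{w} = w^{-1}X$ shows $\tilde{w} V^\lambda_* = V^{w\lambda}_*$, so $\tilde{w} V^\Xi_* = V^{w\Xi}_*$; if $w\Xi = \Xi$ this equals $V^\Xi_*$.

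\textbf{Hard inclusion.} Let $g \in \Stab_G(V^\Xi_*)$. Using the Bruhat decomposition, write $g = p_1 \tilde{w} p_2$ with $p_1, p_2 \in P^+$ and $w \in W$. Since $P^+$ stabilizes $V^\Xi_*$ by the easy inclusion, both $p_1$ and $p_2$ do, hence $\tilde{w} = p_1^{-1} g p_2^{-1}$ does as well. But then $V^{w\Xi}_* = \tilde{w} V^\Xi_* = V^\Xi_*$, and since the weight spaces of distinct weights are linearly independent and $\Omega_*$ is $W$-stable, we conclude $w\Xi = \Xi$, i.e.\ $w \in \Stab_W(\Xi)$. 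Hence $g \in P^+ \Stab_W(\Xi) P^+$.

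\textbf{Main obstacle.} The content of the lemma is essentially the Bruhat decomposition itself, which I would invoke as a black box; everything else is routine bookkeeping on weight spaces. The only point requiring a small amount of care is the symmetry argument for the bottom-subset case: one needs to check that the bottom-subset condition is exactly what makes $\mathfrak{n}^-$ preserve $V^\Xi_*$ (which is immediate, since $\mathfrak{n}^-$ lowers weights by elements of $\Sigma^+$, equivalently raises them by elements of $-\Sigma^+$). Once this is noted, the same two-step proof goes through verbatim with $P^-$ in place of $P^+$.
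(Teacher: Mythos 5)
Your proof is correct and follows essentially the same route as the paper: the easy inclusion via $L$ preserving each restricted weight space and $\mathfrak{n}^+$ raising weights (so the top-subset condition gives stability of $V^\Xi_*$, and $P^+ = L\exp(\mathfrak{n}^+)$), and the hard inclusion by writing $g = p_1 \tilde{w} p_2$ via the Bruhat decomposition and deducing $w\Xi = \Xi$ from $\tilde{w}V^\Xi_* = V^{w\Xi}_* = V^\Xi_*$. One cosmetic remark: it suffices to observe that $\mathfrak{n}^+$ preserves $V^\Xi_*$ at the Lie-algebra level and then exponentiate (as the paper does), which sidesteps your slightly stronger claim that every nonzero $V^{\lambda+k\alpha}_*$ lies in $V^\Xi_*$; that claim is anyway harmless here, since the only terms that matter are those with $Y^k v \neq 0$, which witness an unbroken weight string along which the top-subset condition can be iterated.
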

\begin{proof}
Assume that $\Xi$ is a top-subset; the case of a bottom-subset is analogous.

The first step is to show that $\Stab_G(V^\Xi_*)$ contains~$P^+$. Indeed:
\begin{itemize}
\item The group $L$ stabilizes every restricted weight space $V^\lambda_*$. Indeed, take some $\lambda \in \mathfrak{a}^*$, $v \in V^\lambda_*$, $l \in L$, $X \in \mathfrak{a}$; then we have:
\begin{equation}
\label{eq:L_stabilizes_weight_spaces}
X \cdot l(v) = l(\Ad (l^{-1}) (X) \cdot v) = l(X \cdot v) = \lambda(X)l(v).
\end{equation}
\item For every $\alpha \in \Sigma^+$ and every $\lambda \in \Xi$, clearly we have
\[\mathfrak{g}^\alpha \cdot V^\lambda_* \subset V^{\lambda+\alpha}_* \subset V^\Xi_*\]
by definition of a top-subset. Hence $\mathfrak{n}^+$ stabilizes $V^\Xi_*$.
\item The statement follows as $P^+ = L\exp(\mathfrak{n}^+)$.
\end{itemize}
Now take any element $g \in G$. Let us apply the Bruhat decomposition (see \eg \cite{Kna96}, Theorem 7.40): there exists an element~$w$ of the restricted Weyl group $W$ such that we may write
\[g = p_1\tilde{w}p_2\]
where $p_1, p_2$ are some elements of the minimal parabolic subgroup $P^+$ and $\tilde{w} \in N_G(A)$ is some representative of~$w \in W = N_G(A)/Z_G(A)$. From the statement that we just proved, it follows that $g$~stabilizes $V^\Xi_*$ if and only if $\tilde{w}$~does so. On the other hand, it is clear that for every $\lambda \in \mathfrak{a}^*$, we have
\begin{equation}
\tilde{w} V^\lambda = V^{w \lambda}
\end{equation}
(the choice of the representative~$\tilde{w}$ does not matter since as seen above~\eqref{eq:L_stabilizes_weight_spaces}, the kernel~$Z_G(A) = L$ stabilizes~$V^\lambda$). The conclusion follows.
\end{proof}
The following particular case is well-known (see for example \cite{Hum75}, Theorem~29.3):
\begin{corollary}[Bruhat decomposition for parabolic groups]
\label{parabolic_Bruhat_decomposition}
We have the identities $P_X^+ = P^+ W_X P^+$ and~$P_X^- = P^- W_X P^-$.
\end{corollary}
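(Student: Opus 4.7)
The plan is to apply Lemma~\ref{generalized_Bruhat} to the adjoint representation $\rho_* = \Ad$ on $V_* = \mathfrak{g}$, whose set of restricted weights is $\Omega_* = \Sigma \cup \{0\}$ (with weight space $\mathfrak{l}$ at~$0$). For the~$+$ case, I would take
\[\Xi := \{0\} \cup \setsuch{\alpha \in \Sigma}{\alpha(X) \geq 0};\]
this is a top-subset of~$\Omega_*$, because if $\lambda \in \Xi$ and $\alpha \in \Sigma^+$, then both $\lambda(X)$ and~$\alpha(X)$ are nonnegative, so $(\lambda + \alpha)(X) \geq 0$. By direct inspection, $V^\Xi_* = \mathfrak{p}_X^+$, so $\Stab_G(V^\Xi_*) = N_G(\mathfrak{p}_X^+) = P_X^+$ by the very definition of~$P_X^+$. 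Thus Lemma~\ref{generalized_Bruhat} yields
\[P_X^+ = P^+ \Stab_W(\Xi) P^+,\]
and it remains only to show that $\Stab_W(\Xi) = W_X$.

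The inclusion $W_X \subset \Stab_W(\Xi)$ is the easy direction: if $wX = X$, then for every $\alpha \in \Xi \setminus \{0\}$ we have $(w\alpha)(X) = \alpha(w^{-1}X) = \alpha(X) \geq 0$, hence $w\alpha \in \Xi$. For the reverse inclusion, which is where the real (but minor) work lies, I would argue as follows. If $w\Xi = \Xi$, then applying the equivalence $\alpha \in \Xi \Leftrightarrow w^{-1}\alpha \in \Xi$ to all restricted roots shows that $X$ and~$w^{-1}X$ determine the same subset $\setsuch{\alpha \in \Sigma}{\alpha(\cdot) \geq 0}$. Since $X \in \mathfrak{a}^+$, this subset contains~$\Sigma^+$, forcing $w^{-1}X \in \mathfrak{a}^+$ as well; but $\mathfrak{a}^+$ is a (closed) fundamental domain for the action of~$W$ on~$\mathfrak{a}$, so the two can only coincide, giving $w \in W_X$.

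The case of $P_X^-$ is completely symmetric: the analogous bottom-subset $\Xi' := \{0\} \cup \setsuch{\alpha \in \Sigma}{\alpha(X) \leq 0}$ satisfies $V^{\Xi'}_* = \mathfrak{p}_X^-$, and the same Weyl-group analysis identifies $\Stab_W(\Xi')$ with~$W_X$. I do not foresee any serious obstacle: Lemma~\ref{generalized_Bruhat} already does the heavy lifting, and the one slightly delicate point---the backward identification of $\Stab_W(\Xi)$---reduces to the fundamental-domain property of~$\mathfrak{a}^+$, which is classical.
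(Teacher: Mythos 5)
Your proposal is correct and follows essentially the same route as the paper: apply Lemma~\ref{generalized_Bruhat} to the adjoint representation with $\Xi^+ = \setsuch{\lambda \in \Sigma \cup \{0\}}{\lambda(X) \geq 0}$ (a top-subset whose weight-space sum is $\mathfrak{p}_X^+$, stabilized by~$P_X^+$ by definition), then identify $\Stab_W(\Xi^+)$ with~$W_X$, and treat the minus case symmetrically. The only difference is that you spell out the identification $\Stab_W(\Xi^+) = W_X$ (via the fundamental-domain property of~$\mathfrak{a}^+$), a step the paper dismisses as easy; your argument for it is sound.
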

\begin{proof}
Take $\rho_*$ to be the adjoint representation: then $V_* = \mathfrak{g}$ and~$\Omega_* = \Sigma \cup \{0\}$.
Take
\[\Xi^+ = \setsuch{\alpha \in \Sigma \cup \{0\}}{\alpha(X) \geq 0};\]
this is a top-subset. It is easy to show that we have $\Stab_W(\Xi^+) = W_X$, and by definition~$\mathfrak{g}^{\Xi^+} = \mathfrak{p}^+_X$. Applying the lemma, the first identity follows. Applying the lemma to the subset~$\Xi^-$ defined analogously, the second identity follows.
\end{proof}

\subsection{Metric properties and estimates}
\label{sec:metric_conventions}

In this subsection we mostly introduce some notational conventions. They were already introduced in the beginning of Subsection~5.1 in~\cite{Smi16} and the beginning of Subsection~2.6 in~\cite{Smi14}.

For any linear map~$g$ acting on a Euclidean space~$E$, we write $\|g\| := \sup_{x \neq 0} \frac{\|g(x)\|}{\|x\|}$ its operator norm.

Consider a Euclidean space $E$. We introduce on the projective space $\mathbb{P}(E)$ a metric by setting, for every $\overline{x}, \overline{y} \in \mathbb{P}(E)$,
\begin{equation}
\alpha (\overline{x}, \overline{y}) := \arccos \frac{| \langle x, y \rangle |}{\|x\| \|y\|} \in \textstyle [0, \frac{\pi}{2}],
\end{equation}
where $x$ and $y$ are any vectors representing respectively $\overline{x}$ and $\overline{y}$ (obviously, the value does not depend on the choice of $x$ and $y$). This measures the angle between the lines $\overline{x}$ and $\overline{y}$. For shortness' sake, we will usually simply write $\alpha(x, y)$ with $x$ and $y$ some actual vectors in $E \setminus \{0\}$.

For any vector subspace $F \subset E$ and any radius $\eps > 0$, we shall denote the $\eps$-neighborhood of $F$ in $\mathbb{P}(E)$ by:
\begin{equation}
B_{\mathbb{P}}(F, \eps) := \setsuch{x \in \mathbb{P}(E)}{\alpha(x,\mathbb{P}(F)) < \eps}.
\end{equation}
(You may think of it as a kind of ``conical neighborhood''.)

Consider a metric space $(\mathcal{M}, \delta)$; let $X$ and $Y$ be two subsets of $\mathcal{M}$. We shall denote the ordinary, minimum distance between $X$ and $Y$ by
\begin{equation}
\delta(X, Y) := \inf_{x \in X} \inf_{y \in Y} \delta(x, y),
\end{equation}
as opposed to the Hausdorff distance, which we shall denote by
\begin{equation}
\delta^\mathrm{Haus}(X, Y) := \max\left( \sup_{x \in X} \delta\big(\{x\}, Y\big),\; \sup_{y \in Y} \delta\big(\{y\}, X\big) \right).
\end{equation}

Finally, we introduce the following notation. Let $X$ and $Y$ be two positive quantities, and $p_1, \ldots, p_k$ some parameters. Whenever we write
\[X \lesssim_{p_1, \ldots, p_k} Y,\]
we mean that there is a constant $K$, depending on nothing but $p_1, \ldots, p_k$, such that $X \leq KY$. (If we do not write any subscripts, this means of course that $K$ is an ``absolute'' constant --- or at least, that it does not depend on any ``local'' parameters; we consider the ``global'' parameters such as the choice of $G$ and of the Euclidean norms to be fixed once and for all.) Whenever we write
\[X \asymp_{p_1, \ldots, p_k} Y,\]
we mean that $X \lesssim_{p_1, \ldots, p_k} Y$ and $Y \lesssim_{p_1, \ldots, p_k} X$ at the same time.

The following result will often be useful:
\begin{lemma}[\cite{Smi14}]
\label{bounded_norm_is_bilipschitz}
Let $C \geq 1$. Then any map $\phi \in \GL(E)$ such that $\|\phi^{\pm 1}\| \leq C$ induces a $C^2$-Lipschitz continuous map on $\mathbb{P}(E)$.
\end{lemma}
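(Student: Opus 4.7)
The plan is to parameterize the shortest great-circle arc joining (representatives of) $\overline{u}$ and $\overline{v}$ on the unit sphere $S(E)$, push it forward through $\phi$, and estimate the length of the resulting curve in $(\mathbb{P}(E), \alpha)$. This works because $\alpha$ is the quotient of the round spherical distance on $S(E)$ under the antipodal identification, so any piecewise-smooth path in $\mathbb{P}(E)$ from $\overline{\phi u}$ to $\overline{\phi v}$ furnishes an upper bound for $\alpha(\overline{\phi u}, \overline{\phi v})$.

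Concretely, given nonzero $u, v \in E$, I rescale so that $\|u\| = \|v\| = 1$ and, if needed, flip the sign of $v$ so that $\langle u, v\rangle \geq 0$; neither operation changes $\overline{v}$, $\overline{\phi v}$ or the two angles involved. Setting $\theta := \alpha(\overline{u}, \overline{v}) = \arccos\langle u, v\rangle \in [0, \pi/2]$ (the case $\theta = 0$ being trivial), I choose the unit vector $w$ orthogonal to $u$ in $\mathrm{span}(u, v)$ with $v = \cos\theta \cdot u + \sin\theta \cdot w$, and form the unit-speed geodesic
\[\gamma(t) := \cos t \cdot u + \sin t \cdot w, \qquad t \in [0, \theta], \qquad \gamma(0) = u, \; \gamma(\theta) = v.\]
The image $\pi \circ \phi \circ \gamma$, where $\pi: E \setminus \{0\} \to \mathbb{P}(E)$ is the natural projection, is a smooth path $\delta$ from $\overline{\phi u}$ to $\overline{\phi v}$ in $\mathbb{P}(E)$.

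The main (and only) estimate needed is the standard formula
\[d\pi_x(h) = \frac{1}{\|x\|}\bigl(h - \langle h, \hat{x}\rangle \hat{x}\bigr), \qquad \hat{x} = x/\|x\|,\]
for the differential of the projection, which yields $\|d\pi_x(h)\| \leq \|h\|/\|x\|$ in the angular metric (since the quotient map $S(E) \to \mathbb{P}(E)$ is a local isometry from the round sphere). Applying this with $x = \phi\gamma(t)$ and $h = \phi\gamma'(t)$, and combining $\|\phi\| \leq C$, $\|\phi(z)\| \geq \|z\|/C$ (the latter from $\|\phi^{-1}\| \leq C$), together with $\|\gamma(t)\| = \|\gamma'(t)\| = 1$, gives the speed bound
\[\|\delta'(t)\| \;\leq\; \frac{\|\phi\gamma'(t)\|}{\|\phi\gamma(t)\|} \;\leq\; \frac{C}{1/C} \;=\; C^2.\]
Integrating over $[0, \theta]$ yields $\mathrm{length}(\delta) \leq C^2\theta$, whence $\alpha(\overline{\phi u}, \overline{\phi v}) \leq \mathrm{length}(\delta) \leq C^2 \alpha(\overline{u}, \overline{v})$, as desired.

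There is essentially no obstacle here: this is a routine calculus-on-manifolds argument. The only thing worth noting is that \emph{both} norm bounds on $\phi$ genuinely enter the estimate — $\|\phi\|$ through the numerator $\|\phi\gamma'\|$ and $\|\phi^{-1}\|$ through the denominator $\|\phi\gamma\|$ — which is precisely why the Lipschitz constant comes out to $C^2$ rather than $C$.
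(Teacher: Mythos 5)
Your proof is correct. Note that this paper does not actually contain a proof of the lemma --- it simply defers to Lemma~2.20 of~\cite{Smi14} --- so there is no in-text argument to compare against; but your argument is a complete and self-contained justification, and it is the natural one: the metric $\alpha$ is the Riemannian quotient distance of the round sphere under the antipodal map, so it suffices to bound the speed of the image of a unit-speed great-circle arc, and the bound $\|\delta'(t)\| \leq \|\phi\gamma'(t)\|/\|\phi\gamma(t)\| \leq \|\phi\|\,\|\phi^{-1}\| \leq C^2$ followed by integration does exactly that. All the delicate points are handled: you normalize and flip signs so that the lifted arc has length $\alpha(\overline{u},\overline{v}) \leq \pi/2$, the curve $\phi\gamma$ never meets $0$, and the estimate $\|d\pi_x(h)\| \leq \|h\|/\|x\|$ is the correct one since $S(E) \to \mathbb{P}(E)$ is a local isometry. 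This infinitesimal route also yields the sharp constant (the true Lipschitz constant is the ratio of the extreme singular values of $\phi$, which is at most $C^2$), whereas cruder comparisons via chordal distance or $\sin\alpha$ versus $\alpha$ would lose a factor such as $\pi/2$ and only give $C^2$ up to a multiplicative constant --- which would still suffice for the way the lemma is used in the paper, but your statement matches the cited one exactly.
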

\begin{proof}
See~\cite{Smi14}, Lemma~2.20.
\end{proof}

\section{Proximal maps}
\label{sec:proximal_maps_bis}

In this section, we give the definitions of the proximal versions of the concepts from Table~\ref{tab:notions} and state the proximal version of Schema~\ref{proposition_template}, namely Proposition~\ref{proximal_product}. It contains no new results.

Let $E$ be a Euclidean space.

\begin{definition}[Proximal version of regularity]
\label{proximal_definition}
Let $\gamma \in \GL(E)$; let $\lambda_1, \ldots, \lambda_n$ be its eigenvalues repeated according to multiplicity and ordered by nonincreasing modulus. We define the \emph{spectral radius} of~$\gamma$ as
\begin{equation}
r(\gamma) := |\lambda_1|
\end{equation}
(we do not use ``$\rho(\gamma)$'' as it could be confused with the representation~$\rho$). We say that $\gamma$~is \emph{proximal} if we have
\[|\lambda_2| < |\lambda_1| =: r(\gamma),\]
\ie if $\lambda_1$ is the only eigenvalue with modulus~$r(\gamma)$ and has multiplicity~$1$. Equivalently, $\gamma$~is proximal if and only if its \emph{spectral gap} $\frac{|\lambda_1|}{|\lambda_2|}$ is greater than~$1$.
\end{definition}

\begin{definition}[Proximal version of geometry]
\label{attracting_repelling_space_definition}
For every proximal map~$\gamma$, we may then decompose $E$ into a direct sum of a line $E^s_\gamma$, called the \emph{attracting space} of~$\gamma$, and a hyperplane $E^u_\gamma$, called the \emph{repelling space} of~$\gamma$, both stable by $\gamma$ and such that:
\[\begin{cases}
\restr{\gamma}{E^s_\gamma} = \lambda_1 \Id; \\
\text{for every eigenvalue } \lambda \text{ of } \restr{\gamma}{E^u_\gamma},\; |\lambda| < |\lambda_1|.
\end{cases}\]
\end{definition}

\begin{definition}[Proximal version of non-degeneracy]
\label{proximal_C_non_deg}
Consider a line $E^s$ and a hyperplane $E^u$ of $E$, transverse to each other. An \emph{optimal canonizing map} for the pair $(E^s, E^u)$ is a map $\phi \in GL(E)$ satisfying
\[\phi(E^s) \perp \phi(E^u)\]
and minimizing the quantity $\max \left( \|\phi\|, \|\phi^{-1}\| \right)$.

We define an \emph{optimal canonizing map} for a proximal map $\gamma \in \GL(E)$ to be an optimal canonizing map for the pair $(E^s_\gamma, E^u_\gamma)$.

Let $C \geq 1$. We say that the pair formed by a line and a hyperplane $(E^s, E^u)$ (resp. that a proximal map $\gamma$) is \emph{$C$-non-degenerate} if it has an optimal canonizing map $\phi$ such that $\left \|\phi^{\pm 1} \right\| \leq C$. (This is equivalent to the angle between $E^s$ and $E^u$ being bounded below by a constant that depends only on~$C$.)

Now take $\gamma_1, \gamma_2$ two proximal maps in $\GL(E)$. We say that the pair $(\gamma_1, \gamma_2)$ is \emph{$C$-non-degenerate} if every one of the four possible pairs $(E^s_{\gamma_i}, E^u_{\gamma_j})$ is $C$-non-degenerate.
\end{definition}

\begin{definition}[Proximal version of contraction strength]
\label{s_tilde_definition}
Let $\gamma \in \GL(E)$ be a proximal map. We define the \emph{proximal contraction strength} of $\gamma$ by
\[\tilde{s}(\gamma)
:= \frac{\left\| \restr{\gamma}{E^u_\gamma} \right\|}{\left\| \restr{\gamma}{E^s_\gamma} \right\|}
= \frac{\left\| \restr{\gamma}{E^u_\gamma} \right\|}{r(\gamma)}\]
(where $r(\gamma)$ is the spectral radius of~$\gamma$, equal to~$|\lambda_1|$ in the notations of the previous definition). We say that $\gamma$ is \emph{$\tilde{s}$-contracting} if $\tilde{s}(\gamma) \leq \tilde{s}$.
\end{definition}

\begin{proposition}
\label{proximal_product}
For every $C \geq 1$, there is a positive constant $\tilde{s}_{\ref{proximal_product}}(C)$ with the following property. Take a $C$-non-degenerate pair of proximal maps $\gamma_1, \gamma_2$ in $\GL(E)$, and suppose that both $\gamma_1$ and $\gamma_2$ are $\tilde{s}_{\ref{proximal_product}}(C)$-contracting. Then $\gamma_1 \gamma_2$ is proximal, and we have:
\begin{hypothenum}
\item $\alpha \left(E^s_{\gamma_1 \gamma_2},\; E^s_{\gamma_1} \right) \lesssim_C \tilde{s}(\gamma_1)$;
\item $\tilde{s}(\gamma_1 \gamma_2) \lesssim_C \tilde{s}(\gamma_1)\tilde{s}(\gamma_2)$;
\item $r(\gamma_1 \gamma_2) \asymp_C \|\gamma_1\| \|\gamma_2\|$.
\end{hypothenum}
\end{proposition}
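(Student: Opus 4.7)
The plan is to reduce to a near-orthogonal setting by means of the canonizing maps, and then run explicit block computations together with a contraction argument on $\mathbb{P}(E)$.

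First I would invoke the $C$-non-degeneracy hypothesis to obtain optimal canonizing maps $\phi_i$ with $\|\phi_i^{\pm 1}\| \leq C$, so that after conjugation each $\gamma_i$ has a block form in an orthonormal basis adapted to $\phi_i(E^s_{\gamma_i}) \oplus \phi_i(E^u_{\gamma_i})$: a $1\times 1$ block $\lambda_1^{(i)}$ of modulus $r(\gamma_i)$, and a hyperplane block of operator norm at most $\tilde{s}(\gamma_i)\,r(\gamma_i)$. In particular $\|\gamma_i\| \asymp_C r(\gamma_i)$ whenever $\tilde{s}(\gamma_i) \leq 1$. The $C$-non-degeneracy of each cross pair $(E^s_{\gamma_i}, E^u_{\gamma_j})$ translates, via the same canonizing maps, into angles in $\mathbb{P}(E)$ bounded below by a quantity $\gtrsim_C 1$.

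Next I would prove proximality together with (i) and (iii) by a single attracting-ball argument. Fix $\eps = \eps(C) > 0$ small, and consider the ball $B := B_{\mathbb{P}}(E^s_{\gamma_1}, \eps) \subset \mathbb{P}(E)$. Using the lower bound on the angle $\alpha(E^s_{\gamma_1}, E^u_{\gamma_2})$, the map $\gamma_2$ sends $B$ into a neighborhood of~$\gamma_2(E^s_{\gamma_1})$ which, by the block form, lies within a $\lesssim_C \tilde s(\gamma_2)$-neighborhood of $E^s_{\gamma_2}$; the lower bound on $\alpha(E^s_{\gamma_2}, E^u_{\gamma_1})$ then keeps this image bounded away from $E^u_{\gamma_1}$, so that $\gamma_1$ contracts it into $B_{\mathbb{P}}(E^s_{\gamma_1}, K\,\tilde s(\gamma_1)\eps)$ with $K = K(C)$, via the block form of $\gamma_1$ and Lemma~\ref{bounded_norm_is_bilipschitz}. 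Choosing $\tilde{s}_{\ref{proximal_product}}(C)$ small enough that $K \tilde s(\gamma_1) < 1/2$, the map $\gamma_1\gamma_2$ is then a strict self-contraction of $B$; its unique fixed point is $E^s_{\gamma_1\gamma_2}$, yielding (i). To see (iii), note that a unit vector in $E^s_{\gamma_1}$ is mapped by $\gamma_2$ to a vector of norm $\asymp_C r(\gamma_2)$ whose component transverse to $E^u_{\gamma_1}$ is of comparable size (by the same angle bound), and then scaled by $r(\gamma_1)$ under $\gamma_1$; combined with the upper bound $\|\gamma_1\gamma_2\| \leq \|\gamma_1\|\|\gamma_2\| \asymp_C r(\gamma_1)r(\gamma_2)$, this shows $r(\gamma_1\gamma_2) \asymp_C \|\gamma_1\|\|\gamma_2\|$ and, crucially, that the leading eigenvalue is simple and strictly dominant, so $\gamma_1\gamma_2$ is proximal.

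For (ii), I would identify $E^u_{\gamma_1\gamma_2}$ as the invariant complement of $E^s_{\gamma_1\gamma_2}$ and, using the argument above, show that it is close to $\gamma_2^{-1}(E^u_{\gamma_1})$, itself close to $E^u_{\gamma_2}$ up to an error controlled by~$\tilde s(\gamma_2)$. A vector $v$ in this hyperplane is therefore contracted first by $\gamma_2$ to norm $\lesssim_C \tilde s(\gamma_2)\,r(\gamma_2)\|v\|$, the image lying close to $E^u_{\gamma_1}$, and then by~$\gamma_1$ to norm $\lesssim_C \tilde s(\gamma_1)\tilde s(\gamma_2)\,r(\gamma_1)r(\gamma_2)\|v\|$; dividing by $r(\gamma_1\gamma_2) \asymp_C r(\gamma_1)r(\gamma_2)$ from (iii) gives~(ii). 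The main obstacle throughout is bookkeeping: all the cross-decompositions in play are only \emph{approximately} orthogonal, so I have to ensure that the accumulating $C$-factors remain under control and choose $\tilde{s}_{\ref{proximal_product}}(C)$ small enough that both the self-map property of $B$ and the domination of the leading eigenvalue survive the compounding of these $C$-dependent errors.
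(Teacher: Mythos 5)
Your proposal is correct and follows essentially the same route as the proofs this paper defers to (Proposition~3.4 in~\cite{Smi14} for proximality, (i) and~(ii), and Proposition~5.12 in~\cite{Smi16} for~(iii)): conjugate by the optimal canonizing maps to get near-orthogonal block forms, then run the attracting-ball/contraction argument on $\mathbb{P}(E)$, with the dual (transpose) version handling the repelling hyperplane. The one step to phrase carefully is the lower bound in~(iii): a lower bound on $\|\gamma_1\gamma_2\|$ does not by itself bound $r(\gamma_1\gamma_2)$, so the norm estimate should be applied to the fixed eigendirection produced by your contraction argument (legitimate, since it lies within $\eps$ of $E^s_{\gamma_1}$), which is exactly how the cited proofs conclude.
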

(The constant~$\tilde{s}_{\ref{proximal_product}}(C)$ is indexed by the number of the proposition, a scheme that we will stick to throughout the paper.)

Similar results have appeared in the literature for a long time, see \eg Lemma~5.7 in~\cite{AMS02}, Proposition~6.4 in~\cite{Ben96} or Lemma~2.2.2 in~\cite{Ben97}.

\begin{proof}
See Proposition~3.4 in~\cite{Smi14} for the proof of (i) and~(ii), and Proposition~5.12 in~\cite{Smi16} for the proof of~(iii).
\end{proof}

\begin{remark}
\label{comparison_proximal_with_schema}
If we wanted to literally follow Schema~\ref{proposition_template} (taking ``asymptotic dynamics'' to mean the logarithm of the spectral radius), we would need:
\begin{itemize}
\item to add a point (i'): $\alpha \left(E^u_{\gamma_1 \gamma_2},\; E^u_{\gamma_2} \right) \lesssim_C \tilde{s}(\gamma_2)$;
\item to replace (iii) by (iii'): $r(\gamma_1 \gamma_2) \asymp_C r(\gamma_1) r(\gamma_2)$.
\end{itemize}
However:
\begin{itemize}
\item The estimate~(i') will not be used in the sequel. It is nevertheless true: it follows by considering the action of~$\GL(E)$ on the dual space~$E^*$ and applying~(i).
\item The estimate~(iii') is on the contrary not strong enough for the applications we need. It is also true; it follows by plugging the identity
\[r(\gamma) \asymp_C \|\gamma\|\]
(valid for proximal, $C$-non-degenerate, $\tilde{s}_{\ref{proximal_product}}(C)$-contracting~$\gamma$; obtained from~(iii) by setting $\gamma_1 = \gamma_2 = \gamma$) into (iii) itself.
\end{itemize}
\end{remark}

\section{$X$-regular linear maps}
\label{sec:linear_regular_maps}

In this section, we define the linear versions of the properties from Table~\ref{tab:notions} and state the linear version of Schema~\ref{proposition_template}. Most of the basic ideas, and several definitions, come from~\cite{Ben97}; however, we use a slightly different point of view. Benoist relies most of the time only on the proximal versions of the properties from Table~\ref{tab:notions}, using the representations~$\rho_i$ as a proxy. We, on the other hand, clearly separate the linear versions from the proximal versions, and establish the correspondences between linear and proximal versions as theorems.

In Subsection~\ref{sec:X-regular}, we give the definitions of these linear properties. All of them are parametrized by some vector~$X \in \mathfrak{a}^+$ (or equivalently by some subset $\Pi_X \subset \Pi$; see the discussion in the beginning of Subsection~\ref{sec:parabolics}). When we will apply the linear case to the affine case, this vector will be set to the vector~$X_0$ chosen in Section~\ref{sec:choice}.

In Subsection~\ref{sec:group_inverse}, we examine what happens to these properties when we replace~$g$ by its inverse~$g^{-1}$.

In Subsection~\ref{sec:linear_regular_product}, we relate the linear properties with the proximal properties, and then prove Propositions \ref{intrinsic_regular_product} and~\ref{jordan_additivity}, which together comprise the linear version of Schema~\ref{proposition_template}.

\subsection{Definitions}
\label{sec:X-regular}

Let us fix some $X \in \mathfrak{a}^+$.

\begin{definition}[Linear version of regularity]
\label{X-regular_definition}
We say that an element $g \in G$ is \emph{$X$-regular} if every root which does not vanish on~$X$ does not vanish on~$\jordan(g)$ either:
\[\forall \alpha \in \Pi \setminus \Pi_X,\quad \alpha(\jordan(g)) > 0.\]
\end{definition}
Benoist calls such elements ``elements of type~$\theta$'', where his $\theta$ is our set~$\Pi \setminus \Pi_X$: see Definition~3.2.2 in~\cite{Ben97}.
\begin{example}~
\begin{enumerate}
\item If $X \in \mathfrak{a}^{++}$, then $g \in G$ is $X$-regular if and only if $\jordan(g) \in \mathfrak{a}^{++}$.
\item If $X = 0$, then the condition is vacuous: all elements of~$G$ are $X$-regular.
\end{enumerate}
\end{example}
Elements $g \in G$ such that $\jordan(g) \in \mathfrak{a}^{++}$ are often called \emph{$\mathbb{R}$-regular} or \emph{loxodromic}. So informally, being $X$-regular should be understood as being ``partially $\mathbb{R}$-regular''. (In fact, technically we should probably say ``$X$-$\mathbb{R}$-regular'' instead of ``$X$-regular''.)

\begin{definition}[Linear version of geometry]
\label{geometry_definition}
Let $g \in G$ be an $X$-regular element. Let $g = g_h g_e g_u$ be its Jordan decomposition, and let $\phi$ be any element of~$G$ realizing the conjugacy
\[\phi g_h \phi^{-1} = \exp(\jordan(g))\]
(called a \emph{canonizing map} for~$g$). Then we define:
\begin{itemize}
\item the \emph{attracting $X$-flag} of~$g$, denoted by~$y^{X, +}_g$, to be the class of~$\phi^{-1}$ in the flag variety $G/P_X^+$:
\[y^{X, +}_g := \phi^{-1} P_X^+ \in G/P_X^+;\]
\item the \emph{repelling $X$-flag} of~$g$, denoted by~$y^{X, -}_g$, to be the class of~$\phi^{-1}$ in the flag variety $G/P_X^-$:
\[y^{X, -}_g := \phi^{-1} P_X^- \in G/P_X^-;\]
\item the \emph{$X$-geometry} of~$g$ to be the data of its attracting and repelling $X$-flags, \ie the pair $\left( y^{X, +}_g, y^{X, -}_g \right)$.
\end{itemize}
\end{definition}
Benoist defines the attracting and repelling flags in the last sentence of~3.4 in~\cite{Ben97}.
\begin{remark}
Depending on context, sometimes it is the map~$\phi$ itself that is more relevant to consider, and sometimes it is its inverse. Indeed while $\phi$ is the map that ``brings $g$ to the canonical position'', its inverse $\phi^{-1}$ is the map that ``defines the geometry of~$g$'', starting from the canonical position. This is why the formulas above involve~$\phi^{-1}$. 
\end{remark}
We need to check that those definitions do not depend on the choice of~$\phi$. Indeed, $\phi^{-1}$ is unique up to multiplication on the right by an element of the centralizer of~$\jordan(g)$. By Proposition~\ref{centralizer_of_x}, the latter is equal to~$L_{\jordan(g)}$; since $g$ is $X$-regular, it is contained in~$L_X$, which in turn is contained both in~$P^+_X$ and in~$P^-_X$.

\begin{definition}
\label{transverse_purely_linear}
We say that a pair $(y^+, y^-) \in G/P^+_X \times G/P^-_X$ is \emph{transverse} if the intersection of $y^+$ and~$y^-$ (seen as cosets in~$G$) is nonempty, \ie if there exists an element $\phi \in G$ such that
\[\begin{cases}
y^+ = \phi P^+_X; \\
y^- = \phi P^-_X.
\end{cases}\]
\end{definition}
In particular, the pair of flags giving the geometry of any $X$-regular element is transverse.
(Compare this with the definition given in~\cite{Ben97} in~3.3, p.~14, third line from the end.)
\begin{proposition}
\label{GL_GPGP}
The map
\[\fundef{\Phi_X:}{G/L_X}{G/P^+_X \times G/P^-_X}
{\phi L_X}{\left( \phi P^+_X, \phi P^-_X \right)}\]
gives a canonical diffeomorphism between~$G/L_X$ and the subset of~$G/P^+_X \times G/P^-_X$ formed by transverse pairs.
\end{proposition}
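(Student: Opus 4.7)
My proof proposal is to check that $\Phi_X$ is a well-defined, injective, smooth $G$-equivariant map whose image is exactly the transverse pairs, and then to invoke the orbit–stabilizer theorem for smooth actions of Lie groups to upgrade this to a diffeomorphism onto an embedded submanifold.

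\textbf{Well-definedness, image, injectivity.} The crucial identity is $L_X = P^+_X \cap P^-_X$, which is literally the definition of $L_X$. From it, well-definedness is immediate: if $\phi_1 L_X = \phi_2 L_X$ then $\phi_1^{-1}\phi_2 \in L_X \subset P^\pm_X$, so the pair $(\phi P^+_X, \phi P^-_X)$ depends only on the coset $\phi L_X$. The image of $\Phi_X$ agrees with the set of transverse pairs by Definition~\ref{transverse_purely_linear}: $(y^+, y^-)$ is transverse precisely when there is some $\phi$ with $y^\pm = \phi P^\pm_X$, and then $(y^+, y^-) = \Phi_X(\phi L_X)$. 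For injectivity, if $\Phi_X(\phi_1 L_X) = \Phi_X(\phi_2 L_X)$ then $\phi_1^{-1}\phi_2$ lies in both $P^+_X$ and $P^-_X$, hence in $L_X$.

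\textbf{Smoothness and the diffeomorphism structure.} Smoothness of $\Phi_X$ follows from the universal property of the quotient $G \to G/L_X$: the composite $G \to G/P^+_X \times G/P^-_X$, $\phi \mapsto (\phi P^+_X, \phi P^-_X)$, is smooth and descends to $\Phi_X$. The map is $G$-equivariant for the left translation actions. The set of transverse pairs is thus a single $G$-orbit (by definition), with stabilizer $P^+_X \cap P^-_X = L_X$ at the basepoint $(eP^+_X, eP^-_X)$. The orbit–stabilizer theorem for smooth Lie group actions then immediately yields that $\Phi_X$ is a $G$-equivariant bijective immersion onto this orbit, with the orbit carrying the natural structure of a $G$-homogeneous manifold diffeomorphic to $G/L_X$.

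\textbf{The main obstacle} is to conclude that this orbit is actually an \emph{embedded} submanifold of $G/P^+_X \times G/P^-_X$, so that the equivariant diffeomorphism onto the orbit is genuinely a diffeomorphism onto the subset of transverse pairs. My plan is to show the orbit is \emph{open}, from which embedding is automatic. Openness at the basepoint reduces to the infinitesimal transversality statement
\[\mathfrak{p}^+_X + \mathfrak{p}^-_X = \mathfrak{g},\]
which holds because by definition $\mathfrak{p}^\pm_X$ contains every restricted root space $\mathfrak{g}^\alpha$ with $\pm \alpha(X) \geq 0$, so between them they exhaust $\mathfrak{l} \oplus \bigoplus_{\alpha \in \Sigma} \mathfrak{g}^\alpha = \mathfrak{g}$. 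A dimension count $\dim(G/L_X) = \dim(G/P^+_X) + \dim(G/P^-_X)$, which follows from the same infinitesimal decomposition together with $L_X = P^+_X \cap P^-_X$, then shows that the injective immersion $\Phi_X$ between manifolds of equal dimension is a local diffeomorphism at $eL_X$; by $G$-equivariance it is a local diffeomorphism everywhere, and being bijective onto the open orbit, a diffeomorphism.
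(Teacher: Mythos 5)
Your proof is correct and follows essentially the same route as the paper, which simply observes that $G$ acts smoothly on $G/P^+_X \times G/P^-_X$, that the orbit of $(P^+_X, P^-_X)$ is exactly the set of transverse pairs, and that the stabilizer is $P^+_X \cap P^-_X = L_X$. The only difference is that you explicitly verify the point the paper leaves implicit — that the orbit is open (via $\mathfrak{p}^+_X + \mathfrak{p}^-_X = \mathfrak{g}$ and the dimension count), hence embedded — which is a welcome but not divergent elaboration.
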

From now on, we shall tacitly identify $G/L_X$ with the set of transverse pairs (which is also known as the open $G$-orbit in $G/P^+_X \times G/P^-_X$).
\begin{proof}
The group~$G$ acts smoothly on the manifold~$G/P^+_X \times G/P^-_X$. The orbit of the point~$(P^+_X, P^-_X)$ is precisely the set of transverse pairs, and its stabilizer is~$P^+_X \cap P^-_X = L_X$.
\end{proof}

Our map~$\Phi_X$ corresponds to Benoist's ``injection $Z_\theta \to Y_\theta \times Y_\theta^-$'' introduced near the end of~3.3 in~\cite{Ben97}.

\begin{definition}
On every flag variety~$G/P^+_X$ and on every flag variety~$G/P^-_X$, we now fix, once and for all, a distance coming from some Riemannian metric. All these distances shall be denoted by~$\delta$.
\end{definition}

\begin{remark}
Note that every flag variety~$G/P^+_X$ (and~$G/P^-_X$, which is isomorphic) is compact. Indeed by the Iwasawa decomposition (see \cite{Kna96}, Theorem 6.46), the maximal compact subgroup~$K$ acts transitively on it. This means that any two Riemannian metrics on a given flag variety are always Lipschitz-equivalent. It turns out that we will only be interested in properties that are true up to a multiplicative constant; so the choice of a Riemannian metric does not influence anything in the sequel.
\end{remark}

We now introduce the notion of $C$-non-degeneracy, which is basically a quantitative measure of transversality. Every transverse pair of flags is $C$-non-degenerate for \emph{some} constant~$C$; but the smaller the constant gets, the ``more strongly'' the flags are transverse.

In~\cite{Ben97}, this notion appears bundled together with contraction strength in the concept of ``$(\theta, \eps)$-proximality'' (for a single element) or ``$(\theta, \eps)$-Schottky-ness'' (for a pair or, more generally, a family of elements).

\begin{definition}
\label{nu_definition}
We fix, once and for all, a continuous proper map
\[\nu: G \to [0, +\infty).\]

A typical example of such a map is given by
\begin{equation}
\label{eq:nu_example}
\nu(g) = \max \left( \|\rho_*(g)\|,\; \|\rho_*(g)^{-1}\| \right),
\end{equation}
where $\rho_*$ can be any faithful representation of~$G$ and $\| \bullet \|$ can be any Euclidean norm on the representation space. The specific choice of~$\nu$ is not really important: see the remark below. In practice, we will indeed find it convenient to use a very specific map~$\nu$ of this form: see \eqref{eq:nu_compatible_definition}.

The important property is that then the family of the preimages~$\nu^{-1}([0,C])$, indexed by~$C \in [0, +\infty)$, is a nested family of compact sets whose union exhausts the set $G$.
\end{definition}
\begin{definition}[Linear version of non-degeneracy]
\label{linear-C-non-deg-definition}
Note that the last statement also holds for the projections of these preimages onto~$G/L_X$. We may call these projections $\nu_X^{-1}([0,C])$, where we set
\[\fundef{\nu_X:}{G/L_X}{[0, +\infty)}
{\phi L_X}{\displaystyle \min_{\phi' \in \phi L_X} \nu(\phi').}\]
To justify that this is well-defined, notice that for every $\phi \in G$, the intersection of the coset~$\phi L_X$ with~$\nu^{-1}([0,\nu(\phi)])$ is compact (and nonempty), so the continuous map~$\nu$ reaches a minimum on it. Also, the map~$\nu_X$ is still proper.

We say that an element $\phi \in G$ is an \emph{optimal representative} of the coset~$\phi L_X \in G/L_X$ if $\nu$ reaches its minimum at~$\phi$, \ie if we have $\nu(\phi) = \nu_X(\phi L_X)$.
\begin{itemize}
\item We say that a transverse pair $(y^+, y^-) \in G/P^+_X \times G/P^-_X$ is \emph{$C$-non-degenerate} if $\nu_X(y^+, y^-) \leq C$, or in other terms
\[(y^+, y^-) \in \nu_X^{-1}([0,C])\]
(where we identify $(y^+, y^-)$ with a coset of~$G/L_X$ by the map~$\Phi_X$ defined above).
\item We say that an $X$-regular element $g \in G$ is \emph{$C$-non-degenerate} if its $X$-geometry is $C$-non-degenerate, \ie if
\[\left( y^{X, +}_g,\; y^{X, -}_g \right) \in \nu_X^{-1}([0,C]).\]
\item We say that a pair $(g_1, g_2)$ of $X$-regular elements of~$G$ is \emph{$C$-non-degenerate} if we have
\[\left( y^{X, +}_{g_i},\; y^{X, -}_{g_j} \right) \in \nu_X^{-1}([0,C])\]
for all four possible pairs $(i, j) \in \{1, 2\} \times \{1, 2\}$.
\end{itemize}
\end{definition}
\begin{remark}
\label{choice_of_nu_does_not_matter}
Let us now explain why the choice of the function~$\nu$ is not really important. Indeed, suppose we replace the function~$\nu$ by another function~$\nu'$ having the same property. Then we will simply need to replace every constant~$C$ by some~$C'$ that depends only on~$C$.
\end{remark}
\begin{example}
Take $G = \SO^+(n,1)$. This group has real rank~$1$; so the closed Weyl chamber~$\mathfrak{a}^+$ is a half-line, with only two facets: $\{0\}$ and its interior~$\mathfrak{a}^{++}$. Taking $X = 0$ makes everything trivial; so assume that $X \in \mathfrak{a}^{++}$. Let us identify~$G$ with the group of isometries of the hyperbolic space~$\mathbb{H}^n$. In this case:
\begin{itemize}
\item An element $g \in G$ is then $X$-regular if and only if it is \emph{loxodromic}, \ie fixes exactly two points of the ideal boundary $\partial^\infty \mathbb{H}^n$.
\item The flag variety~$G/P_X^+$ canonically identifies with this ideal boundary~$\partial^\infty \mathbb{H}^n$, and so does the opposite flag variety~$G/P_X^-$.
\item The attracting flag~$y_g^{X, +}$ (resp. repelling flag~$y_g^{X, -}$) of an $X$-regular element~$g$ corresponds to the attracting (resp. repelling) fixed point at infinity of the loxodromic isometry~$g$.
\item Two flags $(y^+, y^-) \in G/P_X^+ \times G/P_X^-$ are transverse if and only if the corresponding points of~$\partial^\infty \mathbb{H}^n$ are distinct.
\item One possible choice of the ``non-degeneracy'' function~$\nu_X$ is as follows. Choose any reference point~$x_0 \in \mathbb{H}^n$, and let $\delta^+$ and~$\delta^-$ be half-lines starting at~$x_0$ and reaching the ideal boundary at points corresponding to~$y^+$ and~$y^-$ respectively. Then we may let $\nu_X(y^+, y^-)$ be the reciprocal of the angle between $\delta^+$ and~$\delta^-$.
\item In that case, a pair $(y^+, y^-)$ is $C$-non-degenerate if and only if the corresponding points of the ideal boundary are separated by an angle of at least $\frac{1}{C}$ (when looking from~$x_0$).
\end{itemize}
\end{example}

We finish this subsection by introducing the following notion.

\begin{definition}[Linear version of contraction strength]
\label{vec_s_definition}
We define the \emph{linear $X$\nobreakdash-\hspace{0pt}contraction strength} of an $X$-regular element~$g \in G$ to be the quantity
\[\vec{s}_X(g) := \exp \left( - \min_{\alpha \in \Pi \setminus \Pi_X} \alpha(\cartan(g)) \right).\]
It measures how far the Cartan projection of~$g$ is from the walls of the Weyl chamber, except those containing~$X$. The arrow serves to distinguish it from the affine contraction strength $s_{X_0}$ introduced in Definition~\ref{s_definition}.
\end{definition}

\subsection{Impact of the group inverse}
\label{sec:group_inverse}

In this section, we examine what happens to the properties we just introduced when we pass from an element~$g \in G$ to its inverse~$g^{-1}$. Though slightly technical, the proof is completely straightforward.

We start by observing that for every $g \in G$, we have
\begin{equation}
\label{eq:inverse_jordan_projection}
\jordan(g^{-1}) = -w_0(\jordan(g))
\end{equation}
and
\begin{equation}
\label{eq:inverse_cartan_projection}
\cartan(g^{-1}) = -w_0(\cartan(g)).
\end{equation}
(The map $-w_0$ is Benoist's ``opposition involution''~$\imath$: compare these formulas with~\cite{Ben97}, Section~2.4, p.~8, l.~10.)

The first identity immediately follows from the definitions of the Jordan projection and of~$w_0$. The second identity also follows from the definitions, using the well-known fact that every element of the restricted Weyl group~$W$ (in particular~$w_0$) has a representative in the maximal compact subgroup~$K$ (see \cite{Kna96}, formulas (7.84a) and~(7.84b)).

\begin{proposition}~
\label{intrinsic_inverse}
\begin{hypothenum}
\item \label{itm:w0_X-reg} An element $g \in G$ is $X$-regular if and only if its inverse~$g^{-1}$ is $-w_0(X)$-regular;
\item \label{itm:w0_flags} For every $X$-regular element $g \in G$, we have
\[\iota^-_X \left( y^{X, -}_g \right) = y^{-w_0(X), +}_{g^{-1}},\]
where $\iota^\pm_X$ are the diffeomorphisms given by
\[\fundef{\iota^\pm_X:}{G/P^\pm_X}{G/P^\mp_{-w_0(X)}}
{\phi P^\pm_X}{\phi w_0 P^\mp_{-w_0(X)}.}\]
\item \label{itm:w0_C-non-deg} If a pair
\[(y^+, y^-) \in G/P^+_X \times G/P^-_X\]
is $C$-non-degenerate, then the pair
\[(\iota^-_X(y^-),\; \iota^+_X(y^+)) \in G/P^+_{-w_0(X)} \times G/P^-_{-w_0(X)}\]
is $C'$-non-degenerate for some constant~$C'$ that depends only on~$C$.
\item \label{itm:w0_contraction} For every $X$-regular element $g \in G$, we have
\[\vec{s}_X(g) = \vec{s}_{-w_0(X)}(g^{-1}).\]
\end{hypothenum}
\end{proposition}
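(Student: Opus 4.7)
The entire proposition rests on three structural observations. First, since $w_0(\Sigma^+) = \Sigma^-$, the map $-w_0$ acts as a permutation of~$\Pi$; a short computation with the identity $\alpha(-w_0 X) = -(w_0\alpha)(X)$ shows that $-w_0$ restricts to a bijection $\Pi \setminus \Pi_X \to \Pi \setminus \Pi_{-w_0(X)}$, and correspondingly sends $\Pi_X$ to $\Pi_{-w_0(X)}$. Second, the formulas \eqref{eq:inverse_jordan_projection} and~\eqref{eq:inverse_cartan_projection} give $\jordan(g^{-1}) = -w_0(\jordan(g))$ and $\cartan(g^{-1}) = -w_0(\cartan(g))$. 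Third, the conjugation identity $\tilde{w}_0 \, L_X \, \tilde{w}_0^{-1} = L_{-w_0(X)}$ (for any representative $\tilde{w}_0 \in N_G(A)$) follows from Proposition~\ref{centralizer_of_x} together with $Z_G(-Y) = Z_G(Y)$.

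Given these facts, parts \ref{itm:w0_X-reg} and~\ref{itm:w0_contraction} reduce to direct substitution. For~\ref{itm:w0_X-reg}: an $\alpha \in \Pi \setminus \Pi_{-w_0(X)}$ can be written $\alpha = -w_0\beta$ with $\beta \in \Pi \setminus \Pi_X$, and then $\alpha(\jordan(g^{-1})) = \beta(\jordan(g))$; the positivity conditions for $g$ and $g^{-1}$ are therefore equivalent. The identical substitution, applied to~$\cartan(g)$, yields~\ref{itm:w0_contraction}.

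For part~\ref{itm:w0_flags}, start from a canonizing map~$\phi$ for~$g$, so that $\phi g_h \phi^{-1} = \exp(\jordan(g))$. Conjugating both sides by a fixed representative~$\tilde{w}_0$ of~$w_0$ yields $(\tilde{w}_0\phi) \, g_h^{-1} \, (\tilde{w}_0\phi)^{-1} = \exp(-w_0\jordan(g)) = \exp(\jordan(g^{-1}))$, so $\tilde{w}_0\phi$ is a canonizing map for~$g^{-1}$. The definition of the attracting flag then gives
\[
y^{-w_0(X), +}_{g^{-1}} \;=\; (\tilde{w}_0\phi)^{-1} P^+_{-w_0(X)} \;=\; \phi^{-1}\tilde{w}_0^{-1} P^+_{-w_0(X)},
\]
which matches $\iota^-_X(y^{X,-}_g) = \phi^{-1} \tilde{w}_0 P^+_{-w_0(X)}$ because $\tilde{w}_0^{-1}$ and $\tilde{w}_0$ represent the same Weyl element (as $w_0$ is an involution), so they differ by an element of~$Z_G(A) = L \subset P^+_{-w_0(X)}$. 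Well-definedness of the map~$\iota^-_X$, needed implicitly, follows from $\tilde{w}_0^{-1} P^-_X \tilde{w}_0 = P^+_{-w_0(X)}$, which again comes from the action of~$-w_0$ on roots.

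The only part requiring genuine care is~\ref{itm:w0_C-non-deg}, which is the step I expect to be the main obstacle because it involves tracking the function~$\nu_X$ through the change of parameter. Using Proposition~\ref{GL_GPGP}, write the transverse pair~$(y^+, y^-)$ as $(\phi P^+_X, \phi P^-_X)$, so it corresponds to the coset $\phi L_X \in G/L_X$. Applying~$\iota^\pm_X$ and using the conjugation identity $\tilde{w}_0 L_X \tilde{w}_0^{-1} = L_{-w_0(X)}$ established above, one checks that the image pair corresponds to the coset $\phi \tilde{w}_0 L_{-w_0(X)} = \phi L_X \tilde{w}_0$ in $G/L_{-w_0(X)}$. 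Thus
\[
\nu_{-w_0(X)}\bigl(\iota^-_X(y^-),\, \iota^+_X(y^+)\bigr) \;=\; \min_{l \in L_X} \nu(\phi l\, \tilde{w}_0).
\]
Comparing with $\nu_X(y^+,y^-) = \min_{l \in L_X} \nu(\phi l)$, it suffices to bound $\nu(\,\cdot\, \tilde{w}_0)$ above by a constant multiple of $\nu(\,\cdot\,)$. For the model choice~\eqref{eq:nu_example} this follows from submultiplicativity of the operator norm, with a multiplicative constant depending only on~$\tilde{w}_0$; Remark~\ref{choice_of_nu_does_not_matter} then handles an arbitrary~$\nu$, and yields the desired constant~$C'$ depending only on~$C$.
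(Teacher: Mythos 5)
Your proposal is correct and follows essentially the same route as the paper: parts \ref{itm:w0_X-reg} and~\ref{itm:w0_contraction} from the inversion formulas \eqref{eq:inverse_jordan_projection} and~\eqref{eq:inverse_cartan_projection} together with the fact that $-w_0$ permutes $\Pi$, part~\ref{itm:w0_flags} via the observation that $\tilde{w}_0\phi$ canonizes $g^{-1}$ plus the conjugation identity $w_0^{-1}P^-_X w_0 = P^+_{-w_0(X)}$, and part~\ref{itm:w0_C-non-deg} via the map $\phi \mapsto \phi w_0$ on cosets of~$L_X$. The only cosmetic difference is in~\ref{itm:w0_C-non-deg}, where you obtain the constant explicitly from submultiplicativity of a model~$\nu$ of the form~\eqref{eq:nu_example} combined with Remark~\ref{choice_of_nu_does_not_matter}, whereas the paper argues softly, noting that the induced diffeomorphism $G/L_X \to G/L_{-w_0(X)}$ sends the compact set $\nu_X^{-1}([0,C])$ into some $\nu_{-w_0(X)}^{-1}([0,C'])$ by properness — the two arguments are interchangeable.
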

\begin{remark}
Starting from Section~\ref{sec:choice}, we will only consider situations where $X$ will be \emph{symmetric}, \ie $-w_0(X) = X$ (which simplifies the above formulas).
\end{remark}
\begin{proof}~
\begin{hypothenum}
\item This is an immediate consequence of~\eqref{eq:inverse_jordan_projection}.
\item To show that the map~$\iota^-_X$ is well-defined, note that we have $w_0^{-1} P^-_X w_0 = P^+_{-w_0(X)}$ (this easily follows from the definitions). It is obviously smooth. Hence the map~$\iota^+_{-w_0(X)}$ is also smooth, and is clearly equal to the inverse of~$\iota^-_X$. To show the desired identity, note that (by~\eqref{eq:inverse_jordan_projection}) if $\phi$~is a canonizing map for~$g$, then $w_0 \phi$~is a canonizing map for~$g^{-1}$. (Pay attention to the convention of $\phi$ versus~$\phi^{-1}$.)
\item The map $\phi \mapsto \phi w_0$ descends to a diffeomorphism~$\iota^L_X$ that makes the diagram
\begin{equation}\begin{tikzcd}[column sep = huge]
G/L_X \arrow{r}{\Phi_X} \arrow{dd}{\iota^L_X}
&G/P^+_X \times G/P^-_X
  \arrow[white]{dd}{{\color{black} \hspace{1ex} \sigma \circ (\iota^+_X \times \iota^-_X)}}
  \arrow[rounded corners, to path =
       {([xshift=-1ex]\tikztostart.south)
    -- ([xshift=-1ex, yshift=1ex]$(\tikztostart.south)!.5!(\tikztotarget.north)$)
    -- ([xshift=1ex, yshift=-1ex]$(\tikztostart.south)!.5!(\tikztotarget.north)$)
    -- ([xshift=1ex]\tikztotarget.north)}]{dd}
  \arrow[rounded corners, to path =
       {([xshift=1ex]\tikztostart.south)
    -- ([xshift=1ex, yshift=1ex]$(\tikztostart.south)!.5!(\tikztotarget.north)$)
    -- ([xshift=-1ex, yshift=-1ex]$(\tikztostart.south)!.5!(\tikztotarget.north)$)
    -- ([xshift=-1ex]\tikztotarget.north)}]{dd}\\
& \\
G/L_{-w_0(X)} \arrow{r}{\Phi_{-w_0(X)}}
&G/P^+_{-w_0(X)} \times G/P^-_{-w_0(X)}
\end{tikzcd}\end{equation}
commutative. Here $\Phi_X$~is the embedding from Proposition~\ref{GL_GPGP}, and $\sigma$ denotes the pair-switching map: $(a, b) \mapsto (b, a)$ (the crossing-over double arrow is meant to suggest this graphically). So clearly the map $\sigma \circ (\iota^+_X \times \iota^-_X)$ preserves transversality of pairs. Now for every~$C$, the map~$\iota^L_X$ maps the preimage $\nu_X^{-1}([0,C])$ to some compact subset of~$G/L_{-w_0(X)}$, which is in particular contained in the preimage $\nu_{-w_0(X)}^{-1}([0, C'])$ for some $C'$.
\item This is an immediate consequence of \eqref{eq:inverse_cartan_projection}. \qedhere
\end{hypothenum}
\end{proof}
\begin{remark}~
\label{iota_isometry}
\begin{itemize}
\item In point~\ref{itm:w0_X-reg}, since the choice of the metrics on the flag varieties was arbitrary, we lose no generality in assuming that the diffeomorphisms~$\iota^\pm_X$ defined above are actually isometries.
\item In point~\ref{itm:w0_C-non-deg}, if $\nu$ is chosen in a sufficiently natural way (for example as defined by~\eqref{eq:nu_compatible_definition}), we may actually let $C' = C$.
\end{itemize}
\end{remark}

\subsection{Products of $X$-regular maps}
\label{sec:linear_regular_product}

In this subsection, we start by proving a few results that link linear properties to proximal properties. As a key tool for this, we will use the representations~$\rho_i$ (on spaces called~$V_i$) defined in Proposition~\ref{fundamental_real_representation}. More precisely:
\begin{itemize}
\item Proposition~\ref{type_X0_to_proximality}.\ref{X_regular_to_proximal} links the linear and proximal versions of regularity;
\item Lemma~\ref{PS_to_PVi_and_PVistar} links the linear and proximal versions of geometry;
\item Proposition~\ref{C-non-deg-in-Vi} links the linear and proximal versions of non-degeneracy;
\item Proposition~\ref{type_X0_to_proximality}.\ref{X_to_proximal_contraction_strength} links the linear and proximal versions of contraction strength.
\end{itemize}
Their proofs are adapted from Section~6 in~\cite{Smi16}.

We then use these results to deduce the linear version of Schema~\ref{proposition_template} from the proximal version. The linear version comprises two parts:
\begin{itemize}
\item Proposition~\ref{intrinsic_regular_product} is the ``main'' part. This result did not appear in~\cite{Smi16}.
\item Proposition~\ref{jordan_additivity} is the ``asymptotic dynamics'' part. It gives the same conclusion as Proposition~6.11 in~\cite{Smi16}, but uses in its hypotheses the linear versions of the properties (which are the most natural here) rather than the affine versions.
\end{itemize}

\begin{proposition}~
\label{type_X0_to_proximality}
\begin{hypothenum}
\item \label{X_regular_to_proximal}An element $g \in G$ is $X$-regular if and only if for every~$i \in \Pi \setminus \Pi_X$, the map~$\rho_i(g)$ is proximal. 
\item \label{X_to_proximal_contraction_strength} For every~$C \geq 1$, there is a constant $\vec{s}_{\ref{type_X0_to_proximality}}(C)$ with the following property. Let $g \in G$ be a $C$-non-degenerate $X$-regular map such that $\vec{s}_X(g) \leq \vec{s}_{\ref{type_X0_to_proximality}}(C)$. Then for every~$i \in \Pi \setminus \Pi_X$, we have
\[\tilde{s}(\rho_i(g)) \lesssim_C \vec{s}_X(g).\]
\end{hypothenum}
\end{proposition}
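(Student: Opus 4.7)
Part~\ref{X_regular_to_proximal} is a direct weight-theoretic computation. By Proposition~\ref{fundamental_real_representation}, the highest restricted weight $n_i\varpi_i$ of $\rho_i$ has multiplicity one, and by Lemma~\ref{fund_repr_other_weights} every other weight $\lambda$ of $\rho_i$ satisfies $n_i\varpi_i - \lambda = \alpha_i + \sum_j c_j \alpha_j$ with $c_j \geq 0$; for any $Y \in \mathfrak{a}^+$ this forces $(n_i\varpi_i - \lambda)(Y) \geq \alpha_i(Y)$. Plugging $Y = \jordan(g)$ and using Proposition~\ref{eigenvalues_and_singular_values_characterization}~(i), the eigenvalue $e^{n_i\varpi_i(\jordan(g))}$ of $\rho_i(g)$ is simple and is the strict maximum of eigenvalue moduli precisely when $\alpha_i(\jordan(g)) > 0$, which is the ``if'' direction. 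Conversely, if $\alpha_i(\jordan(g)) = 0$ for some $i \in \Pi \setminus \Pi_X$, then $\alpha_i'(\jordan(g)) = 0$ as well, so the Weyl-orbit element $s_{\alpha_i'}(n_i\varpi_i) = n_i\varpi_i - n_i\alpha_i'$ (a weight of $\rho_i$ by Proposition~\ref{convex_hull}) evaluates to $n_i\varpi_i(\jordan(g))$ on $\jordan(g)$, producing a second eigenvalue of the same modulus and ruling out proximality.

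For part~\ref{X_to_proximal_contraction_strength}, the plan is to read off singular-value estimates from the Cartan decomposition and then convert them into the eigenvalue-based quantity $\tilde{s}(\rho_i(g))$ using non-degeneracy. By Proposition~\ref{eigenvalues_and_singular_values_characterization}~(ii) combined with the $K$-invariant norm $B_i$ provided by Lemma~\ref{K-invariant}, the singular values of $\rho_i(g)$ are exactly the $e^{\lambda(\cartan(g))}$ with $\lambda$ ranging over weights of $\rho_i$; the top is $\sigma_1 = e^{n_i\varpi_i(\cartan(g))}$ and, again by Lemma~\ref{fund_repr_other_weights}, the second satisfies $\sigma_2 \leq e^{(n_i\varpi_i - \alpha_i)(\cartan(g))}$, so $\sigma_2/\sigma_1 \leq e^{-\alpha_i(\cartan(g))} \leq \vec{s}_X(g)$ for every $i \in \Pi \setminus \Pi_X$.

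To translate this gap into a bound on $\tilde{s}(\rho_i(g))$, I would invoke Remark~\ref{choice_of_nu_does_not_matter} to enlarge $\nu$ so that $C$-non-degeneracy of $g$ produces a canonizing map $\phi$ (decomposing $g = \phi^{-1}\tilde g_h \tilde g_e \tilde g_u \phi$ with commuting factors) satisfying $\|\rho_i(\phi)^{\pm 1}\| \lesssim_C 1$. Since $X$-regularity makes $V_i^{n_i\varpi_i}$ the entire generalized eigenspace of $\rho_i(\tilde g_h)$ for its dominant eigenvalue, the commuting factors $\tilde g_e,\tilde g_u$ preserve the decomposition $V_i = V_i^{n_i\varpi_i} \oplus W$ with $W := \bigoplus_{\lambda \neq n_i\varpi_i} V_i^\lambda$; consequently $E^s_{\rho_i(g)} = \rho_i(\phi)^{-1} V_i^{n_i\varpi_i}$ and $E^u_{\rho_i(g)} = \rho_i(\phi)^{-1} W$. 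Because $V_i^{n_i\varpi_i} \perp_{B_i} W$ by Lemma~\ref{K-invariant}, $\rho_i(\phi)$ itself is an admissible canonizing map in the sense of Definition~\ref{proximal_C_non_deg} for $(E^s_{\rho_i(g)}, E^u_{\rho_i(g)})$, so this pair is $C_1$-non-degenerate in the proximal sense, for some $C_1$ depending only on $C$.

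The final step is to apply the proximal theory to $\rho_i(g)$: Proposition~\ref{proximal_product}~(iii) specialized to $\gamma_1=\gamma_2=\rho_i(g)$ gives $r(\rho_i(g)) \asymp_{C_1} \|\rho_i(g)\| = \sigma_1$, and a parallel estimate, obtained from the block-diagonal structure of $\rho_i(\phi) \rho_i(g) \rho_i(\phi)^{-1}$ on $V_i^{n_i\varpi_i} \oplus W$ together with the fact that conjugation by a bounded-norm map preserves all singular values up to multiplicative constants (Weyl's inequalities), yields $\|\rho_i(g)|_{E^u_{\rho_i(g)}}\| \lesssim_{C_1} \sigma_2$. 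Taking the ratio gives $\tilde{s}(\rho_i(g)) \lesssim_{C_1} \sigma_2/\sigma_1 \leq \vec{s}_X(g)$, as desired. The main obstacle is a potential circularity: Proposition~\ref{proximal_product}~(iii) requires $\rho_i(g)$ to be $\tilde{s}_{\ref{proximal_product}}(C_1)$-contracting in the proximal sense, which is essentially what we are trying to prove. I would break the loop by choosing $\vec{s}_{\ref{type_X0_to_proximality}}(C)$ small enough that the elementary block-diagonal bound (using only bounded conjugation by $\rho_i(\phi)$ together with the Cartan singular value gap) already forces $\tilde{s}(\rho_i(g)) \leq \tilde{s}_{\ref{proximal_product}}(C_1)$ a priori, at which point Proposition~\ref{proximal_product}~(iii) applies and closes the argument.
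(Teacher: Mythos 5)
Your part~(i) is essentially the paper's own argument: the spectral gap of $\rho_i(g)$ is computed to be $\exp(\alpha_i(\jordan(g)))$ via Proposition~\ref{eigenvalues_and_singular_values_characterization}~(i), Lemma~\ref{fund_repr_other_weights} and the fact that $s_{\alpha_i}(n_i\varpi_i)=n_i\varpi_i-n_i\alpha'_i$ is a weight, and that is fine. Likewise the first half of your part~(ii) — the singular-value gap $\sigma_2/\sigma_1\le e^{-\alpha_i(\cartan(g))}\le\vec{s}_X(g)$ from Proposition~\ref{eigenvalues_and_singular_values_characterization}~(ii) — matches the paper. The gap lies entirely in your conversion of this singular-value gap into a bound on the eigenvalue-based quantity $\tilde{s}(\rho_i(g))$.

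Two issues there. A minor one first: $C$-non-degeneracy (Definition~\ref{linear-C-non-deg-definition}) hands you a bounded representative of the coset of $G/L_X$ recording the geometry of~$g$, not a bounded canonizing map $\phi$ with $\phi g_h\phi^{-1}=\exp(\jordan(g))$; conjugating by that representative does carry $E^s_{\rho_i(g)}$ and $E^u_{\rho_i(g)}$ onto $V_i^{n_i\varpi_i}$ and $W=\bigoplus_{\lambda\neq n_i\varpi_i}V_i^\lambda$, so your orthogonal invariant splitting is available — but nothing more than that. The decisive problem is the step $\|\rho_i(g)|_{E^u_{\rho_i(g)}}\|\lesssim_{C_1}\sigma_2$: the block structure plus ``bounded conjugation preserves singular values up to constants'' only tells you that the singular values of $h'=\rho_i(\phi)\rho_i(g)\rho_i(\phi)^{-1}$ split as the union of the one carried by the line and those of $h'|_W$; it does \emph{not} tell you that the largest singular value is the one on the line. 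A priori $\|h'|_W\|$ can be comparable to $\sigma_1$ (the $W$-block carries the whole elliptic, unipotent and $L_X$-part of the conjugated element, which your hypotheses do not cap), in which case the numerator bound fails and, at the same time, $r(\rho_i(g))$ need not be comparable to $\|\rho_i(g)\|$. This is also why your proposed escape from the circularity does not work: the ``elementary block-diagonal bound'' that is supposed to force $\tilde{s}(\rho_i(g))\le\tilde{s}_{\ref{proximal_product}}(C_1)$ a priori is precisely the unproven inequality, so Proposition~\ref{proximal_product}~(iii) is never legitimately available. The paper avoids this loop altogether: it never invokes Proposition~\ref{proximal_product} here, but conjugates $g$ by the bounded coset representative to a canonized $g'$, evaluates $\tilde{s}(\rho_i(g'))$ directly as a ratio of the top two singular values of $\rho_i(g')$ (redoing the weight computation with Proposition~\ref{eigenvalues_and_singular_values_characterization}~(ii) in place of~(i)), and transfers back via $\|\cartan(g')-\cartan(g)\|\lesssim_C 1$ — this is the chain \eqref{eq:srhoig_srhoig'}--\eqref{eq:lin_contr_strength_straightforward}. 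The substantive point your write-up leaves unaddressed, namely that the top singular value is the one attached to the attracting line and not to the $W$-block, is exactly what is packed into the paper's choice of~$g'$, and any repair of your argument has to supply that justification rather than appeal to Weyl's inequalities and the proximal product proposition.
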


These two statements essentially correspond to the respective left halves of~(6.12) and~(6.13) in~\cite{Smi16}. The proof is also essentially the same.

Part~\ref{X_regular_to_proximal} is given in Definition~3.2.2 in~\cite{Ben97}.

\begin{remark}~
\begin{itemize}
\item Note that since all Euclidean norms on a finite-dimensional vector space are equivalent, this estimate makes sense even though we did not specify any norm on~$V_i$. In the course of the proof, we shall choose one that is convenient for us.
\item Recall that ``$i \in \Pi \setminus \Pi_X$'' is a notation shortcut for ``$i$ such that $\alpha_i \in \Pi \setminus \Pi_X$''.
\item $\Pi_X$ should be thought of as a kind of ``exceptional set''. In practice, it will often be empty (see Remark~\ref{X0_examples} below).
\end{itemize}
\end{remark}

\needspace{\baselineskip}
\begin{proof}[Proof of Proposition~\ref{type_X0_to_proximality}]~
\begin{hypothenum}
\item For each $i \in \Pi \setminus \Pi_X$, we will estimate (in \eqref{eq:spectral_gap_estimate}) the spectral gap of~$\rho_i(g)$ in terms of~$\jordan(g)$. Recalling the respective definitions of proximality and of $X$-regularity, the conclusion will then follow immediately.

We fix some $i \in \Pi \setminus \Pi_X$. By Proposition~\ref{eigenvalues_and_singular_values_characterization}~(i), the list of the moduli of the eigenvalues of~$\rho_i(g)$ is precisely
\[\left( e^{\lambda_i^j(\jordan(g))} \right)_{1 \leq j \leq d_i},\]
where $d_i$ is the dimension of~$V_i$ and~$(\lambda_i^j)_{1 \leq j \leq d_i}$ is the list of restricted weights of~$\rho_i$ listed with multiplicity. It remains to determine which are the largest two of them (see \eqref{eq:highest_eigenvalue}).

Up to reordering that list, we may suppose that~$\lambda_i^1 = n_i \varpi_i$ is the highest restricted weight of~$\rho_i$. We may also suppose that~$\lambda_i^2 = n_i \varpi_i - \alpha_i$. Indeed we have
\begin{align}
s_{\alpha_i}(n_i \varpi_i)
  &= s_{\alpha'_i}(n_i \varpi_i) \nonumber \\
  &= n_i \varpi_i - 2 n_i \frac{\langle \varpi_i, \alpha'_i \rangle}{\langle \alpha'_i, \alpha'_i \rangle}\alpha'_i \nonumber \\
  &= n_i \varpi_i - n_i \alpha'_i
\end{align}
(recall that~$\alpha'_i$ is equal to~$2\alpha_i$ if~$2\alpha_i$ is a restricted root and to~$\alpha_i$ otherwise). But by Proposition~\ref{convex_hull}, $s_{\alpha_i}(n_i \varpi_i)$~is a restricted weight of~$\rho_i$ (because it is the image of a restricted weight of~$\rho_i$ by an element of the Weyl group) and then $n_i \varpi_i - \alpha_i$ is also a restricted weight of~$\rho_i$ (as a convex combination of two restricted weights of~$\rho_i$, that belongs to the restricted root lattice shifted by~$n_i \varpi_i$).

Take any~$j > 2$. Since by hypothesis, the restricted weight~$n_i \varpi_i$ has multiplicity~$1$, we have $\lambda_i^j \neq \lambda_i^1$. By Lemma~\ref{fund_repr_other_weights}, it follows that this restricted weight has the form
\innerneedspace{\baselineskip}
\[\lambda_i^j = n_i \varpi_i - \alpha_i - \sum_{i' = 1}^r c_{i'} \alpha_{i'},\]
with $c_{i'} \geq 0$ for every index~$i'$.

Finally, since by definition $\jordan(g) \in \mathfrak{a}^{+}$, for every index~$i'$ we have~$\alpha_{i'}(\jordan(g)) \geq 0$. It follows that for every~$j > 2$, we have
\begin{equation}
\label{eq:highest_eigenvalue}
\lambda_i^1(\jordan(g)) \geq \lambda_i^2(\jordan(g)) \geq \lambda_i^j(\jordan(g)).
\end{equation}
In other words, among the moduli of the eigenvalues of~$\rho_i(g)$, the largest is
\[\exp \Big( \lambda_i^1(\jordan(g)) \Big) = \exp \Big( n_i \varpi_i(\jordan(g)) \Big),\]
and the second largest is
\[\exp \Big( \lambda_i^2(\jordan(g)) \Big) = \exp \Big( n_i \varpi_i(\jordan(g)) - \alpha_i(\jordan(g)) \Big).\]
It follows that the spectral gap of~$\rho_i(g)$ is equal to
\begin{equation}
\label{eq:spectral_gap_estimate}
\frac{\exp \Big( \lambda_i^1(\jordan(g)) \Big)}
     {\exp \Big( \lambda_i^2(\jordan(g)) \Big)} = \exp(\alpha_i(\jordan(g)))
\end{equation}
as desired.

\item Let~$\vec{s}_{\ref{type_X0_to_proximality}}(C)$ be a constant small enough to satisfy all the constraints that will appear in the course of the proof.

To prove the desired inequality, we will prove the following four intermediate steps, the main step being~\eqref{eq:prox_contr_strength_expression} (with a proof closely analogous to the proof of~\eqref{eq:spectral_gap_estimate}):
\begin{align}
\tilde{s}(\rho_i(g)) &\asymp_C \tilde{s}(\rho_i(g')) \label{eq:srhoig_srhoig'} \\
                     &=_{\phantom{C}} \exp(\alpha_i(\cartan(g')))^{-1} \label{eq:prox_contr_strength_expression} \\
                     &\asymp_C \exp(\alpha_i(\cartan(g)))^{-1} \label{eq:cartan_canonization} \\
                     &\leq_{\phantom{C}} \vec{s}_X(g), \label{eq:lin_contr_strength_straightforward}
\end{align}
where $g'$ is some conjugate of~$g$ to be chosen suitably. Also we assume here that we use, on the space~$V_i$ where the representation~$\rho_i$ acts, a $K$-invariant Euclidean form~$B_i$ such that all the restricted weight spaces for~$\rho_i$ are pairwise $B_i$\nobreakdash-\hspace{0pt}orthogonal (which exists by Lemma~\ref{K-invariant} applied to~$\rho_i$). If we worked with a different norm, the final result would of course stay the same (since all norms are equivalent), but some of the intermediate equalities would have to be replaced by ``$\asymp_C$'' estimates.

\begin{itemize}
\item Let $g \in G$ be a map satisfying the hypotheses, and let us fix $i \in \Pi \setminus \Pi_X$. To obtain~\eqref{eq:srhoig_srhoig'}, let us define~$g' := \phi g \phi^{-1}$, where $\phi$ is an optimal representative of the coset in $G/L_X$ giving the geometry of~$g$. Then \eqref{eq:srhoig_srhoig'} follows from the $C$-non-degeneracy of~$g$, by the following observation: for every~$C \geq 1$, the continuous map
\begin{equation}
\label{eq:rho_i_norm_map}
\phi \mapsto \max \Big( \left\|\rho_i(\phi\vphantom{^{-1}})\right\|,\; \left\|\rho_i(\phi^{-1})\right\| \Big)
\end{equation}
is bounded above on the compact set $\nu^{-1}([0,C])$, by some constant~$C'_i$ that depends only on~$C$ (and on the choice of a norm on~$V_i$).

\item The $C$-non-degeneracy of~$g$ also ensures~\eqref{eq:cartan_canonization}, as a corollary of the following important fact: the difference between $\cartan(g')$ and $\cartan(g)$ is bounded in norm by a constant that depends only on~$C$; or more concisely
\begin{equation}
\label{eq:cartan_g_g'}
\|\cartan(g') - \cartan(g)\| \lesssim_C 1.
\end{equation}
This can easily be proved coordinate by coordinate, using the $(\varpi_i)_{i \in \Pi}$ coordinate system. Indeed, note that we have
\begin{equation}
\forall i \in \Pi,\quad \varpi_i(\cartan(g)) = \frac{1}{n_i} \log \|\rho_i(g)\|,
\end{equation}
and similarly for~$g'$ (this is a consequence of Proposition~\ref{eigenvalues_and_singular_values_characterization}~(ii)). So it suffices to prove that $\|\rho_i(g)\| \asymp_C \|\rho_i(g')\|$ for every~$i$; but this is just another application of the remark about the boundedness of the map~\eqref{eq:rho_i_norm_map} on compacts.

\item The inequality~\eqref{eq:lin_contr_strength_straightforward} is just a straightforward consequence of the definition of~$\vec{s}_X$.

\item It remains to prove the main part, namely~\eqref{eq:prox_contr_strength_expression}. Note that, thanks to our choice of~$g'$ (and of the Euclidean norm~$B_i$), $\tilde{s}(\rho_i(g'))$ is simply the quotient of the two largest singular values of~$\rho_i(g')$. So the equality~\eqref{eq:prox_contr_strength_expression} can be proved in the same way as~\eqref{eq:spectral_gap_estimate}; the only thing to change is to replace all uses of Proposition~\ref{eigenvalues_and_singular_values_characterization}~(i) by uses of Proposition~\ref{eigenvalues_and_singular_values_characterization}~(ii) (that gives the singular values of an element of~$G$ in a given representation). \qedhere
\end{itemize}
\end{hypothenum}
\end{proof}

\begin{proposition}
\label{C-non-deg-in-Vi}
Let $(g_1, g_2)$ be a $C$-non-degenerate pair of $X$-regular elements of~$G$. Then for every~$i \in \Pi \setminus \Pi_X$, the pair~$(\rho_i(g_1), \rho_i(g_2))$ is a $C'$-non-degenerate pair of proximal maps in~$\GL(V_i)$, where $C'$~is some constant that depends only on~$C$.
\end{proposition}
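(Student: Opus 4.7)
\medskip

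\noindent\textbf{Proof proposal.} Proximality of each $\rho_i(g_j)$ for $i \in \Pi \setminus \Pi_X$ and $j \in \{1,2\}$ is already provided by Proposition~\ref{type_X0_to_proximality}\ref{X_regular_to_proximal}, so the only thing to verify is that, for each $i \in \Pi \setminus \Pi_X$ and each of the four pairs $(j,k) \in \{1,2\}^2$, the pair $(E^s_{\rho_i(g_j)}, E^u_{\rho_i(g_k)})$ admits a canonizing map whose operator norm and inverse operator norm are bounded by a constant depending only on~$C$. The guiding idea is that the optimal representative in~$G$ of the transverse pair $(y^{X,+}_{g_j}, y^{X,-}_{g_k}) \in G/L_X$, whose existence is guaranteed by $C$-non-degeneracy, should produce exactly such a canonizing map once pushed through~$\rho_i$.

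To implement this idea, the plan is to construct natural $G$-equivariant maps
\[\pi_i^+ : G/P_X^+ \to \mathbb{P}(V_i), \qquad \pi_i^- : G/P_X^- \to \operatorname{Gr}_{\dim V_i - 1}(V_i),\]
sending the cosets $\phi P_X^+$ and $\phi P_X^-$ to $\rho_i(\phi) V_i^{n_i \varpi_i}$ and $\rho_i(\phi) \bigoplus_{\lambda \neq n_i \varpi_i} V_i^\lambda$ respectively. Well-definedness amounts to checking that $P_X^\pm$ stabilize these subspaces, which I would verify by applying Lemma~\ref{generalized_Bruhat} to the top-subset $\{n_i \varpi_i\}$ and the bottom-subset $\Omega_i \setminus \{n_i \varpi_i\}$ of the weight set~$\Omega_i$ of $\rho_i$, together with Corollary~\ref{parabolic_Bruhat_decomposition}: it then reduces to showing $W_X \subset \Stab_W(n_i \varpi_i)$, which is immediate since $W_X$ is generated by the reflections~$s_\alpha$ with $\alpha \in \Pi_X$, all of which fix~$\varpi_i$ (because $\alpha_i \notin \Pi_X$).

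Unpacking Definitions \ref{proximal_definition}, \ref{attracting_repelling_space_definition} and~\ref{geometry_definition} and using the fact that $n_i \varpi_i$ is a highest restricted weight of multiplicity one (so that the top eigenspace of $\rho_i(g_h)$ equals $\rho_i(\phi^{-1}) V_i^{n_i\varpi_i}$ for any canonizing~$\phi$), one gets $E^s_{\rho_i(g)} = \pi_i^+(y^{X,+}_g)$ and $E^u_{\rho_i(g)} = \pi_i^-(y^{X,-}_g)$ for every $X$-regular $g \in G$. Consequently, a transverse pair in $G/P_X^+ \times G/P_X^-$ lands in a transverse (line, hyperplane) pair in~$V_i$, which in the canonical model $\bigl(V_i^{n_i\varpi_i}, \bigoplus_{\lambda \neq n_i\varpi_i} V_i^\lambda\bigr)$ is $B_i$-orthogonal by Lemma~\ref{K-invariant}.

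To conclude: pick, for each $(j,k)$, an optimal representative $\phi_{jk} \in G$ of the transverse pair $(y^{X,+}_{g_j}, y^{X,-}_{g_k}) \in G/L_X$, so that $\nu(\phi_{jk}) \leq C$. Then $\rho_i(\phi_{jk}^{-1})$ sends $(E^s_{\rho_i(g_j)}, E^u_{\rho_i(g_k)})$ onto the canonical $B_i$-orthogonal pair, hence is a canonizing map in the sense of Definition~\ref{proximal_C_non_deg}. Its operator norms are bounded by the continuous function $\phi \mapsto \max(\|\rho_i(\phi)\|, \|\rho_i(\phi^{-1})\|)$ evaluated on the compact set $\nu^{-1}([0,C])$, yielding a constant $C'_i$ that depends only on~$C$; the optimal canonizing map has norms no greater, so the pair is $C'_i$-non-degenerate. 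Taking $C' := \max_{i \in \Pi \setminus \Pi_X} C'_i$ (a finite maximum) finishes the argument. The main technical hurdle is the equivariant construction of $\pi_i^\pm$ and the accompanying identification of the attracting and repelling spaces with the images of the $X$-flags; once this structural correspondence is in place, the quantitative bound is a routine compactness argument.
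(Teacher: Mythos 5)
Your argument is correct and follows essentially the same route as the paper: you reconstruct the content of Lemmas \ref{stabilizer_Es_Eu} and~\ref{PS_to_PVi_and_PVistar} (the $G$-equivariant identification of the attracting and repelling $X$-flags of~$g$ with the attracting line and repelling hyperplane of~$\rho_i(g)$, via the highest restricted weight space of multiplicity one) and then conclude by a compactness argument. The only difference is cosmetic and lies in the last step: the paper bounds the angle between $\Phi_X^s(y^+)_i$ and $\Phi_X^u(y^-)_i$ on the compact set of $C$-non-degenerate pairs in $G/P_X^+ \times G/P_X^-$, whereas you exhibit the explicit canonizing map $\rho_i(\phi_{jk}^{-1})$ coming from an optimal representative $\phi_{jk} \in \nu^{-1}([0,C])$ and bound its norms by compactness of that set in~$G$ --- the same device the paper itself uses in the proof of Proposition~\ref{type_X0_to_proximality}~\ref{X_to_proximal_contraction_strength}.
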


This is a straightforward generalization of Proposition~6.7 in~\cite{Smi16}. Its proof relies on the following two lemmas, which are analogous to Lemmas~6.8 and~6.10 in~\cite{Smi16}. However, the two lemmas that follow are now formulated more generally; the stronger statements will be useful in order to prove Proposition~\ref{intrinsic_regular_product}.

\begin{lemma}
\label{stabilizer_Es_Eu}
We have:
\begin{hypothenum}
\item $\displaystyle \bigcap_{i \in \Pi \setminus \Pi_X} \Stab_G \Bigg( V^{n_i \varpi_i}_i \Bigg) = P_X^+$;
\item $\displaystyle \bigcap_{i \in \Pi \setminus \Pi_X} \Stab_G \Bigg( \bigoplus_{\lambda \neq n_i \varpi_i} V^\lambda_i \Bigg) = P_X^-$.
\end{hypothenum}
\end{lemma}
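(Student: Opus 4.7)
The strategy is to compute each stabiliser on the left-hand side separately by applying the generalised Bruhat decomposition (Lemma~\ref{generalized_Bruhat}) to a suitably chosen subset of $\Omega_i$, identify the result as a ``maximal'' standard parabolic corresponding to $\Pi \setminus \{\alpha_i\}$, and then take the intersection using the correspondence between standard parabolic subgroups containing $P^\pm$ and subsets of~$\Pi$.

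For~(i), the plan is as follows. Since $n_i \varpi_i$ is the highest restricted weight of~$\rho_i$ (Proposition~\ref{fundamental_real_representation} and Proposition~\ref{highest_restr_weight}), for every $\alpha \in \Sigma^+$ the element $n_i \varpi_i + \alpha$ strictly exceeds $n_i \varpi_i$ in the dominance order and therefore does not belong to~$\Omega_i$; hence $\{n_i \varpi_i\}$ is a top-subset of~$\Omega_i$. Lemma~\ref{generalized_Bruhat} then yields $\Stab_G(V_i^{n_i \varpi_i}) = P^+ \Stab_W(\{n_i \varpi_i\}) P^+$. To compute the Weyl-stabiliser, note that $s_{\alpha_j} \varpi_i = \varpi_i - \delta_{ij}\alpha'_j$, so the simple reflections fixing $n_i \varpi_i$ are exactly $\{s_{\alpha_j} : j \neq i\}$; by Chevalley's lemma (see Remark~\ref{W_vs_Pi}) these generate the whole stabiliser, hence $\Stab_W(\{n_i \varpi_i\}) = W_{Y_i}$, where $Y_i \in \mathfrak{a}^+$ is any element with $\Pi_{Y_i} = \Pi \setminus \{\alpha_i\}$. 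Corollary~\ref{parabolic_Bruhat_decomposition} then identifies $\Stab_G(V_i^{n_i \varpi_i}) = P^+_{Y_i}$.

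Part~(ii) is the symmetric statement: the complement $\Xi := \Omega_i \setminus \{n_i \varpi_i\}$ is a bottom-subset of $\Omega_i$ (the only way the bottom-subset condition could fail is via $\lambda = n_i\varpi_i$ with some $\lambda + \alpha$, $\alpha \in \Sigma^+$, belonging to $\Omega_i$, which was ruled out above), and $W$ permutes $\Omega_i$, so $\Stab_W(\Xi) = \Stab_W(\{n_i\varpi_i\}) = W_{Y_i}$. Applying the $P^-$-version of Lemma~\ref{generalized_Bruhat} and Corollary~\ref{parabolic_Bruhat_decomposition}, we get $\Stab_G(\bigoplus_{\lambda \neq n_i\varpi_i} V_i^\lambda) = P^-_{Y_i}$.

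It remains to take the intersection over $i \in \Pi \setminus \Pi_X$. For each such $i$ we have $\Pi_X \subset \Pi \setminus \{\alpha_i\} = \Pi_{Y_i}$, so $P^\pm_X \subset P^\pm_{Y_i}$, giving one inclusion. For the reverse, the cleanest argument is at the Lie-algebra level: $\mathfrak{p}^+_{Y_i}$ contains a root space $\mathfrak{g}^\alpha$ with $\alpha$ written as $\sum_j c_j \alpha_j$ iff $c_i \geq 0$ (which is automatic when $\alpha \in \Sigma^+$, and forces the coefficient of $\alpha_i$ to vanish when $\alpha \in -\Sigma^+$). Intersecting over $i \in \Pi \setminus \Pi_X$ kills exactly the negative root spaces whose support is not entirely inside $\Pi_X$, yielding $\bigcap_i \mathfrak{p}^+_{Y_i} = \mathfrak{p}^+_X$; the group identity $\bigcap_i P^+_{Y_i} = P^+_X$ follows since these are closed connected-stabiliser parabolic subgroups and standard parabolics are determined by their Lie algebras. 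The same argument (with signs reversed) gives $\bigcap_i P^-_{Y_i} = P^-_X$. The main technical care needed is to verify that intersection of standard parabolics matches intersection of the associated subsets of $\Pi$, but once this is done at the Lie-algebra level both identities fall out immediately.
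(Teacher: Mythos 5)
Your proof is correct and takes essentially the same route as the paper: apply Lemma~\ref{generalized_Bruhat} to the singleton $\{n_i\varpi_i\}$ (a top-subset) and to its complement (a bottom-subset with the same Weyl stabilizer), identify each stabilizer as the maximal parabolic $P^\pm_{\varpi_i}$ via Corollary~\ref{parabolic_Bruhat_decomposition}, and intersect over $i \in \Pi \setminus \Pi_X$. The only divergence is at the last step, where the paper dispatches $\bigcap_i P^\pm_{\varpi_i} = P^\pm_X$ as an ``easy exercise'' through the correspondence with subsets of~$\Pi$ while you verify it at the Lie-algebra level; this does suffice, since any element of $\bigcap_i P^+_{Y_i}$ normalizes each $\mathfrak{p}^+_{Y_i}$, hence normalizes $\bigcap_i \mathfrak{p}^+_{Y_i} = \mathfrak{p}^+_X$, and $P^+_X$ is by definition $N_G(\mathfrak{p}^+_X)$ (similarly for the minus signs).
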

\begin{proof}
For every~$i \in \Pi$, let~$\Omega_i$ be the set of restricted weights of the representation~$\rho_i$.
\begin{hypothenum}
\item We begin by noting that for every $i$, we have
\[\Stab_G \Bigg( V^{n_i \varpi_i}_i \Bigg) = P^+ \Stab_W(n_i \varpi_i) P^+ = P^+ W_{\varpi_i} P^+.\]
This follows from Lemma~\ref{generalized_Bruhat}: indeed the singleton $\{n_i \varpi_i\}$ is clearly a top-subset in~$\Omega_i$. Corollary~\ref{parabolic_Bruhat_decomposition} then gives us that
\begin{equation}
\Stab_G \Bigg( V^{n_i \varpi_i}_i \Bigg) = P^+_{\varpi_i}.
\end{equation}
Now it is an easy exercise to show that for any $X, Y, Z \in \mathfrak{a}^+$, we have
\begin{align}
P^+_X \cap P^+_Y = P^+_Z &\iff \mathfrak{p}^+_X \cap \mathfrak{p}^+_Y = \mathfrak{p}^+_Z \nonumber \\
                         &\iff W_X \cap W_Y = W_Z \nonumber \\
                         &\iff \Pi_X \cap \Pi_Y = \Pi_Z.
\end{align}
Now obviously $\Pi_{\varpi_i} = \Pi \setminus \{\alpha_i\}$, and if we remove from~$\Pi$ all elements that are not in~$\Pi_X$, we are left with precisely~$\Pi_X$. Thus
\begin{equation}
\bigcap_{i \in \Pi \setminus \Pi_X} P^+_{\varpi_i} = P^+_X,
\end{equation}
and the conclusion follows.

\item Note that for every~$i$, the complement $\Omega_i \setminus \{n_i \varpi_i\}$ is a bottom-subset of~$\Omega_i$, and has the same stabilizer in~$W$ as~$\{n_i \varpi_i\}$. We may then follow the same line of reasoning. \qedhere
\end{hypothenum}
\end{proof}

In the following lemma, we identify the projective space~$\mathbb{P}(V_i)$ with the set of vector lines in~$V_i$ and the projective space~$\mathbb{P}(V^*_i)$ with the set of vector hyperplanes of~$V_i$.
\begin{lemma}
\label{PS_to_PVi_and_PVistar}
There exist two smooth embeddings
\[\Phi_X^s: G/P^+_X \to \prod_{i \in \Pi \setminus \Pi_X} \mathbb{P}(V_i)
\quad\text{ and }\quad
\Phi_X^u: G/P^-_X \to \prod_{i \in \Pi \setminus \Pi_X} \mathbb{P}(V^*_i)\]
with the following properties:
\begin{hypothenum}
\item For every $X$-regular map~$g \in G$, we have
\[\begin{cases}
\Phi_X^s(y^{X, +}_g) = \Big( E^s_{\rho_i(g)} \Big)_{i \in \Pi \setminus \Pi_X}; \\
\Phi_X^u(y^{X, -}_g) = \Big( E^u_{\rho_i(g)} \Big)_{i \in \Pi \setminus \Pi_X}.
\end{cases}\]
\item If $(y^+, y^-) \in G/P^+_X \times G/P^-_X$ is a transverse pair, then for every $i \in \Pi \setminus \Pi_X$, we have $\Phi_X^s(y^+)_i \not\in \Phi_X^u(y^-)_i.$
\end{hypothenum}
\end{lemma}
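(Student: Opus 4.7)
The natural construction is to package together the attracting lines (resp.\ repelling hyperplanes) in each of the representations $\rho_i$ for $i \in \Pi \setminus \Pi_X$. For each such $i$, set $\ell_i := V_i^{n_i \varpi_i}$ (a line, by Proposition~\ref{fundamental_real_representation}) and $H_i := \bigoplus_{\lambda \neq n_i \varpi_i} V_i^\lambda$ (a hyperplane). By Lemma~\ref{stabilizer_Es_Eu}, the stabilizer of $\ell_i$ in $G$ is the maximal parabolic $P^+_{\varpi_i}$, and similarly $H_i$ is stabilized by $P^-_{\varpi_i}$. Moreover, still by that lemma, $\bigcap_{i \in \Pi \setminus \Pi_X} P^\pm_{\varpi_i} = P^\pm_X$. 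I therefore propose to define
\[\Phi_X^s(\phi P^+_X) := (\phi \cdot \ell_i)_{i \in \Pi \setminus \Pi_X}, \qquad \Phi_X^u(\phi P^-_X) := (\phi \cdot H_i)_{i \in \Pi \setminus \Pi_X}.\]
Well-definedness is immediate from the stabilizer description, and smoothness is clear.

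To check that $\Phi_X^s$ is an embedding, I would argue as follows. Injectivity is a direct restatement of Lemma~\ref{stabilizer_Es_Eu}~(i): if $\phi_2^{-1} \phi_1$ fixes every $\ell_i$ for $i \in \Pi \setminus \Pi_X$, then it lies in $P^+_X$. For the immersion property, I use $G$-equivariance: it suffices to verify injectivity of the differential at the base point $eP^+_X$, where the tangent space is $\mathfrak{g}/\mathfrak{p}^+_X$. The differential sends the class of $Y \in \mathfrak{g}$ to the tuple whose $i$-th component is the image of $Y$ in $T_{\ell_i}\mathbb{P}(V_i)$; this image vanishes precisely when $Y$ preserves $\ell_i$, i.e.\ when $Y \in \mathfrak{p}^+_{\varpi_i}$. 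So the kernel of the differential is $\bigcap_i \mathfrak{p}^+_{\varpi_i} = \mathfrak{p}^+_X$, and equivariance extends injectivity of the differential to every point. Since $G/P^+_X$ is compact, an injective immersion is automatically an embedding. The case of $\Phi_X^u$ is completely analogous.

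For property~(i), fix an $X$-regular element $g$ with Jordan decomposition $g = g_h g_e g_u$, and let $\phi$ be a canonizing map, so that $g_h = \phi^{-1} \exp(\jordan(g)) \phi$. The key observation is that $g_e$ and $g_u$, commuting with $g_h$, preserve each generalized eigenspace of $g_h$; in any representation $\rho_i$ they act there with eigenvalues of modulus~$1$. So in $V_i$, the moduli of the eigenvalues of $\rho_i(g)$ coincide with those of $\rho_i(g_h)$, and by the computation already carried out in the proof of Proposition~\ref{type_X0_to_proximality}\ref{X_regular_to_proximal} the largest such modulus is $e^{n_i \varpi_i(\jordan(g))}$, attained on the 1-dimensional eigenspace $\phi^{-1}(\ell_i)$. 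Hence $E^s_{\rho_i(g)} = \phi^{-1} \cdot \ell_i$, which is exactly $\Phi_X^s(\phi^{-1} P^+_X)_i = \Phi_X^s(y^{X,+}_g)_i$. The repelling hyperplane $E^u_{\rho_i(g)}$ is the complementary sum of generalized eigenspaces, equal to $\phi^{-1} \cdot H_i$, which gives the statement for $\Phi_X^u$.

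Property~(ii) is then essentially a tautology: if $(y^+, y^-)$ is a transverse pair, one may write $y^+ = \phi P^+_X$ and $y^- = \phi P^-_X$ for a common $\phi$, so that $\Phi_X^s(y^+)_i = \phi \cdot \ell_i$ and $\Phi_X^u(y^-)_i = \phi \cdot H_i$; since $\ell_i \oplus H_i = V_i$, translating by $\phi$ preserves this direct sum decomposition and in particular $\phi \cdot \ell_i \not\subset \phi \cdot H_i$. I expect the only mildly delicate step to be the verification that the differential is injective at the base point (and consequently that $\Phi_X^s$ is an immersion); everything else reduces cleanly to Lemma~\ref{stabilizer_Es_Eu} together with the elementary properties of highest weights already invoked above.
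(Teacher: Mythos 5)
Your proposal is correct and follows essentially the same route as the paper: the same maps $\phi P_X^\pm \mapsto (\phi\cdot V_i^{n_i\varpi_i})_i$, resp.\ $(\phi\cdot\bigoplus_{\lambda\neq n_i\varpi_i}V_i^\lambda)_i$, with well-definedness, injectivity and injectivity of the differential all reduced to Lemma~\ref{stabilizer_Es_Eu} via $G$-equivariance, and property~(i) obtained from the eigenvalue ranking~\eqref{eq:highest_eigenvalue} together with conjugation by a canonizing map. Your explicit identification of the kernel of the differential with $\bigcap_i \mathfrak{p}^+_{\varpi_i}=\mathfrak{p}^+_X$ is just the spelled-out form of what the paper calls ``differentiating'' that lemma.
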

Our map~$\Phi_X^s$ is the map~$j_\theta$ defined in the beginning of~3.4 in~\cite{Ben97}. Statement~(ii) appears a little further in the same text (together with its converse which is also true).
\begin{proof}
We define the maps $\Phi_X^s$ and~$\Phi_X^u$ in the following way. For every $\phi \in G$, we set
\begin{equation}
\begin{cases}
\Phi_X^s(\phi P^+_X) = \Big( \rho_i(\phi) \left( V^{n_i \varpi_i}_i \right) \Big)_{i \in \Pi \setminus \Pi_X}; \\
\Phi_X^u(\phi P^-_X) = \Big( \rho_i(\phi) \left( \bigoplus_{\lambda \neq n_i \varpi_i} V^\lambda_i \right) \Big)_{i \in \Pi \setminus \Pi_X}.
\end{cases}
\end{equation}

These maps are well-defined and injective by Lemma~\ref{stabilizer_Es_Eu}, and obviously continuous. They are by construction $G$-equivariant; hence to prove that they are smooth embeddings, it is sufficient to prove that both of them have injective differential at the identity coset. This also follows from Lemma~\ref{stabilizer_Es_Eu} (by differentiating it).

To show property~(i), we essentially use:
\begin{itemize}
\item the identities
\begin{equation}
\begin{cases}
E^s_{\rho_i(\exp(\jordan(g)))} = V^{n_i \varpi_i}_i; \\
E^u_{\rho_i(\exp(\jordan(g)))} = \bigoplus_{\lambda \neq n_i \varpi_i} V^\lambda_i,
\end{cases}
\end{equation}
which follow from the inequality~\eqref{eq:highest_eigenvalue} ranking the values of different restricted weights of~$\rho_i$ evaluated at~$\jordan(g)$;
\item and the simple observation that any eigenspace of~$\phi g \phi^{-1}$ is the image by~$\phi$ of the eigenspace of~$g$ with the same eigenvalue.
\end{itemize}
Property~(ii) is obvious from the definitions.
\end{proof}

\begin{proof}[Proof of Proposition~\ref{C-non-deg-in-Vi}]
Let $C \geq 1$. Then the set of $C$-non-degenerate transverse pairs in $G/P^+_X \times G/P^-_X$ is compact. On the other hand, the function
\[(y^+, y^-) \mapsto \max_{i \in \Pi \setminus \Pi_X} \alpha(\Phi_X^s(y^+)_i, \Phi_X^u(y^-)_i)\]
is continuous, and (by Lemma~\ref{PS_to_PVi_and_PVistar}~(ii)) takes positive values on that set. Hence it is bounded below. So there is a constant~$C' \geq 1$, depending only on~$C$, such that whenever a transverse pair $(y^+, y^-)$ is $C$-non-degenerate, all pairs $(\Phi_X^s(y^+)_i, \Phi_X^u(y^-)_i)$ are $C'$-non-degenerate.

The conclusion then follows by Lemma~\ref{PS_to_PVi_and_PVistar}~(i).
\end{proof}

\begin{proposition}
\label{intrinsic_regular_product}
For every $C \geq 1$, there is a positive constant $\vec{s}_{\ref{intrinsic_regular_product}}(C) < 1$ with the following property. Take any $C$-non-degenerate pair $(g, h)$ of $X$-regular maps; suppose that we have $\vec{s}_X(g) \leq \vec{s}_{\ref{intrinsic_regular_product}}(C)$ and $\vec{s}_X(h) \leq \vec{s}_{\ref{intrinsic_regular_product}}(C)$. Then $gh$ is still $X$-regular, and we have:
\[\begin{cases}
\delta \left(y^{X, +}_{gh},\; y^{X, +}_{g} \right) \lesssim_C \vec{s}_X(g); \vspace{1mm} \\
\delta \left(y^{X, -}_{gh},\; y^{X, -}_{h} \right) \lesssim_C \vec{s}_X(h).
\end{cases}\]
\end{proposition}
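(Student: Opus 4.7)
The plan is to reduce the statement to its proximal counterpart, Proposition~\ref{proximal_product}, by means of the fundamental representations $\rho_i$ indexed by $i \in \Pi \setminus \Pi_X$. First, Proposition~\ref{type_X0_to_proximality}.\ref{X_regular_to_proximal} turns the $X$-regularity of $g$ and $h$ into proximality of each $\rho_i(g)$ and $\rho_i(h)$. Second, Proposition~\ref{C-non-deg-in-Vi} upgrades the $C$-non-degeneracy of the pair $(g,h)$ to $C'$-non-degeneracy of each pair $(\rho_i(g), \rho_i(h))$, with $C' = C'(C)$. Third, Proposition~\ref{type_X0_to_proximality}.\ref{X_to_proximal_contraction_strength} converts the smallness of $\vec{s}_X(g)$ (resp.\ $\vec{s}_X(h)$) into the bound $\tilde{s}(\rho_i(g)) \lesssim_C \vec{s}_X(g)$ (resp.\ $\tilde{s}(\rho_i(h)) \lesssim_C \vec{s}_X(h)$), provided $\vec{s}_X$ lies below a $C$-dependent threshold.

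I would then choose $\vec{s}_{\ref{intrinsic_regular_product}}(C)$ small enough that two things hold simultaneously: it lies below $\vec{s}_{\ref{type_X0_to_proximality}}(C)$ so that the previous reduction applies, and it is small enough that each $\tilde{s}(\rho_i(g))$ and $\tilde{s}(\rho_i(h))$ falls under the threshold $\tilde{s}_{\ref{proximal_product}}(C')$ (noting that the set $\Pi \setminus \Pi_X$ is finite, so only finitely many such constraints need be satisfied). Proposition~\ref{proximal_product} then applies to each pair $(\rho_i(g), \rho_i(h))$, and since $\rho_i$ is a group homomorphism we have $\rho_i(gh) = \rho_i(g)\rho_i(h)$, so $\rho_i(gh)$ is proximal with
\begin{equation*}
\alpha\bigl(E^s_{\rho_i(gh)},\; E^s_{\rho_i(g)}\bigr) \;\lesssim_{C'}\; \tilde{s}(\rho_i(g)) \;\lesssim_C\; \vec{s}_X(g).
\end{equation*}
The $X$-regularity of $gh$ then follows by invoking the converse direction of Proposition~\ref{type_X0_to_proximality}.\ref{X_regular_to_proximal}. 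The symmetric estimate $\alpha(E^u_{\rho_i(gh)}, E^u_{\rho_i(h)}) \lesssim_C \vec{s}_X(h)$ for the repelling sides is obtained from the dual statement (i$'$) recorded in Remark~\ref{comparison_proximal_with_schema}, applied with $\gamma_1 = \rho_i(g)$ and $\gamma_2 = \rho_i(h)$.

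The last step is to repackage the collection of projective estimates, one per index $i \in \Pi \setminus \Pi_X$, into a single inequality on each of the flag varieties $G/P^+_X$ and $G/P^-_X$. For this I would invoke the embeddings $\Phi_X^s$ and $\Phi_X^u$ from Lemma~\ref{PS_to_PVi_and_PVistar}: by part~(i) of that lemma, they send the attracting and repelling $X$-flags of any $X$-regular element to the corresponding tuples of attracting lines and repelling hyperplanes in the spaces $V_i$. Since $G/P^\pm_X$ are compact (as $K$ acts transitively on them via the Iwasawa decomposition) and the maps $\Phi_X^s, \Phi_X^u$ are smooth embeddings, their set-theoretic inverses on the images are globally Lipschitz for the fixed Riemannian distance on $G/P^\pm_X$ and the product distance on $\prod_i \mathbb{P}(V_i)$ (resp.\ $\prod_i \mathbb{P}(V_i^*)$). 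Pulling the projective bounds back under these Lipschitz inverses yields the desired estimates on $\delta(y^{X,+}_{gh}, y^{X,+}_g)$ and $\delta(y^{X,-}_{gh}, y^{X,-}_h)$.

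The substantive mathematics is already packaged in Proposition~\ref{proximal_product} and in the three linkage results of Subsection~\ref{sec:linear_regular_product}; what is left for the present proof is just the quantitative bookkeeping and the Lipschitz transfer from the product of projective spaces to the flag varieties. The main potential obstacle is keeping the dependencies of constants straight through the chain $C \rightsquigarrow C'$ and ensuring that the single constant $\vec{s}_{\ref{intrinsic_regular_product}}(C)$ can be chosen to dominate all the finitely many thresholds arising from the indices $i \in \Pi \setminus \Pi_X$; but the finiteness of $\Pi$ makes this straightforward.
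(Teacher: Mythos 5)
Your proposal is correct and follows the same skeleton as the paper's proof: reduce to Proposition~\ref{proximal_product} through the representations $\rho_i$, $i \in \Pi \setminus \Pi_X$, using Proposition~\ref{type_X0_to_proximality}.\ref{X_regular_to_proximal} for proximality (and its converse for the $X$-regularity of $gh$), Proposition~\ref{C-non-deg-in-Vi} for non-degeneracy, Proposition~\ref{type_X0_to_proximality}.\ref{X_to_proximal_contraction_strength} for contraction strength, and then the bilipschitz transfer through the compact embeddings of Lemma~\ref{PS_to_PVi_and_PVistar}. The only place you diverge is the repelling estimate: you invoke the dual statement (i$'$) of Remark~\ref{comparison_proximal_with_schema} together with $\Phi_X^u$, whereas the paper never uses (i$'$); it instead applies the already-proved attracting estimate to the pair $(h^{-1}, g^{-1})$ with $X$ replaced by $-w_0(X)$, via Proposition~\ref{intrinsic_inverse} and Remark~\ref{iota_isometry}. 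Your route is slightly more direct and avoids the opposition-involution bookkeeping, but it leans on a statement whose proof is only sketched in a remark (one must check that passing to transposes on $E^*$ preserves proximality, non-degeneracy constants and $\tilde{s}$, which is routine); the paper's route stays entirely within results proved in full, at the cost of the $\iota^\pm_X$ normalization. Either way the constants only depend on $C$ through finitely many indices, so your bookkeeping is sound.
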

\begin{remark}
We did not include the conclusion about contraction strength, namely that
\begin{equation}
\label{eq:linear_contraction_strength_additive}
\vec{s}_X(gh) \asymp_C \vec{s}_X(g) \vec{s}_X(h).
\end{equation}
This statement \emph{is} true (as we shall see in a moment), but will never be directly useful to us.
\end{remark}
\begin{proof}
Let us fix some constant $\vec{s}_{\ref{intrinsic_regular_product}}(C)$, small enough to satisfy all the constraints that will appear in the course of the proof. Let $(g, h)$ be a pair of maps satisfying the hypotheses.

Our strategy is to apply Proposition~\ref{proximal_product} to every pair $(\rho_i(g), \rho_i(h))$ with $i \in \Pi \setminus \Pi_X$. Let us fix such an~$i$, and check the hypotheses of Proposition~\ref{proximal_product}:
\begin{itemize}
\item By Proposition~\ref{type_X0_to_proximality}~\ref{X_regular_to_proximal}, the maps $\rho_i(g)$ and~$\rho_i(h)$ are proximal.
\item By Proposition~\ref{C-non-deg-in-Vi}, the pair $(\rho_i(g)), (\rho_i(h))$ is $C'$-non-degenerate (where $C'$ depends only on~$C$).
\item If we choose $\vec{s}_{\ref{intrinsic_regular_product}}(C) \leq \vec{s}_{\ref{type_X0_to_proximality}}(C)$, it follows by Proposition~\ref{type_X0_to_proximality}~\ref{X_to_proximal_contraction_strength} that $\tilde{s}(\rho_i(g)) \lesssim_C \vec{s}_X(g)$ and~$\tilde{s}(\rho_i(h)) \lesssim_C \vec{s}_X(h)$. If we choose $\vec{s}_{\ref{intrinsic_regular_product}}(C)$ sufficiently small, then all the maps $\rho_i(g)$ and $\rho_i(h)$ are $\tilde{s}_{\ref{proximal_product}}(C')$-contracting, \ie sufficiently contracting to apply Proposition~\ref{proximal_product}.
\end{itemize}
Now Proposition~\ref{proximal_product} gives us:
\begin{itemize}
\item that for every~$i \in \Pi \setminus \Pi_X$, the map $\rho_i(gh)$ is proximal. Hence by Proposition~\ref{type_X0_to_proximality}~\ref{X_regular_to_proximal}, the element~$gh$ is $X$-regular.
\item that moreover for every~$i \in \Pi \setminus \Pi_X$, we have
\begin{equation}
\label{eq:esrhoigh}
\alpha(E^s_{\rho_i(gh)}, E^s_{\rho_i(g)}) \lesssim_C \tilde{s}(\rho_i(g)) \lesssim_C \vec{s}_X(g).
\end{equation}
Now let us endow the product~$\prod_{i \in \Pi \setminus \Pi_X} \mathbb{P}(V_i)$ with the product distance~$\delta$ given by
\begin{equation}
\delta(y, y') := \max_{i \in \Pi \setminus \Pi_X} \alpha(y_i, y'_i);
\end{equation}
then all the inequalities~\eqref{eq:esrhoigh} may be combined together to yield
\begin{equation}
\delta \left( \Phi_X^s \left( y^{X, +}_{gh} \right),\; \Phi_X^s \left( y^{X, +}_{g} \right) \right) \lesssim_C \vec{s}_X(g),
\end{equation}
where~$\Phi_X^s$ is the map introduced in Lemma~\ref{PS_to_PVi_and_PVistar}. Since~$\Phi_X^s$ is a smooth embedding of the compact manifold~$G/P^+_X$, it is in particular a bilipschitz map onto its image. Hence we also have
\[\delta \left( y^{X, +}_{gh}, y^{X, +}_g \right) \lesssim_C \vec{s}_X(g).\]
\end{itemize}
This yields the first line of the conclusion. To get the second line, simply note that by Proposition~\ref{intrinsic_inverse}, if we replace~$X$ by~$-w_0(X)$, the pair $(h^{-1}, g^{-1})$ satisfies the same hypotheses as the pair~$(g, h)$. Applying what we did above to this pair, we get
\[\delta \left( \iota^-_X(y^{X, -}_{gh}),\; \iota^-_X(y^{X, -}_{h}) \right) \lesssim_C \vec{s}_X(h);\]
by Remark~\ref{iota_isometry}, the conclusion follows.
\end{proof}

\begin{proposition}
\label{jordan_additivity}
For every $C \geq 1$, there are positive constants $\vec{s}_{\ref{jordan_additivity}}(C)$ and $k_{\ref{jordan_additivity}}(C)$ with the following property. Take any $C$-non-degenerate pair $(g, h)$ of $X$-regular elements of~$G$ such that $\vec{s}_X(g) \leq \vec{s}_{\ref{jordan_additivity}}(C)$ and $\vec{s}_X(h) \leq \vec{s}_{\ref{jordan_additivity}}(C)$. Then we have:
\begin{hypothenum}
\item $\forall i \in \mathrlap{\Pi,}\qquad\qquad\quad
\varpi_i \left( \jordan(gh) - \cartan(g) - \cartan(h) \right) \leq 0$;
\item $\forall i \in \mathrlap{\Pi \setminus \Pi_{X_0},}\qquad\qquad\quad
\varpi_i \left( \jordan(gh) - \cartan(g) - \cartan(h) \right) \geq - k_{\ref{jordan_additivity}}(C)$.
\end{hypothenum}
\end{proposition}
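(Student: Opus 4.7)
The plan is to reduce both estimates to their counterparts for the fundamental representations $\rho_i$ from Proposition~\ref{fundamental_real_representation}, where everything we need is already encoded in the proximal machinery.

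First I would translate the statements using Proposition~\ref{eigenvalues_and_singular_values_characterization} combined with the fact that $n_i \varpi_i$ is the highest restricted weight of $\rho_i$ and strictly dominates the others on $\mathfrak{a}^+$ (as in inequality~\eqref{eq:highest_eigenvalue}). This yields, for every $g \in G$,
\[
n_i \varpi_i(\jordan(g)) = \log r(\rho_i(g)), \qquad n_i \varpi_i(\cartan(g)) = \log \|\rho_i(g)\|,
\]
where $\|\cdot\|$ is the operator norm coming from the $K$-invariant form $B_i$ supplied by Lemma~\ref{K-invariant}. Statement (i) then becomes the assertion that, for every $i \in \Pi$,
\[
r(\rho_i(gh)) \leq \|\rho_i(g)\|\,\|\rho_i(h)\|,
\]
which is nothing but $r(A) \leq \|A\|$ combined with submultiplicativity of the operator norm; this needs no hypothesis on the pair $(g, h)$. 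Statement (ii) becomes, for every $i \in \Pi \setminus \Pi_X$,
\[
r(\rho_i(gh)) \gtrsim_C \|\rho_i(g)\|\,\|\rho_i(h)\|,
\]
which is precisely the content of Proposition~\ref{proximal_product}(iii) applied to the pair $(\rho_i(g), \rho_i(h))$.

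The bulk of the work is therefore verifying the hypotheses of the proximal product proposition for each $i \in \Pi \setminus \Pi_X$; this is the same three-step verification as in the proof of Proposition~\ref{intrinsic_regular_product}. Proposition~\ref{type_X0_to_proximality}\ref{X_regular_to_proximal} gives proximality of $\rho_i(g)$ and $\rho_i(h)$ from $X$-regularity of $g$ and $h$. Proposition~\ref{C-non-deg-in-Vi} upgrades the $C$-non-degeneracy of $(g, h)$ to $C'$-non-degeneracy of $(\rho_i(g), \rho_i(h))$ for some $C'$ depending only on $C$. Finally, Proposition~\ref{type_X0_to_proximality}\ref{X_to_proximal_contraction_strength} yields $\tilde{s}(\rho_i(g)) \lesssim_C \vec{s}_X(g)$ and analogously for $h$; choosing $\vec{s}_{\ref{jordan_additivity}}(C) \leq \vec{s}_{\ref{type_X0_to_proximality}}(C)$ and small enough that this upper bound forces $\tilde{s}(\rho_i(g)), \tilde{s}(\rho_i(h)) \leq \tilde{s}_{\ref{proximal_product}}(C')$ for every $i \in \Pi \setminus \Pi_X$ secures the sufficient-contraction hypothesis. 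The constant $k_{\ref{jordan_additivity}}(C)$ absorbs the implicit multiplicative constant in the $\asymp_C$ of Proposition~\ref{proximal_product}(iii), divided by $\min_i n_i$ and maximized over $i \in \Pi \setminus \Pi_X$.

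I do not expect a serious obstacle here: the whole argument is a representation-by-representation application of tools already built in this section. The one tiny point requiring care is that in statement (i) the index $i$ ranges over all of $\Pi$, including those roots in $\Pi_X$ for which $\rho_i(g)$ need not even be proximal; but for those indices only the upper bound is claimed, which is the cheap side needing nothing beyond $r \leq \|\cdot\|$ and submultiplicativity.
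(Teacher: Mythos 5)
Your proposal is correct and follows essentially the route the paper intends: statement (i) reduces, via $n_i\varpi_i(\jordan(\cdot))=\log r(\rho_i(\cdot))$ and $n_i\varpi_i(\cartan(\cdot))=\log\|\rho_i(\cdot)\|$, to $r\leq\|\cdot\|$ plus submultiplicativity, and statement (ii) to Proposition~\ref{proximal_product}(iii) applied to $(\rho_i(g),\rho_i(h))$, with the hypotheses checked exactly as in the proof of Proposition~\ref{intrinsic_regular_product}. This is the same argument as the proof of Proposition~6.11 in~\cite{Smi16} to which the paper defers, adapted (as you did) to the linear hypotheses via Propositions \ref{type_X0_to_proximality} and~\ref{C-non-deg-in-Vi}.
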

\begin{proof}
The proof is completely analogous to the proof of Proposition~6.11 in~\cite{Smi16}.
\end{proof}
See Figure~\ref{fig:trapezoid_picture} for a picture (borrowed from~\cite{Smi16}) explaining both this proposition and the corollary below.
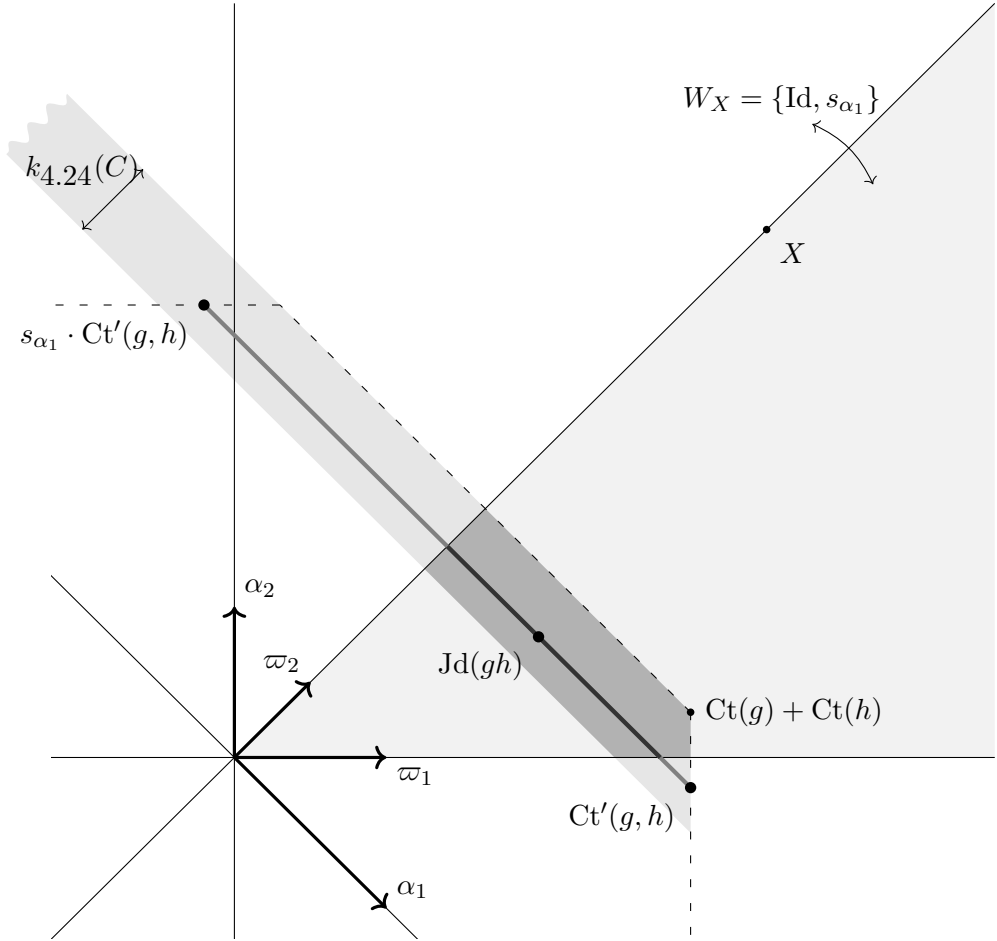
\begin{figure}
\[\begin{tikzpicture}[scale=2]
\fill[black!5!white] (0,0) -- (5,0) -- (5,5) -- (0,0);
\fill[black!10!white] (3,0.3) -- (-1.1,4.4) decorate[decoration=snake] {-- (-1.5,4)} -- (3,-0.5) -- (3,0.3);
\fill[black!30!white] (3,0.3) -- (1.65,1.65) -- (1.25,1.25) -- (2.5,0) -- (3,0) -- (3,0.3);
\draw[<->] (-0.6,3.9) -- (-1,3.5);
\node at (-1,3.9) {$k_{\ref{jordan_additivity}}(C)$};
\begin{scope}
  \clip (-1.2, -1.2) rectangle (5, 5);
  \draw (-5, 0) -- (5, 0);
  \draw (-5, -5) -- (5, 5);
  \draw (0, -5) -- (0, 5);
  \draw (5, -5) -- (-5, 5);
  \draw[loosely dashed] (3, 0.3) -- (3, -5);
  \draw[loosely dashed] (0.3, 3) -- (-5, 3);
\end{scope}
\draw[loosely dashed] (3, 0.3) -- (0.3, 3);
\draw[ultra thick, black!50!white] (3, -0.2) -- (-0.2, 3);
\draw[ultra thick, black!80!white] (2.8, 0) -- (1.4, 1.4);
\node[circle, fill, inner sep=1pt, label=right:$\cartan(g) + \cartan(h)$] at (3,0.3) {};
\node[circle, fill, inner sep=1.5pt, label=below left:${\cartan'(g,h)}$] at (3,-0.2) {};
\node[circle, fill, inner sep=1.5pt, label=below left:$\jordan(gh)$] at (2,0.8) {};
\node[circle, fill, inner sep=1.5pt, label=below left:${s_{\alpha_1} \cdot \cartan'(g,h)}$] at (-0.2,3) {};
\node[circle, fill, inner sep=1pt, label=below right:$X$] at (3.5,3.5) {};
\draw[<->] (3.8,4.2) to[bend left = 20] (4.2,3.8);
\node[above] at (3.6,4.2) {$W_X = \{\Id, s_{\alpha_1}\}$};
\draw[very thick, ->] (0,0) -- (0,1);
\node[above right] at (0,1) {$\alpha_2$};
\draw[very thick, ->] (0,0) -- (0.5,0.5);
\node[above left] at (0.5,0.5) {$\varpi_2$};
\draw[very thick, ->] (0,0) -- (1,0);
\node[below right] at (1,0) {$\varpi_1$};
\draw[very thick, ->] (0,0) -- (1,-1);
\node[above right] at (1,-1) {$\alpha_1$};
\end{tikzpicture}\]
\caption{This picture represents the situation for~$G = \SO^+(3,2)$ acting on~$\mathbb{R}^5$, and $X$~chosen such that $\Pi_X = \{\alpha_1\}$ (or~$\{1\}$ with the usual abuse of notations). This choice of~$X$ is not random: it satisfies the conditions that will be required starting from Section~\ref{sec:choice}. (This also corresponds to Example~3.7.3 in~\cite{Smi16}.) \\
\hspace*{15pt}The group~$W_X$ is then generated by the single reflection~$s_{\alpha_1}$. Proposition~\ref{jordan_additivity} states that $\jordan(gh)$ lies in the shaded trapezoid. Corollary~\ref{jordan_additivity_reformulation} states that it lies on the thick line segment. In any case it lies by definition in the dominant open Weyl chamber~$\mathfrak{a}^{++}$ (the shaded sector).
}
\label{fig:trapezoid_picture}
\end{figure}

\needspace{\baselineskip}
Let us also give a more palatable (though slightly weaker) reformulation.
\begin{corollary}
\label{jordan_additivity_reformulation}
For every $C \geq 1$, there exists a positive constant $k_{\ref{jordan_additivity_reformulation}}(C)$ with the following property. For any pair $(g, h)$ satisfying the hypotheses of the Proposition, we have
\begin{equation}
\label{eq:cartan_in_jordan_convex_hull}
\jordan(gh) \in \Conv \Big( W_X \cdot \cartan'(g, h) \Big),
\end{equation}
where $\Conv$ denotes the convex hull and $\cartan'(g, h)$~is some vector in~$\mathfrak{a}$ satisfying
\begin{equation}
\label{eq:ct'gh_ctg_cth}
\|\cartan'(g, h) - \cartan(g) - \cartan(h)\| \leq k_{\ref{jordan_additivity_reformulation}}(C).
\end{equation}
\end{corollary}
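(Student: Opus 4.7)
The plan is to apply Proposition~\ref{convex_hull_and_inequalities} with $\Pi' = \Pi_X$, taking $Y = \jordan(gh)$ and $X$ replaced by a carefully chosen $\cartan'(g,h)$. Concretely, since $(\varpi_i)_{i \in \Pi}$ is a basis of $\mathfrak{a}^*$, I will define $\cartan'(g,h) \in \mathfrak{a}$ by prescribing its coordinates in this basis:
\[
\varpi_i(\cartan'(g,h)) = \begin{cases}
\varpi_i(\cartan(g) + \cartan(h)) & \text{if } i \in \Pi_X, \\
\varpi_i(\jordan(gh)) & \text{if } i \in \Pi \setminus \Pi_X.
\end{cases}
\]
With this definition, the system of equalities and inequalities in Proposition~\ref{convex_hull_and_inequalities} (applied with $\Pi' = \Pi_X$) is immediate: for $i \in \Pi \setminus \Pi_X$ the equality holds by construction, while for $i \in \Pi_X$ the inequality $\varpi_i(\jordan(gh)) \leq \varpi_i(\cartan'(g,h)) = \varpi_i(\cartan(g) + \cartan(h))$ is exactly Proposition~\ref{jordan_additivity}(i).

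The remaining hypothesis of Proposition~\ref{convex_hull_and_inequalities} is that both $\jordan(gh)$ and $\cartan'(g,h)$ lie in $\mathfrak{a}^+_{\Pi_X}$. For $\jordan(gh)$ this is automatic since $\jordan(gh) \in \mathfrak{a}^+ \subset \mathfrak{a}^+_{\Pi_X}$. This is the main technical point: to verify $\cartan'(g,h) \in \mathfrak{a}^+_{\Pi_X}$, I will use the Cartan matrix $C_{ji} := \tfrac{2\langle \alpha_j, \alpha'_i\rangle}{\|\alpha'_i\|^2}$, which expresses $\alpha_j = \sum_i C_{ji}\,\varpi_i$ and whose off-diagonal entries are non-positive. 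For any $j \in \Pi_X$, adding and subtracting $\sum_{i \notin \Pi_X} C_{ji}\,\varpi_i(\cartan(g)+\cartan(h))$ yields
\[
\alpha_j(\cartan'(g,h)) = \alpha_j(\cartan(g) + \cartan(h)) + \sum_{i \in \Pi \setminus \Pi_X} C_{ji}\bigl[\varpi_i(\jordan(gh)) - \varpi_i(\cartan(g) + \cartan(h))\bigr].
\]
Since $j \in \Pi_X$ and $i \notin \Pi_X$ force $i \neq j$, we have $C_{ji} \leq 0$, while the bracketed quantity is $\leq 0$ by Proposition~\ref{jordan_additivity}(i). Thus each summand is non-negative, so $\alpha_j(\cartan'(g,h)) \geq \alpha_j(\cartan(g) + \cartan(h)) \geq 0$. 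Proposition~\ref{convex_hull_and_inequalities} then delivers~\eqref{eq:cartan_in_jordan_convex_hull}.

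Finally, the norm estimate~\eqref{eq:ct'gh_ctg_cth} is essentially free: the vector $\cartan'(g,h) - \cartan(g) - \cartan(h)$ has $\varpi_i$-coordinate equal to $0$ for $i \in \Pi_X$ and to $\varpi_i(\jordan(gh) - \cartan(g) - \cartan(h))$ for $i \in \Pi \setminus \Pi_X$, which is bounded in absolute value by $k_{\ref{jordan_additivity}}(C)$ thanks to Proposition~\ref{jordan_additivity}(i)--(ii). Passing from $\ell^\infty$-norm in the $(\varpi_i)$-basis to the ambient Euclidean norm absorbs a constant depending only on the fixed data, so we obtain the required bound with some $k_{\ref{jordan_additivity_reformulation}}(C)$ depending only on $C$. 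The only genuinely non-routine step is the Cartan-matrix computation ensuring $\cartan'(g,h) \in \mathfrak{a}^+_{\Pi_X}$; once that is in place, everything else is bookkeeping.
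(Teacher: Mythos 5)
Your construction is correct and is essentially the argument the paper relies on (the proof is deferred to Corollary~6.13 of~\cite{Smi16}, which defines $\cartan'(g,h)$ by exactly these $\varpi_i$-coordinates --- matching $\cartan(g)+\cartan(h)$ on $\Pi_X$ and $\jordan(gh)$ on $\Pi\setminus\Pi_X$, as one can also read off Figure~\ref{fig:trapezoid_picture} --- and then applies Proposition~\ref{convex_hull_and_inequalities} with $\Pi'=\Pi_X$). Your Cartan-matrix verification that $\cartan'(g,h)\in\mathfrak{a}^+_{\Pi_X}$ and the $\ell^\infty$-to-Euclidean norm comparison are exactly the needed (and correct) supporting steps.
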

\begin{proof}
The proof is the same as the proof of Corollary~6.13 in~\cite{Smi16}.
\end{proof}
Note that we do \emph{not} require that the vector~$\cartan'(g, h)$ lie in the closed dominant Weyl chamber~$\mathfrak{a}^{+}$ (even though in practice, it is very close to the vector~$\cartan(g) + \cartan(h)$, which does).
\begin{remark}
At first sight, one might think that by putting together Lemma 4.1 and~4.5.2 in~\cite{Ben97}, we recover this result and even something stronger. However this is not the case: in fact we recover only the particular case when $\Pi_X = \emptyset$. See Remark~6.16 in~\cite{Smi16} for an explanation.
\end{remark}
\begin{remark}~
\begin{itemize}
\item Though we shall not use it, an interesting particular case is $g = h$. We then obviously have~$\jordan(gh) = 2\jordan(g)$ and~$\cartan(g) + \cartan(h) = 2\cartan(g)$, so the statement simply reduces to a relationship between the Jordan and Cartan projections on~$g$.
\item The Proposition, and its Corollary, also hold if we replace $\jordan(gh)$ by~$\cartan(gh)$. This immediately implies~\eqref{eq:linear_contraction_strength_additive}.
\end{itemize}
(Compare this to Remark~\ref{comparison_proximal_with_schema}, which involved a similar construction in the proximal case.)
\end{remark}

\section{Choice of a reference Jordan projection~$X_0$}
\label{sec:choice}

For the remainder of the paper, we fix $\rho$ an irreducible representation of~$G$ on a finite-dimensional real vector space~$V$; we also require~$\rho$ to satisfy the following condition:
\begin{assumption}
\label{zero_is_a_weight}
The form~$0$ is a restricted weight of~$\rho$:
\[\dim V^0 > 0.\]
\end{assumption}
However we do \emph{not} require, for the moment, the representation~$\rho$ to satisfy all of the hypotheses of the Main Theorem. We will only need them at the very end of the paper.
\begin{remark}
\label{radical_characterization}
By Proposition~\ref{convex_hull}, this is the case if and only if the highest restricted weight of~$\rho$ is a $\mathbb{Z}$-linear combination of restricted roots.
\end{remark}
\begin{remark}
We lose no generality in assuming this property, because it comes as a consequence of condition~\ref{itm:main_condition}\ref{itm:fixed_by_l} of the Main Theorem: indeed, any nonzero vector fixed by~$L$ is in particular fixed by~$A \subset L$, which means that it belongs to the zero restricted weight space. Conversely, without this assumption, the conclusion of the Main Theorem is certain to fail (see Remark~3.5 in~\cite{Smi16}).
\end{remark}

We call $\Omega$ the set of restricted weights of~$\rho$. For any $X \in \mathfrak{a}$, we call~$\Omega^\subg_X$ (resp. $\Omega^\subl_X$, $\Omega^\sube_X$, $\Omega^\subge_X$, $\Omega^\suble_X$) the set of all restricted weights of~$\rho$ that take a positive (resp. negative, zero, nonnegative, nonpositive) value on~$X$:
\begin{align*}
\Omega^\subge_X &:= \setsuch{\lambda \in \Omega}{\lambda(X) \geq 0}
   & \Omega^\subg_X &:= \setsuch{\lambda \in \Omega}{\lambda(X) > 0} \\
\Omega^\suble_X &:= \setsuch{\lambda \in \Omega}{\lambda(X) \leq 0}
   & \Omega^\subl_X &:= \setsuch{\lambda \in \Omega}{\lambda(X) < 0} \\
\Omega^\sube_X &:= \setsuch{\lambda \in \Omega}{\lambda(X) = 0}. & &
\end{align*}
The goal of this section is to study these sets, and to choose a vector~$X_0 \in \mathfrak{a}^+$ for which the corresponding sets have some nice properties. This generalizes Section~3 in~\cite{Smi16}.

In fact, these sets are the only property of~$X_0$ that matters for us; in other terms, what we really care about is the class of~$X_0$ with respect to the following equivalence relation:

\begin{definition}
\label{type_definition}
We say that $X$ and~$Y$ have \emph{the same type} if
\begin{equation}
\label{eq:same_type_def}
\begin{cases}
\Omega^\subg_Y = \Omega^\subg_X \\
\Omega^\subl_Y = \Omega^\subl_X.
\end{cases}
\end{equation}
Obviously this implies that the spaces $\Omega^\subge$, $\Omega^\suble$ and~$\Omega^\sube$ coincide as well for~$X$ and~$Y$. This is an equivalence relation, which partitions~$\mathfrak{a}$ into finitely many equivalence classes.
\end{definition}

%
%

\begin{remark}
Every such equivalence class is obviously a convex cone; taken together, the equivalence classes decompose~$\mathfrak{a}$ into a cell complex.
\end{remark}

\begin{example}
If $\rho$ is the adjoint representation, two \emph{dominant} vectors $X, Y \in \mathfrak{a}^+$ have the same type if and only if $\Pi_X = \Pi_Y$. There is only one generic type, corresponding to~$\mathfrak{a}^{++}$.

See Example~3.7.3 in~\cite{Smi16} for more details, and two other examples with pictures.
\end{example}

The motivation for the study of these five sets~$\Omega^?_X$ is that they allow us to introduce some ``reference dynamical spaces'' (see Subsection~\ref{sec:reference_dynamical_spaces}).

In Subsections \ref{sec:generically_symmetric} and~\ref{sec:extreme}, we define two properties that we want~$X_0$ to satisfy. Subsection~\ref{sec:simplifying_assumptions} basically consists of examples, and may be safely skipped.

\subsection{Generically symmetric vectors}
\label{sec:generically_symmetric}
We start by defining the property of being ``generically symmetric'', which generalizes generic and symmetric vectors as defined in Subsections~3.1 and~3.3 of~\cite{Smi16}. One of the goals is to ensure that the set~$\Omega^\sube_X$ is as small as possible.

Here is the first attempt: we say that an element $X \in \mathfrak{a}$ is \emph{generic} if
\[\Omega^\sube_X = \{0\}.\]
\begin{remark}
This is indeed the generic case: given Assumption~\ref{zero_is_a_weight}, this happens as soon as~$X$ avoids a finite collection of hyperplanes, namely the kernels of all nonzero restricted weights of~$\rho$. In fact, a vector is generic if and only if its equivalence class is open. If $X$~is generic, its equivalence class is just the connected component containing~$X$ in the set of generic vectors; otherwise, its equivalence class is always contained in some proper vector subspace of~$\mathfrak{a}$.
\end{remark}

Since we want to construct a \emph{group}~$\Gamma$ with certain properties, it must in particular be stable by inverse. The identity~\eqref{eq:inverse_jordan_projection} encourages us to examine the action of~$-w_0$ on~$\mathfrak{a}$.
\begin{definition}
We say that an element $X \in \mathfrak{a}$ is \emph{symmetric} if it is invariant by $-w_0$:
\[-w_0(X) = X.\]
\end{definition}
Ideally, we would like our ``reference'' vector~$X_0$ to be both symmetric and generic. This is always possible in the non-swinging case, so this is what we required in~\cite{Smi16}. However in general, this is not always possible: indeed, every restricted weight that happens to be invariant by~$w_0$ necessarily vanishes on every symmetric vector. Let
\[\Omega^{w_0} := \setsuch{\lambda \in \Omega}{w_0 \lambda = \lambda}\]
be the set of those restricted weights.
\begin{definition}
We say that an element $X \in \mathfrak{a}$ is \emph{generically symmetric} if it is symmetric and we have
\[\Omega^\sube_X = \Omega^{w_0}.\]
\end{definition}
In other terms, an element is generically symmetric if it is ``as generic as possible'' while still being symmetric.

\subsection{Extreme vectors}
\label{sec:extreme}
Besides $W_X$, we are also interested in the group
\begin{equation}
W_{\rho, X} := \setsuch{w \in W}{w X \text{ has the same type as } X},
\end{equation}
which is the stabilizer of~$X$ ``up to type''. It obviously contains~$W_X$. The goal of this subsection is to show that in every equivalence class, we can actually choose~$X$ in such a way that both groups coincide. This generalizes Subsection~3.5 in~\cite{Smi16}.
\begin{example}
In Example~3.7.3 in~\cite{Smi16} ($G = \SO^+(3,2)$ acting on~$V = \mathbb{R}^5$), the group $W_{\rho, X}$ corresponding to any generic~$X$ is a two-element group. If we take~$X$ to be generic not only with respect to~$\rho$ but also with respect to the adjoint representation (in other terms if $X$~is in an open Weyl chamber), then the group~$W_X$ is trivial. If however we take as~$X$ any element of the diagonal wall of the Weyl chamber, we have indeed $W_{X} = W_{\rho, X}$.
\end{example}
\begin{definition}
We call an element $X \in \mathfrak{a}^{+}$ \emph{extreme} if $W_X = W_{\rho, X}$, \ie if it satisfies the following property:
\[\forall w \in W,\quad w X \text{ has the same type as } X \iff w X = X.\]
\end{definition}
\begin{remark}
Here is an equivalent definition which is possibly more enlightening. It is possible to show that a vector~$X \in \mathfrak{a}^+$ is extreme if and only if it lies in every wall of the Weyl chamber that contains at least one vector of the same type as~$X$. In other words, a vector is extreme if it is in the ``furthest possible corner'' of its equivalence class in the Weyl chamber~$\mathfrak{a}^+$.

As this last statement will never be used in this paper, we have left out its proof.
\end{remark}
\begin{proposition}
\label{change_of_X_0}
For every generically symmetric $X \in \mathfrak{a}^{+}$, there exists a generically symmetric $X' \in \mathfrak{a}^{+}$ that has the same type as~$X$ and that is extreme.
\end{proposition}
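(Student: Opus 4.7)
My plan is to take for $X'$ the average of the $W_{\rho,X}$-orbit of $X$: writing $H := W_{\rho,X}$, set
\[ X' \;:=\; \frac{1}{|H|}\sum_{h \in H} hX. \]
The cell of $X$, $C := \{Y \in \mathfrak{a} : Y \text{ has the same type as } X\}$, is a convex open subset of the affine subspace $\bigcap_{\lambda \in \Omega^\sube_X}\ker\lambda$, and is $H$-invariant by the very definition of $W_{\rho,X}$. So the convex combination $X'$ still lies in $C$, which gives it the same type as $X$, and therefore $W_{\rho,X'} = W_{\rho,X} = H$.

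Extremeness is then automatic: the linearity of the $W$-action makes $X'$ an $H$-fixed point, so
\[ H \;\subseteq\; W_{X'} \;\subseteq\; W_{\rho,X'} \;=\; H, \]
forcing $W_{X'} = W_{\rho,X'}$. Symmetry follows once one checks that $-w_0$ normalizes $H$. Because $X$ is symmetric, the identity $(w_0\lambda)(X) = -\lambda(X)$ (already recorded in Subsection~\ref{sec:generically_symmetric}) yields $w_0\Omega^\subg_X = \Omega^\subl_X$; so for every $h \in H$,
\[ (w_0 h w_0)\,\Omega^\subg_X \;=\; w_0\,h\,\Omega^\subl_X \;=\; w_0\,\Omega^\subl_X \;=\; \Omega^\subg_X, \]
i.e.\ $w_0 h w_0 \in H$. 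Using $-w_0 X = X$ and the resulting bijectivity of $h \mapsto -w_0 h (-w_0)^{-1}$ on $H$, the averaging identity yields $-w_0 X' = X'$.

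The delicate point, on which I expect the bulk of the work to fall, is dominance: showing $X' \in \mathfrak{a}^+$, i.e.\ $\alpha(X') \geq 0$ for every $\alpha \in \Pi$. For $\alpha \in \Pi \cap \Omega$ this is immediate, since $\alpha(X')$ is the average of the values $(h\alpha)(X)$ for $h \in H$, and each $h\alpha$ still lies in $\Omega^\subge_X$ (because $h$ preserves signs on $\Omega$ and $\alpha(X) \geq 0$), so every term in the sum is nonnegative. The delicate case is $\alpha \in \Pi \setminus \Omega$, where the type of $X$ does not directly control the sign of $\alpha$ at $X'$. My proposed route is to observe that $X'$ coincides with the orthogonal projection of $X$ onto $\operatorname{Fix}(H)$ with respect to any $W$-invariant scalar product on $\mathfrak{a}$, and to prove that $H$, which we have just identified with $W_{X'}$, is a \emph{standard} parabolic subgroup of $W$ (i.e.\ generated by a subset of $\{s_\alpha : \alpha \in \Pi\}$). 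Once this is established, $\operatorname{Fix}(H)$ is the linear span of a face of $\mathfrak{a}^+$, and the orthogonal projection of any vector in $\mathfrak{a}^+$ onto such a span remains in $\mathfrak{a}^+$ by a classical property of Coxeter complexes. Proving this standardness of $H$ is where I expect the hypothesis $\Omega^\sube_X = \Omega^{w_0}$, together with the compatibility between $H$ and the opposition involution $-w_0$, to intervene decisively; it should force each reflection appearing in $H$ to be a simple reflection attached to a wall of $\mathfrak{a}^+$ adjacent to the face containing $X$, and is the main technical hurdle in the proof.
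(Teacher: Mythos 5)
Your construction (averaging over $H = W_{\rho,X}$), the same-type argument via convexity of the type class, the symmetry argument via $-w_0$ normalizing $H$, and the extremeness argument $H \subseteq W_{X'} \subseteq W_{\rho,X'} = H$ all coincide with the paper's proof, as does your (correct) treatment of $\alpha \in \Pi \cap \Omega$. The genuine gap is exactly the step you flag yourself: the claim that $H$ is a \emph{standard} parabolic subgroup of $W$. You do not prove it, and the mechanism you hope for is too strong as stated: it is false in general that every reflection lying in $H$ is a simple reflection (already a posteriori, if $\Pi_{X'}$ contains two adjacent simple roots then $W_{X'}$ contains the reflection in their sum, which is not simple); at best one can hope that $H$ is \emph{generated} by simple reflections. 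Moreover, proving that directly is essentially as hard as the dominance you want to deduce from it: the natural proof that the point stabilizer $W_{X'}$ is standard goes through $X' \in \mathfrak{a}^{+}$ together with Chevalley's lemma (Remark~\ref{W_vs_Pi}), which is circular here, and a point stabilizer $W_Y$ of a non-dominant vector can perfectly well fail to be standard (e.g.\ $Y$ on the wall of the highest root in type $A_2$). So the ``main technical hurdle'' is not a detail to be filled in along the indicated lines; it is the heart of the statement, and the proposed route is not viable as written.

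The paper closes this step differently, and you already have every ingredient it needs, namely the extremeness of $X'$. For $\alpha \in \Pi$, either $s_\alpha X' = X'$, and then $\alpha(X') = 0$; or, since $X'$ is extreme, $s_\alpha X'$ does not even have the same type as $X'$, so there is a restricted weight $\lambda$ with $\lambda(X') \geq 0$ and $(s_\alpha\lambda)(X') \leq 0$, at least one inequality strict, whence $(\lambda - s_\alpha\lambda)(X') \neq 0$ and therefore $\alpha(X') \neq 0$, since $\lambda - s_\alpha\lambda$ is a multiple of $\alpha$. Because the signs of $\lambda$ and $s_\alpha\lambda$ are constant on the type class of $X$, the same computation gives $\alpha(Y) \neq 0$ for \emph{every} $Y$ of that type; the class being a convex cone (hence connected) containing $X$ with $\alpha(X) \geq 0$, it follows that $\alpha > 0$ on the whole class, in particular $\alpha(X') > 0$. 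This settles $\alpha \in \Pi \setminus \Omega$ (indeed all $\alpha$ uniformly) with no orthogonal projection and no standard-parabolic input.
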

This is a straightforward generalization of Proposition~3.19 in~\cite{Smi16}. The proof is similar.
\begin{proof}
To construct an element that has the same type as~$X$ but has the whole group~$W_{\rho, X}$ as stabilizer, we simply average over the action of this group: we set
\begin{equation}
X' = \sum_{w \in W_{\rho, X}} w X.
\end{equation}
(As multiplication by positive scalars does not change anything, we have written it as a sum rather than an average for ease of manipulation.) Let us check that this~$X'$ has the required properties.
\begin{itemize}
\item Let us show that $X'$~is still symmetric. Since $w_0$~belongs to the Weyl group, it induces a permutation on~$\Omega$, hence we have:
\begin{equation}
w_0 \Omega^\subg_X = \Omega^\subg_{w_0 X} = \Omega^\subg_{-X} = \Omega^\subl_X,
\end{equation}
so that $w_0$ swaps the sets $\Omega^\subg_X$ and~$\Omega^\subl_X$. Now by definition we have
\begin{equation}
W_{\rho, X} = \Stab_W(\Omega^\subg_X) \cap \Stab_W(\Omega^\subl_X),
\end{equation}
hence $w_0$~normalizes~$W_{\rho, X}$. Obviously the map~$X \mapsto -X$ commutes with everything, so $-w_0$ also normalizes~$W_{\rho, X}$. We conclude that
\begin{align}
-w_0(X') &= \sum_{w \in W_{\rho, X}} -w_0 (w (X)) \nonumber \\
         &= \sum_{w' \in W_{\rho, X}} w' (-w_0 (X)) \nonumber \\
         &= X'.
\end{align}
\item By definition every~$w X$ for~$w \in W_{\rho, X}$ has the same type as~$X$; since the equivalence class of~$X$ is a convex cone, their sum~$X'$ also has the same type as~$X$.
\item In particular we have $\Omega^\sube_{X'} = \Omega^\sube_X = \Omega^{w_0}$, so $X'$~is still generically symmetric.
\item By construction whenever $w X$~has the same type as~$X$, we have~$w X' = X'$; conversely if $w$~fixes $X'$, then $w X$~has the same type as~$w X' = X'$ which has the same type as~$X$. So $X'$~is extreme.
\item It remains to show that $X' \in \mathfrak{a}^+$, \ie that for every $\alpha \in \Pi$, we have $\alpha(X') \geq 0$:
\begin{itemize}
\item If $s_\alpha X' = X'$, then obviously $\alpha(X') = 0$.
\item Otherwise, since~$X'$ is extreme, it follows that $s_\alpha X'$ does not even have the same type as~$X'$. This means that there exists a restricted weight~$\lambda$ of~$\rho$ such that
\begin{equation}
\label{eq:transgressing_restr_weight_raw}
\lambda(X') \geq 0 \quad\text{ and }\quad s_\alpha(\lambda)(X') \leq 0,
\end{equation}
and (at least) one of the two inequalities is strict. In particular we have
\begin{equation}
\label{eq:transgressing_restr_weight}
\Big(\lambda - s_\alpha(\lambda)\Big)(X') \neq 0.
\end{equation}
Since $\lambda - s_\alpha(\lambda)$~is by definition a multiple of~$\alpha$, it follows that
\[\alpha(X') \neq 0.\]
Now the same reasoning applies to any vector of the same type as~$X$; hence $\alpha$ never vanishes on the equivalence class of~$X$. Since by hypothesis $\alpha(X) \geq 0$ and since the equivalence class is connected, we conclude that
\[\alpha(X') > 0. \qedhere\]
\end{itemize}
\end{itemize}
\end{proof}

Note that in practice, the set~$\Pi_X$ for an extreme, generically symmetric~$X$ can take only a very limited number of values: see Remark~3.21 in~\cite{Smi16}.

\subsection{Simplifying assumptions}
\label{sec:simplifying_assumptions}

In this subsection, we discuss how the constructions of this paper may simplify in some particular cases. None of the following results are logically necessary for the proof of the Main Theorem, which is why we do not provide their proofs. However, they can be helpful for a reader who is only interested in one particular representation (which is likely to fit at least one of the cases outlined below).

\begin{definition}
\label{simpifying_assumptions_def}
We say that the representation~$\rho$ is:
\begin{enumerate}
\item \label{itm:limited_def} \emph{limited} if every restricted weight is a multiple of a restricted root:
\[\Omega \subset \mathbb{Z} \cdot \Sigma := \setsuch{n \alpha}{n \in \mathbb{Z}, \alpha \in \Sigma};\]
\item \label{itm:abundant_def} \emph{abundant} if every restricted root is a restricted weight:
\[\Omega \supset \Sigma \cup \{0\};\]
\item \label{itm:awkward_def} \emph{awkward} if it is neither limited nor abundant;
\item \label{itm:non-swinging_def} \emph{non-swinging} if $\Omega^{w_0} = \{0\}$.
\end{enumerate}
\end{definition}

\begin{example}~
\label{simplifying_assumptions_example}
\begin{enumerate}
\item \label{itm:adjoint_limited_abundant} The adjoint representation is both limited and abundant, and always non-swinging.
\item \label{itm:standard_limited} The standard representation of~$\SO(p+1,p)$ on~$\mathbb{R}^{2p+1}$ is limited, and non-swinging.
\item \label{itm:abundant_classification} If $G$ is simple, it seems that all but finitely many representations are abundant. If additionally its restricted root system has a simply-laced diagram, then Lemma~\ref{roots_contained_in_weights} says that all representations except the trivial one are abundant.
\item \label{itm:awkward_classification} Among simple groups, it seems that awkward representations occur only when the restricted root system is of type~$C_n$ or~$BC_n$, and only for $n$~at least equal to~$4$. For non-simple groups, this phenomenon is more common. See Example~\ref{awkward} for specific examples.
\item \label{itm:swinging_classification} Swinging representations occur only when $-w_0$ is a nontrivial automorphism of the Dynkin diagram. Among simple groups, this happens only if the restricted root system is of type $A_n$ ($n \geq 2$), $D_{2n+1}$ or~$E_6$. The bad news is that for these groups, \emph{most} representations (all but finitely many) are swinging. The simplest example is $SL_3(\mathbb{R})$ acting on~$S^3 \mathbb{R}^3$ (see Example~3.9 in~\cite{Smi16}).
\item \label{itm:swinging_awkward} Thus no representation of a \emph{simple} group is swinging and awkward at the same time. (However this \emph{may} happen for a non-simple group $G_1 \times G_2$: simply take the tensor product of a swinging representation $\rho_1$ of~$G_1$ and an awkward representation $\rho_2$ of~$G_2$.)
\end{enumerate}
\end{example}

All the subsequent constructions rely on the choice of a generically symmetric vector~$X_0 \in \mathfrak{a}^+$ that is extreme; actually, only the type of~$X_0$ matters. Here is what we can say about the choice of~$X_0$ (up to type) in these particular cases:

\begin{remark}~
\label{X0_examples}
\begin{hypothenum}
\item \label{itm:limited_implications} In a limited representation, there is only one type of generically symmetric vector. So we can ignore the dependence on~$X_0$.
\item \label{itm:abundant_implications} In an abundant representation, every generically symmetric vector lies in particular in~$\mathfrak{a}^{++}$. In other terms we have $\Pi_{X_0} = \emptyset$, and we do not need the theory of non-minimal parabolic subgroups (as developed in section~\ref{sec:parabolics}).
\item \label{itm:non_awkward_implications} In both cases, we get that the type of~$X_0$ \emph{with respect to the adjoint representation} (\ie the subset~$\Pi_{X_0}$) does not depend on the choice of~$X_0$. In fact, in every non-awkward representation, for every generically symmetric and extreme~$X_0 \in \mathfrak{a}^+$, we have the identity
\begin{equation}
\label{eq:pi_X0_non-awkward}
\Pi_{X_0} = \setsuch{\alpha \in \Pi}{\alpha \not\in \Omega}.
\end{equation}
\item \label{itm:awkward_implications} Note that one of the inclusions, namely 
\[\Pi_{X_0} \subset \setsuch{\alpha \in \Pi}{\alpha \not\in \Omega},\]
is obvious and holds in every representation. The other inclusion however may fail in awkward representations, and the value of~$\Pi_{X_0}$ may then depend on the choice of~$X_0$: see Example~\ref{awkward}.
\item \label{itm:non_swinging_implications} In a non-swinging representation, clearly the vector $X_0$~is generically symmetric if and only if it is generic and also symmetric.
\end{hypothenum}
\end{remark}

\section{Constructions related to~$X_0$}
\label{sec:X_0_constructions}

\begin{definition}
For the remainder of the paper, we fix some vector~$X_0$ in the closed dominant Weyl chamber~$\mathfrak{a}^{+}$ that is generically symmetric and extreme.
\end{definition}

In this section, we introduce some preliminary constructions associated to this vector~$X_0$.

In Subsection~\ref{sec:reference_dynamical_spaces}, we introduce the ``reference dynamical spaces'' associated to~$X_0$, and find their stabilizers in~$G$. This generalizes Subsection~4.1 in~\cite{Smi16}.

Subsection~\ref{sec:affine} consists entirely of definitions. We introduce some elementary formalism that expresses affine spaces in terms of vector spaces, and use it to define the affine reference dynamical spaces. We basically repeat Subsection~4.2 from~\cite{Smi16}.

In Subsection~\ref{sec:regularity}, we try to understand what ``regularity'' means in the affine context. We introduce \emph{three} different notions of regularity, and establish the relationships between them. In \cite{Smi16}, we used a simpler notion (see Definition~4.14 in~\cite{Smi16}), to which two of the three new notions reduce in the non-swinging case (see Example~\ref{jordan_properties_discussion}.\ref{itm:jord_prop_non-swing} below).

Subsection~\ref{sec:counterexamples} consists entirely of examples. It contains counterexamples to help understand why some statements of the previous subsection cannot be made stronger.

\subsection{Reference dynamical spaces}
\label{sec:reference_dynamical_spaces}

\begin{definition}
\label{reference_dynamical_spaces}
We define the following subspaces of~$V$:
\begin{itemize}
\item $V^\subg_0 := \bigoplus_{\lambda(X_0) > 0} V^\lambda$,\quad the \emph{reference expanding space};
\item $V^\subl_0 := \bigoplus_{\lambda(X_0) < 0} V^\lambda$,\quad the \emph{reference contracting space};
\item $V^\sube_0 := \bigoplus_{\lambda(X_0) = 0} V^\lambda$,\quad the \emph{reference neutral space};
\item $V^\subge_0 := \bigoplus_{\lambda(X_0) \geq 0} V^\lambda$,\quad the \emph{reference noncontracting space};
\item $V^\suble_0 := \bigoplus_{\lambda(X_0) \leq 0} V^\lambda$,\quad the \emph{reference nonexpanding space}.
\end{itemize}
In other terms, $V^\subge_0$~is the direct sum of all restricted weight spaces corresponding to weights in~$\Omega^\subge_{X_0}$, and similarly for the other spaces.
\end{definition}
These are precisely the dynamical spaces associated to the map~$\exp(X_0)$ (acting on~$V$ by~$\rho$), as defined in section~4.3 of~\cite{Smi16}. See Example~4.3 in~\cite{Smi16} for the case of the adjoint representation and of $G = \SO^+(p,q)$ acting on the standard representation.
\begin{remark}
Note that by Assumption~\ref{zero_is_a_weight}, zero \emph{is} a restricted weight, so the space $V^\sube_0$ is always nontrivial.
\end{remark}
Let us now determine the stabilizers in~$G$ of these subspaces.
\begin{proposition}
\label{stabg=stabge} We have:
\begin{hypothenum}
\item $\Stab_G(V^\subge_0) = \Stab_G(V^\subg_0) = P_{X_0}^+$.
\item $\Stab_G(V^\suble_0) = \Stab_G(V^\subl_0) = P_{X_0}^-$.
\end{hypothenum}
\end{proposition}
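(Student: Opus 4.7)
The plan is to deduce both identities from the generalized Bruhat decomposition (Lemma~\ref{generalized_Bruhat}). The subsets $\Omega^\subge_{X_0}$ and $\Omega^\subg_{X_0}$ are both top-subsets of the weight set $\Omega$: indeed, $X_0 \in \mathfrak{a}^+$ forces $\alpha(X_0) \geq 0$ for every $\alpha \in \Sigma^+$, so adding such an $\alpha$ to a weight with nonnegative (respectively strictly positive) value on $X_0$ preserves that property. Lemma~\ref{generalized_Bruhat} applied to $\rho_* = \rho$ thus yields
\[
\Stab_G(V^\subge_0) \;=\; P^+\,\Stab_W(\Omega^\subge_{X_0})\,P^+
\quad\text{and}\quad
\Stab_G(V^\subg_0) \;=\; P^+\,\Stab_W(\Omega^\subg_{X_0})\,P^+,
\]
while Corollary~\ref{parabolic_Bruhat_decomposition} gives $P^+_{X_0} = P^+ W_{X_0} P^+$. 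Since the Bruhat double cosets $P^+ w P^+$ are pairwise disjoint for distinct $w \in W$, the proposition reduces to the purely combinatorial claim
\[
\Stab_W(\Omega^\subge_{X_0}) \;=\; \Stab_W(\Omega^\subg_{X_0}) \;=\; W_{X_0}.
\]

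The inclusion $\supseteq$ in both cases is immediate, since an element of $W_{X_0}$ fixes $X_0$ and therefore preserves every $X_0$-level set of weights. For the reverse inclusion, given $w$ in either stabilizer, I would set $Y := wX_0$ and use the identity $\lambda(Y) = (w^{-1}\lambda)(X_0)$ together with the fact that $w$ permutes $\Omega$. In the $\Omega^\subge$ case, taking complements inside $\Omega$ yields $\Omega^\subl_Y = \Omega^\subl_{X_0}$; in the $\Omega^\subg$ case, the hypothesis directly gives $\Omega^\subg_Y = \Omega^\subg_{X_0}$. The remaining piece required for $Y$ to have the same type as $X_0$ is the equality $\Omega^\sube_Y = \Omega^\sube_{X_0}$, and this is where the generic symmetry of $X_0$ intervenes: the identification $\Omega^\sube_{X_0} = \Omega^{w_0}$, together with the relation $w_0 X_0 = -X_0$ (which swaps $\Omega^\subg_{X_0} \leftrightarrow \Omega^\subl_{X_0}$), places the neutral set in a geometrically distinguished position that prevents $w$ from mixing it with the strictly positive or strictly negative sets. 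Once $Y$ has the same type as $X_0$, the extremity of $X_0$ (the identity $W_{X_0} = W_{\rho, X_0}$) forces $Y = X_0$, i.e.\ $w \in W_{X_0}$.

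The main obstacle is this last combinatorial step: upgrading ``$w$ stabilizes $\Omega^\subge_{X_0}$ (or $\Omega^\subg_{X_0}$)'' to ``$wX_0$ has the same type as $X_0$''. A priori the stabilization hypothesis does not preclude $w$ from swapping a weight of $\Omega^\sube_{X_0}$ with one of $\Omega^\subg_{X_0}$ (or $\Omega^\subl_{X_0}$), and ruling this out demands a careful argument exploiting both the $w_0$-invariance of the neutral set (generic symmetry) and the swap $w_0 \Omega^\subg_{X_0} = \Omega^\subl_{X_0}$ (symmetry of $X_0$). This is precisely where the bundling of ``generically symmetric'' and ``extreme'' into the definition of $X_0$ pays off. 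Once it is carried out, the rest is straightforward Bruhat bookkeeping.
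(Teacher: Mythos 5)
Your reduction is the same as the paper's: via Lemma~\ref{generalized_Bruhat} and Corollary~\ref{parabolic_Bruhat_decomposition}, everything comes down to the combinatorial identity $\Stab_W(\Omega^\subge_{X_0}) = \Stab_W(\Omega^\subg_{X_0}) = W_{X_0}$, with the easy inclusion and the passage through $W_{\rho, X_0} = W_{X_0}$ (extremity) handled correctly. But the step you defer to ``a careful argument'' is precisely the hard core of the proof, and it is not established by the considerations you invoke. Concretely: if $w$ stabilizes $\Omega^\subge_{X_0}$, then indeed $\Omega^\subge_{wX_0} = \Omega^\subge_{X_0}$ and, by complements, $\Omega^\subl_{wX_0} = \Omega^\subl_{X_0}$; but to conclude that $wX_0$ has the same type as $X_0$ (Definition~\ref{type_definition}) you still need $w\Omega^\subg_{X_0} = \Omega^\subg_{X_0}$, i.e.\ that $w$ does not trade elements of $\Omega^\sube_{X_0}$ against elements of $\Omega^\subg_{X_0}$ inside the preserved set $\Omega^\subge_{X_0}$ (and symmetrically in the $\Omega^\subg$ case). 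The facts you cite --- $\Omega^\sube_{X_0} = \Omega^{w_0}$ and $w_0\Omega^\subg_{X_0} = \Omega^\subl_{X_0}$ --- do not rule this out: a general $w \in W$ neither commutes with $w_0$ nor preserves $\Omega^{w_0}$, so ``the neutral set is in a distinguished position'' is not an argument.

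In the paper this missing step is exactly Lemma~\ref{Weyl_semi_stabilizer}, and its proof genuinely needs more structure than symmetry of $X_0$: one decomposes $\mathfrak{g} = \mathfrak{g}_1 \oplus \mathfrak{g}_2 \oplus \mathfrak{g}_3$ (simply-laced simple summands acting nontrivially; summands with $w_0 = -\Id$ on $\mathfrak{a}_2$; summands acting trivially), applies Lemma~\ref{roots_contained_in_weights} to get $\Sigma_1 \subset \Omega$ so that any such $w$ must fix $\Sigma_1^+$ and hence have trivial $W_1$-component, and shows $\Omega^{w_0} \subset \mathfrak{a}_1^*$ so that the remaining $W_2 \times W_3$-component fixes $\Omega^\sube_{X_0}$ pointwise. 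Note that Lemma~\ref{roots_contained_in_weights} itself rests on Proposition~\ref{convex_hull} and the transitivity of $W$ on roots of equal length, so this is not ``straightforward Bruhat bookkeeping'' that can be left implicit. As written, your proposal reproduces the short reduction that the paper also performs, but leaves the substantive lemma unproved; to complete it you must either prove the statement of Lemma~\ref{Weyl_semi_stabilizer} (or at least the equality of the two stabilizers) by an argument of comparable depth, or cite it explicitly.
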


This generalizes Proposition~4.4 in~\cite{Smi16}; but now the proof is somewhat more involved.

\begin{proof}~
By Lemma~\ref{generalized_Bruhat} and Corollary~\ref{parabolic_Bruhat_decomposition}, it is enough to show that
\begin{equation}
\Stab_W(\Omega^\subge_{X_0}) = \Stab_W(\Omega^\subg_{X_0}) = \Stab_W(\Omega^\suble_{X_0}) = \Stab_W(\Omega^\subl_{X_0}) = W_{X_0}.
\end{equation}
Indeed since $X_0 \in \mathfrak{a}^+$, clearly $\Omega^\subge_{X_0}$ and~$\Omega^\subg_{X_0}$ are top-subsets and $\Omega^\suble_{X_0}$ and~$\Omega^\subl_{X_0}$ are bottom-subsets of~$\Omega$.

Now obviously any subset of~$\Omega$ and its complement always have the same stabilizer by~$W$ (since $\Omega$ is stable by~$W$). On the other hand, we have
\begin{align*}
W_{X_0} &= W_{\rho, X_0}
  &\text{since $X_0$ is extreme} \\
        &= \Stab_W(\Omega^\subg_{X_0}) \cap \Stab_W(\Omega^\subl_{X_0})
  &\text{by definition.}
\end{align*}
So it is sufficient to show that $\Omega^\subg_{X_0}$ and $\Omega^\subl_{X_0}$, or equivalently $\Omega^\subg_{X_0}$ and~$\Omega^\subge_{X_0}$, have the same stabilizer in~$W$. This is a consequence of Lemma~\ref{Weyl_semi_stabilizer} below.
\end{proof}

\begin{lemma}
\label{Weyl_semi_stabilizer}
Every element $w$~of the restricted Weyl group~$W$ such that
\begin{equation}
\label{eq:weyl_semi_stabilizer}
w\Omega^\subg_{X_0} \subset \Omega^\subge_{X_0}
\end{equation}
stabilizes every set of restricted weights~$\Xi$ such that~$\Omega^\subg_{X_0} \subset \Xi \subset \Omega^\subge_{X_0}$.
\end{lemma}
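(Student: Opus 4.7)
The plan is to translate the hypothesis into a statement about $Y := w^{-1}X_0 \in WX_0$. Since $(w\lambda)(X_0) = \lambda(w^{-1}X_0) = \lambda(Y)$ for every $\lambda \in \mathfrak{a}^*$, the condition $w\Omega^\subg_{X_0} \subset \Omega^\subge_{X_0}$ is equivalent to $\lambda(Y) \geq 0$ for all $\lambda \in \Omega^\subg_{X_0}$; in other words, $Y$ lies in the closed convex cone
\[
C := \bigl\{Y' \in \mathfrak{a} : \lambda(Y') \geq 0 \text{ for every } \lambda \in \Omega^\subg_{X_0}\bigr\},
\]
and $X_0$ itself lies in the (relative) interior of $C$.

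The main step is to show $Y = X_0$, equivalently $w \in W_{X_0}$. For any $v \in W$, the type of $vX_0$ is given by $(\Omega^\subg_{vX_0}, \Omega^\sube_{vX_0}, \Omega^\subl_{vX_0}) = (vP, vZ, vN)$, with the convenient shorthand $P := \Omega^\subg_{X_0}$, $Z := \Omega^\sube_{X_0}$, $N := \Omega^\subl_{X_0}$. Writing $Y = w^{-1}X_0$, the hypothesis forces $w^{-1}P \cup w^{-1}Z \supset P$, i.e., no weight in $P$ can map under $w^{-1}$ into $N$. Using that $X_0$ is generically symmetric — so $w_0 \cdot P = N$ — one obtains a symmetric statement after conjugating the hypothesis by $w_0$: no weight in $N$ can be sent into $P$ either. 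Combining these two exclusions with the bijectivity of $w$ on the finite set $\Omega$ gives $w^{-1}P = P$, hence $Y$ has the same type as $X_0$. By the extremeness assumption $W_{\rho, X_0} = W_{X_0}$, this forces $Y = X_0$, so $w \in W_{X_0}$ and $w\Omega^\subg_{X_0} = \Omega^\subg_{X_0}$.

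Once $w \in W_{X_0}$, it remains to show that $w$ fixes every $\lambda \in \Omega^\sube_{X_0}$ pointwise — which, together with $w\Omega^\subg_{X_0} = \Omega^\subg_{X_0}$, implies $w\Xi = \Xi$ for every intermediate $\Xi$. Here I invoke the generically symmetric property in the form $\Omega^\sube_{X_0} = \Omega^{w_0}$: every $\lambda \in \Omega^\sube_{X_0}$ lies in the $+1$-eigenspace of $w_0$. Since $W_{X_0}$ is generated by the simple reflections $s_\alpha$ with $\alpha \in \Pi_{X_0}$ (Chevalley's lemma, cf.\ Remark~\ref{W_vs_Pi}), it suffices to show that $\langle \alpha, \lambda \rangle = 0$ for each such $\alpha$ and each $\lambda \in \Omega^\sube_{X_0}$. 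Because $-w_0 X_0 = X_0$, the opposition involution $\sigma := -w_0|_\Pi$ stabilizes $\Pi_{X_0}$; decomposing every $\alpha \in \Pi_{X_0}$ into its $w_0$-invariant and $w_0$-anti-invariant parts, the anti-invariant part is orthogonal to $\lambda$ automatically, and the generically symmetric condition forces the invariant part to be orthogonal to $\lambda$ as well. Thus each $s_\alpha$ fixes $\lambda$ pointwise, and the conclusion follows.

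The hardest part is the exclusion argument in the second paragraph: converting a non-strict inequality $\lambda(Y) \geq 0$ (for $\lambda \in P$) into the type equality $\Omega^\subg_Y = P$. The $w_0$-symmetric reformulation of the hypothesis, made possible only by generic symmetry, is the lever that closes this gap; without it one cannot rule out that $Y$ sits on a face of $C$ corresponding to a strictly more degenerate type than that of $X_0$.
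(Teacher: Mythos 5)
There is a genuine gap at your main step (second paragraph). First, ``conjugating the hypothesis by $w_0$'' does not produce the symmetric exclusion for the \emph{same} element $w$: from $w\Omega^\subg_{X_0} \subset \Omega^\subge_{X_0}$ and $w_0\Omega^\subg_{X_0} = \Omega^\subl_{X_0}$ you only obtain $(w_0 w w_0)\Omega^\subl_{X_0} \subset \Omega^\suble_{X_0}$, a statement about $w_0 w w_0$, which in general is a different Weyl element (also, your exclusion should concern $w$ rather than $w^{-1}$, but that is cosmetic). Second, even if both exclusions $w\Omega^\subg_{X_0} \subset \Omega^\subg_{X_0} \cup \Omega^\sube_{X_0}$ and $w\Omega^\subl_{X_0} \subset \Omega^\subl_{X_0} \cup \Omega^\sube_{X_0}$ were available, bijectivity of $w$ on the finite set $\Omega$ does \emph{not} give $w\Omega^\subg_{X_0} = \Omega^\subg_{X_0}$: nothing yet prevents $w$ from trading a weight of $\Omega^\subg_{X_0}$ into $\Omega^\sube_{X_0}$ while moving a weight of $\Omega^\sube_{X_0}$ into $\Omega^\subg_{X_0}$. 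Excluding such an exchange requires controlling $w$ on $\Omega^\sube_{X_0} = \Omega^{w_0}$, which in a swinging representation contains nonzero weights; that control is precisely the hard content of the lemma, and it is what the paper supplies by decomposing $\mathfrak{g} = \mathfrak{g}_1 \oplus \mathfrak{g}_2 \oplus \mathfrak{g}_3$, invoking Lemma~\ref{roots_contained_in_weights} to get $\Sigma_1 \subset \Omega$ (forcing the $W_1$-component of $w$ to be trivial) and showing $\Omega^{w_0} \subset \mathfrak{a}_1^*$, whence $w$ fixes $\Omega^\sube_{X_0}$ \emph{pointwise}. Note also that the conclusion of the lemma genuinely needs this pointwise fixing (take $\Xi = \Omega^\subg_{X_0} \cup \{\zeta\}$ for a single $\zeta \in \Omega^\sube_{X_0}$), so even a correct proof that $w \in W_{X_0}$ setwise would not by itself finish the argument.

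Your final step is also only asserted: you do not prove that generic symmetry ``forces the invariant part to be orthogonal to $\lambda$''. That particular point can be repaired cheaply: if $\alpha \in \Pi_{X_0}$ and $\lambda \in \Omega^\sube_{X_0}$ satisfy $\langle \alpha, \lambda \rangle \neq 0$, then $s_\alpha \lambda = \lambda - c\alpha$ with $c \neq 0$ is a restricted weight vanishing on $X_0$, hence lies in $\Omega^{w_0}$ by generic symmetry; then $w_0$ fixes both $\lambda$ and $\lambda - c\alpha$, so $w_0 \alpha = \alpha$, contradicting $w_0(\Sigma^+) = \Sigma^-$ (this is the trick used in the paper's proof of Lemma~\ref{quasi-translation}). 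But this repair does not rescue the main step, which needs the structural decomposition (or an equivalent input such as Lemma~\ref{roots_contained_in_weights}) rather than the symmetry-plus-counting argument you propose.
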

In particular all such sets~$\Xi$ have the same stabilizer in~$W$: indeed every~$w \in W$ that stabilizes~$\Omega^\subg_{X_0}$, and every~$w \in W$ that stabilizes~$\Omega^\subge_{X_0}$, satisfies in particular~\eqref{eq:weyl_semi_stabilizer}.
\begin{proof}
The basic idea is as follows:
\begin{itemize}
\item If $\mathfrak{g}$ is simple and has a simply-laced Dynkin diagram, then $\Omega$ contains all the roots of~$\mathfrak{g}$ (unless the representation is trivial); so the only $w$ that satisfies this is the identity.
\item Otherwise (for simple~$\mathfrak{g}$), we always have $w_0 = -\Id$; so swinging cannot happen, and the sets $\Omega^\subg_{X_0}$ and $\Omega^\subge_{X_0}$ differ only by the zero weight.
\end{itemize}

More precisely, let us decompose~$\mathfrak{g}$ as a direct sum of three pieces $\mathfrak{g} = \mathfrak{g}_1 \oplus \mathfrak{g}_2 \oplus \mathfrak{g}_3$, with respective Cartan subspaces $\mathfrak{a}_1$, $\mathfrak{a}_2$ and~$\mathfrak{a}_3$, such that:
\begin{itemize}
\item every simple summand of~$\mathfrak{g}_1$ has a restricted root system with a nonempty, simply-laced diagram, and acts nontrivially on~$V$;
\item the restriction of~$w_0$ to~$\mathfrak{a}_2$ is~$-\Id$;
\item $\mathfrak{g}_3$~acts trivially on~$V$.
\end{itemize}
(Of course this decomposition is not unique: \eg simple summands of types $A_1$, $D_{2n}$, $E_7$ or~$E_8$ can fit into either $\mathfrak{g}_1$ or~$\mathfrak{g}_2$.)

Then we also have the decompositions:
\begin{itemize}
\item $\Sigma = \Sigma_1 \cup \Sigma_2 \cup \Sigma_3$, where $\Sigma_i$ is the restricted root system of~$\mathfrak{g}_i$;
\item $\Sigma^+ = \Sigma_1^+ \cup \Sigma_2^+ \cup \Sigma_3^+$, where every $\Sigma_i^+ := \Sigma_i \cap \Sigma^+$ is a system of positive restricted roots for~$\mathfrak{g}_i$;
\item $W = W_1 \times W_2 \times W_3$, where $W_i$ is the restricted Weyl group of~$\mathfrak{g}_i$.
\end{itemize}

We now prove the two contrasting statements \eqref{eq:stabwomegaint_23} and~\eqref{eq:omegawo_1}.
\begin{itemize}
\item On the one hand, we claim that for any $w \in W$ such that $w\Omega^\subg_{X_0} \subset \Omega^\subge_{X_0}$, we have
\begin{equation}
\label{eq:stabwomegaint_23}
w \in W_2 \times W_3.
\end{equation}
Indeed, let us decompose~$w = w_1 w_2 w_3$ with $w_i \in W_i$.

Applying Lemma~\ref{roots_contained_in_weights} to every simple summand of~$\mathfrak{g}_1$, we find that $\Sigma_1 \subset \Omega$. Since $X_0$~is generically symmetric, the set~$\Omega^\sube_{X_0} = \Omega^{w_0}$ does not intersect~$\Sigma_1$ (it does not even intersect~$\Sigma$, as by definition $w_0$ does not leave any restricted root invariant). Since $X_0$~is dominant, we deduce that
\[\Omega^\subge_{X_0} \cap \Sigma_1 = \Omega^\subg_{X_0} \cap \Sigma_1 = \Sigma_1^+.\]
It follows that
\begin{align*}
w(\Sigma_1^+) &= w(\Omega^\subg_{X_0} \cap \Sigma_1)
    & \\
              &= w(\Omega^\subg_{X_0}) \cap \Sigma_1
    &\text{since $W$ stabilizes $\Sigma_1$} \\
              &\subset \Omega^\subge_{X_0} \cap \Sigma_1
    &\text{by assumption} \\
              &= \Sigma_1^+.
    & 
\end{align*}
Thus $w$ fixes~$\Sigma_1^+$, which precisely means that $w_1 = \Id$ as required.
\item On the other hand, we claim that
\begin{equation}
\label{eq:omegawo_1}
\Omega^\sube_{X_0} = \Omega^{w_0} \subset \mathfrak{a}_1^*
\end{equation}
(the equality holds since $X_0$~is generically symmetric). Indeed, take any $\lambda \in \Omega^{w_0}$, and let us decompose it as $\lambda = \lambda_1 + \lambda_2 + \lambda_3$ with~$\lambda_i \in \mathfrak{a}_i^*$. The component~$\lambda_3$ vanishes by definition of~$\mathfrak{g}_3$. As for~$\lambda_2$, we have by definition of~$\mathfrak{g}_2$ that $w_0(\lambda_2) = -\lambda_2$, so the component~$\lambda_2$ also vanishes.
\end{itemize}
Combining \eqref{eq:stabwomegaint_23} with~\eqref{eq:omegawo_1}, we conclude that whenever an element~$w \in W$ satisfies $w\Omega^\subg_{X_0} \subset \Omega^\subge_{X_0}$, it actually fixes every element of~$\Omega^\sube_{X_0}$. Now take such a~$w$, and take any element $\xi \in \Xi$. Since $\Xi \subset \Omega^\subge_{X_0}$, we may distinguish two cases:
\begin{itemize}
\item Either $\xi \in \Omega^\sube_{X_0}$. Then it follows from the previous statement that $w(\xi) = \xi \in \Xi$.
\item Or $\xi \in \Omega^\subg_{X_0}$. Then it follows from the previous statement that $w(\xi) \not\in \Omega^\sube_{X_0}$. On the other hand we know that $w(\xi) \in w(\Omega^\subg_{X_0}) \subset \Omega^\subge_{X_0}$. Thus $w(\xi) \in \Omega^\subg_{X_0} \subset \Xi$. \qedhere
\end{itemize}
\end{proof}

\subsection{Extended affine space}
\label{sec:affine}
Let $V_{\Aff}$ be an affine space whose underlying vector space is~$V$.

\begin{definition}[Extended affine space] We choose once and for all a point~$p_0$ in~$V_{\Aff}$ which we take as an origin; we call $\mathbb{R} p_0$ the one-dimensional vector space formally generated by this point, and we set $A := V \oplus \mathbb{R} p_0$ the \emph{extended affine space} corresponding to $V$. (We hope that $A$,~the extended affine space, and $A$,~the~group corresponding to the Cartan space, occur in sufficiently different contexts that the reader will not confuse them.) Then $V_{\Aff}$ is the affine hyperplane ``at height~1'' of this space, and $V$ is the corresponding vector hyperplane:
\[
V        = V \times \{0\} \subset V \times \mathbb{R} p_0; \qquad
V_{\Aff} = V \times \{1\} \subset V \times \mathbb{R} p_0.
\]
\end{definition}
\begin{definition}[Linear and affine group]
Any affine map $g$ with linear part $\ell(g)$ and translation vector $v$, defined on $V_{\Aff}$ by
\[g: x \mapsto \ell(g)(x)+v,\]
can be extended in a unique way to a linear map defined on $A$, given by the matrix
\[\begin{pmatrix} \ell(g) & v \\ 0 & 1 \end{pmatrix}.\]

From now on, we identify the abstract group~$G$ with the group $\rho(G) \subset \GL(V)$, and the corresponding affine group~$G \ltimes V$ with a subgroup of~$\GL(A)$.
\end{definition}
\begin{definition}[Affine subspaces]
We define an \emph{extended affine subspace} of $A$ to be a vector subspace of $A$ not contained in $V$. For every $k$, there is a one-to-one correspondence between $k+1$-dimensional extended affine subspaces of $A$ and $k$-dimensional affine subspaces of $V_{\Aff}$. For any extended affine subspace of~$A$ denoted by~$A_1$ (or~$A_2$, $A'$ and so on), we denote by~$V_1$ (or~$V_2$, $V'$ and so on) the space~$A \cap V$ (which is the linear part of the corresponding affine space $A \cap V_{\Aff}$).
\end{definition}
\begin{definition}[Translations]
By abuse of terminology, elements of the normal subgroup $V \lhd G \ltimes V$ will still be called \emph{translations}, even though we shall see them mostly as endomorphisms of $A$ (so that they are formally transvections). For any vector $v \in V$, we denote by~$\tau_{v}$ the corresponding translation.
\end{definition}
\begin{definition}[Reference affine dynamical spaces]
We now give a name for (the vector extensions of) the affine subspaces of $V_{\Aff}$ parallel respectively to $V^\subge_0$, $V^\suble_0$ and $V^\sube_0$ and passing through the origin: we set
\begin{align*}
A^\subge_0 &:= V^\subge_0 \oplus \mathbb{R} p_0,\quad \text{ the \emph{reference affine noncontracting space};} \\
A^\suble_0 &:= V^\suble_0 \oplus \mathbb{R} p_0,\quad \text{ the \emph{reference affine nonexpanding space};} \\
A^\sube_0 &:= V^\sube_0 \oplus \mathbb{R} p_0,\quad \text{ the \emph{reference affine neutral space}.}
\end{align*}
These are obviously the affine dynamical spaces (in the sense of~\cite{Smi16}) corresponding to the map~$\exp(X_0)$, seen as an element of~$G \ltimes V$ by identifying~$G$ with the stabilizer of~$p_0$ in~$G \ltimes V$.

We then have the decomposition
\begin{equation}
\label{eq:affine_reference_spaces_decomposition}
A =
\lefteqn{\overbrace{\phantom{
  V^\subg_0 \oplus A^\sube_0
}}^{\displaystyle A^\subge_0}}
  V^\subg_0 \oplus \underbrace{
  A^\sube_0 \oplus V^\subl_0.
}_{\displaystyle A^\suble_0}
\end{equation}
This gives a hint for why we do not introduce the spaces ``$A^\subg_0$'' or~``$A^\subl_0$''; see Remark~4.13 in~\cite{Smi16} for a detailed explanation.
\end{definition}

\begin{definition}[Affine Jordan projection]
Finally, we extend the notion of Jordan projection to the whole group~$G \ltimes V$, by setting
\[\forall g \in G \ltimes V,\quad \jordan(g) := \jordan(\ell(g)).\]
\end{definition}

\subsection{Conditions on the Jordan projection}
\label{sec:regularity}
In this subsection, we explain what ``regularity'' means in the affine case. In fact we introduce not just one, but \emph{three} different notions of regularity of an element~$g \in G \ltimes V$ (given as conditions on its Jordan projection); we then discuss the significance of each one of these. We then prove some implications between these properties. Examples are given only at the end of this section (Example~\ref{jordan_properties_discussion}), but the reader can (and maybe even should) study them before working through the propositions. 

\begin{definition}[Affine version of regularity]
\label{strong_regularity_definition}
We say that a vector $Y \in \mathfrak{a}$ is:
\begin{hypothenum}
\item \emph{$\rho$-regular} if it is $X_0$-regular and we have $\Omega^\sube_{Y} \subset \Omega^\sube_{X_0}$, or in other terms:
\begin{equation}
\label{eq:strongly_regular_def}
\begin{cases}
\forall \alpha \in \Pi \setminus \Pi_{X_0},\quad \alpha(Y) \neq 0 \\
\forall \lambda \in \Omega \setminus \Omega^\sube_{X_0},\quad \lambda(Y) \neq 0;
\end{cases}
\end{equation}
\item \emph{asymptotically contracting along~$X_0$} if
\begin{equation}
\label{eq:asymp_contr_def}
\forall \lambda^\subl \in \Omega^\subl_{X_0},\; \forall \lambda^\subge \in \Omega^\subge_{X_0},\quad \lambda^\subl(Y) < \lambda^\subge(Y);
\end{equation}
\item \emph{compatible with~$X_0$} if\footnote{The published version of this article uses an erroneous version of the definition of compatibility with~$X_0$: instead of (6.8*), it uses the weaker property
\begin{equation}
\label{eq:compat_def}
\forall \lambda^\subl \in \Omega^\subl_{X_0},\;
\forall \lambda^\subg \in \Omega^\subg_{X_0},\quad
\lambda^\subl(Y) < 0 < \lambda^\subg(Y).
\end{equation}
and erroneously claims that it is equivalent to~(6.8*). In the current version, we have corrected this mistake.}
\begin{equation}
\tag{6.8*}
\forall \lambda^\subl \in \Omega^\subl_{X_0},\;
\forall \lambda^\sube \in \Omega^\sube_{X_0},\;
\forall \lambda^\subg \in \Omega^\subg_{X_0},\quad
\lambda^\subl(Y) < \lambda^\sube(Y) < \lambda^\subg(Y).
\end{equation}
\addtocounter{equation}{1}
\end{hypothenum}
We say that an element $g \in G \ltimes V$ has one of these three properties if $\jordan(g)$~has the corresponding property.
\end{definition}

Some logical relationships between these three properties will be established soon: see Propositions \ref{asymp_cont_vs_compat} and~\ref{asymptotically_contracting_to_regular}.
\begin{remark}
Rigorously we should talk about ``$(\rho, X_0)$-regularity'', as the definition depends on the choice of~$X_0$. However the author feels that this dependence is not significant enough to be constantly mentioned in this way. (See in particular Example~\ref{jordan_properties_discussion}.\ref{itm:jord_prop_well-beh}).
\end{remark}
All three of these new properties are ``affine analogs'' of $X_0$-regularity. However, they are useful in somewhat different contexts: 
\begin{hypothenum}
\item The purpose of assuming that an element $g \in G \ltimes V$ is $\rho$-regular is just to ensure that the affine ideal dynamical subspaces (introduced in section~\ref{sec:algebraic}) are well-defined. This is a relatively weak property: $\jordan(g)$ is merely requested to avoid a finite collection of hyperplanes.

This is the property we will use the most often, and in particular the one that makes the affine version of Schema~\ref{proposition_template} work.
\item The purpose of assuming that an element $g \in G \ltimes V$ is asymptotically contracting along~$X_0$ is roughly to ensure that $g$~acts ``with the correct dynamics'' on its ideal dynamical subspaces. For more details, see the discussion following the definition of the latter (Definition~\ref{ideal_dynamical_spaces_definition}); for a motivation of the ``asymptotically contracting'' terminology, see Proposition~\ref{contraction_strength_limit}.\ref{itm:asymptotic_explanation}.

This property is not explicitly part of the hypotheses in Schema~\ref{proposition_template}, since it is implied by contraction strength (see Proposition~\ref{contraction_strength_limit}.\ref{itm:one_to_asymp_contr}) However, we will often need it as an extra assumption in intermediate results.

\item Since we work with groups and not just semigroups, we will actually often need to assume that \emph{both $g$ and~$g^{-1}$} are asymptotically contracting, \ie that $g$ is compatible with~$X_0$.

Also note that this property is verified as soon as the Jordan projection of~$g$ points in a direction that is ``close enough'' to that of~$X_0$ (see also Proposition~\ref{paper_non_vacuous}).

\end{hypothenum}
\begin{remark}
(Removed\footnote{Remark~\ref{removed_remark} applies only to the wrong definition of compatibility with $X_0$ used in the previous version, and no longer has its place in the corrected version. We left an empty shell in order to preserve environment numbering.}.)
\label{removed_remark}
\end{remark}
\begin{proposition}
\label{asymp_cont_vs_compat}
A vector $Y \in \mathfrak{a}$ (resp. an element~$g \in G \ltimes V$) is compatible with~$X_0$ if and only if\footnote{With the wrong definition of compatibility, the ``only if'' part of this proposition was false.} both $Y$ and~$-w_0(Y)$ (resp. both $g$ and~$g^{-1}$) are asymptotically contracting along~$X_0$.
\end{proposition}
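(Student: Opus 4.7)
The plan is to establish the equivalence by directly comparing the defining inequalities of Definition~\ref{strong_regularity_definition}, leveraging three elementary ingredients: the identity $\lambda(-w_0(Y)) = -(w_0\lambda)(Y)$ for every $\lambda \in \mathfrak{a}^*$; the fact that, since $X_0$ is symmetric, the involution $w_0$ permutes $\Omega$ by swapping $\Omega^\subg_{X_0}$ with $\Omega^\subl_{X_0}$ while preserving $\Omega^\sube_{X_0} = \Omega^{w_0}$ (and therefore also swapping $\Omega^\subge_{X_0}$ with $\Omega^\suble_{X_0}$); and Assumption~\ref{zero_is_a_weight}, which places $0$ inside $\Omega^\sube_{X_0}$, hence inside both $\Omega^\subge_{X_0}$ and $\Omega^\suble_{X_0}$. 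My very first step will be to use the first two ingredients to rewrite the asymptotic contraction of $-w_0(Y)$ as the ``dual'' condition
\[
\forall \lambda^\suble \in \Omega^\suble_{X_0},\; \forall \lambda^\subg \in \Omega^\subg_{X_0},\quad \lambda^\suble(Y) < \lambda^\subg(Y).
\]

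For the direction ``$\Leftarrow$'', I will simply specialize $\lambda^\subge = 0$ in the asymptotic contraction of $Y$ (which is legal by Assumption~\ref{zero_is_a_weight}), obtaining $\lambda^\subl(Y) < 0$ for every $\lambda^\subl \in \Omega^\subl_{X_0}$; dually, specializing $\lambda^\suble = 0$ in the reformulation above yields $0 < \lambda^\subg(Y)$ for every $\lambda^\subg \in \Omega^\subg_{X_0}$. Together these are exactly the two halves of the compatibility condition, so this direction is essentially free.

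For the converse, suppose $Y$ is compatible with $X_0$, and fix $\lambda^\subl \in \Omega^\subl_{X_0}$ together with $\lambda^\subge \in \Omega^\subge_{X_0}$. When $\lambda^\subge \in \Omega^\subg_{X_0}$, compatibility directly furnishes the strict sandwich $\lambda^\subl(Y) < 0 < \lambda^\subge(Y)$. The delicate case, which I expect to be the principal obstacle, is $\lambda^\subge \in \Omega^\sube_{X_0} = \Omega^{w_0}$: here compatibility as stated places no a priori constraint on $\lambda^\sube(Y)$, and I plan to exploit the $W$-invariance of $\Omega$, the extremality of $X_0$, and the easy observation that the compatibility of $Y$ is equivalent to the compatibility of $-w_0(Y)$ (which follows by the same reformulation) to bridge the gap. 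Once the asymptotic contraction of $Y$ is obtained, the asymptotic contraction of $-w_0(Y)$ follows by the same argument applied with $Y$ replaced by $-w_0(Y)$, completing the equivalence.
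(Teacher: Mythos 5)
Your ``$\Leftarrow$'' direction is correct and is in fact all that the paper's own (one-line) proof contains: since $X_0$ is symmetric and $\Omega$ is $W$-invariant, $w_0$ swaps $\Omega^\subg_{X_0}$ with $\Omega^\subl_{X_0}$ and preserves $\Omega^\sube_{X_0}$, so asymptotic contraction of $-w_0(Y)$ is exactly your dual condition, and specializing the weight $0$ (Assumption~\ref{zero_is_a_weight}) on both sides gives compatibility; the affine case reduces to the vector case via \eqref{eq:inverse_jordan_projection}. The ``$\Rightarrow$'' direction, however, is not proved in your proposal: for $\lambda^\subge \in \Omega^\sube_{X_0}$ you only announce a plan, and that case is the entire content of the implication, because what is needed is the sandwich $\max_{\lambda \in \Omega^\subl_{X_0}} \lambda(Y) \,<\, \lambda^\sube(Y) \,<\, \min_{\lambda \in \Omega^\subg_{X_0}} \lambda(Y)$ for every $\lambda^\sube \in \Omega^\sube_{X_0} = \Omega^{w_0}$, while compatibility (Definition~\ref{strong_regularity_definition}) imposes no constraint whatsoever on the values $\lambda^\sube(Y)$.

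Moreover, this gap cannot be bridged by $W$-invariance, extremality of $X_0$, or the self-duality of compatibility, because the implication ``compatible $\Rightarrow$ asymptotically contracting'' fails as stated once $\Omega^{w_0} \neq \{0\}$. Take $G = \SL_3(\mathbb{R})$ acting on $V = S^3\mathbb{R}^3$ (the paper's flagship swinging example), write $\varepsilon_i$ for the $i$-th diagonal coordinate on $\mathfrak{a}$, and take $X_0 = \Diag(1,0,-1)$, which is generically symmetric and extreme, with $\Omega^\sube_{X_0} = \Omega^{w_0} = \{0,\, 3\varepsilon_2\}$, $\Omega^\subg_{X_0} = \{3\varepsilon_1,\, 2\varepsilon_1+\varepsilon_2,\, 2\varepsilon_1+\varepsilon_3,\, \varepsilon_1+2\varepsilon_2\}$ and $\Omega^\subl_{X_0} = \{3\varepsilon_3,\, 2\varepsilon_2+\varepsilon_3,\, \varepsilon_1+2\varepsilon_3,\, \varepsilon_2+2\varepsilon_3\}$. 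The dominant vector $Y = \Diag(5,-2,-3)$ is compatible with $X_0$: the weights of $\Omega^\subg_{X_0}$ take the values $15, 8, 7, 1 > 0$ at $Y$, and those of $\Omega^\subl_{X_0}$ take the values $-9, -7, -1, -8 < 0$. Yet $(\varepsilon_1+2\varepsilon_3)(Y) = -1$ is \emph{larger} than $3\varepsilon_2(Y) = -6$, with $3\varepsilon_2 \in \Omega^\subge_{X_0}$, so $Y$ is not asymptotically contracting along $X_0$ (and taking $g = \exp(Y)$ gives the same failure for elements of $G \ltimes V$). So your instinct that the neutral case is the principal obstacle is exactly right, but it is not an obstacle a proof can overcome: the forward implication holds only under a stronger hypothesis (for instance for $Y$ of the same type as $X_0$, or if ``compatible'' is redefined outright as the conjunction of the two asymptotic-contraction conditions, which is what the subsequent arguments really use); in the non-swinging case $\Omega^{w_0} = \{0\}$ the issue disappears, which is presumably why the paper's terse justification — which, note, delivers nothing beyond your easy direction — was thought sufficient.
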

\begin{proof}
This immediately follows from the fact that $X_0$ is symmetric, from the $W$-invariance of~$\Omega$ and from \eqref{eq:inverse_jordan_projection}.
\end{proof}


\begin{proposition}
\label{asymptotically_contracting_to_regular}
Let $g \in G \ltimes V$. Then:
\begin{hypothenum}
\item if $g$~is asymptotically contracting along~$X_0$, then $g$~is $X_0$-regular;
\item if $g$~is compatible with $X_0$, then $g$~is $\rho$-regular.
\end{hypothenum}
\end{proposition}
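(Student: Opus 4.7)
The linchpin of both parts is a single observation derived from the extremality of $X_0$: for every $\alpha \in \Pi \setminus \Pi_{X_0}$, there exists a restricted weight $\lambda \in \Omega^\subg_{X_0}$ with $s_\alpha \lambda \in \Omega^\subl_{X_0}$. I will establish this by contradiction. Suppose instead that $s_\alpha \Omega^\subg_{X_0} \subset \Omega^\subge_{X_0}$. Then Lemma~\ref{Weyl_semi_stabilizer} (applied successively with $\Xi = \Omega^\subg_{X_0}$ and with $\Xi = \Omega^\subge_{X_0}$) forces $s_\alpha$ to stabilize both $\Omega^\subg_{X_0}$ and $\Omega^\subge_{X_0}$, hence also $\Omega^\sube_{X_0}$ and its further complement $\Omega^\subl_{X_0}$ in $\Omega$. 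This places $s_\alpha$ in $\Stab_W(\Omega^\subg_{X_0}) \cap \Stab_W(\Omega^\subl_{X_0}) = W_{\rho, X_0}$; by extremality $W_{\rho, X_0} = W_{X_0}$, so $s_\alpha \in W_{X_0}$, and then by Chevalley's lemma (Remark~\ref{W_vs_Pi}) $\alpha \in \Pi_{X_0}$, contradicting the choice of $\alpha$.

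For part~(i), fix any $\alpha \in \Pi \setminus \Pi_{X_0}$ and pick $\lambda \in \Omega^\subg_{X_0} \subset \Omega^\subge_{X_0}$ with $s_\alpha \lambda \in \Omega^\subl_{X_0}$ as above. Applying the asymptotic contraction inequality~\eqref{eq:asymp_contr_def} to the pair $(\lambda^\subl, \lambda^\subge) = (s_\alpha \lambda, \lambda)$ yields $s_\alpha \lambda(Y) < \lambda(Y)$. If we had $\alpha(Y) = 0$, then $s_\alpha Y = Y$ would give $s_\alpha \lambda(Y) = \lambda(s_\alpha Y) = \lambda(Y)$, contradicting the strict inequality. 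Therefore $\alpha(Y) \neq 0$, which is precisely the $X_0$-regularity condition (for $Y = \jordan(g) \in \mathfrak{a}^+$, this then forces $\alpha(Y) > 0$).

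For part~(ii), I will reduce to part~(i). Proposition~\ref{asymp_cont_vs_compat} tells us that a compatible $Y$ is in particular asymptotically contracting along $X_0$, so part~(i) yields $X_0$-regularity. It remains to verify the second defining condition of $\rho$-regularity, namely $\Omega^\sube_Y \subset \Omega^\sube_{X_0}$. This is immediate from the definition of compatibility~\eqref{eq:compat_def}: any $\lambda \in \Omega \setminus \Omega^\sube_{X_0}$ satisfies either $\lambda(X_0) > 0$, whence $\lambda(Y) > 0$, or $\lambda(X_0) < 0$, whence $\lambda(Y) < 0$; in either case $\lambda(Y) \neq 0$, so $\lambda \notin \Omega^\sube_Y$. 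Hence $Y$ is $\rho$-regular.

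The main obstacle is the extremality argument in the first paragraph, which cleverly leverages Lemma~\ref{Weyl_semi_stabilizer} to produce a weight whose sign under $s_\alpha$ flips strictly rather than degenerating to zero. Once this sign-flipping weight is in hand, both parts fall out by brief sign comparisons exploiting the identity $s_\alpha \lambda(Y) = \lambda(Y)$ that holds whenever $\alpha(Y) = 0$.
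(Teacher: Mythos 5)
Your proof is correct, and your part~(ii) is essentially the paper's own argument: like the paper, you pass from compatibility to asymptotic contraction via Proposition~\ref{asymp_cont_vs_compat}, invoke part~(i) for $X_0$-regularity, and then read the condition $\Omega^\sube_{\jordan(g)} \subset \Omega^\sube_{X_0}$ straight off the compatibility inequalities (the paper phrases this as a two-case min/max check using $0 \in \Omega^\subge_{X_0} \cap \Omega^\suble_{X_0}$, which is the same computation). Where you genuinely diverge is part~(i). The paper simply invokes Lemma~\ref{de_part_et_d_autre} (weak version), which for each $\alpha_i \in \Pi \setminus \Pi_{X_0}$ supplies weights $\lambda^\subge_i \in \Omega^\subge_{X_0}$ and $\lambda^\subl_i \in \Omega^\subl_{X_0}$ with $\lambda^\subge_i - \lambda^\subl_i = \alpha_i$, so that \eqref{eq:asymp_contr_def} gives $\alpha_i(\jordan(g)) > 0$ in one line. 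You instead manufacture a sign-flipping pair $(\lambda, s_\alpha \lambda)$ directly: assuming $s_\alpha \Omega^\subg_{X_0} \subset \Omega^\subge_{X_0}$, Lemma~\ref{Weyl_semi_stabilizer} forces $s_\alpha$ to stabilize $\Omega^\subg_{X_0}$ and $\Omega^\subge_{X_0}$, hence $s_\alpha \in W_{\rho, X_0} = W_{X_0}$ by extremality and $\alpha \in \Pi_{X_0}$ by Remark~\ref{W_vs_Pi} --- a contradiction; this is in substance the stabilizer computation already packaged in Proposition~\ref{stabg=stabge}, which you could have cited directly, since $s_\alpha \notin \Stab_W(\Omega^\subge_{X_0}) = W_{X_0}$ already produces some $\lambda \in \Omega^\subge_{X_0}$ with $s_\alpha \lambda \in \Omega^\subl_{X_0}$, and that suffices for your argument. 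Your conclusion via the reflection identity only yields $\alpha(\jordan(g)) \neq 0$, which is enough because $\jordan(g) \in \mathfrak{a}^+$; alternatively, note that $\lambda - s_\alpha \lambda$ is a \emph{positive} multiple of $\alpha$ (its value at $X_0$ is positive and $\alpha(X_0) > 0$), which gives the sign at once. What the paper's route buys is economy: Lemma~\ref{de_part_et_d_autre}, with its exact difference $\alpha_i$ and its strong version, is needed later anyway (Proposition~\ref{affine_to_intrinsic_linear}), so part~(i) comes for free; what your route buys is independence from the root-string construction inside that lemma. As a final remark, the strong version of Lemma~\ref{de_part_et_d_autre} (with $\lambda^\subge_i \in \Omega^\subg_{X_0} \cup \{0\}$) would let you prove part~(ii) directly from \eqref{eq:compat_def}, without routing through asymptotic contraction and Proposition~\ref{asymp_cont_vs_compat} at all.
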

\begin{remark}
\label{asymp_vs_regular}
As we shall see in Example~\ref{jordan_properties_discussion}.\ref{itm:jord_prop_non-swing}, if $\rho$~is non-swinging, then asymptotic contraction alone suffices to guarantee $\rho$-regularity. But in general it does not: see Example~\ref{fancy_swinging_example} for a counterexample.
\end{remark}

The proof relies on the following lemma. For the moment we only need the weak version (which generalizes Lemma~6.4 from~\cite{Smi16}); the strong version (which is new) will be useful later.
\begin{lemma}
\label{de_part_et_d_autre}
For every $i \in \Pi \setminus \Pi_{X_0}$, there exists a pair of restricted weights~$(\lambda^\subge_i, \lambda^\subl_i)$ such that $\lambda^\subge_i - \lambda^\subl_i = \alpha_i$, with:
\begin{hypothenum}
\item \label{itm:weak} (weak version) $\lambda^\subl_i \in \Omega^\subl_{X_0}$ and~$\lambda^\subge_i \in \Omega^\subge_{X_0}$;
\item \label{itm:strong} (strong version) $\lambda^\subl_i \in \Omega^\subl_{X_0}$ and~$\lambda^\subge_i \in \Omega^\subg_{X_0} \cup \{0\}$.
\end{hypothenum}
\end{lemma}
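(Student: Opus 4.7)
My plan is to prove both parts using the classical structure theory of $\alpha_i$-strings of restricted weights: for any $\lambda \in \Omega$, the set $\{k \in \mathbb{Z} : \lambda + k\alpha_i \in \Omega\}$ is a contiguous integer interval $[-p, q]$ on which the reflection $s_{\alpha_i}$ acts by $\lambda + k\alpha_i \mapsto \lambda + (q - p - k)\alpha_i$. I will combine this with the fact (established inside the proof of Proposition~\ref{stabg=stabge}) that $\Stab_W(\Omega^\subl_{X_0}) = W_{X_0}$, together with the extremality of $X_0$, which gives $W_{X_0} = W_{\rho, X_0}$.

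For the weak version~\ref{itm:weak}, I would first argue that $s_{\alpha_i} \notin W_{X_0}$ (since $i \in \Pi \setminus \Pi_{X_0}$, via Remark~\ref{W_vs_Pi}), so $s_{\alpha_i}$ does not stabilize $\Omega^\subl_{X_0}$. Picking $\lambda \in \Omega^\subl_{X_0}$ with $s_{\alpha_i}(\lambda) \notin \Omega^\subl_{X_0}$, I would then consider the $\alpha_i$-string through $\lambda$. Since $\alpha_i(X_0) > 0$, the function $k \mapsto (\lambda + k\alpha_i)(X_0)$ is strictly increasing; it takes the negative value $\lambda(X_0)$ at $k = 0$ and, at the position corresponding to $s_{\alpha_i}(\lambda)$, a nonnegative value. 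Hence there is a $k^*$ in the string interval with $(\lambda + k^*\alpha_i)(X_0) < 0 \leq (\lambda + (k^*+1)\alpha_i)(X_0)$; setting $\lambda^\subl_i := \lambda + k^*\alpha_i$ and $\lambda^\subge_i := \lambda + (k^*+1)\alpha_i$ produces the required pair.

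For the strong version~\ref{itm:strong}, the only additional thing to exclude is $\lambda^\subge_i \in \Omega^\sube_{X_0} \setminus \{0\}$, which by generic symmetry of $X_0$ equals $\Omega^{w_0} \setminus \{0\}$. I would split into two cases. \textbf{If $\alpha_i \in \Omega$}, then $-\alpha_i \in \Omega$ by $W$-invariance, and the explicit pair $(\lambda^\subl_i, \lambda^\subge_i) := (-\alpha_i, 0)$ works (recall $0 \in \Omega$ by Assumption~\ref{zero_is_a_weight}). \textbf{If $\alpha_i \notin \Omega$}, I claim the bad subcase of the weak version's output cannot occur. To see this I would invoke the decomposition $\mathfrak{g} = \mathfrak{g}_1 \oplus \mathfrak{g}_2 \oplus \mathfrak{g}_3$ from the proof of Lemma~\ref{Weyl_semi_stabilizer}, which yields $\Omega \subset (\mathfrak{a}_1 \oplus \mathfrak{a}_2)^*$ and $\Omega^{w_0} \subset \mathfrak{a}_1^*$. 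Supposing $\lambda^\subge_i \in \Omega^{w_0} \setminus \{0\}$ with $\lambda^\subl_i = \lambda^\subge_i - \alpha_i \in \Omega$, the simple root $\alpha_i$ cannot be a root of $\mathfrak{g}_3$ (else it would vanish on all of $\mathfrak{a}$); if $\alpha_i$ is a root of $\mathfrak{g}_1$, Lemma~\ref{roots_contained_in_weights} applied to the simple summand containing it (via a nontrivial irreducible $\mathfrak{g}_1$-subrepresentation of $V$) gives $\alpha_i \in \Omega$, contradicting our case; if $\alpha_i$ is a root of $\mathfrak{g}_2$, I would use the tensor-product-like decomposition of the irreducible $\rho$ over $\mathfrak{g}_1 \oplus \mathfrak{g}_2$ to read off that $-\alpha_i$, the $\mathfrak{a}_2^*$-component of $\lambda^\subl_i$, is a weight of the $\mathfrak{g}_2$-factor, hence by $W_2$-invariance so is $\alpha_i$, and combining this with $0$ being a weight of the $\mathfrak{g}_1$-factor yields $\alpha_i \in \Omega$, again a contradiction.

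The hardest step will be the strong version in the subcase $\alpha_i \notin \Omega$ and $\alpha_i$ a root of $\mathfrak{g}_2$: the transfer from ``$-\alpha_i$ appears as an $\mathfrak{a}_2^*$-component of some weight'' to ``$\alpha_i \in \Omega$'' relies on the factorization of the irreducible real representation $\rho$ of the direct sum $\mathfrak{g}_1 \oplus \mathfrak{g}_2$, which needs some care over~$\mathbb{R}$; but since we are working only with the combinatorial structure of the restricted weight \emph{sets}, the needed conclusion still holds.
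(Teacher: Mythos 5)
Your weak version is essentially the paper's own argument: the paper likewise picks a restricted weight that $s_{\alpha_i}$ moves across the wall $\{\lambda(X_0)=0\}$ (using $\Stab_W(\Omega^\subge_{X_0})=W_{X_0}$, which comes from extremality via Proposition~\ref{stabg=stabge}) and walks along the $\alpha_i$-string, whose contiguity between $\lambda$ and $s_{\alpha_i}(\lambda)$ is exactly what Proposition~\ref{convex_hull} provides. The only cosmetic difference is that the paper first splits $\Pi\setminus\Pi_{X_0}$ according to the decomposition $\mathfrak{g}=\mathfrak{g}_1\oplus\mathfrak{g}_2\oplus\mathfrak{g}_3$ of Lemma~\ref{Weyl_semi_stabilizer}, which your uniform crossing argument renders unnecessary for part~(i).

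For the strong version your route genuinely differs in the key subcase. Both you and the paper reduce to excluding $\lambda^\subge_i\in\Omega^\sube_{X_0}\setminus\{0\}=\Omega^{w_0}\setminus\{0\}\subset\mathfrak{a}_1^*$. The paper stays inside Proposition~\ref{convex_hull}: since $W_1$ fixes $\alpha_i$ and the $W_1$-average of $\lambda_{\mathrm{last}}\in\mathfrak{a}_1^*$ is zero, the average of the $W_1$-orbit of $\lambda_{\mathrm{last}}-\alpha_i$ is $-\alpha_i$, which is therefore a restricted weight, and one simply takes the pair $(0,-\alpha_i)$. You instead invoke the external-tensor-product (sum-set) structure of the restricted weight set of an irreducible representation of $\mathfrak{g}_1\oplus\mathfrak{g}_2\oplus\mathfrak{g}_3$ to read off $-\alpha_i$ as a weight of the $\mathfrak{g}_2$-factor; this is correct, but the step you flag does need a real-forms argument (complexify: either $V_{\mathbb{C}}$ is an outer tensor product, or it splits as $W\oplus\overline{W}$ whose restricted weight sets coincide because restricted weights are real), whereas the paper's averaging trick yields the same conclusion with no such detour. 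Two smaller repairs: your exclusion of $\alpha_i\in\Sigma_3$ is misjustified as stated (a restricted root of $\mathfrak{g}_3$ does not vanish on $\mathfrak{a}$; the correct one-liner is that in the bad subcase $\alpha_i=\lambda^\subge_i-\lambda^\subl_i$ lies in $(\mathfrak{a}_1\oplus\mathfrak{a}_2)^*$ because every restricted weight of $\rho$ vanishes on $\mathfrak{a}_3$, or alternatively that $s_{\alpha_i}$ would fix $\Omega$ pointwise and hence lie in $W_{X_0}$, contradicting $i\notin\Pi_{X_0}$); and in the $\mathfrak{g}_1$ subcase an arbitrary nontrivial irreducible $\mathfrak{g}_1$-subrepresentation of $V$ need not contain the zero restricted weight, so to apply Lemma~\ref{roots_contained_in_weights} you should again pass through the sum-set structure (the paper's wording glosses over the same point, so this is not a gap relative to the paper, but it is worth a sentence).
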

\begin{remark}
As long as $\rho$~is non-awkward, \eqref{eq:pi_X0_non-awkward} provides a ``super-strong version'' of this lemma: namely we can actually take $\lambda^\subge_i = 0$. In general this is not true: see Example~\ref{awkward}.
\end{remark}
\begin{proof}~
\begin{hypothenum}
\item Let us reintroduce the decomposition $\mathfrak{g} = \mathfrak{g}_1 \oplus \mathfrak{g}_2 \oplus \mathfrak{g}_3$ from the proof of Lemma~\ref{Weyl_semi_stabilizer}, together with the other notations that went along with it. We then have
\[\Pi = \Pi_1 \sqcup \Pi_2 \sqcup \Pi_3,\]
where $\Pi_{1, 2, 3} := \Pi \cap \Sigma_{1, 2, 3}$. Let~$i \in \Pi \setminus \Pi_{X_0}$. We distinguish three cases:
\begin{itemize}
\item The case $i \in \Pi_3$ never occurs: indeed for $i \in \Pi_3$, the symmetry~$s_{\alpha_i}$ fixes~$\Omega$ pointwise, so we have $i \in \Pi_{X_0}$.
\item If $i \in \Pi_1$, applying Lemma~\ref{roots_contained_in_weights} to the simple summand of~$\mathfrak{g}_1$ containing~$\alpha_i$, we find that $\alpha_i \in \Omega$. Then we may simply take $\lambda^\subge_i = 0$ and $\lambda^\subl_i = -\alpha_i$.
\item Finally suppose that $i \in \Pi_2$. Then by definition, we have~$s_{\alpha_i} \not\in W_{X_0}$; by Proposition~\ref{stabg=stabge}, this means that $s_{\alpha_i}$ does not stabilize~$\Omega^\subge_{X_0}$. In other terms, there exists a restricted weight~$\lambda$ of~$\rho$ such that
\begin{equation}
\lambda(X_0) \geq 0 \quad\text{ and }\quad s_{\alpha_i}(\lambda)(X_0) < 0
\end{equation}
(compare this with~\eqref{eq:transgressing_restr_weight_raw}, which was slightly weaker). Since $\lambda$ is a restricted weight, by Proposition~\ref{restr_weight_lattice}, the number
\begin{equation}
n_\lambda := \frac{\langle \lambda, \alpha_i \rangle}{2\langle \alpha_i, \alpha_i \rangle}
\end{equation}
is an integer. We have, on the one hand:
\begin{equation}
\label{eq:nalpha_positive}
n_\lambda \alpha_i (X_0) = \left( \lambda - s_{\alpha_i}(\lambda) \right) (X_0) > 0;
\end{equation}
on the other hand, $\alpha_i(X_0) \geq 0$ (because $X_0 \in \mathfrak{a}^{+}$); hence $n_\lambda$~is positive.

By Proposition~\ref{convex_hull}, every element of the sequence
\[\lambda,\; \lambda - \alpha_i,\; \ldots,\; \lambda - n_\lambda \alpha_i = s_{\alpha_i}(\lambda)\]
is a restricted weight of~$\rho$. Let~$\lambda_{\mathrm{last}}$ be the last element of this sequence that still lies in~$\Omega^\subge_{X_0}$; then by construction $\lambda_{\mathrm{last}} - \alpha_i \in \Omega^\subl_{X_0}$. Taking $\lambda^\subge_i = \lambda_{\mathrm{last}}$ and~$\lambda^\subl_i = \lambda_{\mathrm{last}} - \alpha_i$, we get the weak version.
\end{itemize}

\item The same proof works for the strong version, except in the last case ($i \in \Pi_2$) if we happen to have $\lambda_{\mathrm{last}} \not\in \Omega^\subg_{X_0}$. But then we have $\lambda_{\mathrm{last}} \in \Omega^\sube_{X_0} = \Omega^{w_0}$ since~$X_0$ is generically symmetric. In particular, by~\eqref{eq:omegawo_1}, we have $\lambda_{\mathrm{last}} \in \mathfrak{a}_1^*$.

Since $\lambda_{\mathrm{last}} - \alpha_i$ is a restricted weight of~$\rho$, by Proposition~\ref{convex_hull}, the form~$-\alpha_i$ is also a restricted weight of~$\rho$ (the latter is the average of the $W_1$-orbit of the former). By~\eqref{eq:nalpha_positive}, we actually have $-\alpha_i \in \Omega^\subl_{X_0}$. Thus we may take $\lambda^\subge_i = 0$ and~$\lambda^\subl_i = -\alpha_i$. \qedhere
\end{hypothenum}
\end{proof}

\begin{proof}[Proof of Proposition~\ref{asymptotically_contracting_to_regular}]~
\begin{hypothenum}
\item Let $i \in \Pi \setminus \Pi_{X_0}$, and let $\lambda^\subge_i$ and~$\lambda^\subl_i$ be the restricted weights constructed in Lemma~\ref{de_part_et_d_autre} (the weak version suffices). Since $g$~is asymptotically contracting along~$X_0$, we then have $\lambda^\subge_i(\jordan(g)) > \lambda^\subl_i(\jordan(g))$, hence $\alpha_i(\jordan(g)) > 0$.
\item Suppose that both $g$ and~$g^{-1}$ are asymptotically contracting along~$X_0$. By the previous point, we already know that $g$~is $X_0$-regular. It remains to check that for any $\lambda \not\in \Omega^\sube_{X_0}$, we have $\lambda(\jordan(g)) \neq 0$. We distinguish two cases:
\begin{itemize}
\item if $\lambda \in \Omega^\subl_{X_0}$, since $g$~is asymptotically contracting along~$X_0$, we have
\[\lambda(\jordan(g)) < \min_{\lambda^\subge \in \Omega^\subge_{X_0}} \lambda^\subge(\jordan(g)) \leq 0;\]
\item if $\lambda \in \Omega^\subg_{X_0}$, since $g^{-1}$~is asymptotically contracting along~$X_0$, we have
\[\lambda(\jordan(g)) > \max_{\lambda^\suble \in \Omega^\suble_{X_0}} \lambda^\suble(\jordan(g)) \geq 0.\]
\end{itemize}
Hence $g$~is $\rho$-regular. \qedhere
\end{hypothenum}
\end{proof}

Now let us prove a somewhat technical (but easy) proposition, that will be useful to us only in the final proof of the Main Theorem.

\begin{definition}
We set\footnote{Note that in this version, the definition of compatibility --- namely Definition~\ref{strong_regularity_definition}.(iii) --- has been changed.}
\begin{equation}
\label{eq:relevant_cone_def}
\tag{6.13*}
\mathfrak{a}'_{\rho, X_0}
:= \setsuch{Y \in \mathfrak{a}}{\textnormal{$Y$ is compatible with~$X_0$}}.
\end{equation}
\addtocounter{equation}{1}
\end{definition}
This set will play in this paper a role analogous to the role played in~\cite{Smi16} by the set~$\mathfrak{a}_{\rho, X_0}$ of vectors \emph{of the same type} as~$X_0$ (defined in~(3.2) in~\cite{Smi16}). In fact, in the particular cases studied in that earlier paper, both sets actually coincided (see Example~\ref{jordan_properties_discussion}.\ref{itm:jord_prop_non-swing}).
\begin{proposition}
\label{paper_non_vacuous}
The intersection $\mathfrak{a}'_{\rho, X_0} \cap \mathfrak{a}^{++}$ is a nonempty open convex cone.
\end{proposition}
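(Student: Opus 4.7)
The plan is to verify the three required properties (openness, convexity-plus-cone, and nonemptiness) separately, since the first two are essentially formal and the only real content lies in producing a single element of the intersection.

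\textbf{Openness, convexity, cone property.} I would unfold the definition \eqref{eq:compat_def}: a vector $Y \in \mathfrak{a}$ is compatible with~$X_0$ iff it satisfies the finite system of strict linear inequalities
\[
\lambda^{\subl}(Y) < 0 \quad \forall \lambda^{\subl} \in \Omega^{\subl}_{X_0},
\qquad
\lambda^{\subg}(Y) > 0 \quad \forall \lambda^{\subg} \in \Omega^{\subg}_{X_0}.
\]
Together with the chamber conditions $\alpha(Y) > 0$ for all $\alpha \in \Pi$, this realizes $\mathfrak{a}'_{\rho, X_0} \cap \mathfrak{a}^{++}$ as a finite intersection of open linear half-spaces not passing through the origin. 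It is therefore open, convex, and stable under multiplication by positive scalars, hence an open convex cone.

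\textbf{Nonemptiness by perturbation of~$X_0$.} The vector~$X_0$ itself lies in $\mathfrak{a}'_{\rho, X_0}$ (the inclusions $\Omega^\subl_{X_0} \supset \Omega^\subl_{X_0}$ and $\Omega^\subg_{X_0} \supset \Omega^\subg_{X_0}$ are tautological), but it may lie on the boundary of~$\mathfrak{a}^+$, so a small perturbation into the open chamber is needed. The idea is to fix any auxiliary element $Z \in \mathfrak{a}^{++}$ — for instance $Z := \sum_{i=1}^r \varpi_i$ — and set
\[
Y_\varepsilon := X_0 + \varepsilon Z, \qquad \varepsilon > 0.
\]
For every $\alpha \in \Pi$ we have $\alpha(Y_\varepsilon) = \alpha(X_0) + \varepsilon \alpha(Z) > 0$, because $\alpha(X_0) \geq 0$ (since $X_0 \in \mathfrak{a}^+$) and $\alpha(Z) > 0$; hence $Y_\varepsilon \in \mathfrak{a}^{++}$ for all $\varepsilon > 0$. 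Regarding compatibility, the sets $\Omega^\subl_{X_0}$ and~$\Omega^\subg_{X_0}$ are finite, and on each of them the quantities $\lambda^\subl(X_0)$ and $\lambda^\subg(X_0)$ are bounded away from~$0$ (strictly negative, resp. strictly positive). By continuity of the linear forms, choosing $\varepsilon$ small enough makes $\lambda^\subl(Y_\varepsilon) < 0$ and $\lambda^\subg(Y_\varepsilon) > 0$ simultaneously for every $\lambda^\subl \in \Omega^\subl_{X_0}$ and $\lambda^\subg \in \Omega^\subg_{X_0}$. Thus $Y_\varepsilon \in \mathfrak{a}'_{\rho, X_0} \cap \mathfrak{a}^{++}$.

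\textbf{Main obstacle.} There is essentially none: the statement is a soft corollary of the definitions once one notices that compatibility is defined by strict inequalities that are already satisfied by~$X_0$ itself, so the only work is ensuring genuine dominance, handled by the $\varepsilon Z$ perturbation. No use of Proposition~\ref{asymptotically_contracting_to_regular} or of the finer structure of~$X_0$ (genericity, symmetry, extremality) is required for this particular statement.
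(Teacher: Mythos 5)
Your proof is correct and takes essentially the same route as the paper: the paper simply observes that $\mathfrak{a}'_{\rho, X_0}$ is an open convex cone (finitely many strict homogeneous inequalities) containing $X_0 \in \mathfrak{a}^+ = \overline{\mathfrak{a}^{++}}$, hence it meets $\mathfrak{a}^{++}$, and your explicit perturbation $X_0 + \varepsilon Z$ is just that openness argument spelled out. (One cosmetic slip: $\sum_i \varpi_i$ lives in $\mathfrak{a}^*$ rather than $\mathfrak{a}$, but any choice of $Z \in \mathfrak{a}^{++}$ does the job.)
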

\begin{proof}~
By construction $\mathfrak{a}'_{\rho, X_0}$ is an intersection of finitely many open vector half-spaces, so it is an open convex cone. It contains~$X_0$, hence it meets the closed cone~$\mathfrak{a}^+$; hence it also meets the open cone~$\mathfrak{a}^{++}$.
\end{proof}

Finally, as promised, we give some examples.

\begin{example}~
\label{jordan_properties_discussion}
\begin{enumerate}
\item \label{itm:jord_prop_limited} If the representation~$\rho$ is limited, all four properties of being $X_0$-regular, $\rho$-regular, asymptotically contracting along~$X_0$ and compatible with~$X_0$ become equivalent. This includes the standard representation of~$G = \SO(p+1, p)$ on~$\mathbb{R}^{2p+1}$: see Example 4.15.1 in~\cite{Smi16}.
\item \label{itm:jord_prop_adjoint} If the representation~$\rho$ is limited and abundant at the same time (\eg the adjoint representation), all four properties actually reduce to ordinary $\mathbb{R}$-regularity (since $X_0$ generic then means $X_0 \in \mathfrak{a}^{++}$).
\item \label{itm:jord_prop_well-beh} If the representation~$\rho$ is non-awkward (either limited or abundant), the notion of $\rho$-regularity does \emph{not} depend on the choice of~$X_0$: indeed $\Pi_{X_0}$ is then uniquely determined by \eqref{eq:pi_X0_non-awkward}, and $\Omega_{X_0}^\sube = \Omega^{w_0}$ since $X_0$~is by assumption generically symmetric.
\item \label{itm:jord_prop_ill-beh} In general (see Example~\ref{awkward}), the notion of $\rho$-regularity \emph{does} depend on the choice of~$X_0$ (so technically, it should be called ``$(\rho, X_0)$-regularity'').
\item \label{itm:jord_prop_non-swing} If the representation~$\rho$ is non-swinging, it is possible to show that its set of restricted weights~$\Omega$ is centrally symmetric (or, equivalently, invariant by~$-w_0$). (The author must however admit that he knows no better proof of this fact than by complete enumeration of non-swinging representations.) It then follows that whenever $g$ is asymptotically contracting along~$X_0$, so is $g^{-1}$. Then the conditions of asymptotic contraction along~$X_0$, compatibility with~$X_0$ and having the same type as~$X_0$ (in the sense of Definition~\ref{type_definition}) all become equivalent. This is why in~\cite{Smi16}, we worked only with the latter condition.
\item \label{itm:jord_prop_swing} In general, asymptotic contraction along~$X_0$ is a stronger condition than being of type~$X_0$. For a simple counterexample, we encourage the reader to study the case of $G = \SL_3(\mathbb{R})$ acting on~$S^3 \mathbb{R}^3$ (some pictures are given in \cite{Smi16}, Example~3.9). For a fancier counterexample, see Example~\ref{fancy_swinging_example}.
\end{enumerate}
\end{example}

\subsection{Counterexamples}
\label{sec:counterexamples}
Here we give three examples of ``pathological'' behavior, to explain why some constructions of this paper cannot be simplified in general. All three of them are fairly far-fetched. For the reader who wishes to focus on ``non-pathological'' behavior, it is probably safe to skip this subsection.

\begin{example}
\label{awkward}
Here are two examples of awkward (\ie neither limited nor abundant) representations, with an explanation of how they provide a counterexample to~\eqref{eq:pi_X0_non-awkward} and to the ``super-strong version'' of Lemma~\ref{de_part_et_d_autre}. (The first one is a development on Example~3.7.4 in~\cite{Smi16}.)
\begin{enumerate}
\item Take $G = \operatorname{PSp}_4(\mathbb{R})$ (which is a split group). Hence its restricted root system coincides with its ordinary root system, and is of type $C_4$. In the notations of \cite{Kna96}, its simple (restricted) roots are $e_1 - e_2$, $e_2 - e_3$, $e_3 - e_4$ and~$2e_4$.

Take $\rho$ to be the representation with highest weight $e_1 + e_2 + e_3 + e_4$. It has:
\begin{itemize}
\item the $16$~(restricted) weights of the form $\pm e_1 \pm e_2 \pm e_3 \pm e_4$, with multiplicity~$1$;
\item the $24$~(restricted) weights of the form $\pm e_i \pm e_j$, with multiplicity~$1$;
\item the zero (restricted) weight with multiplicity~$2$,
\end{itemize}
for a total dimension of~$42$.

For $1 \leq i \leq 3$, we have $\alpha_i = e_i - e_{i+1} \in \Omega$ and in particular~$-\alpha_i \in \Omega^\subl_{X_0}$ for any generic~$X_0$, so obviously we may take $\lambda^\subge_i = 0$. For $\alpha_4 = 2e_4$ on the other hand, we have $-\alpha_4 \not\in \Omega$ so this is no longer the case; the outcome depends on the type of~$X_0$. There are three different types of generic elements; extreme representatives of each type are given by:
\begin{enumerate}[label=(\alph*)]
\item $X_0 = (4, 2, 1, 0)$, with $\Pi_{X_0} = \{\alpha_4\}$;
\item $X_0 = (5, 3, 2, 1)$, with $\Pi_{X_0} = \emptyset$;
\item $X_0 = (4, 3, 2, 0)$, with $\Pi_{X_0} = \{\alpha_4\}$.
\end{enumerate}
In cases (a) and~(c), the removal of~$\Pi_{X_0}$ excludes~$\alpha_4$ from consideration. In case~(b) however, we have to deal with it. To wit, we then have:
\begin{align}
\Omega^\subge_{X_0} = \{&e_1 + e_2 + e_3 + e_4, \nonumber \\
                        &e_1 + e_2 + e_3 - e_4, \nonumber \\
                        &e_1 + e_2 - e_3 + e_4, \nonumber \\
                        &e_1 + e_2 - e_3 - e_4, \nonumber \\
                        &e_1 - e_2 + e_3 + e_4, \nonumber \\
                        &e_1 - e_2 + e_3 - e_4, \nonumber \\
                        &e_1 - e_2 - e_3 + e_4, \nonumber \\
                        &{-e_1} + e_2 + e_3 + e_4, \nonumber \\
                        &e_i \pm e_j \text{ for all } i < j, \nonumber \\
                        &0\},
\end{align}
and the \emph{only} possible choice is
\begin{equation}\begin{cases}
\lambda^\subge_4 = e_1 - e_2 - e_3 + e_4;\\
\lambda^\subl_4 = e_1 - e_2 - e_3 - e_4.
\end{cases}\end{equation}
\item Take $G = \SO^+(3,2) \times \SO^+(3,2)$. Its root system (both ordinary and restricted) is then of type $B_2 \times B_2$. Let us call
\begin{itemize}
\item $\pm e_1,\; \pm e_2,\; \pm e_1 \pm e_2$ the (restricted) roots of the first factor;
\item $\pm f_1,\; \pm f_2,\; \pm f_1 \pm f_2$ the (restricted) roots of the second factor.
\end{itemize}
Let us order the (restricted) roots by the lexicographical order on each factor. This gives a unique ordering of the combined root system. The simple (restricted) roots of~$G$ are then $e_1 - e_2$, $e_2$, $f_1 - f_2$ and~$f_2$.

Take $\rho$ to be the representation of~$G$ with highest weight~$e_1 + f_1$ (which corresponds to the standard action of~$G$ on~$\mathbb{R}^5 \otimes \mathbb{R}^5$). The set of its (restricted) weights is then
\begin{equation}
\Omega = \{-e_1, -e_2, 0, e_2, e_1\} + \{-f_1, -f_2, 0, f_2, f_1\}
\end{equation}
(where by $A + B$ we mean $\setsuch{a + b}{a \in A, b \in B}$, \ie the set~$\Omega$ is of cardinal~$25$). The set~$\Omega$ contains the negative simple (restricted) roots $-e_2$ and~$-f_2$, but not $-e_1 + e_2$ nor~$-f_1 + f_2$.

There are six different types of generic elements. Let us examine three of these types; the three others are obtained by exchanging~$e$ and~$f$. Using on~$\mathfrak{a}$ the coordinate system $(e_1, e_2, f_1, f_2)$, extreme representatives of these three types are given by:
\begin{enumerate}[label=(\alph*)]
\item $X_0 = (2, 2, 1, 1)$. Then $\Pi_{X_0} = \{e_1 - e_2, f_1 - f_2\}$: this is a ``nice'' case.
\item $X_0 = (3, 1, 2, 2)$. Then $\Pi_{X_0} = \{f_1 - f_2\}$, and we need to deal with~$e_1 - e_2$. The two only possibilities are to take
\begin{equation}\begin{cases}
\lambda^\subge_{e_1 - e_2} = e_1 - f_i;\\
\lambda^\subl_{e_1 - e_2} = e_2 - f_i
\end{cases}\end{equation}
for $i = 1$ or~$2$.
\item $X_0 = (4, 2, 3, 1)$. Then $\Pi_{X_0} = \emptyset$, and we need to deal with both. We have to take
\begin{equation}\begin{cases}
\lambda^\subge_{e_1 - e_2} = e_1 - f_1;\\
\lambda^\subl_{e_1 - e_2} = e_2 - f_1
\end{cases}\end{equation}
and
\begin{equation}\begin{cases}
\lambda^\subge_{f_1 - f_2} = -e_2 + f_1;\\
\lambda^\subl_{f_1 - f_2} = -e_2 + f_2.
\end{cases}\end{equation}
\end{enumerate}
\end{enumerate}
\end{example}

\begin{example}
\label{fancy_swinging_example}
Here is a counterexample to the statement of Remark~\ref{asymp_vs_regular}: a representation~$\rho$ (necessarily swinging) and a choice of~$X_0$ such that asymptotic contraction along~$X_0$ does not, on its own, imply $\rho$-regularity. (We let the reader check the details.)

Take $G = \PSL_4(\mathbb{R})$; its root system (both ordinary and restricted) is of type~$A_3$, and the involution $w_0$ maps $(e_1, e_2, e_3, e_4)$ to~$(e_4, e_3, e_2, e_1)$ (in the notations of~\cite{Kna96}, appendix~C).

Let $\rho$ be a representation of~$G$ with highest weight
\[\lambda = 5\varpi_2 + \varpi_3 = 4e_1 - e_2 - e_3 - 2e_2;\]
this is a representation of dimension~$189$, with $119$~distinct (restricted) weights. Take
\[X_0 = (10, 1, -1, -10);\]
this vector is generically symmetric and extreme with respect to~$\rho$ --- in fact, we have
\[\Omega^\sube_{X_0} = \Omega^{w_0} = \{0,\; \pm(e_1 - e_2 - e_3 + e_4)\}.\]

Then the vector
\[Y = (16, 2, -3, -15)\]
is asymptotically contracting along~$X_0$, \ie
\[\forall \lambda^\subl \in \Omega^\subl_{X_0},\; \forall \lambda^\subge \in \Omega^\subge_{X_0},\quad \lambda^\subl(Y) < \lambda^\subge(Y).\]
However the (restricted) weight~$-e_1-e_2+4e_3-2e_4$ vanishes on~$Y$, but not on~$X_0$; so $Y$ is not $\rho$-regular.
\end{example}

\section{Properties of $\rho$-regular affine maps}
\label{sec:rho_regular}

The goal of this section is to define the affine versions of the remaining properties. It generalizes Subsections 4.3--5.2 in~\cite{Smi16}, but several constructions now become considerably more complex.

In Subsection~\ref{sec:algebraic}, we define the ``ideal dynamical spaces'' associated to a $\rho$-regular map~$g \in G \ltimes V$. The data of two of them is the affine version of the ``geometry'' of~$g$, and the remaining ones can be deduced from these two. This generalizes at the same time Subsections 4.3 and~4.4 from~\cite{Smi16}, but using a different approach.

In Subsection~\ref{sec:quasi-translations}, we study the action of a $\rho$-regular map on its affine ideally neutral space~$A^\subap_g$; this action turns out to be a so-called \emph{quasi-translation}. This generalizes Subsection~4.5 from~\cite{Smi16} and Subsection~2.4 from~\cite{Smi14}, with one small difference: see Remark~\ref{quasi-translation-generalization}.

In Subsection~\ref{sec:canonical}, we introduce a groupoid of ``canonical identifications'' between all the possible affine ideally neutral spaces. We then use them to define the ``translation part'' of the asymptotic dynamics of~$g$ (the Margulis invariant). This is an almost straightforward generalization of Subsection~4.6 from~\cite{Smi16} and Subsection~2.5 from~\cite{Smi14}.

In Subsection~\ref{sec:regular_metric}, we define and study the affine versions of non-degeneracy and contraction strength. We mostly follow the second half of Subsection~5.1 from~\cite{Smi16} or of Subsection~2.6 from~\cite{Smi14}.

In Subsection~\ref{sec:affine_and_linear}, we study the relationships between affine and linear properties. This generalizes Subsection~5.2 from~\cite{Smi16} or Subsection~2.7 from~\cite{Smi14}, but now the statement is weaker and more complicated.

\subsection{Ideal dynamical spaces}
\label{sec:algebraic}

The goal of this subsection is to define the \emph{ideal dynamical spaces} associated to a $\rho$-regular (see Definition~\ref{strong_regularity_definition}) map $g \in G \ltimes V$. We start with the following particular case:


\begin{definition}
Take any element~$g \in G \ltimes V$. We may then write it as
\begin{equation}
\label{eq:sorta_affine_Jordan_decomposition}
g = \tau_v g_h g_e g_u,
\end{equation}
where $\tau_v$ is a translation by some vector~$v \in V$ and $g_h g_e g_u$ is the Jordan decomposition (see Proposition~\ref{jordan_decomposition}) of the linear part of~$g$. We say that $g$ is \emph{in canonical form} if we have:
\begin{hypothenum}
\item $g_h \in \exp(\mathfrak{a}^+)$ (\ie $g_h = \exp(\jordan(g))$);
\item $v \in V^\sube_0$.
\end{hypothenum}
\end{definition}

If $g$ is in canonical form, then we define its ideal dynamical spaces to be the reference dynamical spaces introduced above. This is especially useful when $g$ is $X_0$-regular, as shown by the following property:

\begin{proposition}
\label{reference_spaces_invariant}
If a map $g \in G \ltimes V$ is in canonical form and is $X_0$-regular, then it stabilizes all eight reference dynamical spaces, namely $A^\subge_0$, $A^\suble_0$, $A^\sube_0$, $V^\subge_0$, $V^\suble_0$, $V^\sube_0$, $V^\subg_0$ and~$V^\subl_0$.
\end{proposition}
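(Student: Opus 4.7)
The plan is to verify separately that the translation $\tau_v$ and the linear part $\ell(g) = g_h g_e g_u$ each stabilize all eight reference dynamical spaces, then combine.

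For the translation $\tau_v$: as a linear map on $A$ it acts as the identity on the hyperplane $V \subset A$, so it trivially preserves the five vector reference spaces. It sends $p_0$ to $p_0 + v$; since by hypothesis $v \in V^\sube_0 \subseteq V^\subge_0 \cap V^\suble_0$, the image $p_0 + v$ lies in $A^\sube_0 \subseteq A^\subge_0 \cap A^\suble_0$, and so $\tau_v$ also preserves the three affine reference spaces.

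For the linear part, the main goal is to show that each of $g_h, g_e, g_u$ lies in the Levi subgroup $L_{X_0}$, after which Proposition~\ref{stabg=stabge} closes the argument. The case $g_h = \exp(\jordan(g)) \in A \subseteq L \subseteq L_{X_0}$ is immediate. Both $g_e$ and $g_u$ commute with $g_h$, hence lie in $Z_G(g_h)$. Now $\Ad(g_h) = \exp(\ad\jordan(g))$ acts on each restricted root space $\mathfrak{g}^\alpha$ by the positive real scalar $e^{\alpha(\jordan(g))}$, which equals $1$ precisely when $\alpha(\jordan(g)) = 0$; the Lie algebra of $Z_G(g_h)$ is therefore $\bigoplus_{\alpha(\jordan(g))=0} \mathfrak{g}^\alpha = \mathfrak{l}_{\jordan(g)}$. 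A component-group argument, analogous to the end of the proof of Proposition~\ref{centralizer_of_x} and relying on the inclusion $M \subseteq Z_G(g_h)$ together with $L_{\jordan(g)} = M \cdot L_{\jordan(g)}^0$, then upgrades this identification to $Z_G(g_h) = L_{\jordan(g)}$. Finally, the $X_0$-regularity of $g$ gives $\Pi_{\jordan(g)} \subseteq \Pi_{X_0}$, hence $\mathfrak{l}_{\jordan(g)} \subseteq \mathfrak{l}_{X_0}$ and $L_{\jordan(g)} \subseteq L_{X_0}$, so indeed all three factors lie in $L_{X_0}$.

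By Proposition~\ref{stabg=stabge}, the Levi subgroup $L_{X_0} = P^+_{X_0} \cap P^-_{X_0}$ stabilizes each of $V^\subge_0, V^\subg_0, V^\suble_0, V^\subl_0$, and hence also $V^\sube_0 = V^\subge_0 \cap V^\suble_0$; so $\ell(g)$ preserves all five vector reference spaces. Since $\ell(g)$ additionally fixes $p_0$, it stabilizes each affine space $A^?_0 = V^?_0 \oplus \mathbb{R} p_0$. Composing, $g = \tau_v \cdot \ell(g)$ stabilizes all eight spaces.

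The delicate step is verifying that $g_e$ lies in $L_{\jordan(g)}$ (and not merely in the identity component of $Z_G(g_h)$). The Lie-algebra computation alone only yields $Z_G(g_h)^0 = L_{\jordan(g)}^0$, and one must invoke the structural identity $L_{\jordan(g)} = M \cdot L_{\jordan(g)}^0$ to promote this to an equality of full groups; this is precisely the component-group argument in the proof of Proposition~\ref{centralizer_of_x}, applied now to $g_h$ in place of the Lie-algebra element.
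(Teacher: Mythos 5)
Your overall route is the same as the paper's: place the linear part in $L_{\jordan(g)} \subset L_{X_0}$, invoke Proposition~\ref{stabg=stabge} to get stability of the five vector reference spaces, and use $v \in V^\sube_0$ to handle the three affine ones. The factor-by-factor treatment of $g_h, g_e, g_u$ is an unnecessary detour: since $g$ is in canonical form, the whole linear part $\ell(g)$ commutes with $g_h = \exp(\jordan(g))$, and the paper runs the centralizer argument on $\ell(g)$ directly. Your handling of the translation part is fine and equivalent to the paper's.

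The one step that does not go through as written is the ``component-group argument'' you flag as delicate. What you need is the inclusion $Z_G(g_h) \subset Z_G(\jordan(g)) = L_{\jordan(g)}$ (so that commuting with $g_h$ forces membership in $L_{\jordan(g)}$); but the ingredients you cite --- $M \subset Z_G(g_h)$, the identity $L_{\jordan(g)} = M\,L_{\jordan(g)}^0$, and $Z_G(g_h)^0 = L_{\jordan(g)}^0$ --- only yield the \emph{opposite} inclusion $L_{\jordan(g)} \subset Z_G(g_h)$, which is not the direction the proof uses. To close the gap you would need a component statement about $Z_G(g_h)$ itself (namely $Z_G(g_h) = M\,Z_G(g_h)^0$, which is not what the proof of Proposition~\ref{centralizer_of_x} provides), or, more simply, the standard fact that $Z_G(\exp X) = Z_G(X)$ for hyperbolic $X$: if $\gamma$ commutes with $\exp(\jordan(g))$ then $\exp\bigl(\Ad(\gamma)\jordan(g)\bigr) = \exp(\jordan(g))$, and since $\Ad(\gamma)\jordan(g)$ is again hyperbolic and $\exp$ is injective on hyperbolic elements (a real-diagonalizable $\operatorname{ad}$-matrix is recovered from its exponential), $\Ad(\gamma)\jordan(g) = \jordan(g)$. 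With that substitution your argument is complete and coincides with the paper's, which performs exactly this step (implicitly) for $\ell(g)$ as a whole. A cosmetic point: the Lie algebra of $Z_G(g_h)$ should be written $\mathfrak{l} \oplus \bigoplus_{\alpha(\jordan(g)) = 0}\mathfrak{g}^\alpha = \mathfrak{l}_{\jordan(g)}$; your formula omits the summand $\mathfrak{l}$.
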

\begin{proof}
First of all note that $\ell(g)$ commutes (by definition) with its hyperbolic part, which is (because $g$ is in canonical form) equal to $\exp(\jordan(g))$. Hence $\ell(g)$ belongs to the centralizer of~$\jordan(g)$, which is~$L_{\jordan(g)}$. Now since $g$ is $X_0$-regular, we have $L_{\jordan(g)} \subset L_{X_0}$. Finally from Proposition~\ref{stabg=stabge}, it follows that the group~$L_{X_0}$ (hence in particular $\ell(g)$) stabilizes:
\begin{itemize}
\item the spaces $V^\subge_0$ and $V^\subg_0$, as a subgroup of $P^+_{X_0}$;
\item the spaces $V^\suble_0$ and $V^\subl_0$, as a subgroup of $P^-_{X_0}$;
\item the space $V^\sube_0 = V^\subge_0 \cap V^\suble_0$, as a subgroup of both.
\end{itemize}
Now the action of the affine map~$g$ on the subspace $V \subset V \oplus \mathbb{R} p_0$ coincides with the action of its linear part~$\ell(g)$; so $g$ also stabilizes these five subspaces. Finally we know (since $g$ is in canonical form) that the vector $v$ (defined by~\eqref{eq:sorta_affine_Jordan_decomposition}) is contained in~$V^\sube_0$, hence in~$V^\subge_0$ and in~$V^\suble_0$; hence $g$ also stabilizes $A^\sube_0$, $A^\subge_0$ and $A^\suble_0$.
\end{proof}

For a general $g \in G \ltimes V$, we define the ideal dynamical spaces to be the inverse images of the reference dynamical spaces by a ``canonizing'' map~$\phi$ such that the conjugate $\phi g \phi^{-1}$ is in canonical form. However, to ensure that the \emph{translation} part of~$\phi$ is well-defined, we need $g$ to be regular \emph{with respect to~$\rho$}. More precisely:

\begin{proposition}
\label{canonizing}
Let $g \in G \ltimes V$ be a $\rho$-regular map. Then:
\begin{hypothenum}
\item there exists a map $\phi \in G \ltimes V$, called a \emph{canonizing map} for~$g$, such that $\phi g \phi^{-1}$ is in canonical form;
\item any two such maps differ by left-multiplication by an element of~$L_{X_0} \ltimes V^\sube_0$.
\end{hypothenum}
\end{proposition}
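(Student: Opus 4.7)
The plan is to construct $\phi$ in two stages — first a ``linear canonization'' that conjugates the hyperbolic part of $\ell(g)$ to $\exp(\jordan(g))$, then a translational correction that pushes the translation part of the resulting conjugate into $V^\sube_0$ — and then deduce part~(ii) by running the same two-stage analysis on the comparison element $\psi := \phi'\phi^{-1}$.

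For the existence step, I would first pick $\phi_0 \in G \subset G \ltimes V$ (with trivial translation part) so that $\phi_0 \ell(g) \phi_0^{-1}$ has hyperbolic factor $\exp(\jordan(g))$; such a $\phi_0$ exists by Proposition~\ref{jordan_decomposition} and Definition~\ref{Jd_Ct_definition}. Write $g_1 := \phi_0 g \phi_0^{-1} = \tau_{v_1} L$ with $L := \ell(g_1)$. Since $L$ commutes with its own hyperbolic part $\exp(\jordan(g))$, the same reasoning as in the linear case of Definition~\ref{geometry_definition} puts $L$ in $L_{\jordan(g)}$, and the $X_0$-regularity of $g$ (which is built into $\rho$-regularity, Definition~\ref{strong_regularity_definition}~(i)) gives $L_{\jordan(g)} \subset L_{X_0}$; by Proposition~\ref{stabg=stabge}, $L$ stabilizes each summand of $V = V^\subg_0 \oplus V^\sube_0 \oplus V^\subl_0$. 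I would then seek a translation $\tau_w$ such that $\tau_w g_1 \tau_w^{-1}$ has translation part in $V^\sube_0$; a direct computation gives
\begin{equation*}
\tau_w g_1 \tau_w^{-1} = \tau_{v_1 + (I - L)(w)}\, L,
\end{equation*}
so, decomposing $v_1$ and $w$ along the three summands, the problem reduces to solving
\begin{equation*}
v_1^\subg + (I - L)(w^\subg) = 0 \quad\text{and}\quad v_1^\subl + (I - L)(w^\subl) = 0,
\end{equation*}
while $w^\sube$ remains free (take $w^\sube = 0$).

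The key step, and the main obstacle, is the invertibility of $I - L$ on each of $V^\subg_0$ and $V^\subl_0$. By Proposition~\ref{eigenvalues_and_singular_values_characterization}~(i), the moduli of the eigenvalues of $L$ on $V^\lambda$ are $e^{\lambda(\jordan(g))}$ for $\lambda \in \Omega$; the $\rho$-regularity hypothesis says precisely that $\lambda(\jordan(g)) \neq 0$ for every $\lambda \in \Omega \setminus \Omega^\sube_{X_0}$, i.e.\ for every $\lambda \in \Omega^\subg_{X_0} \cup \Omega^\subl_{X_0}$. Hence no eigenvalue of $L$ on $V^\subg_0 \oplus V^\subl_0$ has modulus~$1$, so $I - L$ is invertible there, yielding a unique $w^\subg \in V^\subg_0$ and $w^\subl \in V^\subl_0$; then $\phi := \tau_w \phi_0$ is a canonizing map for $g$.

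For part~(ii), let $\phi, \phi'$ be two canonizing maps, write $\psi := \phi'\phi^{-1} = \tau_u L_\psi$, and set $h := \phi g \phi^{-1}$, $h' := \phi' g (\phi')^{-1}$, both in canonical form. Taking linear parts of the identity $\psi h \psi^{-1} = h'$ and extracting hyperbolic factors gives $L_\psi \exp(\jordan(g)) L_\psi^{-1} = \exp(\jordan(g))$, which by the same argument as in the linear case after Definition~\ref{geometry_definition} forces $L_\psi \in L_{\jordan(g)} \subset L_{X_0}$. Unpacking the translation part of $\psi h \psi^{-1} = h'$ yields
\begin{equation*}
u + L_\psi(v) - \ell(h')(u) = v',
\end{equation*}
where $v, v' \in V^\sube_0$ are the translation parts of $h$ and $h'$; since $L_\psi$ stabilizes $V^\sube_0$, we obtain $(I - \ell(h'))(u) \in V^\sube_0$, and the same invertibility argument (applied now with $\ell(h') \in L_{X_0}$ in place of $L$) forces the $V^\subg_0$ and $V^\subl_0$ components of $u$ to vanish. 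Hence $\psi \in L_{X_0} \ltimes V^\sube_0$, as claimed.
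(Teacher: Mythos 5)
Your proof is correct and follows essentially the same route as the paper's: linear canonization of the hyperbolic part to $\exp(\jordan(g))$ followed by a translational correction, the key point being invertibility of $\Id-\ell(g)$ away from $V^\sube_0$ (the paper packages this as Lemma~\ref{canonical_form_property}, stated on the quotient $V/V^\sube_0$, which is equivalent to your invertibility of $\Id-L$ on $V^\subg_0\oplus V^\subl_0$), and uniqueness via $\ell(\psi)\in L_{\jordan(g)}\subset L_{X_0}$ plus the same invertibility applied to the translation part. One cosmetic caveat: to localize the eigenvalue moduli of $L$ to the subspace $V^\subg_0\oplus V^\subl_0$ you should argue, as the paper does, that the elliptic and unipotent factors commute with $\exp(\jordan(g))$ and have all eigenvalues of modulus~$1$, rather than invoking Proposition~\ref{eigenvalues_and_singular_values_characterization}~(i) alone, which only lists the moduli on all of $V$.
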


The key point of the proof is the following lemma:

\begin{lemma}
\label{canonical_form_property}
If $g \in G \ltimes V$ is in canonical form and is $\rho$-regular, then the linear map $\ell(g) - \Id$ induces an invertible linear map on the quotient space $V/V^\sube_0$.
\end{lemma}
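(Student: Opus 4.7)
The plan is to show that on $V/V^\sube_0$, the induced endomorphism $\ell(g)$ has no eigenvalue equal to~$1$, which is equivalent to $\ell(g) - \Id$ being invertible there.

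First I would verify that $V/V^\sube_0$ carries a natural $\ell(g)$-equivariant decomposition as $V^\subg_0 \oplus V^\subl_0$. Since $g$ is in canonical form, $\ell(g)$ commutes with its hyperbolic part $\exp(\jordan(g))$, hence $\ell(g) \in Z_G(\jordan(g)) = L_{\jordan(g)}$ by Proposition~\ref{centralizer_of_x}. By $X_0$-regularity (which is built into $\rho$-regularity via Definition~\ref{strong_regularity_definition}), $L_{\jordan(g)} \subset L_{X_0}$, so Proposition~\ref{stabg=stabge} guarantees that $\ell(g)$ stabilizes each of $V^\subg_0$, $V^\subl_0$, and $V^\sube_0$; hence the claimed $\ell(g)$-equivariant decomposition.

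Next I would analyze $\ell(g)$ restricted to each summand via the Jordan decomposition $\ell(g) = \ell(g)_h \ell(g)_e \ell(g)_u$. The three factors commute, and since $\ell(g)_e$ and $\ell(g)_u$ commute with $\ell(g)_h = \exp(\jordan(g))$ they too lie in $L_{\jordan(g)} \subset L_{X_0}$, so they preserve $V^\subg_0$ and $V^\subl_0$ as well. Simultaneously triangularizing the three commuting factors over $\mathbb{C}$, the moduli of the eigenvalues of $\ell(g)|_{V^\subg_0}$ (resp.\ $V^\subl_0$) equal the eigenvalues of $\ell(g)_h$ restricted there---which, since $\exp(\jordan(g))$ is diagonal in the restricted weight basis, are exactly $\{e^{\lambda(\jordan(g))} : \lambda \in \Omega^\subg_{X_0}\}$ (resp.\ $\Omega^\subl_{X_0}$) with appropriate multiplicities.

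Finally I would invoke the second clause of $\rho$-regularity, $\Omega^\sube_{\jordan(g)} \subset \Omega^\sube_{X_0}$: every $\lambda \in \Omega^\subg_{X_0} \cup \Omega^\subl_{X_0}$ lies outside $\Omega^\sube_{X_0}$ and therefore outside $\Omega^\sube_{\jordan(g)}$, so $\lambda(\jordan(g)) \neq 0$ and $e^{\lambda(\jordan(g))} \neq 1$. Thus all eigenvalues of $\ell(g)$ on $V^\subg_0 \oplus V^\subl_0$ have modulus different from~$1$, and in particular none equals~$1$. The argument is really just bookkeeping once the two ingredients are in place---stability of the three reference dynamical spaces under $L_{X_0}$, and the exclusion of the problematic weights by $\rho$-regularity---so I do not anticipate a substantive obstacle; the only mild subtlety is the standard observation that in a commuting product of real Jordan factors, the modulus of each eigenvalue is contributed entirely by the hyperbolic factor.
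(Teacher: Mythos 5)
Your proof is correct and follows essentially the same route as the paper: both establish $\ell(g)$-stability of the reference dynamical spaces via $\ell(g)\in L_{\jordan(g)}\subset L_{X_0}$ (the paper cites Proposition~\ref{reference_spaces_invariant} for this), and both then note that on $V^\subg_0\oplus V^\subl_0$ the hyperbolic part $\exp(\jordan(g))$ has eigenvalues $e^{\lambda(\jordan(g))}\neq 1$ by $\rho$-regularity, while the commuting elliptic and unipotent parts only contribute modulus~$1$, so $\ell(g)-\Id$ is invertible on the quotient. No gaps.
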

\begin{proof}
The fact that the quotient map is well-defined follows from Proposition~\ref{reference_spaces_invariant}: indeed since $g$ is $\rho$-regular, it is in particular $X_0$-regular; and then $\ell(g)$, hence $\ell(g) - \Id$, stabilizes the subspace $V^\sube_0$.

Let us now show that the quotient map is invertible. All eigenvalues of the restriction of the hyperbolic part~$\exp(\jordan(g))$ to the subspace $V^\subg_0 \oplus V^\subl_0$ are real and positive, and (since $g$ is $\rho$-regular) different from~$1$. Since the elliptic and unipotent parts of~$\ell(g)$ commute with the hyperbolic part of~$\ell(g)$ and have all eigenvalues of modulus~$1$, it follows that all eigenvalues of the restriction of~$\ell(g)$ to~$V^\subg_0 \oplus V^\subl_0$ (this subspace is stable by~$\ell(g)$ by Proposition~\ref{reference_spaces_invariant}) are different from~$1$. In particular the restriction of $\ell(g)-\Id$ to~$V^\subg_0 \oplus V^\subl_0$ is invertible. The conclusion follows.
\end{proof}

\begin{proof}[Proof of Proposition~\ref{canonizing}]~
\begin{hypothenum}
\item Let $\phi_\ell \in G$ be a canonizing map for~$\ell(g)$, which exists by Proposition~\ref{jordan_decomposition}. We then have
\[\phi_\ell g \phi_\ell^{-1} = \tau_v g',\]
where $g' \in G$ is in canonical form and $v \in V$.

We now claim that for a suitable choice of~$w \in V$, the map $\phi := \tau_w \phi_\ell$ is a canonizing map for~$g$. Indeed we then have:
\[\phi g \phi^{-1} = \tau_w \tau_v g' \tau_{-w} = \tau_{v+w-g'(w)} g'.\]
We already know that $g' \in G$~is in canonical form. On the other hand, by Lemma~\ref{canonical_form_property} (here we need surjectivity of the quotient map), we may choose~$w \in V$ in such a way that
\[v + w - g'(w) \in V^\sube_0,\]
which finishes the proof.
\item Assume that $g \in G \ltimes V$~is already in canonical form, so that
\[g = \tau_v \ell(g)\]
with $v \in V^\sube_0$ and~$\ell(g) \in G$. It is enough to show that any~$\phi \in G \ltimes V$ such that $\phi g \phi^{-1}$ is still in canonical form is an element of~$L_{X_0} \ltimes V^\sube_0$. Indeed, let $\phi$ be such a map; let us write
\[\phi = \tau_w \ell(\phi),\]
where $w \in V$ is its translation part and $\ell(\phi) \in G$ is its linear part. By Proposition~\ref{centralizer_of_x}, the fact that $\ell(\phi)$ commutes with~$\jordan(g)$ implies that $\ell(\phi) \in L_{\jordan(g)} \subset L_{X_0}$. As for the translation part, if we have $w - \ell(g)(w) \in V^\sube_0$, by Lemma~\ref{canonical_form_property} (here we need injectivity of the quotient map), we have $w \in V^\sube_0$.
\qedhere
\end{hypothenum}
\end{proof}

\begin{definition}[Affine version of geometry]
\label{ideal_dynamical_spaces_definition}
For any $\rho$-regular map~$g \in G \ltimes V$, we introduce the following eight spaces, called \emph{ideal dynamical spaces} of~$g$:
\begin{itemize}
\item $V^\subgg_g := \phi^{-1}(V^\subg_0)$, the \emph{ideally expanding space} associated to~$g$;
\item $V^\subll_g := \phi^{-1}(V^\subl_0)$, the \emph{ideally contracting space} associated to~$g$;
\item $V^\subap_g := \phi^{-1}(V^\sube_0)$, the \emph{ideally neutral space} associated to~$g$;
\item $V^\subgap_g := \phi^{-1}(V^\subge_0)$, the \emph{ideally noncontracting space} associated to~$g$;
\item $V^\sublap_g := \phi^{-1}(V^\suble_0)$, the \emph{ideally nonexpanding space} associated to~$g$;
\item $A^\subgap_g := \phi^{-1}(A^\subge_0)$, the \emph{affine ideally noncontracting space} associated to~$g$;
\item $A^\sublap_g := \phi^{-1}(A^\suble_0)$, the \emph{affine ideally nonexpanding space} associated to~$g$;
\item $A^\subap_g := \phi^{-1}(A^\sube_0)$, the \emph{affine ideally neutral space} associated to~$g$,
\end{itemize}
where $\phi$~is any canonizing map of~$g$. We shall justify in a moment (Proposition~\ref{uniquely_determined}.(i)) that these definitions do not depend on the choice of~$\phi$.
\end{definition}

Here is the idea behind this definition. Suppose first that the representation is non-swinging (so that $X_0$~is actually generic), and that the Jordan projection of~$g$ is sufficiently close to~$X_0$. Then it actually has the same type as~$X_0$, \ie we have $\Omega^\subg_{\jordan(g)} = \Omega^\subg_{X_0}$ and similarly for $\Omega^\sube$, $\Omega^\subl$. Whenever that happens, the ideal dynamical spaces of~$g$ coincide with its actual dynamical spaces, as defined in section~4.3 of~\cite{Smi16}.

If $X_0$ is not generic, this is no longer true as such. We still want to assume that $\jordan(g)$ is sufficiently close to~$X_0$; but now, this can at best ensure that $g$ is compatible with~$X_0$. In that case, we only get that:
\begin{itemize}
\item the moduli of the eigenvalues of~$\restr{g}{V^\subgg_g}$ are ``much larger'' than~$1$;
\item the moduli of the eigenvalues of~$\restr{g}{V^\subll_g}$ are ``much smaller'' than~$1$;
\item the moduli of the eigenvalues of~$\restr{g}{A^\subap_g}$ might now differ from~$1$, but somehow remain ``not too far'' from~$1$.
\end{itemize}

Let us finally check that this definition makes sense, and prove a few extra properties along the way.

\begin{proposition}~
\label{uniquely_determined}
\begin{hypothenum}
\item The definitions above do not depend on the choice of~$\phi$;
\item the datum of~$A^\subgap_g$ uniquely determines the spaces $V^\subgap_g$ and~$V^\subgg_g$;
\item the datum of~$A^\sublap_g$ uniquely determines the spaces $V^\sublap_g$ and~$V^\subll_g$;
\item the data of both $A^\subgap_g$ and~$A^\sublap_g$ uniquely determine all eight ideal dynamical spaces.
\end{hypothenum}
\end{proposition}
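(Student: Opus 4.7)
The strategy is to verify that certain subgroups of $G \ltimes V$ stabilize the reference dynamical spaces, so that each ideal dynamical space is canonically associated to a coset of a suitable subgroup.

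For part~(i), Proposition~\ref{canonizing}(ii) tells us that any two canonizing maps for $g$ differ by left multiplication by an element of $L_{X_0} \ltimes V^\sube_0$, so it suffices to check that this subgroup stabilizes each reference dynamical space. For the five linear ones, translations act as the identity on $V$, while $L_{X_0} \subset P^+_{X_0} \cap P^-_{X_0}$ stabilizes $V^\subge_0, V^\subg_0, V^\suble_0, V^\subl_0$ by Proposition~\ref{stabg=stabge} and hence also their intersection $V^\sube_0$. For the three affine reference spaces $A^\subge_0, A^\suble_0, A^\sube_0$, the subgroup $L_{X_0}$ fixes $p_0$ and stabilizes the $V$-parts, while a translation $\tau_w$ with $w \in V^\sube_0$ sends $x + tp_0$ to $x + tw + tp_0$; since $w$ lies in each of $V^\subge_0, V^\suble_0, V^\sube_0$, each of the three affine reference spaces is preserved.

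The main technical step for parts~(ii) and~(iii) is the stabilizer computation
\[\Stab_{G \ltimes V}(A^\subge_0) = P^+_{X_0} \ltimes V^\subge_0,\]
obtained by applying a general element $(h,w) \in G \ltimes V$ first to $V^\subge_0$ (forcing $h \in P^+_{X_0}$) and then to $p_0$ (forcing $w \in V^\subge_0$); the analogous identity holds for $A^\suble_0$. This larger stabilizer also preserves $V^\subge_0$ (trivially, since translations act as the identity on $V$) and $V^\subg_0$ (since $P^+_{X_0}$ does so by Proposition~\ref{stabg=stabge}). Hence both $V^\subgap_g = \phi^{-1}(V^\subge_0)$ and $V^\subgg_g = \phi^{-1}(V^\subg_0)$ depend only on the coset of $\phi^{-1}$ modulo $P^+_{X_0} \ltimes V^\subge_0$, which is precisely the datum of $A^\subgap_g$. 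Part~(iii) is proved by the symmetric argument with $+$ and $-$ swapped.

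Part~(iv) follows from (ii) and (iii) together with the elementary identities $V^\sube_0 = V^\subge_0 \cap V^\suble_0$ and $A^\sube_0 = A^\subge_0 \cap A^\suble_0$, which after applying $\phi^{-1}$ yield $V^\subap_g = V^\subgap_g \cap V^\sublap_g$ and $A^\subap_g = A^\subgap_g \cap A^\sublap_g$, accounting for the remaining two of the eight spaces. The only subtle point throughout is the stabilizer computation in the semidirect product: a translation $\tau_w$ does \emph{not} act trivially on all of $A$ (it moves $p_0$ by $w$), so one must track carefully when the translation part of an element of the stabilizer is forced to lie in a prescribed subspace of $V$ rather than in all of $V$.
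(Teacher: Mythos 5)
Your proposal is correct and follows essentially the same route as the paper: both proofs rest on computing the stabilizers of the reference spaces in the semidirect product (the paper records all of them at once as equation~(\ref{eq:affine_stabilizers}), deduced from Proposition~\ref{stabg=stabge}), combined with Proposition~\ref{canonizing}~(ii) for point~(i). The only cosmetic difference is in point~(iv), where the paper uses the subgroup identity $(P^+_{X_0} \ltimes V^\subge_0) \cap (P^-_{X_0} \ltimes V^\suble_0) = L_{X_0} \ltimes V^\sube_0$ while you intersect the spaces $A^\subgap_g \cap A^\sublap_g$ and $V^\subgap_g \cap V^\sublap_g$ directly, which is an equivalent and equally valid argument.
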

The spaces $A^\subgap_g$ and~$A^\sublap_g$ will play a crucial role, as they are the affine analogs of the attracting and repelling flags $y^{X_0, +}_g$ and~$y^{X_0, -}_g$ defined in section~\ref{sec:X-regular}: see Remark~\ref{affine_flag_varieties} below for an explanation.
\begin{proof}
An immediate corollary of Proposition~\ref{stabg=stabge} is that we have
\begin{align}
\label{eq:affine_stabilizers}
\Stab_{G \ltimes V}(A^\subge_0) &= P^+_{X_0} \ltimes V^\subge_0
& \Stab_{G \ltimes V}(V^\subge_0) &= P^+_{X_0} \ltimes V
& \Stab_{G \ltimes V}(V^\subg_0) &= P^+_{X_0} \ltimes V \nonumber \\
\Stab_{G \ltimes V}(A^\suble_0) &= P^-_{X_0} \ltimes V^\suble_0
& \Stab_{G \ltimes V}(V^\suble_0) &= P^-_{X_0} \ltimes V
& \Stab_{G \ltimes V}(V^\subl_0) &= P^-_{X_0} \ltimes V \nonumber \\
\Stab_{G \ltimes V}(A^\sube_0) &= L_{X_0} \ltimes V^\sube_0
& \Stab_{G \ltimes V}(V^\sube_0) &= L_{X_0} \ltimes V,
& &
\intertext{with the following relationships:}
P^+_{X_0} &\ltimes V^\subge_0
\mathrlap{\qquad\quad\;\subset} & P^+_{X_0} &\ltimes V
\mathrlap{\qquad\quad\;=} & P^+_{X_0} &\ltimes V \nonumber \\
P^-_{X_0} &\ltimes V^\suble_0
\mathrlap{\qquad\quad\;\subset} & P^-_{X_0} &\ltimes V
\mathrlap{\qquad\quad\;=} & P^-_{X_0} &\ltimes V \nonumber \\
L_{X_0} &\ltimes V^\sube_0
\mathrlap{\qquad\quad\;\subset} & L_{X_0} &\ltimes V. \nonumber
\end{align}
The first two lines immediately imply (ii) and (iii).

For points (i) and (iv), note that all eight groups contain $L_{X_0} \ltimes V^\sube_0$. Point (i) follows by Proposition~\ref{canonizing}. Point (iv) follows using the identity
\[(P^+_{X_0} \ltimes V^\subge_0) \cap (P^-_{X_0} \ltimes V^\suble_0) = L_{X_0} \ltimes V^\sube_0. \qedhere\]
\end{proof}

\subsection{Quasi-translations}
\label{sec:quasi-translations}

Let us now investigate the action of a $\rho$-regular map~$g \in G \ltimes V$ on its affine ideally neutral space~$A^\subap_g$. The goal of this subsection is to prove that it is ``almost'' a translation (Proposition~\ref{V=_translation}).

We fix on~$V$ a Euclidean form~$B$ satisfying the conditions of Lemma~\ref{K-invariant} for the representation~$\rho$.

\begin{definition}
\label{quasi-translation_def}
We call \emph{quasi-translation} any affine automorphism of $A^\sube_0$ induced by an element of $L \ltimes V^\sube_0$.
\end{definition}

Let us explain in what sense quasi-translations are indeed ``almost'' translations.
\begin{proposition}
\label{quasi_translation_charact}
Let $V^\transl_0$ be the set of fixed points of~$L$ in~$V^\sube_0$:
\[V^\transl_0 := \setsuch{v \in V^\sube_0}{\forall l \in L,\; l v = v}.\]
Let $V^\extra_0$ be the $B$-orthogonal complement of~$V^\transl_0$ in~$V^\sube_0$. Then any quasi-translation is an element of
\[\Big( \GL(V^\extra_0) \ltimes V^\extra_0 \Big) \times V^\transl_0.\]
\end{proposition}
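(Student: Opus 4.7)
The plan is to reduce the statement to a clean block-triangular computation once we have established two structural facts: that $V^\transl_0$ is contained in the true zero restricted weight space $V^0$, and that its $B$-orthogonal complement $V^\extra_0$ inside $V^\sube_0$ is $L$-stable.

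First, I would show $V^\transl_0 \subseteq V^0$. Take $v \in V^\transl_0$ and decompose it as $v = \sum_\lambda v_\lambda$ along the restricted weight decomposition $V^\sube_0 = \bigoplus_{\lambda(X_0)=0} V^\lambda$. Every $a = \exp(X) \in A \subset L$ fixes $v$, and acts on $V^\lambda$ by the scalar $e^{\lambda(X)}$. Since the characters $X \mapsto e^{\lambda(X)}$ are linearly independent as $\lambda$ ranges over $\{\lambda \in \Omega \mid \lambda(X_0)=0\}$, the equation $av=v$ for all $a \in A$ forces $v_\lambda = 0$ whenever $\lambda \neq 0$. Hence $V^\transl_0 \subseteq V^0$, and since $A$ acts trivially on $V^0$ we actually get $V^\transl_0 = (V^0)^M$, using that $L = MA$.

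Next, I would show $V^\extra_0$ is $L$-stable by splitting it into two $L$-stable pieces. By Lemma~\ref{K-invariant}, distinct restricted weight spaces are $B$-orthogonal, so $V^\transl_0 \subseteq V^0$ is $B$-orthogonal to every $V^\lambda$ with $\lambda \neq 0$; in particular
\[
V^\extra_0 \;=\; W \;\oplus\; \bigoplus_{\lambda \neq 0,\;\lambda(X_0)=0} V^\lambda,
\]
where $W$ denotes the $B$-orthogonal complement of $V^\transl_0 = (V^0)^M$ inside $V^0$. Each $V^\lambda$ is stable under $A$ (by definition) and under $M$ (since $M$ centralizes $\mathfrak{a}$), hence under $L$. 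And $W$ is $A$-stable because $A$ acts trivially on $V^0$, while it is $M$-stable because $M \subseteq K$ preserves $B$ and hence preserves $B|_{V^0}$ and the $M$-invariant subspace $V^\transl_0 \subseteq V^0$. Therefore $V^\extra_0$ is $L$-stable.

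Finally, the computation: any quasi-translation has the form $l \tau_v$ with $l \in L$ and $v \in V^\sube_0$; write $v = v^\extra + v^\transl$ using the decomposition $V^\sube_0 = V^\extra_0 \oplus V^\transl_0$. In the extended affine picture, using the basis blocks $V^\extra_0,\; V^\transl_0,\; \mathbb{R} p_0$, the map $l \tau_v$ has matrix
\[
\begin{pmatrix} l|_{V^\extra_0} & 0 & v^\extra \\ 0 & \Id & v^\transl \\ 0 & 0 & 1 \end{pmatrix}
=
\begin{pmatrix} l|_{V^\extra_0} & 0 & v^\extra \\ 0 & \Id & 0 \\ 0 & 0 & 1 \end{pmatrix}
\begin{pmatrix} \Id & 0 & 0 \\ 0 & \Id & v^\transl \\ 0 & 0 & 1 \end{pmatrix},
\]
where in the left factor we have used that $l$ preserves $V^\extra_0$ and that $l$ fixes $V^\transl_0$ pointwise. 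The two factors commute and lie respectively in $\GL(V^\extra_0) \ltimes V^\extra_0$ and in $V^\transl_0$, yielding the desired inclusion.

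The only non-routine step is establishing $L$-stability of $V^\extra_0$; the rest is formal. The subtlety there is that $B$ is $K$-invariant but not $A$-invariant, so one cannot directly say ``the orthogonal complement of an $L$-invariant subspace is $L$-invariant''. This is circumvented by the prior reduction $V^\transl_0 \subseteq V^0$, which confines the $M$-issue to $V^0$ (where $A$ acts trivially and $B$-invariance reduces to the $M$-invariance provided by Lemma~\ref{K-invariant}).
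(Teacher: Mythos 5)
Your proof is correct and follows essentially the same route as the paper: reduce to showing that $L$ fixes $V^\transl_0$ pointwise and stabilizes $V^\extra_0$, note $V^\transl_0 \subset V^0$, split off $V^\subenz_0$ via the orthogonal decomposition $V^\sube_0 = V^0 \oplus V^\subenz_0$, and treat $M$ (via $B$-invariance) and $A$ (trivial on $V^0$, stabilizing weight spaces) separately. The explicit block-matrix factorization at the end is just a spelled-out version of the deduction the paper leaves implicit, so there is nothing to change.
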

In other words, quasi-translations are affine automorphisms of $V^\sube_0$ that preserve the directions of $V^\extra_0$ and $V^\transl_0$ and act only by translation on the $V^\transl_0$ component; the superscripts $\transl$ and~$\extra$ respectively stand for ``translation'' and ``affine''. You may think of a quasi-translation as a kind of ``screw displacement''. The reason we consider them to be ``almost translations'' is that we are going to focus on their action on~$V^\transl_0$, and try to get $V^\extra_0$ out of the picture.
\begin{remark}
If the representation~$\rho$ satisfies the hypotheses of the Main Theorem, then hypothesis~\ref{itm:main_condition}\ref{itm:fixed_by_l} ensures that the space~$V^\transl_0$ is actually nonzero.
\end{remark}
\begin{proof}
We need to show that every element of~$L$ fixes~$V^\transl_0$ (pointwise) and leaves~$V^\extra_0$ invariant (globally). The former is true by definition of~$V^\transl_0$. For the latter, let us prove it separately for elements of~$M$ and elements of~$A$:
\begin{itemize}
\item By hypothesis, elements of~$M$ preserve the form~$B$; since they leave invariant the space~$V^\transl_0$, they also leave invariant its $B$-orthogonal complement~$V^\extra_0$.
\item Let us introduce the notation
\begin{equation}
V^\subenz_0 := \bigoplus_{\lambda \in \Omega^\sube_{X_0} \setminus \{0\}} V^\lambda
\end{equation}
(the symbol ``$\Bumpeq$'' is intended to represent the idea of ``avoiding zero''), so that the space~$V^\sube_0$ decomposes into the orthogonal sum 
\begin{equation}
\label{eq:V_eq_decomp}
V^\sube_0 = V^0 \oplus V^\subenz_0.
\end{equation}
Then since $V^\transl \subset V^0$ (obviously), similarly, we have an orthogonal sum
\begin{equation}
\label{eq:V_a_decomp}
V^\extra_0 = (V^\extra_0 \cap V^0) \oplus V^\subenz_0.\
\end{equation}
Now clearly $V^\subenz_0$, being a sum of restricted weight spaces, is invariant by~$A$. Moreover, every element of~$A$ actually fixes every element of~$V^0$, in particular leaves invariant the subspace $V^\extra_0 \cap V^0$. The conclusion follows. \qedhere
\end{itemize}
\end{proof}
\begin{remark}
\label{quasi-translation-generalization}
Note that in contrast to the non-swinging case (Proposition~4.19 in~\cite{Smi16}), quasi-translations no longer have to act by isometries on~$V^\extra_0$, as this space now comprises the possibly nontrivial space~$V^\subenz_0$ where $A$ acts nontrivially.

This phenomenon is the reason why the reasoning we used in~\cite{Smi16} to prove additivity of Margulis invariants could not be reused here without major restructuring.
\end{remark}

We now claim that any $\rho$-regular map acts on its affine ideally neutral space by quasi-translations:
\begin{proposition}
\label{V=_translation}
Let $g \in G \ltimes V$ be a $\rho$-regular map, and let~$\phi \in G \ltimes V$ be any canonizing map for~$g$. Then the restriction of the conjugate $\phi g \phi^{-1}$ to $A^\sube_0$ is a quasi-translation.
\end{proposition}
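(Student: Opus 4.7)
The plan is to unpack what it means for $\phi g \phi^{-1}$ to be in canonical form, track where the factors live using the $\rho$-regularity of $g$, and then show that the resulting restriction to $A^\sube_0$ coincides with the action of some element of $L \ltimes V^\sube_0$.

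First, by Proposition~\ref{canonizing}, the conjugate $\phi g \phi^{-1}$ is in canonical form, so one can write $\phi g \phi^{-1} = \tau_v \cdot h$ with $v \in V^\sube_0$ and $h \in G$ such that the hyperbolic part of $h$ equals $\exp(\jordan(g))$. In particular $h$ commutes with $\exp(\jordan(g))$, so Proposition~\ref{centralizer_of_x} gives $h \in L_{\jordan(g)}$. The assumption of $\rho$-regularity yields $X_0$-regularity, hence $\Pi_{\jordan(g)} \subset \Pi_{X_0}$, and so $L_{\jordan(g)} \subset L_{X_0}$. Combining with $v \in V^\sube_0$ and with the stabilizer computations contained in~\eqref{eq:affine_stabilizers} (i.e.\ $\Stab_{G \ltimes V}(A^\sube_0) = L_{X_0} \ltimes V^\sube_0$), this already shows that $\phi g \phi^{-1}$ preserves $A^\sube_0$ and restricts there to an affine automorphism induced by an element of $L_{X_0} \ltimes V^\sube_0$.

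The heart of the proof will be to upgrade this to $L \ltimes V^\sube_0$: produce some $l \in L$ with $l|_{V^\sube_0} = h|_{V^\sube_0}$. I would argue this by analyzing the Jordan decomposition $h = h_h h_e h_u$ factor by factor. The hyperbolic part $h_h = \exp(\jordan(g))$ lies in $A \subset L$ already. The elliptic and unipotent parts $h_e, h_u$ lie in $L_{\jordan(g)}$ and commute with $h_h$. The plan is to show that when restricted to $V^\sube_0$, the unipotent factor $h_u$ acts trivially, and the elliptic factor $h_e$ acts through the image of some element of $M$. This would follow from a careful weight-space analysis: decomposing $V^\sube_0 = \bigoplus_{\lambda \in \Omega^\sube_{X_0}} V^\lambda$, one tracks how root-space contributions to $\mathfrak{l}_{\jordan(g)}$ act. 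The $\rho$-regularity guarantees that the weight-zero condition defining $V^\sube_0$ is rigid enough to absorb the $\mathfrak{g}^\alpha$-contributions (with $\alpha(\jordan(g)) = 0$) into the $L$-action, so that the ``extra'' part of $L_{\jordan(g)}$ beyond $L$ collapses when restricted to $V^\sube_0$.

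Granted that $h|_{V^\sube_0} = l|_{V^\sube_0}$ for some $l \in L$, one concludes as follows: the pair $(l,v) \in L \ltimes V^\sube_0$ acts on $A^\sube_0 = V^\sube_0 \oplus \mathbb{R} p_0$ by $(u, t p_0) \mapsto (l(u) + t v,\; t p_0)$, and this coincides with the restriction of $\phi g \phi^{-1}$ to $A^\sube_0$. Hence the restriction is a quasi-translation in the sense of Definition~\ref{quasi-translation_def}.

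The main obstacle is the Jordan-factor analysis on $V^\sube_0$. In the non-swinging case (\cite{Smi16}, Proposition~4.19), the corresponding statement was cleaner because the nontrivial $\mathfrak{a}$-part of $V^\sube_0$ was absent, so no interaction between root-space contributions and the extra weight spaces $V^\subenz_0$ had to be controlled. Here, one must verify that the non-central part of $\mathfrak{l}_{\jordan(g)}$ still acts on $V^\sube_0$ through $L$ even in the presence of nontrivial $V^\subenz_0$, which is the genuinely new technical content of the proof.
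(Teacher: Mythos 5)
Your reduction to the group $L_{X_0} \ltimes V^\sube_0$ is fine (and parallels the paper: by Proposition~\ref{reference_spaces_invariant} the canonized map stabilizes $A^\subge_0$ and $A^\suble_0$, and the paper then invokes Lemma~\ref{quasi-translation}), but the heart of the statement --- upgrading $L_{X_0}$ to $L$ on $V^\sube_0$ --- is exactly the step you leave as an assertion, and the mechanism you invoke for it is not the right one. The claim that needs proving is that every restricted root space $\mathfrak{g}^\alpha$ with $\alpha(X_0)=0$ annihilates $V^\sube_0$, \ie that $\mathfrak{g}^\alpha \cdot V^\lambda = 0$ whenever $\alpha(X_0)=\lambda(X_0)=0$; granted this, $\mathfrak{l}_{X_0}$ acts on $V^\sube_0$ through $\mathfrak{l}$ and one passes to the group (first to the identity component, then to all of $L_{X_0}$ using $L_{X_0}=M\,L_{X_0,e}$), with no need for your factor-by-factor Jordan analysis. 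This vanishing has nothing to do with the $\rho$-regularity of $g$: $\rho$-regularity only constrains where $\jordan(g)$ may vanish, and is perfectly compatible with $\lambda$ and $\lambda+\alpha$ both lying in $\Omega^\sube_{X_0}$, in which case elements of $L_{X_0}$ would genuinely mix weight spaces inside $V^\sube_0$ (in particular act nontrivially on $V^\subenz_0$) in a way not realized by any element of $L$, and the restriction would fail to be a quasi-translation in the sense of Definition~\ref{quasi-translation_def}. What actually makes the vanishing true is the standing hypothesis that $X_0$ is \emph{generically symmetric}: $\Omega^\sube_{X_0}=\Omega^{w_0}$, so if $\lambda+\alpha$ were a weight then both $\lambda$ and $\lambda+\alpha$ would be fixed by $w_0$, forcing $w_0\alpha=\alpha$, which is impossible since $w_0(\Sigma^+)=\Sigma^-$. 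Your proposal never uses generic symmetry of $X_0$ anywhere, which is a strong sign the argument as sketched cannot close.

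Two smaller points. First, your specific sub-claims in the Jordan analysis are themselves unjustified and not obviously true as stated: a unipotent element of $L_{\jordan(g)}$ need only act on $V^\sube_0$ like \emph{some} element of $L$ (its action need not be trivial unless its $\mathfrak{l}$-component vanishes), and the elliptic part need not act through $M$; both claims are anyway superfluous once the Lie-algebra vanishing is available. Second, the passage from the Lie algebra statement to the full, possibly disconnected group $L_{X_0}$ (or to the stabilizers of $A^\subge_0$ and $A^\suble_0$, via Proposition~\ref{stabg=stabge}) requires a short argument --- the paper does it through identity components and the decomposition $L_{X_0}=M\,L_{X_0,e}$ --- which your sketch also omits.
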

Let us actually formulate an even more general result, which will have another application in the next subsection:
\begin{lemma}
\label{quasi-translation}
Any map $f \in G \ltimes V$ stabilizing both $A^\subge_0$ and $A^\suble_0$ acts on $A^\sube_0$ by quasi-translation.
\end{lemma}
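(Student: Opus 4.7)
The plan is to reduce the statement to a pure group-theoretic bookkeeping exercise: compute the stabilizer of $A^\subge_0$ and of $A^\suble_0$ in $G \ltimes V$ separately, take their intersection, and observe that this intersection acts on $A^\sube_0$ exactly as a quasi-translation.

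The first step is to pin down the stabilizers. An element $(g, w) \in G \ltimes V$ acts linearly on $A = V \oplus \mathbb{R} p_0$ by $(v, t) \mapsto (g v + t w, t)$, so it stabilizes $A^\subge_0 = V^\subge_0 \oplus \mathbb{R} p_0$ iff the linear part preserves $V^\subge_0$ and the translation part satisfies $w \in V^\subge_0$. Invoking Proposition~\ref{stabg=stabge} for the linear condition, I obtain
\[\Stab_{G \ltimes V}(A^\subge_0) = P_{X_0}^+ \ltimes V^\subge_0, \qquad \Stab_{G \ltimes V}(A^\suble_0) = P_{X_0}^- \ltimes V^\suble_0;\]
both formulas are already recorded in~\eqref{eq:affine_stabilizers} and require no further work beyond Proposition~\ref{stabg=stabge}.

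The second step is the intersection. Since $P_{X_0}^+ \cap P_{X_0}^- = L_{X_0}$ by the definition of the Levi subgroup and $V^\subge_0 \cap V^\suble_0 = V^\sube_0$ by the definition of the weight-space decomposition, I get
\[\Stab_{G \ltimes V}(A^\subge_0) \cap \Stab_{G \ltimes V}(A^\suble_0) = L_{X_0} \ltimes V^\sube_0,\]
which by the same token equals $\Stab_{G \ltimes V}(A^\sube_0)$. In particular $f$ also stabilizes $A^\sube_0$, and its restriction to $A^\sube_0$ is the affine automorphism induced by an element of $L_{X_0} \ltimes V^\sube_0$. This is precisely the content of the notion of quasi-translation introduced in Definition~\ref{quasi-translation_def}, so the lemma follows.

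There is no real obstacle in the proof: the only substantive input, namely the identification of $\Stab_G(V^\subge_0)$ and $\Stab_G(V^\suble_0)$ as the parabolic subgroups $P_{X_0}^\pm$, has already been carried out in Proposition~\ref{stabg=stabge} using the generalized Bruhat decomposition (Lemma~\ref{generalized_Bruhat}) together with the extremality of $X_0$. The only point deserving a sentence of care is to confirm that the action of $L_{X_0} \ltimes V^\sube_0$ on $A^\sube_0$ matches the notion of quasi-translation adopted in Definition~\ref{quasi-translation_def} (so that one can harmlessly pass between the two viewpoints in what follows); the rest is immediate from the stabilizer formulas.
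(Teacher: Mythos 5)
Your stabilizer bookkeeping is correct and unproblematic: the formulas $\Stab_{G \ltimes V}(A^\subge_0) = P_{X_0}^+ \ltimes V^\subge_0$, $\Stab_{G \ltimes V}(A^\suble_0) = P_{X_0}^- \ltimes V^\suble_0$ do follow from Proposition~\ref{stabg=stabge}, and their intersection is indeed $L_{X_0} \ltimes V^\sube_0$. The gap is in the last step, which you dismiss as ``a sentence of care'': Definition~\ref{quasi-translation_def} defines a quasi-translation as an automorphism of $A^\sube_0$ induced by an element of $L \ltimes V^\sube_0$, where $L = Z_G(A) = MA$ is the \emph{minimal} Levi subgroup --- not $L_{X_0}$. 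Your argument only places $f$ in $L_{X_0} \ltimes V^\sube_0$, and whenever $\Pi_{X_0} \neq \emptyset$ (which does occur, e.g.\ for limited or awkward representations) $L_{X_0}$ strictly contains $L$: it contains the root groups $\exp(\mathfrak{g}^{\pm\alpha})$ for $\alpha \in \Pi_{X_0}$, and there is no a priori reason why these should act on $V^\sube_0$ in the same way as elements of $L$. Bridging that difference is precisely the mathematical content of the lemma, and it is exactly where the standing hypothesis that $X_0$ is generically symmetric is used --- a hypothesis your argument never touches, which is a sign that something essential is missing.

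Concretely, the paper proves at the Lie algebra level that every element of $\mathfrak{l}_{X_0} = \mathfrak{p}_{X_0}^+ \cap \mathfrak{p}_{X_0}^-$ acts on $V^\sube_0$ like an element of $\mathfrak{l}$, by showing $\mathfrak{g}^\alpha \cdot V^\lambda = 0$ whenever $\alpha(X_0) = \lambda(X_0) = 0$: if $\lambda + \alpha$ were a restricted weight, then $\lambda$ and $\lambda + \alpha$ would both lie in $\Omega^\sube_{X_0} = \Omega^{w_0}$, forcing $w_0\alpha = \alpha$, which is impossible since $w_0$ sends positive restricted roots to negative ones. One must then still pass from the Lie algebras to the groups (identity components of $P^\pm_{X_0}$ first, then the full groups, then the affine stabilizers), which is a further nontrivial step. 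Without this argument the conclusion ``$f$ acts by quasi-translation'' is not established, and the consequences used later (that the induced map preserves the splitting $V^\sube_0 = V^\transl_0 \oplus V^\extra_0$ and acts by a genuine translation on the $V^\transl_0$-component, which is what makes the Margulis invariant well defined) would be unjustified, since $V^\transl_0$ is defined as the fixed-point set of $L$, not of $L_{X_0}$. So the proposal is incomplete at exactly the point carrying the lemma's substance; only in the special case $\Pi_{X_0} = \emptyset$, where $L_{X_0} = L$, would your argument as written suffice.
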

\begin{proof}~
\begin{itemize}
\item We begin by showing that any element of $\mathfrak{l}_{X_0} = \mathfrak{p}_{X_0}^+ \cap \mathfrak{p}_{X_0}^-$ acts on $V^\sube_0$ in the same way as some element of $\mathfrak{l}$. Recall that by definition
\[\mathfrak{l}_{X_0} = \mathfrak{l} \oplus \bigoplus_{\alpha(X_0) = 0} \mathfrak{g}^\alpha\]
and
\[V^\sube_0 = \bigoplus_{\lambda(X_0) = 0} V^\lambda.\]
Thus we want to show that for every restricted root~$\alpha$ and restricted weight~$\lambda$ such that $\alpha(X_0) = \lambda(X_0) = 0$, we have $\mathfrak{g}^\alpha \cdot V^\lambda = 0$. It is sufficient to show that in such a case, the sum~$\lambda+\alpha$ is no longer a restricted weight. But otherwise, both $\lambda$ and~$\lambda+\alpha$ would be elements of~$\Omega^\sube_{X_0}$, hence they would both be fixed by~$w_0$ since $X_0$~is generically symmetric. This would mean that $\alpha$~is also fixed by~$w_0$, which is impossible.

\item We now conclude in the same fashion as in the proof of Lemma~4.21 in~\cite{Smi16}, passing from the two Lie algebras $\mathfrak{p}_{X_0}^\pm$ first to the identity components~$P_{X_0, e}^\pm$ of~$P_{X_0}^\pm$, then to the whole groups~$P_{X_0}^\pm$, then (using Proposition~\ref{stabg=stabge}) to the stabilizers of~$A^\subge_0$ and of~$A^\suble_0$. \qedhere
\end{itemize}
\end{proof}\begin{proof}[Proof of Proposition~\ref{V=_translation}]
The proposition follows immediately by taking $f = \phi g \phi^{-1}$. Indeed, by Proposition~\ref{reference_spaces_invariant}, the ``canonized'' map~$\phi g \phi^{-1}$ stabilizes~$A^\subge_0$ and~$A^\suble_0$.
\end{proof}
See Example~4.22 in~\cite{Smi16} for examples of what quasi-translations become in some particular cases (all of them non-swinging).


\subsection{Canonical identifications and the Margulis invariant}
\label{sec:canonical}
The main goal of this subsection is to associate to every $\rho$-regular map~$g \in G \ltimes V$ a vector in~$V^\transl_0$, called its ``Margulis invariant'' (see Definition~\ref{margulis_invariant}). The two Propositions and the Lemma that lead up to this definition are important as well, and will be often used subsequently.

Proposition~\ref{uniquely_determined}~(iv) has shown us that the ``geometry'' of a $\rho$-regular map~$g$ (namely the position of its ideal dynamical spaces) is entirely determined by the pair of spaces
\[(A^\subgap_g, A^\sublap_g) = \phi(A^\subge_0, A^\suble_0).\]
In fact, such pairs of spaces play a crucial role. Let us begin with a definition; its connection with the observation we just made will become clear after Proposition~\ref{pair_transitivity}.
\begin{definition}~
\begin{itemize}
\item We define a \emph{parabolic space} to be any subspace of $V$ that is the image of either~$V^\subge_0$ or~$V^\suble_0$ (no matter which one, since $X_0$~is symmetric) by some element of $G$.
\item We define an \emph{affine parabolic space} to be any subspace of $A$ that is the image of $A^\subge_0$ by some element of $G \ltimes V$. Equivalently, a subspace $A^\subgap \subset A$ is an affine parabolic space if and only if it is not contained in $V$ and its linear part $V^\subgap = A^\subgap \cap V$ is a parabolic space.
\item We say that two parabolic spaces (or two affine parabolic spaces) are \emph{transverse} if their intersection has the lowest possible dimension, or equivalently if their sum is the whole space~$V$ (or~$A$).
\end{itemize}
\end{definition}
See Example~4.26 in~\cite{Smi16}.
\begin{proposition}
\label{pair_transitivity}
A pair of parabolic spaces (resp. of affine parabolic spaces) is transverse if and only if it may be sent to~$(V^\subge_0, V^\suble_0)$ (resp. to~$(A^\subge_0, A^\suble_0)$) by some element of~$G$ (resp. of~$G \ltimes V$).
\end{proposition}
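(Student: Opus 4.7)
The ``if'' direction will be immediate in both cases: any element of $G$ (resp.\ of $G \ltimes V$) acts linearly on $V$ (resp.\ on $A$), preserving dimensions of sums and intersections, so the orbit of the standard pair consists of transverse pairs.

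For the linear ``only if'' direction, my plan is to start with a transverse pair $(V^\subgap, V^\sublap)$, write $V^\subgap = g_1 V^\subge_0$ for some $g_1 \in G$, and replace the pair by $(g_1^{-1} V^\subgap, g_1^{-1} V^\sublap)$ to reduce to $V^\subgap = V^\subge_0$. It will then suffice to find $p \in P^+_{X_0} = \Stab_G(V^\subge_0)$ with $p V^\suble_0 = V^\sublap$. Writing $V^\sublap = g V^\suble_0$ and applying the ordinary Bruhat decomposition $g = p_0 \tilde w q$ (with $p_0 \in P^+$, $q \in P^- \subset P^-_{X_0}$, $w \in W$), the transversality hypothesis becomes $V^\subge_0 + \tilde w V^\suble_0 = V$, or in weight-theoretic terms $w \Omega^\suble_{X_0} \supset \Omega^\subl_{X_0}$. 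The key step will be to show this forces $w \in W_{X_0}$: using the symmetry of $X_0$ (so that $w_0 \Omega^\subl_{X_0} = \Omega^\subg_{X_0}$ and $w_0 \Omega^\suble_{X_0} = \Omega^\subge_{X_0}$), I would rewrite the condition as $v \Omega^\subg_{X_0} \subset \Omega^\subge_{X_0}$ with $v := w_0 w^{-1} w_0$, and invoke Lemma~\ref{Weyl_semi_stabilizer} to conclude $v \in W_{X_0}$. Since $w_0$ normalizes $W_{X_0}$ (because $-w_0(X_0) = X_0$), this gives $w \in W_{X_0}$, hence $\tilde w$ may be chosen in $L_{X_0}$ by Corollary~\ref{parabolic_Bruhat_decomposition}, and thus stabilizes $V^\suble_0$. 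Therefore $V^\sublap = p_0 V^\suble_0$ with $p_0 \in P^+ \subset P^+_{X_0}$, finishing this case.

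For the affine case, the same reduction lets us assume $A^\subgap = A^\subge_0$; affine transversality will then yield linear transversality of $(V^\subge_0, V^\sublap)$ where $V^\sublap := A^\sublap \cap V$, and the linear case supplies $p \in P^+_{X_0}$ with $p V^\suble_0 = V^\sublap$. The affine parabolic space $p A^\suble_0$ has the correct linear part but possibly a different ``affine shift'' from $A^\sublap$; writing $A^\sublap = V^\sublap + \mathbb{R}(p_0 + v_2)$ and $p A^\suble_0 = V^\sublap + \mathbb{R}(p_0 + v')$, I would correct this discrepancy by pre-composing $p$ with a translation $\tau_w$ for some $w \in V^\subge_0$ (which lies in the stabilizer $P^+_{X_0} \ltimes V^\subge_0$ of $A^\subge_0$, preserving the reduction above) chosen so that $w \equiv v_2 - v' \pmod{V^\sublap}$. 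Linear transversality $V^\subge_0 + V^\sublap = V$ guarantees such a $w$ exists, and $\tau_w p$ will then send the standard pair to $(A^\subge_0, A^\sublap)$.

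The main obstacle will be the Weyl-group identification in the linear case, where the transversality condition must be carefully massaged into the precise hypothesis of Lemma~\ref{Weyl_semi_stabilizer}; this is exactly where the generically symmetric property of $X_0$ is used in an essential way.
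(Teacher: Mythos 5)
Your linear-case argument is essentially the paper's own proof in light disguise: the paper writes both parabolic spaces as images of~$V^\subge_0$ and applies the Bruhat decomposition $G = P^+ W P^+$ to $\phi_1^{-1}\phi_2$, whereas you normalize the first space to~$V^\subge_0$ and write the second as $g V^\suble_0$ with $G = P^+ W P^-$; both routes reduce transversality to the same weight-combinatorial condition and conclude via Lemma~\ref{Weyl_semi_stabilizer} together with the symmetry of~$X_0$. One small imprecision: the Lemma by itself does not give $v = w_0 w^{-1} w_0 \in W_{X_0}$ (identifying the stabilizer of~$\Omega^\subg_{X_0}$ with~$W_{X_0}$ also uses extremeness of~$X_0$, as in the proof of Proposition~\ref{stabg=stabge}); but you do not actually need membership in~$W_{X_0}$ --- the Lemma already gives that $v$ stabilizes~$\Omega^\subge_{X_0}$, whence $w\Omega^\suble_{X_0} = w_0 v^{-1} w_0 \Omega^\suble_{X_0} = \Omega^\suble_{X_0}$, which is all that is required for $\tilde{w}$ to stabilize~$V^\suble_0$. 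So, contrary to your closing remark, the Weyl-group step is not where the real difficulty lies.

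The affine case is where care is needed, and it is precisely the part the paper dismisses with ``the affine version follows immediately''. Your translation-correction (solving $w \equiv v_2 - v' \pmod{V^\sublap}$ with $w \in V^\subge_0$, using $V^\subge_0 + V^\sublap = V$) is correct; the unproved claim is that affine transversality ``yields linear transversality of $(V^\subge_0, V^\sublap)$''. With transversality taken literally as ``the sum is the whole space~$A$'' (equivalently, minimal intersection dimension), this implication is false: the sum condition only forces $\dim(V^\subge_0 + V^\sublap) \geq \dim V - 1$, and the deficient case genuinely occurs. For instance, for the adjoint representation of $\PSL_2(\mathbb{R})$, the affine parabolic space $\tau_Y(A^\subge_0)$ with $0 \neq Y \in V^\subl_0$ satisfies $A^\subge_0 + \tau_Y(A^\subge_0) = A$, with intersection of the minimal dimension $\dim A^\sube_0$, yet both linear parts equal~$V^\subge_0$; such a pair cannot be sent to $(A^\subge_0, A^\suble_0)$ by any element of $G \ltimes V$ (its intersection is contained in~$V$ --- in $V_{\Aff}$ the two affine planes are parallel). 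So your argument needs the additional fact that for the pairs under consideration the intersection $A^\subgap \cap A^\sublap$ is \emph{not} contained in~$V$ (equivalently, that the corresponding affine subspaces of~$V_{\Aff}$ actually meet); once that is granted, the dimension count $\dim(V^\subgap \cap V^\sublap) = \dim(A^\subgap \cap A^\sublap) - 1 = \dim V^\sube_0$ does give linear transversality and the rest of your proof goes through. To be fair, this gap is shared with the paper's one-line treatment of the affine case (and in all the paper's later applications the pairs come with transverse linear parts), but as a stand-alone deduction of the stated proposition, this step --- not the Weyl-group identification --- is the genuine missing piece.
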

In particular, for any $\rho$-regular map~$g \in G \ltimes V$, the pair~$(A^\subgap_g, A^\sublap_g)$ is by definition a transverse pair of affine parabolic spaces.

This proposition, as well as its proof, is very similar to Claim~2.8 in~\cite{Smi14} and to Proposition~4.27 in~\cite{Smi16}.
\begin{proof}
Let us prove the linear version; the affine version follows immediately. Let $(V_1, V_2)$ be any pair of parabolic spaces. By definition, for $i = 1, 2$, we may write $V_i = \phi_i(V^\subge_0)$ for some $\phi_i \in G$. Let us apply the Bruhat decomposition to the map $\phi_1^{-1}\phi_2$: we may write
\begin{equation}
\phi_1^{-1}\phi_2 = p_1wp_2,
\end{equation}
where $p_1, p_2$ belong to the minimal parabolic subgroup $P^+$, and $w$ is an element of the restricted Weyl group $W$ (or, technically, some representative thereof). Let $\phi := \phi_1p_1 = \phi_2p_2^{-1}w^{-1}$; since $P^+$ stabilizes $V^\subge_0$, we have
\begin{equation}
V_1 = \phi(V^\subge_0) \;\text{ and }\; V_2 = \phi(w V^\subge_0).
\end{equation}
Thus we have
\begin{align}
\text{$V_1$ and~$V_2$ are transverse}
  &\iff \text{$w V^\subge_0$ is transverse to $V^\subge_0$} \nonumber \\
  &\iff \Omega^\subge_{X_0} \cup w\Omega^\subge_{X_0} = \Omega \nonumber \\
  &\iff w\Omega^\subl_{X_0} \subset \Omega^\subge_{X_0} \nonumber \\
  &\iff w w_0\Omega^\subg_{X_0} \subset \Omega^\subge_{X_0}.
\end{align}
By Lemma~\ref{Weyl_semi_stabilizer}, this implies that $w w_0$~stabilizes~$\Omega^\subg_{X_0}$. Hence it stabilizes its complement~$\Omega^\suble_{X_0}$, which means that $w V^\subge_0 = V^\suble_0$. Thus we have $V_1 = \phi(V^\subge_0)$ and~$V_2 = \phi(V^\suble_0)$ as required.

Conversely, if those equalities hold, then $V_1$ and~$V_2$ are obviously transverse.
\end{proof}

\begin{remark}~
\label{affine_flag_varieties}
\begin{itemize}
\item It follows from Proposition~\ref{stabg=stabge} that the set of all parabolic spaces can be identified with the flag variety~$G/P^+_{X_0}$, by identifying every parabolic space~$\phi(V^\subge_0)$ with the coset~$\phi P^+_{X_0}$. For every $X_0$-regular element $g \in G$, this identification then matches the ideally noncontracting space~$V^\subgap_g$ with the attracting flag~$y^{X_0, +}_g$.
\item Composing with the bijection~$\iota_X$ defined in Proposition~\ref{intrinsic_inverse}, we may also identify this set with the \emph{opposite} flag variety~$G/P^-_{X_0}$. For $X_0$-regular $g \in G$, this matches the ideally \emph{nonexpanding} space~$V^\sublap_g$ with the \emph{repelling} flag~$y^{X_0, -}_g$.
\item Using the Bruhat decomposition of~$P_{X_0}$ (see Corollary~\ref{parabolic_Bruhat_decomposition}), we may then show that two parabolic spaces $V_1 = \phi_1(V^\subge_0)$ and~$V_2 = \phi_2(V^\subge_0) = \phi_2 \circ w_0(V^\suble_0)$ are transverse if and only if the corresponding pair of cosets
\[(\phi_1 P^+_{X_0},\; \phi_2 w_0 P^-_{X_0})\]
is transverse in the sense of Definition~\ref{transverse_purely_linear}.
\item Similarly, it follows from~\eqref{eq:affine_stabilizers} that we can in principle identify the set of all \emph{affine} parabolic spaces with the ``affine flag variety''~$G \ltimes V/P^+_{X_0} \ltimes V^\subge_0$. This would however require very cumbersome notations.

In the linear case, it was natural to do it in order to make things representation-independent. In the affine case however, there is a ``privileged'' representation anyway (namely~$\rho$). We decided that translating everything into that ``abstract'' language was not worth the trouble, but the reader may go through that exercise if they wish so.
\end{itemize}
\end{remark}

Consider a transverse pair of affine parabolic spaces. Their intersection may be seen as a sort of ``abstract affine neutral space''. We now introduce a family of ``canonical identifications'' between those spaces. These identifications have however an inherent ambiguity: they are only defined up to quasi-translation.
\begin{proposition}
\label{canonical_identification}
Let $(A_1, A_2)$ be a pair of transverse affine parabolic spaces. Then any map $\phi \in G \ltimes V$ such that $\phi(A_1, A_2) = (A^\subge_0, A^\suble_0)$ gives, by restriction, an identification of the intersection $A_1 \cap A_2$ with $A^\sube_0$, which is unique up to quasi-translation.
\end{proposition}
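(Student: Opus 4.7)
The proof is essentially a corollary of Lemma~\ref{quasi-translation}, with the remaining content being elementary bookkeeping. I would proceed in three short steps.

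First, existence of at least one such $\phi$ is provided directly by Proposition~\ref{pair_transitivity}. Given such a $\phi \in G \ltimes V$, since it is a bijection of~$A$ sending $A_1$ onto~$A^\subge_0$ and~$A_2$ onto~$A^\suble_0$, its restriction to $A_1 \cap A_2$ is a bijection onto~$A^\subge_0 \cap A^\suble_0$. By the decomposition~\eqref{eq:affine_reference_spaces_decomposition}, this intersection is precisely~$A^\sube_0$, which justifies that $\phi$ does induce the claimed identification.

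Next, for the uniqueness statement, I would take two such maps $\phi, \phi' \in G \ltimes V$, both sending~$(A_1, A_2)$ to~$(A^\subge_0, A^\suble_0)$. The composition $\phi' \phi^{-1}$ is then an element of~$G \ltimes V$ that stabilizes both~$A^\subge_0$ and~$A^\suble_0$. Applying Lemma~\ref{quasi-translation} to~$f := \phi' \phi^{-1}$, we conclude that the restriction of $\phi' \phi^{-1}$ to~$A^\sube_0$ is a quasi-translation~$q$. Writing $\phi'|_{A_1 \cap A_2} = q \circ \phi|_{A_1 \cap A_2}$ then shows that the two identifications of~$A_1 \cap A_2$ with~$A^\sube_0$ differ precisely by left-composition with a quasi-translation, as required.

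There is no real obstacle here: the substantive work has already been done in Lemma~\ref{quasi-translation} (which in turn rested on the analysis of $\mathfrak{l}_{X_0}$ acting on $V^\sube_0$, using that $X_0$ is generically symmetric) and in Proposition~\ref{pair_transitivity}. The only point worth a moment's care is verifying that $\phi'\phi^{-1}$ indeed stabilizes both reference affine parabolic spaces; this is immediate from the way $\phi$ and $\phi'$ were chosen.
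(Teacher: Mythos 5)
Your proposal is correct and follows essentially the same route as the paper: existence via Proposition~\ref{pair_transitivity}, and uniqueness by observing that $\phi' \circ \phi^{-1}$ stabilizes both $A^\subge_0$ and $A^\suble_0$ and then invoking Lemma~\ref{quasi-translation}. The extra remark that $A^\subge_0 \cap A^\suble_0 = A^\sube_0$ is a harmless (and correct) bit of bookkeeping the paper leaves implicit.
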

Here by $\phi(A_1, A_2)$ we mean the pair $(\phi(A_1), \phi(A_2))$. This generalizes Corollary~2.14 in~\cite{Smi14} and Proposition~4.29 in~\cite{Smi16}.
\begin{remark}
Note that if $A_1 \cap A_2$ is obtained in another way as an intersection of two affine parabolic spaces, the identification with~$A^\sube_0$ will, in general, no longer be the same, not even up to quasi-translation: there could also be an element of the Weyl group involved.
\end{remark}
\begin{proof}
The existence of such a map~$\phi$ follows from Proposition~\ref{pair_transitivity}. Now let $\phi$ and~$\phi'$ be two such maps, and let $f$~be the map such that
\begin{equation}
\phi' = f \circ \phi
\end{equation}
(\ie $f := \phi' \circ \phi^{-1}$). Then by construction $f$~stabilizes both $A^\subge_0$ and~$A^\suble_0$. It follows from Lemma~\ref{quasi-translation} that the restriction of~$f$ to~$A^\sube_0$ is a quasi-translation.
\end{proof}
Let us now explain why we call these identifications ``canonical''. The following lemma, while seemingly technical, is actually crucial: it tells us that the identifications defined in Proposition~\ref{canonical_identification} commute (up to quasi-translation) with the projections that naturally arise if we change one of the parabolic subspaces in the pair while fixing the other.
\begin{lemma}
\label{projections_commute}
Take any affine parabolic space~$A_1$.

Let $A_2$ and~$A'_2$ be any two affine parabolic spaces both transverse to~$A_1$.

Let~$\phi$ (resp.~$\phi'$) be an element of~$G \ltimes V$ that sends the pair~$(A_1, A_2)$ (resp.~$(A_1, A'_2)$) to~$(A^\subge_0, A^\suble_0)$; these two maps exist by Proposition~\ref{pair_transitivity}.

Let~$W_1$ be the inverse image of~$V^\subg_0$ by any map~$\phi$ such that~$A_1 = \phi^{-1}(A^\subge_0)$ (this image is unique by Proposition~\ref{stabg=stabge}).

Let
\[\psi: A_1 \longlongrightarrow A_1 \cap A'_2\]
be the projection parallel to~$W_1$.

Then the map~$\overline{\psi}$ defined by the commutative diagram
\[
\begin{tikzcd}
  A^\sube_0
    \arrow{rr}{\overline{\psi}}
&
& A^\sube_0\\
\\
  A_1 \cap A_2
    \arrow{rr}{\psi}
    \arrow{uu}{\phi}
&
& A_1 \cap A'_2
    \arrow{uu}{\phi'}
\end{tikzcd}
\]
is a quasi-translation.
\end{lemma}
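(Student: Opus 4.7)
The plan is to reduce to the standard position via $\phi$ and then exploit the Levi decomposition of the parabolic stabilizer of $A^\subge_0$.

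First I would set $g := \phi' \circ \phi^{-1}$. Since both $\phi$ and $\phi'$ send $A_1$ to $A^\subge_0$, the composition $g$ stabilizes $A^\subge_0$, and so lies in $P^+_{X_0} \ltimes V^\subge_0$ by the stabilizer computation established in the proof of Proposition~\ref{uniquely_determined}. Letting $A''_2 := \phi(A'_2)$ — an affine parabolic space transverse to $A^\subge_0$ — one has $g(A''_2) = A^\suble_0$. Conjugating the commuting square by $\phi$, and using that $\phi$ sends the ``expanding direction'' $W_1$ to $V^\subg_0$, the statement reduces to showing that the map $x \mapsto g(\pi(x))$ on $A^\sube_0$ is a quasi-translation, where $\pi : A^\subge_0 \to A^\subge_0 \cap A''_2$ is the projection parallel to $V^\subg_0$.

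Next, I would factor $g = m \cdot u$ with
\[
m \in L_{X_0} \ltimes V^\sube_0, \qquad u \in N^+_{X_0} \ltimes V^\subg_0,
\]
using the Levi decomposition $P^+_{X_0} = L_{X_0} N^+_{X_0}$ together with the $L_{X_0}$-stable splitting $V^\subge_0 = V^\sube_0 \oplus V^\subg_0$ (both summands are sums of restricted weight spaces, hence stable under $\mathfrak{l}_{X_0}$). Since $L_{X_0} \ltimes V^\sube_0 \subset P^-_{X_0} \ltimes V^\suble_0$ already stabilizes $A^\suble_0$, the identity $g(A''_2) = A^\suble_0$ forces $u(A''_2) = A^\suble_0$, and hence $A''_2 = u^{-1}(A^\suble_0)$ and $A^\subge_0 \cap A''_2 = u^{-1}(A^\sube_0)$.

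The heart of the argument is the identification $\pi = u^{-1}|_{A^\sube_0}$. For this I would invoke the elementary root computation ``$\lambda(X_0) = 0$ and $\alpha(X_0) > 0$ imply $(\lambda + \alpha)(X_0) > 0$'' which shows that every element of $\mathfrak{n}^+_{X_0}$ sends $V^\sube_0$ into $V^\subg_0$; hence the linear part of $u$ acts on $V^\sube_0$ trivially modulo $V^\subg_0$, and the translation part of $u$ already lies in $V^\subg_0$. The same then holds for $u^{-1}$, giving $u^{-1}(x) \equiv x \pmod{V^\subg_0}$ for every $x \in A^\sube_0$. Combined with $u^{-1}(x) \in A^\subge_0 \cap A''_2$ and $V^\subg_0 \cap V^\sube_0 = \{0\}$, this pins down $u^{-1}(x)$ as $\pi(x)$. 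Substituting back,
\[
\overline{\psi}(x) \;=\; g(\pi(x)) \;=\; (m u)(u^{-1}(x)) \;=\; m(x),
\]
and since $m \in L_{X_0} \ltimes V^\sube_0$, its restriction to $A^\sube_0$ is by Definition~\ref{quasi-translation_def} a quasi-translation.

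The main delicate step is the Levi factorization together with the identification of $\pi$ with $u^{-1}|_{A^\sube_0}$: all the conceptual content of the lemma is that the ``unipotent'' part $u$ exactly realizes the projection parallel to $V^\subg_0$ between the two copies of the neutral space, while the ``Levi'' part $m$ is what survives as the quasi-translation. The remaining work is bookkeeping in the extended affine formalism, making sure the translation parts are kept inside $V^\sube_0$ and $V^\subg_0$ as dictated by the decomposition.
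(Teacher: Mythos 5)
Your argument is sound in structure, and where the paper simply defers to Lemma~4.30 of~\cite{Smi16} you give a self-contained proof along the natural route: conjugating by~$\phi$ to reduce to the pair $(A^\subge_0, A''_2)$, factoring $g=\phi'\circ\phi^{-1}=mu$ with $m\in L_{X_0}\ltimes V^\sube_0$ and $u\in N^+_{X_0}\ltimes V^\subg_0$, and identifying $u^{-1}|_{A^\sube_0}$ with the projection~$\pi$ so that $\overline{\psi}=m|_{A^\sube_0}$. All of these steps check out (the weight computation $\lambda(X_0)=0$, $\alpha(X_0)>0\Rightarrow(\lambda+\alpha)(X_0)>0$ does give $u^{-1}(x)\equiv x \pmod{V^\subg_0}$, and $u^{-1}(A^\sube_0)=A^\subge_0\cap A''_2$).

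The one step you cannot close ``by Definition~\ref{quasi-translation_def}'' is the last one. A quasi-translation is by definition induced by an element of $L\ltimes V^\sube_0$, whereas your $m$ is only known to lie in $L_{X_0}\ltimes V^\sube_0$, and $L_{X_0}$ is in general strictly larger than~$L$ (it contains the root spaces $\mathfrak{g}^\alpha$ with $\alpha(X_0)=0$ whenever $\Pi_{X_0}\neq\emptyset$). The fact that such an $m$ nevertheless restricts to a quasi-translation of~$A^\sube_0$ is exactly Lemma~\ref{quasi-translation}, applied to~$m$, which stabilizes both $A^\subge_0$ and~$A^\suble_0$; the proof of that lemma is precisely where the standing hypothesis that $X_0$ is generically symmetric enters (elements of $\mathfrak{g}^\alpha$ with $\alpha(X_0)=0$ annihilate~$V^\sube_0$ because $w_0$ fixes no restricted root). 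So replace the appeal to the bare definition by a citation of Lemma~\ref{quasi-translation} and your proof is complete. One cosmetic point: to pin down $u^{-1}(x)=\pi(x)$ you need $V^\subg_0\cap\bigl(A^\subge_0\cap A''_2\bigr)=\{0\}$ rather than $V^\subg_0\cap V^\sube_0=\{0\}$; this follows by applying~$u$, since $u$ preserves $V^\subg_0$ and $A^\subge_0\cap A''_2=u^{-1}(A^\sube_0)$.
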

The space~$W_1$ is, in a sense, the ``abstract linear expanding space'' corresponding to the ``abstract affine noncontracting space''~$A_1$: more precisely, for any $\rho$-regular map~$g \in G \ltimes V$ such that $A^\subgap_g = A_1$, we have $V^\subgg_g = W_1$.

The projection $\psi$ is well-defined because $A^\subge_0 = V^\subg_0 \oplus A^\sube_0 = V^\subg_0 \oplus (A^\subge_0 \cap A^\suble_0)$, and so $A_1 = \phi'^{-1}(A^\subge_0) = W_1 \oplus (A_1 \cap A'_2)$.

This statement generalizes Lemma~2.18 in~\cite{Smi14} and Lemma~4.30 in~\cite{Smi16}.
\begin{proof}
The proof is exactly the same as the proof of Lemma~4.30 in~\cite{Smi16}.
\end{proof}

Now let $g$~be a $\rho$-regular map. We already know that it acts on its affine ideally neutral space by quasi-translation; now the canonical identifications we have just introduced allow us to compare the actions of different elements on their respective affine ideally neutral spaces, as if they were both acting on the same space~$A^\sube_0$. However there is a catch: since the identifications are only canonical up to quasi-translation, we lose information about what happens in~$V^\extra_0$; only the translation part along~$V^\transl_0$ remains.

Formally, we make the following definition. Let $\pi_\transl$~denote the projection from~$V^\sube_0$ onto~$V^\transl_0$ parallel to~$V^\extra_0$.
\begin{definition}
\label{margulis_invariant}
Let $g \in G \ltimes V$ be a $\rho$-regular map. Take any point $x$ in the affine space $A^{\subap}_{g} \cap V_{\Aff}$ and any map $\phi \in G$ that canonizes $\ell(g)$. Then we define the \emph{Margulis invariant} of~$g$ to be the vector
\[M(g) := \pi_\transl(\phi(g(x)-x)) \in V^\transl_0.\]
\end{definition}
This vector does not depend on the choice of~$x$ or~$\phi$: indeed, composing~$\phi$ with a quasi-translation does not change the $V^\transl_0$-component of the image. See Proposition~2.16 in~\cite{Smi14} for a detailed proof of this claim (for $V = \mathfrak{g}$).

Informally, the Margulis invariant gives the ``translation part'' of the asymptotic dynamics of an $\mathbb{R}$-regular element $g \in G \ltimes V$ (the ``linear part'' being given by $\jordan(g)$, just as in the linear case). As such, it plays a central role in this paper.

\subsection{Quantitative properties}
\label{sec:regular_metric}

In this subsection, we define and study the affine versions of non-degeneracy and contraction strength. We more or less follow the second half of Subsection~5.1 in~\cite{Smi16}, or of Subsection~2.6 in~\cite{Smi14}.

We endow the extended affine space~$A$ with a Euclidean norm (written simply $\| \bullet \|$) whose restriction to~$V$ coincides with the norm~$B$ defined in Lemma~\ref{K-invariant} and that makes~$p_0$ a unit vector orthogonal to~$V$. Then the subspaces $V^\subg_0$, $V^\subl_0$, $V^\subenz_0$, $V^\extra_0 \cap V^0$, $V^\transl_0$ and~$\mathbb{R} p_0$ are pairwise orthogonal. 

\begin{definition}[Affine version of non-degeneracy]
\label{regular_definition}
Take a pair of affine parabolic spaces $(A_1, A_2)$. An \emph{optimal canonizing map} for this pair is a map $\phi \in G \ltimes V$ satisfying
\[\phi(A_1, A_2) = (A^\subge_0, A^\suble_0)\]
and minimizing the quantity $\max \left( \|\phi\|, \|\phi^{-1}\| \right)$. By Proposition~\ref{pair_transitivity} and a compactness argument, such a map exists if and only if $A_1$ and $A_2$ are transverse.

We define an \emph{optimal canonizing map} for a $\rho$-regular map $g \in G \ltimes V$ to be an optimal canonizing map for the pair $(A^\subgap_g, A^\sublap_g)$.

Let $C \geq 1$. We say that a pair of affine parabolic spaces $(A_1, A_2)$ (resp. a $\rho$-regular map~$g$) is \emph{$C$-non-degenerate} if it has a canonizing map $\phi$ such that
\[\left \|\phi \right\| \leq C \quad\textnormal{and}\quad \left \|\phi^{-1} \right\| \leq C.\]

Now take $g_1$, $g_2$ two $\rho$-regular maps in $G \ltimes V$. We say that the pair $(g_1, g_2)$ is \emph{$C$-non-degenerate} if every one of the four possible pairs $(A^{\subgap}_{g_i}, A^{\sublap}_{g_j})$ is $C$-non-degenerate.
\end{definition}

The point of this definition is that there are a lot of calculations in which, when we treat a $C$-non-degenerate pair of spaces as if they were perpendicular, we err by no more than a (multiplicative) constant depending on $C$.

\begin{remark}
\label{unified_treatment}
The set of transverse pairs of extended affine spaces is characterized by two open conditions: there is of course transversality of the spaces, but also the requirement that each space not be contained in $V$. What we mean here by ``degeneracy'' is failure of one of these two conditions. Thus the property of a pair $(A_1, A_2)$ being $C$-non-degenerate actually encompasses two properties.

First, it implies that the spaces $A_1$ and $A_2$ are transverse in a quantitative way. More precisely, this means that some continuous function that would vanish if the spaces were not transverse is bounded below. An example of such a function is the smallest non identically vanishing of the ``principal angles'' defined in the proof of Lemma~\ref{regular_to_proximal}~(iv).

Second, it implies that both $A_1$ and $A_2$ are ``not too close'' to the space $V$ (in the same sense). In purely affine terms, this means that the affine spaces $A_1 \cap V_{\Aff}$ and $A_2 \cap V_{\Aff}$ contain points that are not too far from the origin.

Both conditions are necessary, and appeared in the previous literature (such as \cite{Mar87} and~\cite{AMS02}). However, they were initially treated separately. The idea of encompassing both in the same concept of ``$C$-non-degeneracy'' seems to have been first introduced in the author's earlier paper~\cite{Smi14}.
\end{remark}

%
%

\begin{definition}[Affine version of contraction strength]
\label{s_definition}
Let $s > 0$. For a $\rho$-regular map $g \in G \ltimes V$, we say that $g$ is \emph{$s$-contracting along~$X_0$} if we have:
\begin{equation}
\forall (x, y) \in V^{\subll}_{g} \times A^{\subgap}_{g}, \quad
  \frac{\|g(x)\|}{\|x\|} \leq s\frac{\|g(y)\|}{\|y\|}.
\end{equation}
(Note that by definition the spaces $V^{\subll}_{g}$ and $A^{\subgap}_{g}$ always have the same dimensions as $V^\subl_0$ and $A^\subge_0$ respectively, hence they are nonzero.)

We define the \emph{affine contraction strength along~$X_0$} of $g$ to be the smallest number $s_{X_0}(g)$ such that $g$ is $s_{X_0}(g)$-contracting along~$X_0$. In other words, we have
\begin{equation}
s_{X_0}(g) =
\left\| \restr{g}     {V^{\subll}_{g}}  \right\|
\left\| \restr{g^{-1}}{A^{\subgap}_{g}} \right\|.
\end{equation}
This should not be confused with the linear contraction strength $\vec{s}$ defined in Definition~\ref{vec_s_definition}.
\end{definition}

This notion is closely related to the notion of asymptotic contraction:

\begin{proposition}~
\label{contraction_strength_limit}
\begin{hypothenum}
\item \label{itm:one_to_asymp_contr} If a map~$g \in G \ltimes V$ is $\rho$-regular and $1$-contracting along $X_0$ (\ie such that ${s_{X_0}(g) < 1}$), then it is also asymptotically contracting along~$X_0$.
\item \label{itm:asymptotic_explanation} If a map~$g \in G \ltimes V$ is $\rho$-regular and asymptotically contracting along~$X_0$, then we have
\[\lim_{N \to \infty} s_{X_0}(g^N) = 0.\]
\end{hypothenum}
\end{proposition}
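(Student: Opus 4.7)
The plan is to reduce both statements to a single eigenvalue computation on the ideal dynamical spaces and then invoke two standard linear-algebra facts: the bound $r(h) \leq \|h\|$ for part (i), and Gelfand's spectral-radius formula $\lim_{N\to\infty} \|h^N\|^{1/N} = r(h)$ for part (ii). Throughout, set $Y := \jordan(g)$.

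First I would pick a canonizing map $\phi$ for $g$ (which exists by Proposition~\ref{canonizing}). Then $\phi g \phi^{-1}$ is in canonical form, its linear part lies in $L_{X_0}$ with hyperbolic component $\exp(Y)$, and therefore it stabilizes each restricted weight space $V^\lambda$ and acts on it with eigenvalues of modulus $e^{\lambda(Y)}$. Pulling back by $\phi$, the same moduli describe the action of $g$ on the corresponding pieces of its ideal dynamical spaces. This immediately gives
\[
r\!\bigl(g|_{V^\subll_g}\bigr) = \exp\!\Bigl(\max_{\lambda \in \Omega^\subl_{X_0}} \lambda(Y)\Bigr).
\]
For the noncontracting side, on the affine neutral piece $A^\sube_0 = V^\sube_0 \oplus \mathbb{R} p_0$ the map $\phi g \phi^{-1}$ acts as a quasi-translation (Proposition~\ref{V=_translation}), which may produce a non-diagonalizable $1$-eigenvalue from the $p_0$ direction; but this $1$ is already present among the $e^{\lambda(Y)}$ for $\lambda \in \Omega^\subge_{X_0}$, thanks to $0 \in \Omega^\sube_{X_0}$ (Assumption~\ref{zero_is_a_weight}). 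Consequently
\[
r\!\bigl(g^{-1}|_{A^\subgap_g}\bigr) = \exp\!\Bigl({-\min_{\lambda \in \Omega^\subge_{X_0}} \lambda(Y)}\Bigr),
\]
and multiplying these two expressions yields
\[
r\!\bigl(g|_{V^\subll_g}\bigr)\cdot r\!\bigl(g^{-1}|_{A^\subgap_g}\bigr) = \exp\!\Bigl(\max_{\lambda^\subl \in \Omega^\subl_{X_0}} \lambda^\subl(Y) - \min_{\lambda^\subge \in \Omega^\subge_{X_0}} \lambda^\subge(Y)\Bigr),
\]
which is strictly less than $1$ exactly when $g$ is asymptotically contracting along $X_0$.

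Part (i) is then immediate: $s_{X_0}(g) = \|g|_{V^\subll_g}\|\,\|g^{-1}|_{A^\subgap_g}\|$ dominates the product of the two spectral radii, so $s_{X_0}(g) < 1$ forces that product to be $<1$ and hence forces asymptotic contraction. For part~(ii), I would first observe that $\jordan(g^N) = N Y$ keeps $g^N$ $\rho$-regular and lets the same canonizing map $\phi$ work for every $g^N$, so $V^\subll_{g^N} = V^\subll_g$ and $A^\subgap_{g^N} = A^\subgap_g$; hence $s_{X_0}(g^N) = \|g^N|_{V^\subll_g}\|\,\|g^{-N}|_{A^\subgap_g}\|$. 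Applying Gelfand's formula to each factor and combining yields $\lim_{N\to\infty} s_{X_0}(g^N)^{1/N} = r(g|_{V^\subll_g})\,r(g^{-1}|_{A^\subgap_g}) < 1$, so $s_{X_0}(g^N) \to 0$ at an exponential rate. The only mildly subtle point is the eigenvalue bookkeeping on $A^\sube_0$ above, where the shear coming from a possibly nontrivial Margulis invariant only contributes polynomial growth in $N$ and is therefore invisible to Gelfand's formula.
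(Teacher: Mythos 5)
Your proof is correct and takes essentially the same route as the paper's: compute the spectra of $\restr{g}{V^\subll_g}$ and $\restr{g^{-1}}{A^\subgap_g}$ via the restricted weights evaluated at $\jordan(g)$ (with the extra affine eigenvalue~$1$ absorbed thanks to Assumption~\ref{zero_is_a_weight}), then conclude with $r(f)\leq\|f\|$ for~(i) and Gelfand's formula for~(ii), your observation that a single canonizing map serves every power $g^N$ merely making explicit a point the paper leaves implicit. The only slip is the intermediate assertion that the canonized linear part stabilizes each weight space $V^\lambda$ --- it lies in $L_{\jordan(g)}$, which may move individual weight spaces; but it commutes with $\exp(\jordan(g))$ and stabilizes the reference dynamical spaces (Proposition~\ref{reference_spaces_invariant}), so grouping weight spaces by the value of $\lambda(\jordan(g))$ still gives exactly the eigenvalue-moduli formulas you use, which is the porism of Proposition~\ref{eigenvalues_and_singular_values_characterization}~(i) invoked in the paper.
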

\begin{proof}
Let~$g \in G \ltimes V$ be $\rho$-regular. We claim that it is asymptotically contracting along~$X_0$ if and only if it satisfies the inequality
\begin{equation}
r\left( \restr{g}     {V^{\subll}_{g}}  \right)
r\left( \restr{g^{-1}}{A^{\subgap}_{g}} \right) < 1
\end{equation}
(where $r(f)$ denotes the spectral radius of~$f$). Indeed, as a porism of Proposition~\ref{eigenvalues_and_singular_values_characterization}~(i), we obtain that the spectrum of~$\restr{g}{V^{\subll}_{g}}$ is
\[\setsuch{e^{\lambda(\jordan(g))}}{\lambda \in \Omega^\subl_{X_0}},\]
and the spectrum of~$\restr{g}{A^{\subgap}_{g}}$ is
\[\setsuch{e^{\lambda(\jordan(g))}}{\lambda \in \Omega^\subge_{X_0}} \cup \{1\}\]
(the ``$1$'' accounts for the affine extension). By Assumption~\ref{zero_is_a_weight}, the eigenvalue~$1$ is already contained in the spectrum of the linear part, so we may actually ignore the ``$\cup \{1\}$'' part. The claim follows.

The conclusion then follows from the well-known facts that for every linear map~$f$, we have
\begin{equation}
r(f) \leq \|f\|
\end{equation}
(this gives (i)), and
\begin{equation}
\label{eq:Gelfand}
\log \|f^N\| = N \log r(f) + \underset{N \to \infty}{\bigo}(\log N)
\end{equation}
(also known as Gelfand's formula; this gives (ii)).
\end{proof}

\subsection{Comparison of affine and linear properties}
\label{sec:affine_and_linear}

The goal of this subsection is to prove Proposition~\ref{affine_to_intrinsic_linear}, that, for any $\rho$-regular element~$g \in G \ltimes V$, relates the quantitative properties we just introduced to the corresponding properties of its linear part~$\ell(g)$.

This is given by Lemma~2.25 in~\cite{Smi14} in the case of the adjoint representation, and by Lemma~5.8 in~\cite{Smi16} (which is a straightforward generalization) in the non-swinging case. In the general case however, only points (i) and~(ii) generalize in the obvious way. The statement~(iii) holds only in a weaker form: we now need to consider the \emph{linear} contraction strength of~$\ell(g)$, but the \emph{affine} contraction strength of~$g$. This is basically what forced us to develop the ``purely linear theory'' (section~\ref{sec:linear_regular_maps}) in a systematic way, rather than presenting it as a particular case of the ``affine theory'' as we did in the previous papers.

In order to be able to compare the affine contraction strength with the linear one, we begin by expressing the former in terms of the Cartan projection. The following is a generalization of Lemma~6.6 in~\cite{Smi16}, formulated in a slightly more general way (the original statement was essentially the same inequality without the absolute value).

\begin{proposition}
\label{contraction_strength_and_cartan_projection}
For every $C \geq 1$, there is a constant~$k_{\ref{contraction_strength_and_cartan_projection}}(C)$ with the following property. Let $g \in G$ be a $C$-non-degenerate $\rho$-regular map. Then we have
\[\left| - \log s_{X_0}(g) - \left( \min_{\lambda \in \Omega^\subge_{X_0}} \lambda(\cartan(g)) \;-\;
\max_{\lambda \in \Omega^\subl_{X_0}} \lambda(\cartan(g)) \right) \right|
  \;\;\leq\;\;
k_{\ref{contraction_strength_and_cartan_projection}}(C).\]
(Recall that $\Omega^\subge_{X_0}$~is the set of restricted weights that take nonnegative values on~$X_0$, and~$\Omega^\subl_{X_0}$ is its complement in~$\Omega$.)
\end{proposition}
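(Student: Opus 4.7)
The strategy is to conjugate $g$ by a bounded canonizing map, reducing the computation of the two restriction norms to analogous norms on the \emph{reference} spaces, which are then extracted via a Cartan ($KAK$) decomposition inside the Levi subgroup $L_{X_0}$. By affine $C$-non-degeneracy there is a map $\phi$ (which one may take in $G$, since $g \in G$ has no translation part) with $\|\phi^{\pm 1}\| \leq C$ sending the ideal spaces of $g$ to the reference spaces. Setting $g' := \phi g \phi^{-1} \in G$, the conjugate $\ell(g')$ stabilizes all the reference subspaces of Proposition \ref{stabg=stabge}, hence $\ell(g') \in L_{X_0}$. The bound on $\|\phi^{\pm 1}\|$ propagates to restrictions, giving $|\log\|\restr{g}{V^\subll_g}\| - \log\|\restr{g'}{V^\subl_0}\|| \lesssim_C 1$ and similarly for the affine factor; moreover, since $g'$ fixes $p_0$, the affine norm collapses to $\|\restr{g'^{-1}}{A^\subge_0}\| = \|\restr{\ell(g')^{-1}}{V^\subge_0}\|$ (the maximum with $1$ is attained by the linear factor thanks to the eigenvalue $1$ carried by $V^0 \subset V^\subge_0$, guaranteed by Assumption \ref{zero_is_a_weight}).

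Next, write the $KAK$-decomposition of $\ell(g')$ inside the reductive group $L_{X_0}$: $\ell(g') = k_1 \exp(Z) k_2$ with $k_i \in K \cap L_{X_0} = Z_K(X_0)$ and $Z \in \mathfrak{a}$. The crucial feature of $Z_K(X_0)$ is that it preserves the orthogonal decomposition $V = V^\subg_0 \oplus V^\sube_0 \oplus V^\subl_0$ (each summand being an eigenspace of $X_0$) and acts orthogonally on each piece (by $K$-invariance of $B$). Since $\exp(Z)$ is diagonal on the restricted weight decomposition with entries $e^{\lambda(Z)}$, the singular values of $\restr{\ell(g')}{V^\subl_0}$ are precisely $\{e^{\lambda(Z)} : \lambda \in \Omega^\subl_{X_0}\}$, and those of $\restr{\ell(g')}{V^\subge_0}$ are $\{e^{\lambda(Z)} : \lambda \in \Omega^\subge_{X_0}\}$. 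Taking the largest singular value in the first case and the reciprocal of the smallest in the second,
\[\log\|\restr{g'}{V^\subl_0}\| + \log\|\restr{\ell(g')^{-1}}{V^\subge_0}\| \;=\; \max_{\lambda \in \Omega^\subl_{X_0}} \lambda(Z) \;-\; \min_{\lambda \in \Omega^\subge_{X_0}} \lambda(Z).\]

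It remains to identify $Z$ with $\cartan(g)$ up to $\lesssim_C 1$. The standard conjugation estimate (as in the proof of Proposition \ref{type_X0_to_proximality}\ref{X_to_proximal_contraction_strength}) gives $\|\cartan_G(\ell(g')) - \cartan_G(g)\| \lesssim_C 1$. The main obstacle is the a priori possibility that $Z = \cartan_{L_{X_0}}(\ell(g'))$ and $\cartan_G(\ell(g'))$, although $W_G$-equivalent, are not $W_{X_0}$-equivalent, so could differ by some $w \in W_G \setminus W_{X_0}$; this would spoil the formula because $W_{X_0}$ permutes $\Omega^\subl_{X_0}$ and $\Omega^\subge_{X_0}$ individually while $W_G$ generally does not. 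This is resolved by exploiting the freedom to replace $\phi$ within its $L_{X_0}$-coset of canonizing maps (Proposition \ref{canonizing}\textup{(ii)}), together with $\rho$-regularity, which forces $L_{\jordan(g)} \subset L_{X_0}$ and hence controls the non-hyperbolic part of $\ell(g')$: one chooses $\phi$ so that $Z$ and $\cartan_G(\ell(g'))$ both lie in the $W_{X_0}$-orbit of $\jordan(g) \in \mathfrak{a}^+ \subset \mathfrak{a}^+_{L_{X_0}}$, forcing them to coincide in the fundamental domain $\mathfrak{a}^+_{L_{X_0}}$. Granted $|Z - \cartan(g)| \lesssim_C 1$, substituting into the display above and combining with the first paragraph's transfer estimates yields the claimed bound.
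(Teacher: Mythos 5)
Your first three steps are sound, and they do make explicit what the paper compresses into ``proved in exactly the same fashion as (6.4) of \cite{Smi16}'': conjugating by a bounded optimal canonizing map, absorbing the affine factor via $0 \in \Omega^\subge_{X_0}$, and reading off $-\log s_{X_0}(g') = \min_{\lambda \in \Omega^\subge_{X_0}}\lambda(Z) - \max_{\lambda \in \Omega^\subl_{X_0}}\lambda(Z)$ from a decomposition $g' = k_1\exp(Z)k_2$ with $k_1, k_2 \in K \cap L_{X_0}$. The gap is your step 4, and it is a genuine one. Your justification conflates singular-value data with eigenvalue data: $Z$ and $\cartan(g')$ are determined by the singular values of $g'$, while $\jordan(g)$ is determined by eigenvalue moduli, so as soon as the $L_{X_0}$-component of $g$ carries a nontrivial unipotent part, \emph{no} conjugate of $g$ has $Z$ or $\cartan$ in the $W$-orbit of $\jordan(g)$, and the Proposition contains no contraction hypothesis that would make them approximately equal. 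The freedom of Proposition~\ref{canonizing}(ii) does not help either: replacing $\phi$ by $l\phi$ with $l \in L_{X_0}$ conjugates $g'$ by $l$, which changes both $Z$ and $\cartan(g')$ and, unless $\|l^{\pm 1}\|$ is bounded, destroys the bound $\|\phi^{\pm 1}\| \leq C$ on which all your ``$\lesssim_C$'' transfers rest; conjugation by $K \cap L_{X_0}$, the only conjugation leaving $Z$ untouched, cannot move $Z$ into a prescribed $W_{X_0}$-orbit.

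Moreover the coincidence you are after is not merely unproven, it fails, so no choice of canonizing map can close step 4. Take $G = \SO^+(3,2)$ with the standard $5$-dimensional $\rho$ and $\Pi_{X_0} = \{\alpha_1\}$ (the situation of Figure~\ref{fig:trapezoid_picture}), and $g_s := \exp(jY)\exp(sE)$ with $Y \in \mathfrak{a}$ defined by $e_1(Y) = e_2(Y) = 1$, $0 \neq E \in \mathfrak{g}^{\alpha_1}$, $j > 0$ fixed and $s \to \infty$. The two factors commute, so $g_s$ is in canonical form with $\jordan(g_s) = jY$, it is $\rho$-regular, and its ideal spaces are the reference spaces, so it is $C$-non-degenerate with $C$ independent of $s$. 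Since $E$ maps $V^{-e_1}$ onto $V^{-e_2}$ and $V^{e_2}$ onto $V^{e_1}$, both $\|\restr{g_s}{V^\subl_0}\|$ and $\|\restr{g_s^{-1}}{A^\subge_0}\|$ grow like $e^{-j}|s|$, hence $-\log s_{X_0}(g_s) \sim -2\log|s|$, whereas $\min_{\lambda \in \Omega^\subge_{X_0}}\lambda(\cartan(g_s)) = 0$ and $\max_{\lambda \in \Omega^\subl_{X_0}}\lambda(\cartan(g_s)) \sim -\log|s|$, so the bracketed quantity tends to $+\infty$: here $Z \approx (j+\log|s|,\; j-\log|s|)$ while $\cartan(g_s) \approx (\log|s|+j,\; \log|s|-j)$, and these are not $W_{X_0}$-equivalent. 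What survives of this scheme is at most the one-sided bound $-\log s_{X_0}(g) \leq \min_{\lambda \in \Omega^\subge_{X_0}}\lambda(\cartan(g)) - \max_{\lambda \in \Omega^\subl_{X_0}}\lambda(\cartan(g)) + k(C)$ --- which is the only direction invoked later (in Propositions \ref{affine_to_intrinsic_linear} and~\ref{regular_product_qualitative}) and is the form of the original Lemma~6.6 of \cite{Smi16} --- and even that still requires an argument you have not supplied, namely the comparison $\max_{\Omega^\subl_{X_0}}\lambda(Z) - \min_{\Omega^\subge_{X_0}}\lambda(Z) \geq \max_{\Omega^\subl_{X_0}}\lambda(\cartan(g')) - \min_{\Omega^\subge_{X_0}}\lambda(\cartan(g'))$ for $Z \in \mathfrak{a}^{+}_{\Pi_{X_0}} \cap W\cdot\cartan(g')$, in place of the false identification of $Z$ with $\cartan(g')$.
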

To make sense of this estimate, keep in mind that the quantity $- \log s_{X_0}(g)$ is typically positive. Also note that the ``minimum'' term is certainly nonpositive, as $0 \in \Omega^\subge_{X_0}$.
\begin{proof}
First of all let $\phi$ be an optimal canonizing map for~$g$, and let $g' = \phi g \phi^{-1}$. Then it is easy to see that we have
\begin{equation}
\label{eq:WLOG_canonical_form}
s_{X_0}(g') \asymp_C s_{X_0}(g),
\end{equation} 
and we have already seen~\eqref{eq:cartan_g_g'} that the difference $\cartan(g') - \cartan(g)$ is bounded by a constant that depends only on~$C$. Hence we lose no generality in replacing~$g$ by~$g'$. Clearly, it is enough to show that for~$g'$, which is in canonical form, we have the equality:
\begin{equation}
\label{eq:sg'_majoration}
\min_{\lambda \in \Omega^\subge_{X_0}} \lambda(\cartan(g')) -
\max_{\lambda \in \Omega^\subl_{X_0}} \lambda(\cartan(g'))
  \;=\;
- \log s_{X_0}(g').
\end{equation}

This is the straightforward generalization of~(6.4) in~\cite{Smi16}, and is proved in exactly the same fashion, \emph{mutatis mutandis}.
\end{proof}

As a stepping stone for Proposition~\ref{affine_to_intrinsic_linear}, we first need the ``na{\"i}ve'' extension of point~(ii) of Lemma~2.25 in~\cite{Smi14} or Lemma~5.8 in~\cite{Smi16}, giving a bound on the \emph{affine} contraction strength of the \emph{linear} part of~$g$ (seen as an element of~$G \ltimes V$ by the usual embedding).

\begin{lemma}
\label{affine_to_vector_weak}
For any $\rho$-regular map $g \in G \ltimes V$, we have $s_{X_0}(\ell(g)) \leq s_{X_0}(g)$.
\end{lemma}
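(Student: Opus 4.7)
The plan is to split the affine contraction strength into its two factors,
\[
s_{X_0}(g) \;=\; \bigl\|\restr{g}{V^{\subll}_{g}}\bigr\| \cdot \bigl\|\restr{g^{-1}}{A^{\subgap}_{g}}\bigr\|,
\]
and analogously for $\ell(g)$; the goal is then to show that the first factor is unchanged when $g$ is replaced by its linear part, while the second factor can only decrease.

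The first factor is straightforward. Writing a canonizing map for $g$ as $\phi = \tau_{w}\,\ell(\phi)$, the linear part $\ell(\phi) \in G$ is a canonizing map for $\ell(g) \in G \ltimes V$ (indeed $\ell(\phi)\,\ell(g)\,\ell(\phi)^{-1}$ is the linear part of $\phi g \phi^{-1}$, hence in canonical form with vanishing translation). Since the translation $\tau_{-w}$ fixes $V \subset A$ pointwise, one gets $V^{\subll}_{\ell(g)} = \ell(\phi)^{-1}(V^{\subl}_{0}) = \phi^{-1}(V^{\subl}_{0}) = V^{\subll}_{g}$, and on this common subspace of $V$ both $g$ and $\ell(g)$ act by the same linear map $\ell(g)$; so the first factors agree.

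For the second factor, since $\ell(\phi) \in G$ fixes $p_0$, I would first compute
\[
A^{\subgap}_{\ell(g)} \;=\; \ell(\phi)^{-1}(V^{\subge}_{0}) \oplus \mathbb{R} p_0 \;=\; V^{\subgap}_{g} \oplus \mathbb{R} p_0,
\]
and this is an \emph{orthogonal} sum because $V \perp \mathbb{R} p_0$; since $\ell(g)^{-1}$ preserves both summands and acts as the identity on $\mathbb{R} p_0$, we get
\[
\bigl\|\restr{\ell(g)^{-1}}{A^{\subgap}_{\ell(g)}}\bigr\| \;=\; \max\bigl(\,\bigl\|\restr{\ell(g)^{-1}}{V^{\subgap}_{g}}\bigr\|,\;1\bigr).
\]
The key task is then to show that $\bigl\|\restr{g^{-1}}{A^{\subgap}_{g}}\bigr\|$ dominates both of these two terms. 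The bound $\bigl\|\restr{g^{-1}}{A^{\subgap}_{g}}\bigr\| \geq \bigl\|\restr{\ell(g)^{-1}}{V^{\subgap}_{g}}\bigr\|$ follows immediately from the inclusion $V^{\subgap}_{g} \subset A^{\subgap}_{g}$ together with the fact that $g^{-1}$ and $\ell(g)^{-1}$ coincide on $V$. For the bound $\bigl\|\restr{g^{-1}}{A^{\subgap}_{g}}\bigr\| \geq 1$, I would set $q := \phi^{-1}(p_0)$, so that $A^{\subgap}_{g} = V^{\subgap}_{g} \oplus \mathbb{R} q$; since $g^{-1}$ preserves $p_0$-components (its linear part lies in $G$, and its translation part acts only along $V$), the image $g^{-1}(q)$ has the form $v + q$ for some $v \in V^{\subgap}_{g}$, so that in the basis $V^{\subgap}_{g} \oplus \mathbb{R} q$ the restriction $\restr{g^{-1}}{A^{\subgap}_{g}}$ is block upper-triangular with bottom-right entry equal to $1$. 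In particular $1$ lies in its spectrum, so the operator norm --- which bounds the spectral radius from above --- is at least $1$.

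The only subtle point is this $\geq 1$ bound: the naive approach of exhibiting an explicit vector in $A^{\subgap}_{g}$ whose image under $g^{-1}$ has larger ambient norm is awkward, because the extra direction $q$ is in general not orthogonal to $V^{\subgap}_{g}$ (which is precisely the swinging phenomenon that makes $A^{\subgap}_{g}$ differ from $A^{\subgap}_{\ell(g)}$), so that the Euclidean norms on the two subspaces are not directly comparable. Passing through the spectral radius bypasses this difficulty entirely.
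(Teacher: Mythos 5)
Your proof is correct and follows essentially the same route as the paper's: the same factorization of the two contraction strengths, the same identifications $V^{\subll}_{\ell(g)} = V^{\subll}_{g}$ and $A^{\subgap}_{\ell(g)} = V^{\subgap}_{g} \oplus \mathbb{R} p_0$ (orthogonally), the same formula $\max\bigl( \|\restr{\ell(g)^{-1}}{V^{\subgap}_{g}}\|, 1 \bigr)$, and the same comparison via the inclusion $V^{\subgap}_{g} \subset A^{\subgap}_{g}$. The only divergence is in justifying $\|\restr{g^{-1}}{A^{\subgap}_{g}}\| \geq 1$: the paper restricts $g^{-1}$ to the $g$-invariant subspace $\phi^{-1}(V^0)$, nonzero by Assumption~\ref{zero_is_a_weight}, where all eigenvalues have modulus~$1$, whereas you exhibit the eigenvalue~$1$ directly from the preserved $p_0$-coefficient via $q = \phi^{-1}(p_0)$ and block-triangularity --- an equally valid spectral-radius argument, and one that does not even invoke Assumption~\ref{zero_is_a_weight} for this step.
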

The proof is very similar to the one given in~\cite{Smi14}.
\begin{proof} 
We have
\begin{align*}
s_{X_0}(\ell(g))
  &= \left\| \restr{\ell(g)}{V^{\subll}_{\ell(g)}}  \right\|
     \left\| \restr{\ell(g)^{-1}}{A^{\subgap}_{\ell(g)}} \right\| \\
  &= \left\| \restr{g}{V^{\subll}_{g}}  \right\|
     \max\left( \left\| \restr{g^{-1}}{V^{\subgap}_{g}} \right\|, 1 \right) \\
  &\leq \left\| \restr{g}{V^{\subll}_{g}}  \right\|
        \max\left( \left\| \restr{g^{-1}}{A^{\subgap}_{g}} \right\|, 1 \right) \\
  &= s_{X_0}(g).
\end{align*}
To justify the last equality, note that if $\phi$ is a canonizing map for~$g$, then the space $A^{\subgap}_{g} = \phi^{-1}(A^\subge_0)$ contains the subspace $\phi^{-1}(V^0)$, which is nonzero by Assumption~\ref{zero_is_a_weight}, is clearly $g$-invariant, and all eigenvalues of $g^{-1}$ restricted to that subspace have modulus~1; hence $\left\| \restr{g^{-1}}{A^{\subgap}_{g}} \right\|  \geq \left\| \restr{g^{-1}}{\phi^{-1}(V^0)} \right\| \geq 1$.
\end{proof}

We may now state and prove the appropriate generalization of Lemma~2.25 in~\cite{Smi14} and Lemma~5.8 in~\cite{Smi16}, linking in all three points the purely linear properties with the affine properties. Recall that $\vec{s}_{X_0}$ stands for the linear $X$-contraction strength (with $X = X_0$) as defined in Definition~\ref{vec_s_definition}, as opposed to $s_{X_0}$, which stands for the affine contraction strength.

\begin{proposition}
\label{affine_to_intrinsic_linear}
For every $C \geq 1$, there is a positive constant $s_{\ref{affine_to_intrinsic_linear}}(C)$ with the following property. Let $(g, h)$ be a $C$-non-degenerate pair of $\rho$-regular elements of~$G \ltimes V$. In this case:
\begin{hypothenum}
\item the pair $(\ell(g), \ell(h))$ is $C'$-non-degenerate (in the sense of Definition~\ref{linear-C-non-deg-definition}), for some constant~$C'$ that depends only on~$C$;
\item we have
\[\vec{s}_{X_0}(\ell(g)) \lesssim_C s_{X_0}(g);\]
\item moreover, if we assume that $s_{X_0}(g^{-1}) \leq s_{\ref{affine_to_intrinsic_linear}}(C)$, then we actually have
\[\vec{s}_{X_0}(\ell(g)) \left\| \restr{g^{-1}}{A^\subap_g} \right\| \lesssim_C s_{X_0}(g).\]
\end{hypothenum}
\end{proposition}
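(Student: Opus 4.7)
Let $\phi \in G \ltimes V$ be an optimal canonizing map for one of the four pairs $(A^{\subgap}_{g_i}, A^{\sublap}_{g_j})$ (with $g_1 = g$, $g_2 = h$), so that $\max(\|\phi\|, \|\phi^{-1}\|) \leq C$. Its linear part $\ell(\phi) \in G$ inherits the bound $\max(\|\ell(\phi)\|, \|\ell(\phi)^{-1}\|) \leq C$, and sends $V^{\subgap}_{\ell(g_i)}$ to $V^{\subge}_0$ and $V^{\sublap}_{\ell(g_j)}$ to $V^{\suble}_0$; via the identifications in Remark~\ref{affine_flag_varieties}, this means $\ell(\phi)$ canonizes the linear flag pair $(y^{X_0, +}_{\ell(g_i)}, y^{X_0, -}_{\ell(g_j)})$ in the sense of Subsection~\ref{sec:X-regular}. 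Since~$\nu$ is continuous and proper, it is bounded by some constant~$C'$ depending only on~$C$ on the compact set $\{f \in G : \|f^{\pm 1}\| \leq C\}$, so $\nu_{X_0}$ of the corresponding coset in~$G/L_{X_0}$ is at most~$C'$, giving~(i).

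\textbf{Plan for (ii).} Applying Proposition~\ref{contraction_strength_and_cartan_projection} to $\ell(g) \in G$, which is $C'$-non-degenerate by~(i), yields
\[-\log s_{X_0}(\ell(g)) \;\geq\; \min_{\lambda \in \Omega^{\subge}_{X_0}} \lambda(\cartan(\ell(g))) \,-\, \max_{\lambda \in \Omega^{\subl}_{X_0}} \lambda(\cartan(\ell(g))) \,-\, k_{\ref{contraction_strength_and_cartan_projection}}(C').\]
For each $\alpha_i \in \Pi \setminus \Pi_{X_0}$, the \emph{weak} form of Lemma~\ref{de_part_et_d_autre} provides $\lambda^{\subge}_i \in \Omega^{\subge}_{X_0}$ and $\lambda^{\subl}_i \in \Omega^{\subl}_{X_0}$ with $\lambda^{\subge}_i - \lambda^{\subl}_i = \alpha_i$, hence $\alpha_i(\cartan(\ell(g))) \geq -\log s_{X_0}(\ell(g)) - k_{\ref{contraction_strength_and_cartan_projection}}(C')$. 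Minimizing over~$i$ and using Lemma~\ref{affine_to_vector_weak} (which bounds $s_{X_0}(\ell(g)) \leq s_{X_0}(g)$) concludes.

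\textbf{Plan for (iii).} Set $s_{\ref{affine_to_intrinsic_linear}}(C) := 1$. The strategy is to sharpen~(ii) to
\begin{equation}
\label{eq:plan_stronger}
\vec{s}_{X_0}(\ell(g)) \;\lesssim_C\; \big\|\restr{g}{V^{\subll}_g}\big\|;
\end{equation}
once this is shown, the inclusion $A^{\subap}_g \subset A^{\subgap}_g$ immediately yields
\[\vec{s}_{X_0}(\ell(g)) \cdot \big\|\restr{g^{-1}}{A^{\subap}_g}\big\| \;\lesssim_C\; \big\|\restr{g}{V^{\subll}_g}\big\| \cdot \big\|\restr{g^{-1}}{A^{\subgap}_g}\big\| \;=\; s_{X_0}(g).\]
To prove~\eqref{eq:plan_stronger}, we rerun the argument of~(ii) but now invoke the \emph{strong} form of Lemma~\ref{de_part_et_d_autre}: for each $\alpha_i \in \Pi \setminus \Pi_{X_0}$ it furnishes $(\lambda^{\subge}_i, \lambda^{\subl}_i)$ with $\lambda^{\subge}_i \in \Omega^{\subg}_{X_0} \cup \{0\}$ and $\alpha_i = \lambda^{\subge}_i - \lambda^{\subl}_i$. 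If $\lambda^{\subge}_i = 0$, then $\alpha_i(\cartan(\ell(g))) = -\lambda^{\subl}_i(\cartan(\ell(g))) \geq -\max_{\lambda \in \Omega^{\subl}_{X_0}} \lambda(\cartan(\ell(g)))$, which by the canonical singular-value identification of equation~\eqref{eq:sg'_majoration} (as used in the proof of Proposition~\ref{contraction_strength_and_cartan_projection}) is $\gtrsim_C -\log \|\restr{g}{V^{\subll}_g}\|$. If instead $\lambda^{\subge}_i \in \Omega^{\subg}_{X_0}$, we need to rule out $\lambda^{\subge}_i(\cartan(\ell(g)))$ being very negative, and this is where the hypothesis $s_{X_0}(g^{-1}) \leq 1$ enters. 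A short computation with the canonizing map of~$g^{-1}$ (using $-w_0(X_0) = X_0$) gives the identity $V^{\subll}_{g^{-1}} = V^{\subgg}_g$, so $\|\restr{g^{-1}}{V^{\subgg}_g}\| \leq s_{X_0}(g^{-1}) \leq 1$ (the complementary factor $\|\restr{g}{A^{\subgap}_{g^{-1}}}\|$ being $\geq 1$ since affine maps have norm at least~$1$); the same canonical singular-value identification then translates this into $\min_{\lambda \in \Omega^{\subg}_{X_0}} \lambda(\cartan(\ell(g))) \geq -O_C(1)$, and combining with the bound from the first subcase again yields $\alpha_i(\cartan(\ell(g))) \gtrsim_C -\log \|\restr{g}{V^{\subll}_g}\|$. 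Minimizing over~$i$ gives~\eqref{eq:plan_stronger}. The main obstacle is precisely this second subcase $\lambda^{\subge}_i \in \Omega^{\subg}_{X_0}$: in the non-swinging setting of~\cite{Smi16}, Lemma~\ref{de_part_et_d_autre} admits a ``super-strong'' form forcing $\lambda^{\subge}_i = 0$ for every~$i$, so the argument reduces to the first subcase; handling the swinging case requires both the strong version of Lemma~\ref{de_part_et_d_autre} and the new quantitative hypothesis on~$s_{X_0}(g^{-1})$.
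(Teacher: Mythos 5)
Parts (i) and (ii) of your proposal follow the paper's own argument: the paper simply fixes the specific $\nu$ of \eqref{eq:nu_compatible_definition} so that no compactness argument is needed in (i), and your (ii) is the same chain of Lemma~\ref{affine_to_vector_weak}, Proposition~\ref{contraction_strength_and_cartan_projection} and the weak version of Lemma~\ref{de_part_et_d_autre} (when you feed $\ell(g)$ into Proposition~\ref{contraction_strength_and_cartan_projection}, note --- as the paper does implicitly --- that $\ell(\phi)$ canonizes $\ell(g)$, so the non-degeneracy required there is indeed available from the affine hypothesis, not just from the conclusion of~(i)). Part (iii) is correct but assembled differently. The paper first proves $s_{X_0}(g) \asymp_C \exp\bigl( \max_{\lambda \in \Omega^\subl_{X_0}} \lambda(\cartan(\ell(g))) \bigr) \left\| \restr{g^{-1}}{A^\subap_g} \right\|$ (its \eqref{eq:sg_omegasubl_normagsubap}, which requires the intermediate step \eqref{eq:sg_simplified}), and separately derives $\lambda(\cartan(\ell(g))) \geq 0$ for all $\lambda \in \Omega^\subg_{X_0} \cup \{0\}$ from the smallness of $s_{X_0}(g^{-1})$ via Proposition~\ref{contraction_strength_and_cartan_projection} applied to the inverse, which is why its threshold $s_{\ref{affine_to_intrinsic_linear}}(C)$ is a small constant rather than~$1$. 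You instead prove $\vec{s}_{X_0}(\ell(g)) \lesssim_C \left\| \restr{g}{V^\subll_g} \right\|$ and then just multiply by $\left\| \restr{g^{-1}}{A^\subap_g} \right\| \leq \left\| \restr{g^{-1}}{A^\subgap_g} \right\|$, bypassing \eqref{eq:sg_simplified}; and you get the needed lower bound $\min_{\lambda \in \Omega^\subg_{X_0}} \lambda(\cartan(\ell(g))) \geq -\,O_C(1)$ directly from $\left\| \restr{g^{-1}}{V^\subgg_g} \right\| \leq s_{X_0}(g^{-1}) \leq 1$ (using $V^\subll_{g^{-1}} = V^\subgg_g$, the same fact the paper uses tacitly in its own step), so taking the threshold equal to~$1$ is legitimate and even slightly sharper. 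Both proofs rest on the same two essential inputs --- the strong version of Lemma~\ref{de_part_et_d_autre} and the hypothesis on $s_{X_0}(g^{-1})$, used to control the $\Omega^\subg_{X_0}$-coordinates of the Cartan projection from below. The one point you should spell out is the estimate $\left\| \restr{g^{-1}}{V^\subgg_g} \right\| \asymp_C \exp\bigl( -\min_{\lambda \in \Omega^\subg_{X_0}} \lambda(\cartan(\ell(g))) \bigr)$ implicit in your Case~2: it is not literally stated in the paper, but follows from the restriction-norm identification used for \eqref{eq:sg_omegasubl_normagsubap} applied to $g^{-1}$, together with $\cartan(g^{-1}) = -w_0(\cartan(g))$, $w_0(\Omega^\subl_{X_0}) = \Omega^\subg_{X_0}$, and the fact that $g^{-1}$ is non-degenerate with a constant depending only on~$C$.
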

\begin{proof}~
\begin{hypothenum}
\item By Remark~\ref{choice_of_nu_does_not_matter}, we lose no generality in specifying some particular form for the map~$\nu$ introduced in Definition~\ref{nu_definition}. Let us define~$\nu$ in a way which is consistent with the definition of $C$-non-degeneracy in the affine case, namely using a particular case of the formula~\eqref{eq:nu_example}:
\begin{equation}
\label{eq:nu_compatible_definition}
\nu(\phi) := \max(\|\rho(\phi)\|,\; \|\rho(\phi)^{-1}\|),
\end{equation}
where $\rho$ is our working representation, and $\|\bullet\|$ is the Euclidean norm we introduced in the beginning of section~\ref{sec:regular_metric} (or more precisely its restriction to~$V$, which is just the Euclidean norm introduced in Lemma~\ref{K-invariant}). Clearly if~$\phi$ is a canonizing map for~$g$, then $\ell(\phi)$ is a canonizing map for~$\ell(g)$. Since obviously for any $\phi \in G \ltimes V$, we have $\|\ell(\phi)\| \leq \|\phi\|$, the conclusion follows.
\item First note that by Lemma~\ref{affine_to_vector_weak}, we have
\begin{equation}
\label{eq:slg_sg}
s_{X_0}(\ell(g)) \leq s_{X_0}(g).
\end{equation}
Now apply Proposition~\ref{contraction_strength_and_cartan_projection}: passing to the exponential, we have
\begin{equation}
\label{eq:slg_Ctg}
s_{X_0}(\ell(g)) \asymp_C
\exp\left( \max_{\lambda \in \Omega^\subl_{X_0}} \lambda(\cartan(\ell(g))) \right)
\exp\left( -\min_{\lambda \in \Omega^\subge_{X_0}} \lambda(\cartan(\ell(g))) \right).
\end{equation}
Now the key point is Lemma~\ref{de_part_et_d_autre}~\ref{itm:weak}: for every $i \in \Pi \setminus \Pi_{X_0}$, we may find $\lambda^\subl_i \in \Omega^\subl_{X_0}$ and $\lambda^\subge_i \in \Omega^\subge_{X_0}$ whose difference is~$\alpha_i$. It immediately follows that
\begin{equation}
\exp\left( \max_{\lambda \in \Omega^\subl_{X_0}} \lambda(\cartan(\ell(g))) \right)
\exp\left( -\min_{\lambda \in \Omega^\subge_{X_0}} \lambda(\cartan(\ell(g))) \right)
\geq \vec{s}_{X_0}(\ell(g)).
\end{equation}

\item We proceed in two steps: we establish first \eqref{eq:sg_omegasubl_normagsubap} (which is straightforward), then \eqref{eq:sxlg_omegasubl} (which relies on the strong version~\ref{itm:strong} of Lemma~\ref{de_part_et_d_autre} --- in fact, this is the only place where the strong version is needed). 
\begin{itemize}
\item We have, by definition:
\[s_{X_0}(g) =
    \left\| \restr{g}{V^{\subll}_{g}}  \right\|
    \left\| \restr{g^{-1}}{A^{\subgap}_{g}} \right\|.\]
Let $\phi$ be an optimal canonizing map for $g$. Since $g$ is $C$-non-degenerate (and the images $\phi(A^{\subap}_{g})$ and $\phi(V^{\subgg}_{g})$, being respectively equal to $A^\sube_0$ and $V^\subg_0$, are orthogonal), it follows that
\[s_{X_0}(g) \asymp_C
    \left\| \restr{g}     {V^{\subll}_{g}}  \right\|
    \max \left(
    \left\| \restr{g^{-1}}{A^{\subap}_{g}}  \right\|,
    \left\| \restr{g^{-1}}{V^{\subgg}_{g}}  \right\|
    \right).\]
Clearly we have $\left\| \restr{g^{-1}}{A^{\subap}_{g}} \right\| \geq \left\| \restr{g^{-1}}{\phi^{-1}(V^0)} \right\| = 1$. On the other hand, since $s_{X_0}(g^{-1}) \leq 1$, we have $\left\| \restr{g^{-1}}{V^{\subgg}_{g}}  \right\| \leq 1$. It follows that
\begin{equation}
\label{eq:sg_simplified}
s_{X_0}(g) \asymp_C
    \left\| \restr{g}     {V^{\subll}_{g}} \right\|
    \left\| \restr{g^{-1}}{A^{\subap}_{g}} \right\|.
\end{equation}
By methods similar to the proof of Proposition~\ref{contraction_strength_and_cartan_projection} (essentially by Proposition~\ref{eigenvalues_and_singular_values_characterization}), we may rewrite this as
\begin{equation}
\label{eq:sg_omegasubl_normagsubap}
s_{X_0}(g) \asymp_C
    \exp\left( \max_{\lambda \in \Omega^\subl_{X_0}} \lambda(\cartan(\ell(g))) \right)
    \left\| \restr{g^{-1}}{A^{\subap}_{g}} \right\|.
\end{equation}

\item Note that, combining Proposition~\ref{contraction_strength_and_cartan_projection} with Lemma~\ref{affine_to_vector_weak}, we have
\begin{align}
\exp\left( \max_{\lambda \in \Omega^\suble_{X_0}} \lambda(\cartan(\ell(g))) \right)
\exp\left( -\min_{\lambda \in \Omega^\subg_{X_0}} \lambda(\cartan(\ell(g))) \right)
&\lesssim_C
s_{X_0}(\ell(g)^{-1}) \nonumber \\
&\leq
s_{X_0}(g^{-1}).
\end{align}

If we take $s_{\ref{affine_to_intrinsic_linear}}(C)$ equal to the inverse of the implicit constant in that inequality, we may assume that
\begin{equation}
\forall \lambda^\suble \in \Omega^\suble_{X_0},\;
\forall \lambda^\subg \in \Omega^\subg_{X_0},\quad
\lambda^\suble(\cartan(\ell(g))) \leq \lambda^\subg(\cartan(\ell(g)));
\end{equation}
in particular, since $0 \in \Omega^\suble_{X_0}$, we then have
\begin{equation}
\forall \lambda \in \Omega^\subg_{X_0},\quad
\lambda(\cartan(\ell(g))) \geq 0.
\end{equation}
Obviously this remains true for $\lambda \in \Omega^\subg_{X_0} \cup \{0\}$. Now take some $i \in \Pi \setminus \Pi_{X_0}$; then we have
\begin{align}
-\alpha_i(\cartan(\ell(g)))
  &= \lambda^\subl_i(\cartan(\ell(g))) - \lambda^\subge_i(\cartan(\ell(g))) \nonumber \\
  &\leq \lambda^\subl_i(\cartan(\ell(g))),
\end{align}
where $\lambda^\subge_i \in \Omega^\subg_{X_0} \cup \{0\}$ and~$\lambda^\subl_i \in \Omega^\subl_{X_0}$ are the restricted weights introduced in Lemma~\ref{de_part_et_d_autre}~\ref{itm:strong}. This implies that
\begin{align}
\label{eq:sxlg_omegasubl}
\vec{s}_{X_0}(\ell(g)) &:= \exp\left( \max_{\alpha \in \Pi \setminus \Pi_{X_0}} -\alpha(\cartan(\ell(g))) \right) \nonumber \\
  &\leq \exp\left( \max_{\lambda \in \Omega^\subl_{X_0}} \lambda(\cartan(\ell(g))) \right).
\end{align}
\end{itemize}
Combining the two estimates, the conclusion follows. \qedhere
\end{hypothenum}
\end{proof}

\section{Products of $\rho$-regular maps}
\label{sec:quantitative_properties_of_products}


The goal of this section is to prove Proposition~\ref{regular_product}, which is the ``main part'' of the affine version of Schema~\ref{proposition_template}. The general strategy is the same as in Section~7 in~\cite{Smi16} or in Section~3.2 in~\cite{Smi14}: we reduce the problem to Proposition~\ref{proximal_product}, by considering the action of~$G \ltimes V$ on a suitable exterior power $\ext^{p} A$ (rather than on the spaces~$V_i$ as in Section~\ref{sec:linear_regular_product}).

We start by proving the following result, whose role in~\cite{Smi16} was played by Proposition~6.17. Even though the new version involves slightly different inequalities, the proof is quite similar.

\begin{proposition}
\label{regular_product_qualitative}
For every $C \geq 1$, there is a positive constant $s_{\ref{regular_product_qualitative}}(C) \leq 1$ with the following property. Take any $C$-non-degenerate pair $(g, h)$ of $\rho$-regular maps in~$G \ltimes V$ such that $s_{X_0}(g) \leq s_{\ref{regular_product_qualitative}}(C)$ and $s_{X_0}(h) \leq s_{\ref{regular_product_qualitative}}(C)$. Then $gh$ is asymptotically contracting along~$X_0$.
\end{proposition}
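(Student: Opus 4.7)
The goal is to show that $(\lambda^\subge - \lambda^\subl)(\jordan(gh)) > 0$ for every $\lambda^\subge \in \Omega^\subge_{X_0}$ and $\lambda^\subl \in \Omega^\subl_{X_0}$. My plan is to combine Corollary~\ref{jordan_additivity_reformulation}, which controls $\jordan(gh)$ up to the action of $W_{X_0}$, with the Cartan-projection estimate of Proposition~\ref{contraction_strength_and_cartan_projection}, exploiting the crucial fact that $W_{X_0}$ preserves each piece of the partition $\Omega = \Omega^\subl_{X_0} \sqcup \Omega^\sube_{X_0} \sqcup \Omega^\subg_{X_0}$ (since any $w \in W_{X_0}$ fixes $X_0$, and the sign of $\lambda(w X_0) = (w^{-1}\lambda)(X_0)$ then equals that of $\lambda(X_0)$).

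The first step is to reduce to the linear setting. By Proposition~\ref{affine_to_intrinsic_linear}~(i)--(ii), the pair $(\ell(g), \ell(h))$ is $C'$-non-degenerate in the sense of Definition~\ref{linear-C-non-deg-definition}, with linear contraction strengths $\vec{s}_{X_0}(\ell(g)), \vec{s}_{X_0}(\ell(h))$ controlled by $s_{X_0}(g), s_{X_0}(h)$. Choosing $s_{\ref{regular_product_qualitative}}(C)$ small enough, Corollary~\ref{jordan_additivity_reformulation} applies to $(\ell(g), \ell(h))$ and yields a convex combination
\[\jordan(gh) = \sum_i t_i\, w_i \cdot \cartan'(g,h),\qquad w_i \in W_{X_0},\]
with $\|\cartan'(g,h) - \cartan(\ell(g)) - \cartan(\ell(h))\| \leq k_{\ref{jordan_additivity_reformulation}}(C')$. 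In parallel, Proposition~\ref{contraction_strength_and_cartan_projection} applied to $\ell(g)$ and $\ell(h)$, combined with Lemma~\ref{affine_to_vector_weak} to replace $s_{X_0}(\ell(\bullet))$ by $s_{X_0}(\bullet)$, gives
\[(\lambda^\subge - \lambda^\subl)\bigl(\cartan(\ell(g)) + \cartan(\ell(h))\bigr) \;\geq\; -2\log s_{\ref{regular_product_qualitative}}(C) - 2k_{\ref{contraction_strength_and_cartan_projection}}(C')\]
for every $\lambda^\subge \in \Omega^\subge_{X_0}$, $\lambda^\subl \in \Omega^\subl_{X_0}$.

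Transferring this bound from $\cartan(\ell(g)) + \cartan(\ell(h))$ to $\cartan'(g,h)$ costs at most $K_\rho\, k_{\ref{jordan_additivity_reformulation}}(C')$, where $K_\rho$ is a bound on $\|\lambda^\subge - \lambda^\subl\|$ over the finite set $\Omega \times \Omega$. Now I invoke the $W_{X_0}$-invariance observation: since each $w_i \in W_{X_0}$ preserves both $\Omega^\subge_{X_0}$ and $\Omega^\subl_{X_0}$, I can compute
\[(\lambda^\subge - \lambda^\subl)(\jordan(gh)) = \sum_i t_i\, \bigl((w_i^{-1}\lambda^\subge) - (w_i^{-1}\lambda^\subl)\bigr)(\cartan'(g,h)),\]
and each summand enjoys the same lower bound just derived, applied now to the permuted pair $(w_i^{-1}\lambda^\subge, w_i^{-1}\lambda^\subl) \in \Omega^\subge_{X_0} \times \Omega^\subl_{X_0}$. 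Since the $t_i$ form a probability vector, the same bound propagates to the full sum. Taking $s_{\ref{regular_product_qualitative}}(C)$ small enough to make this lower bound strictly positive concludes the proof. I do not expect a serious obstacle here: the nontrivial work has already been carried out in Corollary~\ref{jordan_additivity_reformulation} and Proposition~\ref{contraction_strength_and_cartan_projection}, and the only real point is to notice that the ``$\jordan$ lives in $\Conv(W_{X_0} \cdot \bullet)$'' conclusion of the former is exactly compatible with the $W_{X_0}$-stable definition of asymptotic contraction along $X_0$.
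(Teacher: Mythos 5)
Your proposal is correct and follows essentially the same route as the paper's proof: reduce to the linear parts via Lemma~\ref{affine_to_vector_weak} and Proposition~\ref{affine_to_intrinsic_linear}, combine Corollary~\ref{jordan_additivity_reformulation} with the Cartan-projection estimate of Proposition~\ref{contraction_strength_and_cartan_projection}, and conclude using the $W_{X_0}$-stability of $\Omega^\subge_{X_0}$ and $\Omega^\subl_{X_0}$ together with convexity of the hull $\Conv(W_{X_0}\cdot \cartan'(g,h))$. The only cosmetic difference is that you justify the $W_{X_0}$-invariance directly from $wX_0 = X_0$ rather than quoting Proposition~\ref{stabg=stabge}, which is equally valid.
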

\begin{proof}
The main idea is as follows: the hypotheses can be reformulated as constraints on~$\cartan(g)$ and~$\cartan(h)$, while the desired conclusion is a condition on~$\jordan(gh)$. We then use Corollary~\ref{jordan_additivity_reformulation} to link the latter with the former.

Let $C \geq 1$, and let $(g, h)$ be a $C$-non-degenerate pair of $\rho$-regular maps in $G \ltimes V$, such that
\[s_{X_0}(g) \leq s_{\ref{regular_product_qualitative}}(C) \quad\text{ and }\quad s_{X_0}(h) \leq s_{\ref{regular_product_qualitative}}(C)\]
for some constant $s_{\ref{regular_product_qualitative}}(C)$ to be specified later.

The first thing to note is that since the property of being $\rho$-regular depends only on the linear part, Lemma~\ref{affine_to_vector_weak} and Proposition~\ref{affine_to_intrinsic_linear}~(i) reduce the problem to the case where $g, h \in G$.

Let $\cartan'(g, h)$ be the vector defined in Corollary~\ref{jordan_additivity_reformulation}. Then we deduce from~\eqref{eq:ct'gh_ctg_cth} that for every pair of restricted weights $\lambda^\subge \in \Omega^\subge_{X_0}$ and~$\lambda^\subl \in \Omega^\subl_{X_0}$, we have:
\begin{align}
\label{eq:ljgh-ljg-ljh_estimate}
|(\lambda^\subge - \lambda^\subl)(\cartan'(g, h) - \cartan(g) - \cartan(h))|
  &\;\leq\; \|\lambda^\subge - \lambda^\subl\|\|\cartan'(g, h) - \cartan(g) - \cartan(h)\| \nonumber \\
  &\;\leq\; 2 \left( \max_{\lambda \in \Omega} \|\lambda\| \right) k_{\ref{jordan_additivity_reformulation}}(C).
\end{align}

On the other hand, Proposition~\ref{contraction_strength_and_cartan_projection} gives us
\begin{align}
\max_{\lambda \in \Omega^\subl_{X_0}} \lambda(\cartan(g))
- \min_{\lambda \in \Omega^\subge_{X_0}} \lambda(\cartan(g))
  &\;\leq\; \log s_{X_0}(g) + k_{\ref{contraction_strength_and_cartan_projection}}(C).
\end{align}

Taking $s_{\ref{regular_product_qualitative}}(C)$ small enough, we may assume that
\begin{equation}
\label{eq:ljg_estimate}
\forall \lambda^\subge \in \Omega^\subge_{X_0},\; \forall \lambda^\subl \in \Omega^\subl_{X_0},\quad
(\lambda^\subge - \lambda^\subl)(\cartan(g)) > \left( \max_{\lambda \in \Omega} \|\lambda\| \right) k_{\ref{jordan_additivity_reformulation}}(C).
\end{equation}
Of course a similar estimate then holds for~$h$:
\begin{equation}
\label{eq:ljh_estimate}
\forall \lambda^\subge \in \Omega^\subge_{X_0},\; \forall \lambda^\subl \in \Omega^\subl_{X_0},\quad
(\lambda^\subge - \lambda^\subl)(\cartan(h)) > \left( \max_{\lambda \in \Omega} \|\lambda\| \right) k_{\ref{jordan_additivity_reformulation}}(C).
\end{equation}

Adding together the three estimates \eqref{eq:ljgh-ljg-ljh_estimate}, \eqref{eq:ljg_estimate} and \eqref{eq:ljh_estimate}, we find that for every such pair, we have
\begin{equation}
\label{eq:lambda_cartan_gh_less0}
(\lambda^\subge - \lambda^\subl)(\cartan'(g, h)) > 0.
\end{equation}

On the other hand, we have~\eqref{eq:cartan_in_jordan_convex_hull} which says that
\[\jordan(gh) \in \Conv(W_{X_0} \cdot \cartan'(g, h)).\]
Now take any~$w \in W_{X_0}$. From Proposition~\ref{stabg=stabge}, it then follows that $w^{-1}$~stabilizes both~$\Omega^\subge_{X_0}$ and~$\Omega^\subl_{X_0}$; hence we still have
\begin{align}
(\lambda^\subge - \lambda^\subl)(w(\cartan'(g, h)))
&= (w^{-1}(\lambda^\subge) - w^{-1}(\lambda^\subl))(\cartan'(g, h)) \nonumber \\
&> 0.
\end{align}
Thus the difference $\lambda^\subge - \lambda^\subl$ takes positive values on every point of the orbit~$W_{X_0} \cdot \cartan'(g, h)$; hence it also takes positive values on every point of its convex hull. In particular, we have
\begin{equation}
(\lambda^\subge - \lambda^\subl)(\jordan(gh)) > 0.
\end{equation}
We conclude that $gh$ is indeed asymptotically contracting along~$X_0$.
\end{proof}

%
%

We now establish the correspondence between affine and proximal properties. We introduce the integers:
\begin{align}
p &:= \dim A^\subge_0 = \dim V^\subge_0 + 1; \nonumber \\
q &:= \dim V^\subl_0; \\
d &:= \dim A = \dim V + 1 = q+p. \nonumber
\end{align}
For every $g \in G \ltimes V$, we may define its exterior power $\ext^p g: \ext^p A \to \ext^p A$. The Euclidean structure of $A$ induces in a canonical way a Euclidean structure on $\ext^p A$.

\begin{lemma} \mbox{ }
\label{regular_to_proximal}
\begin{hypothenum}
\item Let $g \in G \ltimes V$ be a map asymptotically contracting along~$X_0$. Then $\ext^{p} g$ is proximal, and the attracting (resp. repelling) space of $\ext^{p} g$ depends on nothing but $A^{\subgap}_{g}$ (resp.~$V^{\subll}_{g}$):
\[\begin{cases}
E^s_{\ext^{p} g} = \ext^{p} A^{\subgap}_{g} \\
E^u_{\ext^{p} g} = \setsuch{x \in \ext^{p} A}
                                        {x \wedge \ext^{q} V^{\subll}_{g} = 0}.
\end{cases}\]
\item For every $C \geq 1$, whenever $(g_1, g_2)$ is a $C$-non-degenerate pair of $\rho$-regular maps that are also asymptotically contracting along~$X_0$, $(\ext^p g_1, \ext^p g_2)$ is a $C^p$-non-degenerate pair of proximal maps.
\item For every $C \geq 1$, there is a constant~$s_{\ref{regular_to_proximal}}(C) < 1$ with the following property. For every $C$-non-degenerate $\rho$-regular map $g \in G \ltimes V$ that is also asymptotically contracting along~$X_0$, we have
\begin{equation}
s_{X_0}(g) \lesssim_C \tilde{s}(\ext^{p} g).
\end{equation}
If in addition $s_{X_0}(g) \leq s_{\ref{regular_to_proximal}}(C)$, we have
\begin{equation}
s_{X_0}(g) \asymp_C \tilde{s}(\ext^{p} g).
\end{equation}
(Recall that $\tilde{s}$ and~$s_{X_0}$ stand respectively for the proximal and affine contraction strengths: see Definitions~\ref{s_definition} and~\ref{s_tilde_definition}.)
\item For any two $p$-dimensional subspaces $A_1$ and $A_2$ of $A$, we have
\[\alpha^\mathrm{Haus}(A_1, A_2)
\;\asymp\; \alpha \left( \ext^{p} A_1,\; \ext^{p} A_2 \right).\]
\end{hypothenum}
\end{lemma}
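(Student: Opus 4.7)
My plan is to systematically reduce each of the four parts to a calculation in \emph{canonical form}: after conjugating~$g$ by a canonizing map~$\phi$, its ideal dynamical spaces coincide with the reference dynamical spaces, which are mutually $B$-orthogonal by our choice of Euclidean structure on~$A$ (Lemma~\ref{K-invariant} combined with the conventions of Section~\ref{sec:regular_metric}). For parts~(ii) and~(iii) the canonizing map will be chosen $C$-bounded, so that the resulting distortion costs only a factor $\asymp_C$ by Lemma~\ref{bounded_norm_is_bilipschitz} (together with the submultiplicativity $\|\ext^p \phi\| \leq \|\phi\|^p$).

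For part~(i), after passing to canonical form the eigenvalues of~$g$ on~$A$ are exactly $\{e^{\lambda(\jordan(g))} : \lambda \in \Omega\}$ (by Proposition~\ref{eigenvalues_and_singular_values_characterization}~(i) applied to~$\ell(g)$) together with the single affine eigenvalue~$1$. Asymptotic contraction along~$X_0$ combined with $0 \in \Omega^\subge_{X_0}$ separates the $p-1$ eigenvalues indexed by~$\Omega^\subge_{X_0}$ together with the affine~$1$ from the $q$ eigenvalues indexed by~$\Omega^\subl_{X_0}$ by a strict gap. Hence the $p$ largest eigenvalues of~$g$ are precisely those on~$A^\subge_0$, so $\ext^p g$ is proximal with attracting line $\ext^p A^\subge_0 = \ext^p A^\subgap_g$; for the repelling hyperplane, in the eigenbasis of~$g$ adapted to $A^\subge_0 \oplus V^\subl_0$, the wedge $(e_{i_1} \wedge \cdots \wedge e_{i_p}) \wedge (e_{p+1} \wedge \cdots \wedge e_d)$ (with the second factor a generator of $\ext^q V^\subl_0$) vanishes precisely when $\{i_1, \ldots, i_p\} \neq \{1, \ldots, p\}$, which describes the complement of the attracting line.

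For part~(ii), let $\phi_{ij}$ be an optimal canonizing map for the pair $(A^\subgap_{g_i}, A^\sublap_{g_j})$, so $\|\phi_{ij}^{\pm 1}\| \leq C$. The uniqueness statement of Proposition~\ref{uniquely_determined}~(iii), applied to any canonizing map of~$g_j$ that agrees with $\phi_{ij}^{-1}$ on~$A^\suble_0$, ensures that $\phi_{ij}$ also sends $V^\subll_{g_j}$ to $V^\subl_0$. Hence $\ext^p \phi_{ij}$ sends the pair $(E^s_{\ext^p g_i}, E^u_{\ext^p g_j})$ to $(\ext^p A^\subge_0, \{x : x \wedge \ext^q V^\subl_0 = 0\})$; this latter pair is orthogonal in $\ext^p A$ because $A^\subge_0 \perp V^\subl_0$ makes the corresponding basis $p$-vectors orthogonal, and $\|(\ext^p \phi_{ij})^{\pm 1}\| \leq C^p$, witnessing $C^p$-non-degeneracy in the proximal sense.

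For part~(iii), in canonical form the invariant decomposition $\ext^p A = \bigoplus_{p_1+p_2=p} \ext^{p_1} A^\subge_0 \wedge \ext^{p_2} V^\subl_0$ is orthogonal. Therefore $\|\restr{\ext^p g}{E^s_{\ext^p g}}\| = |\det(\restr{g}{A^\subge_0})| = \prod_{i=1}^p \sigma_i(\restr{g}{A^\subge_0})$ (via $\prod \sigma_i = |\det|$), while each summand with $p_2 \geq 1$ has norm $\prod_{i \leq p_1} \sigma_i(\restr{g}{A^\subge_0}) \cdot \prod_{j \leq p_2} \sigma_j(\restr{g}{V^\subl_0})$. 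The $(p-1,1)$ summand immediately yields the lower bound $\tilde{s}(\ext^p g) \geq \sigma_1(\restr{g}{V^\subl_0})/\sigma_p(\restr{g}{A^\subge_0}) = s_{X_0}(g)$, giving the unconditional estimate $s_{X_0}(g) \lesssim_C \tilde{s}(\ext^p g)$ after un-canonizing. For the reverse inequality the $(p-1,1)$ summand must dominate all others; the required inequalities $\sigma_{p_2+1}(\restr{g}{V^\subl_0}) \leq \sigma_{p_1}(\restr{g}{A^\subge_0})$ reduce in canonical form to $s_{X_0}(g) \leq 1$, and the smallness hypothesis $s_{X_0}(g) \leq s_{\ref{regular_to_proximal}}(C)$ is chosen precisely to preserve this after the $O(C)$ distortion from the canonizing map. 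Finally, part~(iv) is the classical relationship between the principal angles $0 \leq \theta_1 \leq \cdots \leq \theta_p$ of $A_1$ and $A_2$: the Hausdorff angle equals $\theta_p$, whereas $\cos \alpha(\ext^p A_1, \ext^p A_2) = \prod_i \cos \theta_i$ for orthonormal bases chosen in principal position; the sandwich $\cos \theta_p \geq \prod_i \cos \theta_i \geq (\cos \theta_p)^p$ gives $\theta_p \leq \arccos(\prod_i \cos \theta_i) \leq \arccos((\cos \theta_p)^p)$, and $\arccos((\cos \theta)^p)/\theta$ is uniformly bounded on $(0, \pi/2]$. The main obstacle is part~(iii): carefully verifying that the $(p-1,1)$ summand retains its dominance after conjugation by a $C$-bounded map is what forces the quantitative smallness hypothesis.
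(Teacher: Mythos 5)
Your proof is correct and follows essentially the same route as the paper's: an eigenvalue-modulus gap plus an adapted basis for (i); the optimal canonizing map of the pair $(A^{\subgap}_{g_i}, A^{\sublap}_{g_j})$, orthogonality of the reference spaces and $\|\ext^p \phi\| \leq \|\phi\|^p$ for (ii); reduction to canonical form and comparison of the singular values of $\ext^p g'$ on the orthogonal invariant summands, with the $(p-1,1)$ summand giving the unconditional lower bound and dominating once $s_{X_0}(g') \leq 1$ (which is exactly why the smallness threshold must absorb the $C$-bounded conjugation) for (iii); and principal angles for (iv), which the paper simply cites from~\cite{Smi14}. The only cosmetic slips --- ``eigenvalues'' should be ``moduli of eigenvalues'', and the passage to canonical form in (i) tacitly uses the $\rho$-regularity under which the ideal dynamical spaces are defined, exactly as the paper's own statement does --- do not affect the argument.
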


This is similar to Lemma~3.8 in~\cite{Smi14} and to Lemma~7.2 in~\cite{Smi16}; but now, there is the additional complication of needing to distinguish between $\rho$-regularity and asymptotic contraction.

The proofs of points~(ii) and~(iii) however still remain very similar to the corresponding proofs in~\cite{Smi14}. We chose to reproduce them here, in particular in order to correct a small mistake: in~\cite{Smi14} we erroneously claimed that we could take $s_{\ref{regular_to_proximal}}(C) = 1$, which stemmed from a confusion between $g$ and its canonized version~$g'$.

\begin{proof}~
\begin{hypothenum}
\item Let $g \in G \ltimes V$ be a map asymptotically contracting along~$X_0$. From Proposition~\ref{eigenvalues_and_singular_values_characterization} (as already noted in the proof of Proposition~\ref{contraction_strength_limit}), it follows that
\begin{align}
r \left( \restr{g}{V^\subgg_g} \right)
  &< r \left( \restr{g^{-1}}{A^\subgap_g} \right)^{-1},
\end{align}
\ie every eigenvalue of~$\restr{g}{V^\subgg_g}$ is smaller in modulus than every eigenvalue of~$\restr{g}{A^\subgap_g}$. Let $\lambda_{1}, \ldots, \lambda_{d}$ be the eigenvalues of~$g$ (acting on $A$), counted with multiplicity and ordered by nondecreasing modulus; we then have
\begin{equation}
|\lambda_{q}| = r \left( \restr{g}{V^\subgg_g} \right) < r \left( \restr{g^{-1}}{A^\subgap_g} \right)^{-1} = |\lambda_{q+1}|.
\end{equation}
On the other hand, we know that the eigenvalues of~$\ext^{p} g$ counted with multiplicity are exactly the products of the form $\lambda_{i_1}\cdots\lambda_{i_{p}}$, where $1 \leq i_1 < \cdots < i_{p} \leq d$. As the two largest of them (by modulus) are $\lambda_{q+1} \cdots \lambda_{d}$ and $\lambda_{q}\lambda_{q+2} \cdots \lambda_{d}$, it follows that $\ext^{p} g$ is proximal.

%
%
As for the expression of $E^s$ and $E^u$, it follows immediately by considering a basis that trigonalizes~$g$.

\item Take any pair $(i, j) \in \{1, 2\}^2$. Let $\phi$ be an optimal canonizing map for the pair $(A^\subgap_{g_i}, A^\sublap_{g_j})$. Then we have $\phi(A^\subgap_{g_i}) = A^\subge_0$ and (by Proposition~\ref{uniquely_determined}) $\phi(V^\subll_{g_j}) = V^\subl_0$. In the Euclidean structure we have chosen, $A^\subge_0$ is orthogonal to $V^\subl_0$; hence $\ext^p A^\subge_0$ is orthogonal to the hyperplane~$\setsuch{x \in \ext^{p} A}{x \wedge \ext^{q} V^\subl_0 = 0}$. By the previous point, it follows that $\ext^p \phi$ is a canonizing map for the pair $(E^s_{\ext^p g_i}, E^u_{\ext^p g_j})$. As $\|\ext^p \phi\| \leq \|\phi\|^p$ and similarly for $\phi^{-1}$, the conclusion follows.

\item Let $C \geq 1$, and let $g \in G \ltimes V$ be a $C$-non-degenerate $\rho$-regular map that is also asymptotically contracting along~$X_0$. Let $\phi$ be an optimal canonizing map for $g$, and let $g' = \phi g \phi^{-1}$. Then it is clear that $s_{X_0}(g') \asymp_C s_{X_0}(g)$ and $\tilde{s}(\ext^p g') \asymp_C \tilde{s}(\ext^p g)$; so it is sufficient to prove the statement for~$g'$.

Let $s_1 \leq \ldots \leq s_{p}$ (resp. $s'_1 \leq \ldots \leq s'_q$) be the singular values of $g'$ restricted to~$A^{\subgap}_{g'} = A^\subge_0$ (resp. to~$V^{\subll}_{g} = V^\subl_0$), so that $\left\| \restr{g'^{-1}}{A^\subge_0} \right\| = s_1^{-1}$ and $\left\| \restr{g'}{V^{\subl}_0} \right\| = s'_q$. Since the spaces $A^{\subge}_0$ and $V^{\subl}_0$ are stable by~$g'$ and orthogonal, we get that the singular values of $g'$ on the whole space $A$ are
\[s'_1, \ldots, s'_q, s_1, \ldots, s_{p}\]
(note however that unless~$s_{X_0}(g') \leq 1$, this list may fail to be sorted in nondecreasing order.) On the other hand, we know that the singular values of $\ext^{p} g'$ are products of $p$ distinct singular values of $g'$. Since $E^s_{\ext^{p} g'}$ is orthogonal to $E^u_{\ext^{p} g'}$, we may once again analyze the singular values separately for each subspace. We know that the singular value corresponding to $E^s$ is equal to $s_1 \cdots s_{p}$; we deduce that $\left\| \restr{\ext^{p} g}{E^u} \right\|$ is equal to the maximum of the remaining singular values. In particular it is larger than or equal to $s'_q \cdot s_2 \cdots s_{p}$. On the other hand, if $\lambda$ is the largest eigenvalue of $\ext^{p} g'$, then we have
\[|\lambda| = \left| \lambda_{q+1} \cdots \lambda_{d} \right|
            = \left| \det (\restr{g'}{A^{\subge}_0}) \right|
            = s_1 \cdots s_p\]
(where $\lambda_1, \ldots, \lambda_{d}$ are the eigenvalues of~$g'$ or equivalently of~$g$, sorted by nondecreasing modulus). The second equality holds because $g$, hence~$g'$, is asymptotically contracting along~$X_0$, so that its \emph{eigen}values \emph{are} sorted in the ``correct'' order. It follows that:
\begin{equation}
\label{eq:s'_lower_bound}
\tilde{s}(\ext^{p} g')
   = \frac{\left\| \restr{\ext^{p} g'}{E^u_{\ext^{p} g'}} \right\|}
          {|\lambda|}
   \geq \frac{s'_q \cdot s_2 \cdots s_{p}}{s_1 \cdots s_{p}}
   = s'_q s_1^{-1}
   = s_{X_0}(g'),
\end{equation}
which is the first estimate we were looking for.

Now if we take $s_{\ref{regular_to_proximal}}(C)$ small enough, we may suppose that $s_{X_0}(g') \leq 1$. Then we have $s'_q \leq s_1$, which means that the singular values of $\ext^{p} g'$ are indeed sorted in the ``correct'' order. Hence $s'_q \cdot s_2 \cdots s_{p}$ is actually the largest singular value of $\restr{\ext^{p} g'}{E^u}$, and the inequality becomes an equality: $\tilde{s}(\ext^{p} g') = s_{X_0}(g')$. The second estimate follows.

\item See Lemma~3.8~(iv) in~\cite{Smi14}. \qedhere
\end{hypothenum}
\end{proof}

We also need the following technical lemma, which generalizes Lemma~3.9 in~\cite{Smi14} and Lemma~7.3 in~\cite{Smi16}:

\begin{lemma}
\label{continuity_of_non_degeneracy}
There is a constant $\eps > 0$ with the following property. Let $A_1, A_2$ be any two affine parabolic spaces such that
\[\begin{cases}
\alpha^\mathrm{Haus}(A_1, A^\subge_0) \leq \eps \\
\alpha^\mathrm{Haus}(A_2, A^\suble_0) \leq \eps.
\end{cases}\]
Then they form a $2$-non-degenerate pair.
\end{lemma}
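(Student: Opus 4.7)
The plan is to prove this by a continuity/openness argument for the orbit map $\phi \mapsto (\phi^{-1}(A^\subge_0), \phi^{-1}(A^\suble_0))$. The starting observation is that the reference pair $(A^\subge_0, A^\suble_0)$ is itself $1$-non-degenerate, since the identity of $G \ltimes V$ is a valid canonizing map for it, with $\|\mathrm{Id}\| = \|\mathrm{Id}^{-1}\| = 1$. The lemma then asserts that ``being $C$-non-degenerate'' depends in a sufficiently upper-semicontinuous way on the pair of spaces; we will upgrade $1$-non-degeneracy at the reference pair to $2$-non-degeneracy in a neighborhood.

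First, consider the map
\[\Psi: G \ltimes V \longlongrightarrow \mathcal{T},\qquad \phi \longmapsto \left( \phi^{-1}(A^\subge_0),\; \phi^{-1}(A^\suble_0) \right),\]
where $\mathcal{T}$ denotes the set of transverse pairs of affine parabolic spaces, topologized by the Hausdorff angular distance $\alpha^\mathrm{Haus}$ on each coordinate. The map $\Psi$ is surjective by Proposition~\ref{pair_transitivity}, and continuous because each $\phi \in G \ltimes V$ acts on $A$ as a linear map that depends smoothly on $\phi$, so its image of a fixed subspace varies continuously (for the Hausdorff angular distance). Moreover, by the stabilizer computation in \eqref{eq:affine_stabilizers}, $\Psi$ factors through a bijection $G \ltimes V / L_{X_0} \ltimes V^\sube_0 \to \mathcal{T}$, which is a smooth diffeomorphism onto $\mathcal{T}$ (standard homogeneous-space argument). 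In particular $\Psi$ is an open map.

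Now observe that the set
\[\mathcal{U} := \setsuch{\phi \in G \ltimes V}{\|\phi\| < 2 \text{ and } \|\phi^{-1}\| < 2}\]
is an open neighborhood of $\mathrm{Id}$ in $G \ltimes V$, since the operator norm on $\GL(A)$ is continuous and attains the value $1$ at $\mathrm{Id}$. By openness of $\Psi$, the image $\Psi(\mathcal{U})$ is an open neighborhood in $\mathcal{T}$ of the reference pair $\Psi(\mathrm{Id}) = (A^\subge_0, A^\suble_0)$. In particular, there exists $\eps > 0$ such that every pair $(A_1, A_2)$ with $\alpha^\mathrm{Haus}(A_1, A^\subge_0) \leq \eps$ and $\alpha^\mathrm{Haus}(A_2, A^\suble_0) \leq \eps$ lies inside $\Psi(\mathcal{U})$. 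For any such pair we pick $\phi \in \mathcal{U}$ with $\Psi(\phi) = (A_1, A_2)$; then $\phi$ is a canonizing map for $(A_1, A_2)$ with $\|\phi\|, \|\phi^{-1}\| \leq 2$, which is exactly $2$-non-degeneracy.

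The only genuinely nontrivial point is the openness of $\Psi$, which amounts to the standard fact that a smooth surjection onto a manifold whose fibers are exactly the cosets of a Lie subgroup descends to a diffeomorphism from the quotient; given that we already have the stabilizer identification~\eqref{eq:affine_stabilizers} and the transitivity statement of Proposition~\ref{pair_transitivity}, no additional work is required. The compactness of the maximal compact subgroup of $L_{X_0} \ltimes V^\sube_0$ modulo the obvious non-compact directions does not enter here, because we only need a local statement at the identity.
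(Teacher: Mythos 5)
Your overall strategy (reduce everything to the transitivity statement of Proposition~\ref{pair_transitivity} plus the stabilizer computation~\eqref{eq:affine_stabilizers}, and win by openness of the orbit map at the identity) does lead to the statement, but the one step you yourself single out as ``the only genuinely nontrivial point'' is exactly where your justification breaks down as written. The fact you invoke --- that a smooth surjection whose fibers are the cosets of a closed subgroup descends to a \emph{diffeomorphism onto its image with the subspace topology} --- is false in that generality: the induced map $H\backslash(G\ltimes V)\to\mathcal{T}$ (note also that with your convention $\Psi(\phi)=(\phi^{-1}(A^\subge_0),\phi^{-1}(A^\suble_0))$ the fibers are cosets $H\phi$, not $\phi H$) is a priori only an injective equivariant immersion into the product of Grassmannians, and injective immersions of \emph{non-compact} homogeneous spaces into compact manifolds need not be embeddings (irrational lines in a torus being the standard counterexample). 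This matters here precisely because, unlike the situation of Proposition~\ref{GL_GPGP} where the transverse pairs form an \emph{open} $G$-orbit in a compact product of flag varieties, your $\mathcal{T}$ is a positive-codimension, non-compact orbit inside the product of Grassmannians of~$A$, so openness of $\Psi$ onto its image in the $\alpha^{\mathrm{Haus}}$-topology is genuinely the content of the lemma and cannot be waved through. To close the gap you must either (a) use that the action of $G\ltimes V$ on the Grassmannians is real-algebraic, so that the orbit of $(A^\subge_0,A^\suble_0)$ is locally closed in the Euclidean topology and the orbit map is therefore open onto its image, or (b) produce an explicit local section near the reference pair, e.g.\ parametrize the affine parabolic spaces $\alpha^{\mathrm{Haus}}$-close to $A^\subge_0$ (resp.\ $A^\suble_0$) as images of $A^\subge_0$ (resp.\ $A^\suble_0$) under elements of $\exp(\mathfrak{n}^-_{X_0})\ltimes V^\subl_0$ (resp.\ the opposite group) close to the identity, a graph/big-cell argument which directly yields a canonizing map of norm close to~$1$. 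The paper itself takes the second, hands-on route via Lemma~3.9 of~\cite{Smi14}, which avoids the homogeneous-space machinery altogether.

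Two smaller points. First, from ``$\Psi(\mathcal{U})$ is open in $\mathcal{T}$'' you only get that $\eps$-close pairs \emph{which are transverse} lie in $\Psi(\mathcal{U})$; you should add the (easy) observation that any pair of affine parabolic spaces $\eps$-close to $(A^\subge_0,A^\suble_0)$ is automatically transverse, since $\dim(A_1+A_2)$ is lower semicontinuous and the reference pair spans~$A$. Second, once you have some $\phi\in\mathcal{U}$ canonizing $(A_1,A_2)$, the passage to $2$-non-degeneracy is fine, because the optimal canonizing map minimizes $\max(\|\phi\|,\|\phi^{-1}\|)$ and hence does at least as well.
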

(Of course the constant~2 is arbitrary; we could replace it by any number larger than~1.)
\begin{proof}
The proof is exactly the same as the proof of Lemma~3.9 in~\cite{Smi14}, \emph{mutatis mutandis}.
\end{proof}

\begin{proposition}
\label{regular_product}
For every $C \geq 1$, there is a positive constant $s_{\ref{regular_product}}(C) < 1$ with the following property. Take any $C$-non-degenerate pair $(g, h)$ of $\rho$-regular maps in~$G \ltimes V$; suppose that we have $s_{X_0}(g^{\pm 1}) \leq s_{\ref{regular_product}}(C)$ and $s_{X_0}(h^{\pm 1}) \leq s_{\ref{regular_product}}(C)$. Then $gh$ is still $\rho$-regular, $2C$-non-degenerate, and we have:
\begin{hypothenum}
\item $\begin{cases}
\alpha^\mathrm{Haus} \left(V^{\subgap}_{gh},\; V^{\subgap}_{g} \right) \lesssim_C \vec{s}_{X_0}(\ell(g)) \vspace{1mm} \\
\alpha^\mathrm{Haus} \left(V^{\sublap}_{gh},\; V^{\sublap}_{h} \right) \lesssim_C \vec{s}_{X_0}(\ell(h))
\end{cases}$;
\item $\begin{cases}
\alpha^\mathrm{Haus} \left(A^{\subgap}_{gh},\; A^{\subgap}_{g} \right) \lesssim_C s_{X_0}(g) \vspace{1mm} \\
\alpha^\mathrm{Haus} \left(A^{\sublap}_{gh},\; A^{\sublap}_{h} \right) \lesssim_C s_{X_0}(h^{-1})
\end{cases}$;
\item $s_{X_0}(gh) \lesssim_C s_{X_0}(g)s_{X_0}(h)$.
\end{hypothenum}
\end{proposition}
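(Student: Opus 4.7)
The plan is to reduce all conclusions to the linear version (Proposition~\ref{intrinsic_regular_product}) and the proximal version (Proposition~\ref{proximal_product}) of Schema~\ref{proposition_template}, using the exterior power trick for the affine spaces and the identification of the linear flags with parabolic subspaces. Before addressing the three points, I would first verify that~$gh$ is $\rho$-regular. Proposition~\ref{regular_product_qualitative} applied to~$(g, h)$ gives that $gh$ is asymptotically contracting along~$X_0$, and the same proposition applied to~$(h^{-1}, g^{-1})$ (which is another $C$-non-degenerate pair of $\rho$-regular maps, with the same contraction bounds on $g^{-1}, h^{-1}$ assumed in the hypothesis) gives that $(gh)^{-1}$ is also asymptotically contracting. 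By Proposition~\ref{asymp_cont_vs_compat} this means~$gh$ is compatible with~$X_0$, hence $\rho$-regular by Proposition~\ref{asymptotically_contracting_to_regular}.(ii). I would of course require $s_{\ref{regular_product}}(C) \leq s_{\ref{regular_product_qualitative}}(C)$.

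For conclusion~(i), I would appeal to the linear theory. By Proposition~\ref{affine_to_intrinsic_linear}.(i) and~(ii), the pair~$(\ell(g), \ell(h))$ is $C'$-non-degenerate for some~$C'$ depending only on~$C$, and $\vec{s}_{X_0}(\ell(g)), \vec{s}_{X_0}(\ell(h)) \lesssim_C s_{\ref{regular_product}}(C)$; choosing $s_{\ref{regular_product}}(C)$ small enough, Proposition~\ref{intrinsic_regular_product} yields the estimates $\delta(y^{X_0, \pm}_{\ell(gh)}, y^{X_0, \pm}_{\ell(g),\, \ell(h)}) \lesssim_{C'} \vec{s}_{X_0}(\ell(g,h))$. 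Since the spaces~$V^{\subgap}_g$ and~$V^{\sublap}_g$ depend only on~$\ell(g)$ (the canonizing maps differ only by translations, which act trivially on vector subspaces of~$V$), and since the bijection between the flag varieties~$G/P^\pm_{X_0}$ and the corresponding spaces of parabolic subspaces of~$V$ (via Remark~\ref{affine_flag_varieties}) is a smooth map between compact manifolds, hence Lipschitz for the Hausdorff distance on the Grassmannian, conclusion~(i) follows.

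For conclusion~(ii), I would apply the exterior power strategy. Lemma~\ref{regular_to_proximal}.(ii) gives that~$(\ext^p g, \ext^p h)$ is a $C^p$-non-degenerate pair of proximal maps, and Lemma~\ref{regular_to_proximal}.(iii) (the full equivalence, valid since $s_{\ref{regular_product}}(C) \leq s_{\ref{regular_to_proximal}}(C)$) gives $\tilde{s}(\ext^p g) \asymp_C s_{X_0}(g)$, similarly for~$h$; choosing $s_{\ref{regular_product}}(C)$ small enough makes these contraction strengths sufficiently small to apply Proposition~\ref{proximal_product}. The conclusions of that proposition give, in particular, $\alpha(E^s_{\ext^p(gh)}, E^s_{\ext^p g}) \lesssim_C \tilde{s}(\ext^p g) \asymp_C s_{X_0}(g)$. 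Combining this with the identification $E^s_{\ext^p g} = \ext^p A^{\subgap}_g$ from Lemma~\ref{regular_to_proximal}.(i) and the comparison in Lemma~\ref{regular_to_proximal}.(iv) between angles in~$\ext^p A$ and Hausdorff distance on $p$-dimensional subspaces of~$A$ yields the attracting half of~(ii); the repelling half follows by applying the same argument to~$(h^{-1}, g^{-1})$.

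Finally, I would establish $2C$-non-degeneracy of~$gh$ together with conclusion~(iii). Let~$\phi$ be an optimal canonizing map for the pair~$(A^{\subgap}_g, A^{\sublap}_h)$, which is $C$-non-degenerate by hypothesis, so $\|\phi^{\pm 1}\| \leq C$ and $\phi(A^{\subgap}_g, A^{\sublap}_h) = (A^{\subge}_0, A^{\suble}_0)$. By Lemma~\ref{bounded_norm_is_bilipschitz}, $\phi$~acts as a $C^2$-bilipschitz map on projective space, so also on Hausdorff distances between subspaces; hence $\alpha^\mathrm{Haus}(\phi(A^{\subgap}_{gh}), A^{\subge}_0) \lesssim_C \alpha^\mathrm{Haus}(A^{\subgap}_{gh}, A^{\subgap}_g) \lesssim_C s_{X_0}(g)$ by~(ii), and similarly on the repelling side. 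Choosing $s_{\ref{regular_product}}(C)$ small enough so that these are below the threshold~$\eps$ of Lemma~\ref{continuity_of_non_degeneracy}, that lemma produces a further canonizing map~$\psi$ with $\|\psi^{\pm 1}\| \leq 2$, so that $\psi\phi$ canonizes $(A^{\subgap}_{gh}, A^{\sublap}_{gh})$ with norm at most~$2C$; hence $gh$ is $2C$-non-degenerate. Knowing this, and that~$gh$ is $\rho$-regular and asymptotically contracting along~$X_0$, the weak direction of Lemma~\ref{regular_to_proximal}.(iii) applied to~$gh$ gives $s_{X_0}(gh) \lesssim_{2C} \tilde{s}(\ext^p(gh))$, which combined with Proposition~\ref{proximal_product}.(ii) and the equivalences above yields $s_{X_0}(gh) \lesssim_C s_{X_0}(g) s_{X_0}(h)$. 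The main obstacle is purely book-keeping: the chain of constants must be chosen so that all the smallness hypotheses required by Propositions~\ref{regular_product_qualitative}, \ref{intrinsic_regular_product}, \ref{proximal_product}, \ref{affine_to_intrinsic_linear}, and Lemmas~\ref{regular_to_proximal} and~\ref{continuity_of_non_degeneracy} are simultaneously satisfied, which forces $s_{\ref{regular_product}}(C)$ to be taken as the minimum of several constants depending on~$C$.
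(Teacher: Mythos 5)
Your proposal is correct and follows essentially the same route as the paper's own proof: $\rho$-regularity of $gh$ via Proposition~\ref{regular_product_qualitative} applied to $(g,h)$ and $(h^{-1},g^{-1})$, point~(i) via Proposition~\ref{affine_to_intrinsic_linear} and the linear Proposition~\ref{intrinsic_regular_product} together with the Lipschitz embedding of the flag variety into the Grassmannian, and points~(ii)--(iii) plus $2C$-non-degeneracy via the exterior-power reduction (Lemma~\ref{regular_to_proximal}), Proposition~\ref{proximal_product}, and Lemma~\ref{continuity_of_non_degeneracy}. The only difference is cosmetic bookkeeping (e.g.\ making explicit the use of Proposition~\ref{contraction_strength_limit}~(i) to get asymptotic contraction of $g^{\pm1}, h^{\pm1}$), which your blanket choice of $s_{\ref{regular_product}}(C)$ covers.
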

Points~(ii) and~(iii) are a generalization of Proposition~3.6 in~\cite{Smi14} and Proposition~7.4 in~\cite{Smi16}; the proof is very similar. Together they give the ``main part'' of the affine version of Schema~\ref{proposition_template}.

As for point~(i), it generalizes Corollary~3.7 in~\cite{Smi14} and Corollary~7.5 in~\cite{Smi16}. But its statement is now stronger, as it involves the \emph{linear} contraction strength $\vec{s}_{X_0}$. As such, it can no longer be obtained as a corollary of the affine version; instead it must be proved independently, using Proposition~\ref{intrinsic_regular_product}.

\begin{remark}
Note that point~(ii) involves~$h^{-1}$, but in point~(i) we have simply written~$h$ instead. In fact in the linear case, the distinction between~$h$ and~$h^{-1}$ becomes irrelevant, as they both have the same \emph{linear} contraction strength $\vec{s}_{X_0}$ by Proposition~\ref{intrinsic_inverse}~\ref{itm:w0_contraction}. However their \emph{affine} contraction strengths $s_{X_0}$ \emph{can} be different.
\end{remark}

\begin{proof}[Proof of Proposition~\ref{regular_product}]
Let us fix some constant $s_{\ref{regular_product}}(C)$, small enough to satisfy all the constraints that will appear in the course of the proof. Let $(g, h)$ be a pair of maps satisfying the hypotheses.

First of all note that if we assume that~$s_{\ref{regular_product}}(C) \leq 1$, then Proposition~\ref{contraction_strength_limit}~(i) ensures that $g$, $h$, $g^{-1}$ and~$h^{-1}$ are all asymptotically contracting along~$X_0$.

\begin{itemize}
\item Let us prove~(i). By Proposition~\ref{asymptotically_contracting_to_regular}~(i), it follows that $g$ and~$h$, hence $\ell(g)$ and~$\ell(h)$, are $X_0$-regular. Proposition~\ref{affine_to_intrinsic_linear}~(i) and~(ii) then implies that the pair $(\ell(g), \ell(h))$ satisfies the hypotheses of Proposition~\ref{intrinsic_regular_product}. Hence we have
\begin{equation}\begin{cases}
\delta \left(y^{X_0, +}_{\ell(gh)},\; y^{X_0, +}_{\ell(g)} \right) \lesssim_C \vec{s}_{X_0}(\ell(g)); \vspace{1mm} \\
\delta \left(y^{X_0, -}_{\ell(gh)},\; y^{X_0, -}_{\ell(h)} \right) \lesssim_C \vec{s}_{X_0}(\ell(h)).
\end{cases}\end{equation}
Now remember (Remark~\ref{affine_flag_varieties}) that the attracting (resp. repelling) flag of a map $g \in G$ carries the same information as its (linear) ideally expanding (resp. contracting) space. Even more precisely, we may deduce from Proposition~\ref{stabg=stabge} that the orbital map from~$G$ to the orbit of~$V^\subge_0$ in the $(p-1)$-dimensional Grassmanian of~$V$ descends to a smooth embedding of the flag variety $G/P^+_{X_0}$. Since the flag variety is compact, the embedding is in particular Lipschitz-continuous. The desired inequalities follow.

\item If we take $s_{\ref{regular_product}}(C) \leq s_{\ref{regular_product_qualitative}}(C)$, then Proposition~\ref{regular_product_qualitative} tells us that $gh$~is asymptotically contracting along~$X_0$. We may also apply Proposition~\ref{regular_product_qualitative} to the pair $(h^{-1}, g^{-1})$: so $(gh)^{-1}$ is also asymptotically contracting along~$X_0$. In other terms, $gh$~is actually compatible with~$X_0$ (by Proposition~\ref{asymp_cont_vs_compat}). By Proposition~\ref{asymptotically_contracting_to_regular}~(ii), we deduce that $gh$~is $\rho$-regular.

The remainder of the proof works exactly like the proof of Proposition~3.6 in~\cite{Smi14} or of Proposition~7.4 in~\cite{Smi16}, namely by applying Proposition~\ref{proximal_product} to the maps $\gamma_1 = \ext^{p} g$ and~$\gamma_2 = \ext^{p} h$. Let us check that $\gamma_1$ and~$\gamma_2$ satisfy the required hypotheses:
\begin{itemize}
\item By Lemma~\ref{regular_to_proximal}~(i), $\gamma_1$ and $\gamma_2$ are proximal.
\item By Lemma~\ref{regular_to_proximal}~(ii), the pair $(\gamma_1, \gamma_2)$ is $C^p$-non-degenerate.
\item If we choose $s_{\ref{regular_product}}(C) \leq s_{\ref{regular_to_proximal}}(C)$, it follows by Lemma~\ref{regular_to_proximal}~(iii) that $\tilde{s}(\gamma_1) \lesssim_C s_{X_0}(g)$ and $\tilde{s}(\gamma_2) \lesssim_C s_{X_0}(h)$. If we choose $s_{\ref{regular_product}}(C)$ sufficiently small, then $\gamma_1$ and $\gamma_2$ are $\tilde{s}_{\ref{proximal_product}}(C^p)$-contracting, \ie sufficiently contracting to apply Proposition~\ref{proximal_product}.
\end{itemize}
Thus we may apply Proposition~\ref{proximal_product}. It remains to deduce from its conclusions the conclusions of Proposition~\ref{regular_product}.
\begin{itemize}
\item From Proposition~\ref{proximal_product}~(i), using Lemma~\ref{regular_to_proximal} (i), (iii) and~(iv), we get
\[\alpha^\mathrm{Haus} \left(A^{\subgap}_{gh},\; A^{\subgap}_{g} \right)
  \lesssim_C s_{X_0}(g),\]
which shows the first line of Proposition~\ref{regular_product}~(ii).
\item By applying Proposition~\ref{proximal_product} to $\gamma_2^{-1} \gamma_1^{-1}$ instead of $\gamma_1 \gamma_2$, we get in the same way the second line of Proposition~\ref{regular_product}~(ii).
\item Let $\phi$ be an optimal canonizing map for the pair $(A^{\subgap}_{g}, A^{\sublap}_{h})$. By hypothesis, we have $\left \|\phi^{\pm 1} \right\| \leq C$. But if we take $s_{\ref{regular_product}}(C)$ sufficiently small, the two inequalities that we have just shown, together with Lemma~\ref{continuity_of_non_degeneracy}, allow us to find a map $\phi'$ with $\|\phi'\| \leq 2$, $\|{\phi'}^{-1}\| \leq 2$ and
\[\phi' \circ \phi (A^{\subgap}_{gh}, A^{\sublap}_{gh}) = (A^\subge_0, A^\suble_0).\]
It follows that the composition map $gh$ is $2C$-non-degenerate.
\item The last inequality, namely Proposition~\ref{regular_product}~(iii), now is deduced from Proposition~\ref{proximal_product}~(ii) by using Lemma~\ref{regular_to_proximal}~(iii). \qedhere
\end{itemize}
\end{itemize}
\end{proof}

\section{Additivity of Margulis invariants}
\label{sec:additivity}

The goal of this section is to prove Propositions \ref{invariant_inverse} and~\ref{invariant_additivity_only}, which explain how the Margulis invariant behaves under group operations (respectively inverse and composition). The latter gives the ``asymptotic dynamics'' part in the affine version of Schema~\ref{proposition_template}. They generalize the respective parts (i) and~(ii) of Proposition~8.1 in~\cite{Smi16}, or of Proposition~4.1 in~\cite{Smi14}. These two propositions are the key ingredients in the proof of the Main Theorem.

The two subsections are dedicated to these two propositions respectively.

\subsection{Margulis invariants of inverses}
\label{sec:invariant_inverse}

The following proposition generalizes Proposition~4.1~(i) in~\cite{Smi14} and Proposition~8.1~(i) in~\cite{Smi16}. The proof is similar, and fairly straightforward. Compare it also with the results of Section~\ref{sec:group_inverse}.

\begin{proposition}
\label{invariant_inverse}
For every $\rho$-regular map~$g \in G \ltimes V$, we have
\[M(g^{-1}) = -w_0(M(g)).\]
\end{proposition}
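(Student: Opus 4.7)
The plan is to unwind the definition of the Margulis invariant on both sides and to exploit the parallel between how passing to the inverse acts on the linear canonizing map (treated in Section~\ref{sec:group_inverse}) and how it acts on the affine ideally neutral space. The whole computation essentially reduces to the identity $\jordan(g^{-1}) = -w_0(\jordan(g))$ together with the fact that the reference affine neutral space $A^\sube_0$ is stable under a representative~$\tilde w_0 \in K$ of~$w_0$ (because $X_0$ is generically symmetric, so $w_0$ preserves $\Omega^\sube_{X_0}=\Omega^{w_0}$).

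\medskip

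First I will pick any canonizing map $\phi \in G$ for $\ell(g)$, in the sense of Definition~\ref{margulis_invariant}, and check that $\tilde w_0 \phi$ is a canonizing map for $\ell(g^{-1})$. Indeed, conjugating $\ell(g^{-1})$ by $\phi$ produces a hyperbolic part $\exp(-\jordan(g))$; a further conjugation by $\tilde w_0$ turns it into $\exp(-w_0\jordan(g)) = \exp(\jordan(g^{-1}))$. The same observation shows that the ideal dynamical spaces of $g^{-1}$ are obtained from those of $g$ by swapping the roles of expanding/contracting, and that in particular
\[A^{\subap}_{g^{-1}} \;=\; (\tilde w_0 \phi)^{-1}(A^\sube_0) \;=\; \phi^{-1}\!\bigl(\tilde w_0^{-1}(A^\sube_0)\bigr) \;=\; \phi^{-1}(A^\sube_0) \;=\; A^{\subap}_{g}.\]

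\medskip

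Next I will fix a point $x \in A^{\subap}_g \cap V_{\Aff}$ and set $y := g^{-1}(x)$. Since $g$ stabilises $A^{\subap}_g$ (Lemma~\ref{quasi-translation} applied to the canonical form of $g$), we have $y \in A^{\subap}_g \cap V_{\Aff}$ as well. Using $y$ as base point for $M(g)$ and $x$ as base point for $M(g^{-1})$, the identity $g(y)-y = x - g^{-1}(x)$ gives immediately
\[M(g) \;=\; \pi_\transl\bigl(\phi(x - g^{-1}(x))\bigr), \qquad M(g^{-1}) \;=\; -\,\pi_\transl\bigl(\tilde w_0\,\phi(x - g^{-1}(x))\bigr).\]
The vector $\phi(x-g^{-1}(x))$ lies in $V^\sube_0$ because $x,g^{-1}(x)$ both lie in the affine subspace $A^{\subap}_g\cap V_{\Aff}$ of direction $V^{\subap}_g$, and $\phi(V^{\subap}_g) = V^\sube_0$.

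\medskip

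The final step is to commute $\tilde w_0$ past $\pi_\transl$. This is where one must check that the formula makes sense: one needs $w_0$ to act in a well-defined way on $V^\transl_0$. But $\tilde w_0 \in N_G(A) \subset N_G(L)$, so $\tilde w_0$ sends $L$-invariants to $L$-invariants, hence preserves $V^\transl_0$; since $\tilde w_0$ can be chosen in~$K$, it preserves the form~$B$ and therefore preserves the $B$-orthogonal complement $V^\extra_0$ too. Consequently $\tilde w_0$ commutes with $\pi_\transl$ on $V^\sube_0$, and its action on $V^\transl_0$ is independent of the choice of representative of $w_0$ (any two representatives differ by an element of $L$, which acts trivially on $V^\transl_0$). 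Writing this common action as $w_0 : V^\transl_0 \to V^\transl_0$, the computation collapses to
\[M(g^{-1}) \;=\; -\,\tilde w_0\,\pi_\transl\bigl(\phi(x - g^{-1}(x))\bigr) \;=\; -w_0\bigl(M(g)\bigr).\]
The only genuinely nontrivial point is the identification $A^{\subap}_{g^{-1}}=A^{\subap}_g$ (and the parallel fact that $V^{\subap}_{g^{-1}}=V^{\subap}_g$), which rests on the generic symmetry of $X_0$; everything else is bookkeeping.
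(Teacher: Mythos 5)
Your argument follows the same route as the paper's proof: exhibit a canonizing map for $g^{-1}$ of the form $\tilde w_0\phi$, identify $A^\subap_{g^{-1}}$ with $A^\subap_g$, and commute $\tilde w_0$ past $\pi_\transl$. The endgame is fine: your verification that $\tilde w_0$ preserves $V^\transl_0$ (via $N_G(A)\subset N_G(L)$), that a representative chosen in $K$ preserves $B$ and hence $V^\extra_0$, and that the induced action of $w_0$ on $V^\transl_0$ is independent of the representative, is exactly the content of the paper's argument and of Remark~\ref{Weyl_action_works}; the base-point trick $y=g^{-1}(x)$ is a harmless explicit version of what the paper leaves implicit.

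The one step that does not work as written is the displayed chain $A^\subap_{g^{-1}}=(\tilde w_0\phi)^{-1}(A^\sube_0)=\phi^{-1}(A^\sube_0)=A^\subap_g$ with $\phi\in G$ a canonizing map for $\ell(g)$ only. The \emph{affine} ideally neutral space is defined through an affine canonizing map $\phi_{\mathrm{aff}}\in G\ltimes V$ (Definition~\ref{ideal_dynamical_spaces_definition} together with Proposition~\ref{canonizing}), and its position depends on the translation part of $\phi_{\mathrm{aff}}$: for a merely linear canonizing map one has $\phi^{-1}(A^\sube_0)\neq A^\subap_g$ in general. Concretely, if $g=\tau_v\exp(X_0)$ with $v\notin V^\sube_0$, then $\phi=\Id$ canonizes $\ell(g)$, yet $A^\subap_g$ is the translate of $A^\sube_0$ by a nonzero vector of $V^\subg_0\oplus V^\subl_0$, so neither of your two outer equalities defines the space you need. (Your parallel claim $V^\subap_{g^{-1}}=V^\subap_g$ is unaffected, since the linear spaces only see linear parts.) The repair is short and is precisely the paper's opening step: take an affine canonizing map $\phi_{\mathrm{aff}}$ for $g$ and check that $\tilde w_0\phi_{\mathrm{aff}}$ is an \emph{affine} canonizing map for $g^{-1}$ --- besides the hyperbolic part (your computation $\tilde w_0\exp(-\jordan(g))\tilde w_0^{-1}=\exp(\jordan(g^{-1}))\in\exp(\mathfrak{a}^+)$), one must check that the translation part of the canonical form of $g^{-1}$ lies in $V^\sube_0$; this holds because the canonical form of $g$ has translation part in $V^\sube_0$, its linear part preserves $V^\sube_0$ (Proposition~\ref{reference_spaces_invariant}), and $\tilde w_0(V^\sube_0)=V^\sube_0$ since $X_0$ is generically symmetric. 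Then $A^\subap_{g^{-1}}=(\tilde w_0\phi_{\mathrm{aff}})^{-1}(A^\sube_0)=\phi_{\mathrm{aff}}^{-1}(A^\sube_0)=A^\subap_g$, you may take $\phi=\ell(\phi_{\mathrm{aff}})$ in your Margulis-invariant formulas, and the rest of your proof goes through unchanged.
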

\begin{remark}
\label{Weyl_action_works}
Note that $M(g)$ is (by definition) an element of the space~$V^\transl_0$, which (again by definition) is the set of fixed points of~$L = Z_G(A)$. From this, it is straightforward to deduce that $V^\transl_0$ is invariant by~$N_G(A)$. Hence $w_0$ induces a linear involution on~$V^\transl_0$, which does not depend on the choice of a representative of~$w_0$ in~$G$.
\end{remark}
\begin{proof}
First of all note that if~$\phi$ is a canonizing map for~$g$, then $w_0 \phi$ is a canonizing map for~$g^{-1}$. Indeed, assume that $g = \tau_v \exp(\jordan(g)) g_e g_u$ is in canonical form; then we have $w_0(-\jordan(g)) \in \mathfrak{a}^+$ (since by definition $w_0$ exchanges the dominant Weyl chamber~$\mathfrak{a}^+$ and its negative~$-\mathfrak{a}^+$), and $w_0(-v) \in V^\sube_0$ (since $X_0$ is symmetric, the action of~$w_0$ preserves~$V^\sube_0$).

It remains to show that $w_0$, or more precisely any representative~$\tilde{w}_0 \in N_G(A)$ of~$w_0$, commutes with $\pi_{\transl}$. We use the well-known fact that the group~$W$, that we defined as the quotient~$N_G(A)/Z_G(A)$, is also equal to the quotient~$N_K(A)/Z_K(A)$ (see \cite{Kna96}, formulas (7.84a) and~(7.84b)); hence
\begin{equation}
\tilde{w}_0 \in N_G(A) = W Z_G(A) = W Z_K(A) A = N_K(A) A \subset K A.
\end{equation}
Now recall (\eqref{eq:V_eq_decomp}, \eqref{eq:V_a_decomp}) that we have an orthogonal decomposition
\[V^\sube_0 = 
\lefteqn{\overbrace{\phantom{
  V^\subenz_0 \oplus (V^\extra_0 \cap V^0)
}}^{\displaystyle V^\extra_0}}
  V^\subenz_0 \oplus \underbrace{
  (V^\extra_0 \cap V^0) \oplus V^\transl_0
}_{\displaystyle V^0}.\]
Let us show that all three components are (globally) invariant by~$\tilde{w}_0$:
\begin{itemize}
\item For $V^\subenz_0$ this is obvious (since $\Omega^{w_0} \setminus \{0\}$ is invariant by~$w_0$).
\item For $V^\transl_0$ this follows from Remark~\ref{Weyl_action_works}.
\item This is obviously the case for~$V^0$. Now by definition the group~$A$ acts trivially on~$V^0$, and by construction $K$~acts on~$V^0$ by orthogonal transformations (indeed the Euclidean structure was chosen in accordance with Lemma~\ref{K-invariant}); hence~$V^\extra_0 \cap V^0$, which is the orthogonal complement of~$V^\transl_0$ in~$V^0$, is also invariant by~$\tilde{w}_0$.
\end{itemize}

The desired formula now immediately follows from the definition of the Margulis invariant.
\end{proof}

\subsection{Margulis invariants of products}
\label{sec:invariant_additivity_only}

The following proposition generalizes Proposition~4.1~(ii) in~\cite{Smi14} and Proposition~8.1~(ii) in~\cite{Smi16}.

\begin{proposition}
\label{invariant_additivity_only}
For every $C \geq 1$, there are positive constants $s_{\ref{invariant_additivity_only}}(C) \leq 1$ and $k_{\ref{invariant_additivity_only}}(C)$ with the following property. Let $g, h \in G \ltimes V$ be a $C$-non-degenerate pair of $\rho$-regular maps, with $g^{\pm 1}$ and $h^{\pm 1}$ all $s_{\ref{invariant_additivity_only}}(C)$-contracting along~$X_0$. Then $gh$ is $\rho$-regular, and we have:
\[\|M(gh) - M(g) - M(h)\| \leq k_{\ref{invariant_additivity_only}}(C).\]
\end{proposition}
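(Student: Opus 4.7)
The strategy parallels Section~8 of~\cite{Smi16}, but must be restructured to accommodate the fact that, in the swinging case, a quasi-translation is no longer an isometry on $V^\extra_0$ (cf.\ Remark~\ref{quasi-translation-generalization}); this forces us to work in a single step rather than via iteration, and to exploit Proposition~\ref{affine_to_intrinsic_linear}\,(iii) to keep the $V^\subenz_0$-drift under control. First I would note that Proposition~\ref{regular_product} already gives $\rho$-regularity and $2C$-non-degeneracy of $gh$, together with
\[
\alpha^{\mathrm{Haus}}(A^\subgap_{gh}, A^\subgap_g) \lesssim_C s_{X_0}(g),
\qquad
\alpha^{\mathrm{Haus}}(A^\sublap_{gh}, A^\sublap_h) \lesssim_C s_{X_0}(h^{-1}),
\]
and analogous estimates for the linear ideally noncontracting and nonexpanding spaces via part~(i). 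In particular, the mixed pair $(A^\subgap_g, A^\sublap_h)$ is $C'$-non-degenerate for a constant $C'$ depending only on $C$, so by Proposition~\ref{pair_transitivity} one can pick a point $x_0 \in A^\subgap_g \cap A^\sublap_h$ of bounded norm.

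Next I would compute $M(gh)$ by transporting everything to the reference space $A^\sube_0$. Using the optimal canonizing maps for the three pairs $(A^\subgap_g, A^\sublap_h)$, $(A^\subgap_g, A^\sublap_g)$, $(A^\subgap_h, A^\sublap_h)$, Proposition~\ref{canonical_identification} gives canonical identifications (up to quasi-translation) of $A^\subgap_g \cap A^\sublap_h$, $A^\subap_g$ and $A^\subap_h$ with $A^\sube_0$. Lemma~\ref{projections_commute} ensures that the relevant change-of-identification maps — those corresponding to keeping one affine parabolic subspace fixed while changing the other — are themselves quasi-translations, hence disappear after applying $\pi_\transl$. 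The same lemma applied to the pair $(A^\subgap_g, A^\sublap_{gh})$ vs.\ $(A^\subgap_g, A^\sublap_h)$, and to $(A^\subgap_{gh}, A^\sublap_h)$ vs.\ $(A^\subgap_g, A^\sublap_h)$, will allow us to replace $A^\subap_{gh}$ by $A^\subgap_g \cap A^\sublap_h$ in the definition of $M(gh)$, at the cost of controlling how far the projection pieces differ when we replace the true neutral space of $gh$ by the nearby intersection.

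The core computation is the telescoping
\[
(gh)(x_0) - x_0 \;=\; g\bigl(h(x_0)\bigr) - h(x_0) \;+\; h(x_0) - x_0 .
\]
Since $x_0 \in A^\sublap_h$, Proposition~\ref{V=_translation} (applied after conjugation by a canonizing map of $h$) shows that $h(x_0) - x_0$ equals the quasi-translation datum of $h$ plus an error vector in the ideally contracting direction $V^\subll_h$ of controlled size. Symmetrically, since $x_0 \in A^\subgap_g$ and $h(x_0)$ is close to $A^\subgap_g$ modulo a $V^\subll_h$-correction, the term $g(h(x_0)) - h(x_0)$ equals the quasi-translation datum of $g$ plus an error in $V^\subll_g$, bounded using $s_{X_0}(g)$ and the norm $\|\restr{g}{V^\subll_g}\|$. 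Projecting through $\pi_\transl$ via the corresponding canonizing maps, and using that the $V^\transl_0$-parts of the two quasi-translation data are by definition $M(h)$ and $M(g)$, one recovers $M(gh) = M(g) + M(h) + (\text{error})$.

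The main obstacle is the bookkeeping in Step~3: the error terms land a priori in $V^\subenz_0 \oplus V^\subll_g \oplus V^\subll_h \oplus V^\subgg_g \oplus V^\subgg_h$, and one must check they are killed by $\pi_\transl$ up to bounded size. The $V^\subll$ and $V^\subgg$ components are bounded by $s_{X_0}(g)$ and $s_{X_0}(h^{\pm1})$ after passing through the canonizing maps, which have norms $\lesssim_C 1$; but the $V^\subenz_0$ component is potentially amplified because quasi-translations act nontrivially there. This is precisely where we need Proposition~\ref{affine_to_intrinsic_linear}\,(iii): the smallness of $s_{X_0}(g)$ controls not just $\|\restr{g^{-1}}{A^\subap_g}\|$ (and hence the expansion of the associated quasi-translation on $V^\subenz_0$) but also $\vec{s}_{X_0}(\ell(g))$, which is what governs the $V^\subenz_0$-norm of the discrepancy between $A^\subap_g$ and the nearby intersection. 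A symmetric application to $h$ finishes the bound. After applying $\pi_\transl$ the $V^\subenz_0$ contribution vanishes entirely and only a $C$-dependent additive error survives, yielding the constant $k_{\ref{invariant_additivity_only}}(C)$.
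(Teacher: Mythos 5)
Your proposal correctly identifies the high-level ingredients (Proposition~\ref{regular_product}, the canonical identifications, Lemma~\ref{projections_commute}, Proposition~\ref{V=_translation}, and the compensation supplied by Proposition~\ref{affine_to_intrinsic_linear}\,(iii)), but the scheme you build from them — telescoping the displacement of one auxiliary point $x_0 \in A^\subgap_g \cap A^\sublap_h$ and then projecting — has a genuine gap precisely at the bookkeeping step you defer to the end. The Margulis invariant $M(gh)$ is computed from a point of $A^\subap_{gh}$ together with a canonizing map \emph{of $gh$}; once you evaluate at $x_0$ and project through identifications adapted to $(A^\subgap_g, A^\sublap_h)$ or to $g$ and $h$ separately, the discrepancies are no longer maps between ideally neutral spaces, so Lemma~\ref{projections_commute} does not apply to them: they are frame changes acting on the \emph{whole} displacement vector. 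And in the swinging case that vector is unbounded. Since the quasi-translation of $h$ on $A^\subap_h$ need not be bounded, $h(x_0)-x_0$ is essentially a huge vector along $V^\subap_h$, and $V^\subap_h$ makes an angle of order $1$ (not of order $s$) with the neutral space of $gh$; so projecting it in the $gh$-frame gives the $V^\transl_0$-part of a bounded-but-not-small operator applied to an unbounded vector, which is not close to $M(h)$ by any estimate at your disposal. Symmetrically, in the $g$-step the point $h(x_0)$ has a component along $V^\subgg_g$ comparable to its (huge) norm, and applying $g$ multiplies it by $\left\| \restr{g}{V^\subgg_g} \right\|$; the hypotheses only bound \emph{ratios} (contraction strengths), so products such as $s_{X_0}(g)\left\| \restr{g}{V^\subgg_g} \right\|$ or $\vec{s}_{X_0}(\ell(g))\left\| \restr{g}{V^\subgg_g} \right\|\left\| \restr{h}{A^\subap_h} \right\|$ are not controlled, and Proposition~\ref{affine_to_intrinsic_linear}\,(iii) tames exactly one product, namely $\vec{s}_{X_0}(\ell(g))\left\| \restr{g^{-1}}{A^\subap_g} \right\|$, not these.

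The paper's proof is organized specifically to ensure that no other dangerous product ever appears: instead of following the orbit of a point, it factors the self-map $gh$ of $A^\subap_{gh}$ through $h: A^\subap_{gh} \to A^\subap_{hg}$ and $g: A^\subap_{hg} \to A^\subap_{gh}$, and then factors the latter through the projection onto $A^\subap_g$ \emph{parallel to the $g$-invariant sum} $V^\subgg_g \oplus V^\subll_g$, which commutes with $g$ — so the expansion of $g$ never acts on anything in the estimates. After conjugating every piece to $A^\sube_0$, the frame-change pieces ($\overline{\chi_i}$, $\overline{\psi_i}$) are bounded quasi-translations by Lemma~\ref{projections_commute} and non-degeneracy, and the whole analytic content is confined to showing $\overline{\Delta_1}, \overline{\Delta_2}$ are close to the identity; the single unbounded quantity that survives is $\left\| \restr{g^{-1}}{A^\subap_g} \right\|$, multiplied by $\|\ell(\overline{\Delta_2})^{-1} - \Id\| \lesssim_C \vec{s}_{X_0}(\ell(g))$, which is exactly what Proposition~\ref{affine_to_intrinsic_linear}\,(iii) was designed to absorb. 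To repair your argument you would need to restructure it along these lines (or otherwise prove that every unbounded factor in your error terms is paired with a compensating small factor), rather than invoke Proposition~\ref{affine_to_intrinsic_linear}\,(iii) once at the end.
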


\begin{proof}[Outline of proof]
Since the proof is extremely technical, let us start with an informal (non-rigorous) outline. 

Recall that $M(gh)$ is defined by considering the quasi-translation induced by~$gh$ on its affine ideally neutral space~$A_{gh}^\subap$ (canonically identified with $A_0^\sube$), and taking its translation part along~$V_0^t$. So our goal is to undersand the action of~$gh$ on~$A_{gh}^\subap$.

The obvious first step is to decompose $gh$ into its factors. So we need to understand, on the one hand, the map induced by~$h$ from~$A_{gh}^\subap$ to its image (which is $A_{hg}^\subap$, since $hg$~is the conjugate of~$gh$ by~$h$); and on the other hand, the map induced by~$g$ from~$A_{hg}^\subap$ back to~$A_{gh}^\subap$.

To do this, let us schematically represent all the relevant subspaces: see Figure~\ref{fig:Margulis_schematic}. This is somewhat similar in spirit to Figure~2 from~\cite{AMS11} (and possibly other figures from that paper), but much more schematic.

\begin{figure}[h]
\[
\includegraphics{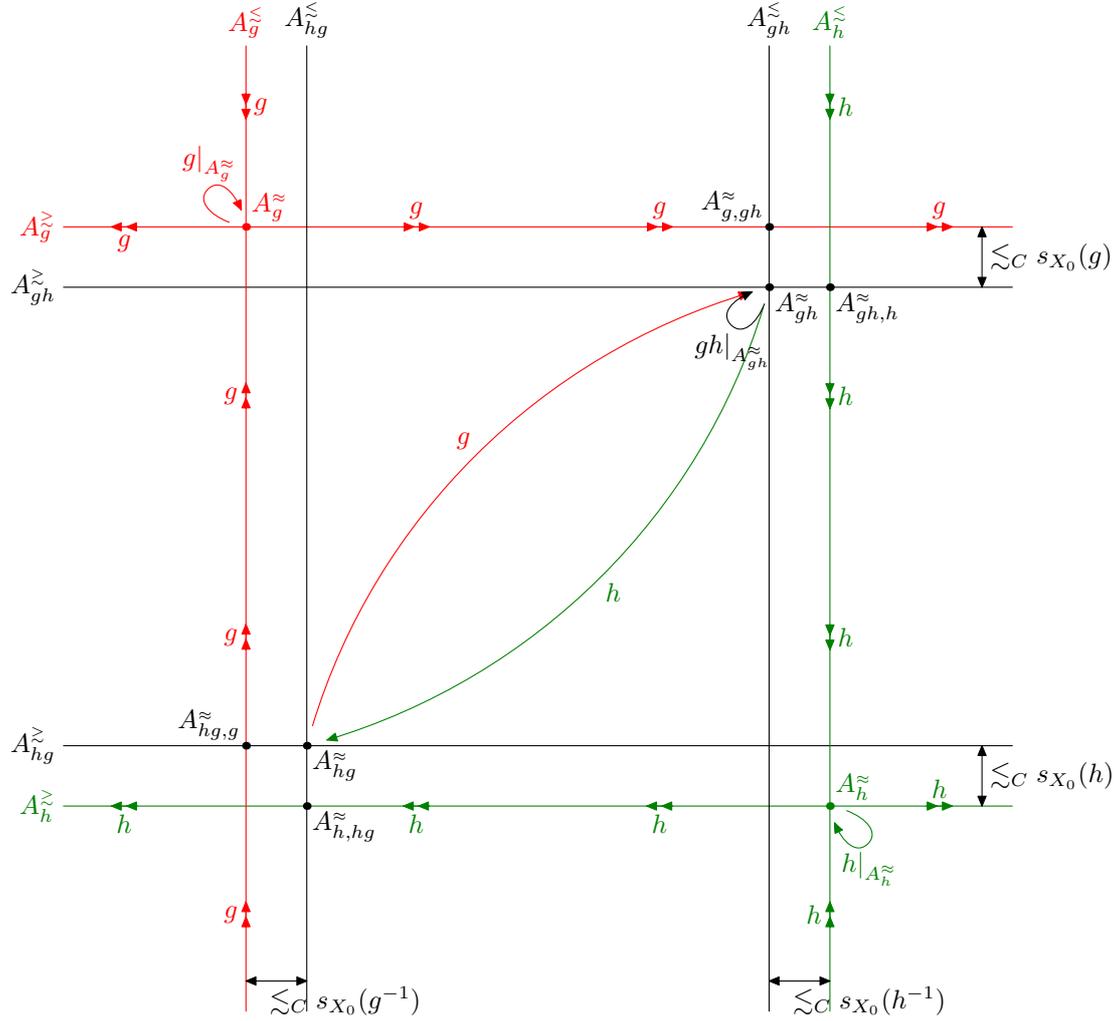}
\]
\caption{Relative positions of subspaces involved in the proof of Proposition~\ref{invariant_additivity_only}. This picture should be understood to represent some affine cross-section of the full picture. Lines represent (the corresponding cross-sections of) affine parabolic spaces (such as $A_g^\subgap$), and dots represent (the corresponding cross-sections of) intersections of transverse pairs of them (such as $A_g^\subap$).  We use the notational convention $A^{\subap}_{u, v} := A^{\subgap}_{u} \cap A^{\sublap}_{v}$ for any two $\rho$-regular maps $u$ and~$v$. Double-headed arrows represent contraction and expansion by the maps $g$ and~$h$. \\
We encourage the reader to watch this picture on a screen, so as to benefit from helpful (even if dispensable) color cues.}
\label{fig:Margulis_schematic}
\end{figure}
\afterpage{\clearpage} 

Note that (as indicated on the diagram) the four affine parabolic spaces corresponding to~$gh$ and~$hg$ lie very close to some of the affine parabolic spaces corresponding to~$g$ and~$h$: this is a consequence of Proposition~\ref{regular_product}~(ii).

Now the fact that $g$~is ``strongly contracting'' actually implies that it strongly contracts everything along the $V_g^\subll$ directions towards~$A_g^\subap$, but strongly expands everything along the $V_g^\subgg$ directions away from~$A_g^\subap$ (indicated by double-headed arrows on Figure~\ref{fig:Margulis_schematic}). Taking into account the relative positions of various spaces, we see that the map induced by~$g$ from~$A_{hg}^\subap$ to~$A_{gh}^\subap$ can then be approximated by the composition of the following five maps:
\begin{itemize}
\item some projection from~$A_{hg}^\subap$ to the space~$A_{hg, g}^\subap$ lying nearby;
\item the projection from~$A_{hg, g}^\subap$ to~$A_g^\subap$ parallel to~$V_g^\subll$;
\item the quasi-translation induced by~$g$ on~$A_g^\subap$;
\item the projection from~$A_{g}^\subap$ to~$A_{g, gh}^\subap$ parallel to~$V_g^\subgg$;
\item some projection from~$A_{g, gh}^\subap$ to the space~$A_{gh}^\subap$ lying nearby.
\end{itemize}
Finally we take advantage of the canonical identifications introduced in Section~\ref{sec:canonical} to identify all of these $A^\subap$ spaces with~$A_0^\sube$. Then by Lemma~\ref{projections_commute}, all the projections from one space to the other collapse to quasi-translations, that can be shown to be bounded (taking advantage of the non-degerenacy of the geometry). There remains only the quasi-translation corresponding to the action of~$g$ on~$A_g^\subgap$, whose projection to~$A_0^t$ is a translation by precisely~$M(g)$.

Similarly the map induced by~$h$ from~$A_{gh}^\subap$ to~$A_{hg}^\subap$ is close to a map whose conjugate by the canonical identifications is a quasi-translation of~$A_0^\sube$, whose projection to~$A_0^t$ is a translation by~$M(h)$. The conclusion then follows.

The proof splits into two parts:
\begin{itemize}
\item an ``algebraic'' part comprising Lemma~\ref{MgghgMg}, where we show that the map descibed above as an approximation for~$g: A_{hg}^\subap \to A_{gh}^\subap$ has a ``Margulis invariant'' close to that of~$g$ (this is basically a straightforward application of Lemma~\ref{projections_commute}); 
\item and an ``analytic'' part comprising Lemma~\ref{P1_almost_Id}, its Corollary~\ref{P1_almost_Id_cor} and Lemma~\ref{MghgMgghg}, where we estimate exactly how closely this map approximates $g: A_{hg}^\subap \to A_{gh}^\subap$. \noqed\qedhere 
\end{itemize}
\end{proof}

\begin{proof}[Comparison with earlier paper]
The same idea was used to prove Proposition~8.1~(ii) in~\cite{Smi16} (and Proposition~4.1~(ii) in~\cite{Smi14}). However we had to completely reorganize the proof, because of the following issue. In the proof of Lemma~8.6 in~\cite{Smi16}, a key point is that since the factors that are introduced to construct Diagram~4, namely $\ell(\overline{g_{gh}})$ and~$\ell(\overline{g_{g, gh}})$, are linear parts of quasi-translations, they automatically have bounded norm. But in the general case, this last deduction fails: see Remark~\ref{quasi-translation-generalization}.

This actually led us to develop a better (albeit still technical) proof: the separation between the ``algebraic'' and the ``analytic'' part was much less obvious in the previous papers. The new proof is also more symmetric: we got rid of the lopsided Diagram~4 from~\cite{Smi16}, and of the confusing series $\overline{P_1}$, $\overline{P_2}$, $\overline{P'_2}$ and~$\overline{P''_1}$. \noqed
\end{proof}

\begin{proof}
We start by doing some setup, culminating in the crucial decomposition~\eqref{eq:Mgh_first_decomposition} which reduces the problem to proving Lemmas \ref{MgghgMg} and~\ref{MghgMgghg}. The hard work will then be done while proving these lemmas.

Let $C \geq 1$. We choose some constant $s_{\ref{invariant_additivity_only}}(C) \leq 1$, small enough to satisfy all the constraints that will appear in the course of the proof. For the remainder of this section, we fix $g, h \in G \ltimes V$ a $C$-non-degenerate pair of $\rho$-regular maps such that $g^{\pm 1}$ and $h^{\pm 1}$ are $s_{\ref{invariant_additivity_only}}(C)$-contracting along~$X_0$.

The following remark will be used throughout this proof. \noqed
\end{proof}
\begin{remark}
\label{ggh_gh_2C}
We may suppose that the pairs $(A^{\subgap}_{gh}, A^{\sublap}_{gh})$, $(A^{\subgap}_{hg}, A^{\sublap}_{hg})$, $(A^{\subgap}_{g}, A^{\sublap}_{gh})$ and $(A^{\subgap}_{hg}, A^{\sublap}_{g})$ are all $2C$-non-degenerate.
Indeed, recall that (by Proposition~\ref{regular_product}), we have
\[\begin{cases}
  \alpha^\mathrm{Haus} \left(A^{\subgap}_{gh},\; A^{\subgap}_{g} \right) \lesssim_C s_{X_0}(g) \vspace{1mm} \\
  \alpha^\mathrm{Haus} \left(A^{\sublap}_{gh},\; A^{\sublap}_{h} \right) \lesssim_C s_{X_0}(h^{-1})
\end{cases}\]
and similar inequalities with $g$ and $h$ interchanged. On the other hand, by hypothesis, $(A^{\subgap}_{g}, A^{\sublap}_{h})$ is $C$-non-degenerate. If we choose $s_{\ref{invariant_additivity_only}}(C)$ sufficiently small, these four statements then follow from Lemma~\ref{continuity_of_non_degeneracy}.
\end{remark}
\begin{proof}[Proof of Proposition~\ref{invariant_additivity_only}, continued]
If we take $s_{\ref{invariant_additivity_only}}(C) \leq s_{\ref{regular_product}}(C)$, then Proposition~\ref{regular_product} ensures that $gh$ is $\rho$-regular.

To estimate $M(gh)$, we decompose the induced map~$gh: A^{\subap}_{gh} \to A^{\subap}_{gh}$ into a product of several maps, as announced in the proof outline. We refer the reader to Figure~\ref{fig:Margulis_schematic} for a better intuition of this decomposition.

\begin{itemize}
\item We begin by decomposing the product $gh$ into its factors. We have the commutative diagram
\begin{equation}
\begin{tikzcd}
  A^{\subap}_{gh}
& 
& 
& A^{\subap}_{hg}
    \arrow{lll}{g}
& 
& 
& A^{\subap}_{gh}
    \arrow{lll}{h}
    \arrow[bend right]{llllll}[swap]{gh}
\end{tikzcd}
\end{equation}
Indeed, since $hg$ is the conjugate of $gh$ by $h$ and vice-versa, we have $h(A^{\subap}_{gh}) = A^{\subap}_{hg}$ and $g(A^{\subap}_{hg}) = A^{\subap}_{gh}$.
\item Next we factor the map $g: A^{\subap}_{hg} \to A^{\subap}_{gh}$ through the map $g: A^{\subap}_{g} \to A^{\subap}_{g}$, which is better known to us. We have the commutative diagram
\begin{equation}
\begin{tikzcd}
  A^{\subap}_{gh}
    \arrow{rdd}[swap]{\pi_g}
&
&
& A^{\subap}_{hg}
    \arrow{lll}{g}
    \arrow{ldd}{\pi_g}
\\
\\
& A^{\subap}_{g}
& A^{\subap}_{g}
    \arrow{l}{g}
&
\end{tikzcd}
\end{equation}
where $\pi_g$ is the projection onto $A^{\subap}_{g}$ parallel to $V^{\subgg}_{g} \oplus V^{\subll}_{g}$. (It commutes with $g$ because $A^{\subap}_{g}$, $V^{\subgg}_{g}$ and $V^{\subll}_{g}$ are all invariant by $g$.)
\item Now we decompose again every diagonal arrow from the last diagram into two factors. We call $P_1$ (resp. $P_2$) the projection onto $A^{\subap}_{g, gh}$ (resp. $A^{\subap}_{hg, g}$), still parallel to~$V^{\subgg}_{g} \oplus V^{\subll}_{g}$. (Recall the notational convention $A^{\subap}_{u, v} := A^{\subgap}_{u} \cap A^{\sublap}_{v}$ from Figure~\ref{fig:Margulis_schematic}.)

To justify this definition, we must check that $A^{\subap}_{g, gh}$ (and similarly~$A^{\subap}_{hg, g}$) is supplementary to $V^{\subgg}_{g} \oplus V^{\subll}_{g}$. Indeed, by Remark~\ref{ggh_gh_2C}, $A^{\sublap}_{gh}$ is transverse to $A^{\subgap}_{g}$, hence (by Proposition~\ref{pair_transitivity} and Proposition~\ref{uniquely_determined}~(ii)) supplementary to $V^{\subgg}_{g}$; thus $A^{\subgap}_{g} = V^{\subgg}_{g} \oplus A^{\subap}_{g, gh}$ and $A = V^{\subll}_{g} \oplus A^{\subgap}_{g} = V^{\subll}_{g} \oplus V^{\subgg}_{g} \oplus A^{\subap}_{g, gh}$. Then we have the commutative diagrams
\begin{subequations}
\begin{equation}
\begin{tikzcd}
  A^{\subap}_{gh}
    \arrow[bend right]{rr}[swap]{\pi_g}
    \arrow{r}{P_1}
& A^{\subap}_{g, gh}
    \arrow{r}{\pi_g}
& A^{\subap}_{g}
\end{tikzcd}
\end{equation}
and
\begin{equation}
\begin{tikzcd}
  A^{\subap}_{hg}
    \arrow[bend right]{rr}[swap]{\pi_g}
    \arrow{r}{P_2}
& A^{\subap}_{hg, g}
    \arrow{r}{\pi_g}
& A^{\subap}_{g}
\end{tikzcd}
\end{equation}
\end{subequations}
\item Finally (and this is new in comparison to~\cite{Smi16}), we would like to replace $P_1$ and~$P_2$ by some ``better-behaved'' projections.
\begin{itemize}
\item We define $\pi_{g, gh}$ to be the projection onto~$A^\subap_{g, gh}$ parallel to~$V^\subgg_g \oplus V^\subll_{gh}$. Obviously it induces a bijection between $A^\subap_{gh}$ and~$A^\subap_{g, gh}$.
\item We define $\pi_{hg, g}$ to be the projection onto~$A^\subap_{hg, g}$ parallel to~$V^\subgg_{hg} \oplus V^\subll_g$. Obviously it induces a bijection between $A^\subap_{hg}$ and~$A^\subap_{hg, g}$.
\end{itemize}
The reason they are ``better-behaved'' is that by Lemma~\ref{projections_commute}, they actually commute with canonical identifications (see Remark~\ref{clean_proof} below for more details). We then make the decompositions
\begin{subequations}
\label{eq:Pi_decomposition}
\begin{equation}
\label{eq:P1_decomposition}
P_1 = (P_1 \circ \pi_{g, gh}^{-1}) \circ \pi_{g, gh}
\end{equation}
and
\begin{equation}
\label{eq:P2_decomposition}
P_2 = (P_2 \circ \pi_{hg, g}^{-1}) \circ \pi_{hg, g}.
\end{equation}
\end{subequations}
\end{itemize}

The last three steps can be repeated with $h$ instead of $g$. The way to adapt the second step is straightforward; for the third step, we factor $\pi_h: A^{\subap}_{hg} \to A^{\subap}_{h}$ through $A^{\subap}_{h, hg}$ and $\pi_h: A^{\subap}_{gh} \to A^{\subap}_{h}$ through $A^{\subap}_{gh, h}$; for the fourth step, we project respectively along~$V^\subgg_h \oplus V^{\subll}_{hg}$ and along~$V^{\subgg}_{gh} \oplus V^\subll_h$.

Combining these four decompositions, we get the lower half of Diagram~\ref{fig:largediagcomm1}. (Compare it with Figure~\ref{fig:Margulis_schematic} for a more geometric picture.) We left out the expansion of $h$; we leave drawing the full diagram for especially brave readers.

Let us now interpret all of these maps as endomorphisms of $A^\sube_0$. To do this, we choose some optimal canonizing maps
\[\phi_{g},\; \phi_{gh},\; \phi_{hg},\; \phi_{g, gh},\; \phi_{hg, g}\]
respectively of $g$, of $gh$, of $hg$, of the pair $(A^{\subgap}_{g}, A^{\sublap}_{gh})$ and of the pair $(A^{\subgap}_{hg}, A^{\sublap}_{g})$. This allows us to define $\overline{g_{gh}}$, $\overline{h_{gh}}$, $\overline{g_{g, gh}}$, $\overline{g_{\subap}}$, $\overline{\Delta_1}$, $\overline{\Delta_2}$, $\overline{\chi_1}$, $\overline{\chi_2}$, $\overline{\psi_1}$, $\overline{\psi_2}$ to be the maps that make the whole Diagram~\ref{fig:largediagcomm1} commutative.

\begin{figure}[!p]
\renewcommand{\figurename}{Diagram}
\[
\begin{tikzcd}
  A^\sube_0
    \arrow{rd}[swap]{\overline{\Delta_1}}
&
&
&
&
&
&
&
&
  A^\sube_0
    \arrow{ld}{\overline{\Delta_2}}
    \arrow{llllllll}{\overline{g_{gh}}}
& A^\sube_0
    \arrow{l}{\overline{h_{gh}}}
\\
&
  A^\sube_0
    \arrow{rdd}[swap]{\overline{\chi_1}}
&
&
&
&
&
& A^\sube_0
    \arrow{ldd}{\overline{\chi_2}}
    \arrow{llllll}{\overline{g_{g, gh}}}
\\
\\
&
&
  A^\sube_0
    \arrow{rdd}[swap]{\overline{\psi_1}}
&
&
&
& A^\sube_0
    \arrow{ldd}{\overline{\psi_2}}
&
&
\\
\\
&
&
& A^\sube_0
&
& A^\sube_0
	\arrow{ll}{\overline{g_{\subap}}}
&
&
&
\\
  A^{\subap}_{gh}
    \arrow[bend right]{rrddd}[swap]{P_1}
    \arrow[dashed]{uuuuuu}[pos=0.04]{\phi_{gh}}
&
&
&
&
&
&
&
& A^{\subap}_{hg}
    \arrow[bend left]{llddd}{P_2}
    \arrow{llllllll}{g}
    \arrow[dashed]{uuuuuu}[pos=0.04]{\phi_{hg}}
& A^{\subap}_{gh}
    \arrow{l}{h}
    \arrow[dashed]{uuuuuu}[pos=0.04]{\phi_{gh}}
\\
& A^{\subap}_{gh}
    \arrow{rdd}{\pi_{g, gh}}
    \arrow[dashed]{uuuuuu}[pos=0.26]{\phi_{gh}}
&
&
&
&
&
& A^{\subap}_{hg}
    \arrow{ldd}[swap]{\pi_{hg, g}}
    \arrow[dashed]{uuuuuu}[pos=0.26]{\phi_{hg}}
\\
\\
&
&
  A^{\subap}_{g, gh}
    \arrow{rdd}{\pi_{g}}
    \arrow[dashed]{uuuuuu}[pos=0.60]{\phi_{g, gh}}
&
&
&
& A^{\subap}_{hg, g}
    \arrow{ldd}[swap]{\pi_{g}}
    \arrow[dashed]{uuuuuu}[pos=0.60]{\phi_{hg, g}}
&
&
\\
\\
&
&
& A^{\subap}_{g}
    \arrow[dashed]{uuuuuu}[pos=0.93]{\phi_{g}}
&
& A^{\subap}_{g}
	\arrow{ll}{g}
    \arrow[dashed]{uuuuuu}[pos=0.93]{\phi_{g}}
&
&
&
\end{tikzcd}
\]
\caption{}
\label{fig:largediagcomm1}
\end{figure}
\afterpage{\clearpage} 

Now let us define
\begin{equation}
\begin{cases}
M_{gh}(g) := \pi_{\transl}(\overline{g_{gh}}(x) - x) \\
M_{gh}(h) := \pi_{\transl}(\overline{h_{gh}}(x) - x)
\end{cases}
\end{equation}
for any $x \in V^\sube_{\Aff, 0}$, where $V^\sube_{\Aff, 0} := A^\sube_0 \cap V_{\Aff}$ is the affine space parallel to $V^\sube_0$ and passing through the origin~$p_0$. Since $gh$ is the conjugate of $hg$ by $g$ and vice-versa, the elements of $G \ltimes V$ (defined in an obvious way) whose restrictions to $A^\sube_0$ are $\overline{g_{gh}}$ and $\overline{h_{gh}}$ stabilize the spaces $A^\subge_0$ and $A^\suble_0$. By Lemma~\ref{quasi-translation}, $\overline{g_{gh}}$ and $\overline{h_{gh}}$ are thus quasi-translations. It follows that these values $M_{gh}(g)$ and $M_{gh}(h)$ do not depend on the choice of $x$. Compare this to the definition of a Margulis invariant (Definition~\ref{margulis_invariant}): we have $M(gh) = \pi_{\transl}(\overline{g_{gh}} \circ \overline{h_{gh}}(x) - x)$ for any $x \in V^\sube_{\Aff, 0}$. It immediately follows that
\begin{equation}
\label{eq:Mgh_first_decomposition}
M(gh) = M_{gh}(g) + M_{gh}(h).
\end{equation}
(Note that while the vectors $M_{gh}(g)$ and $M_{gh}(h)$ are elements of $V^\transl_0$, the maps $\overline{g_{gh}}$ and $\overline{h_{gh}}$ are extended affine isometries acting on the whole subspace $A^\sube_0$.) Thus it is enough to show that
\begin{equation}
\|M_{gh}(g) - M(g)\| \lesssim_C 1 \quad\text{ and }\quad \|M_{gh}(h) - M(h)\| \lesssim_C 1.
\end{equation}

We will prove the estimate for~$g$ in two steps: it will follow by combining Lemmas \ref{MgghgMg} and~\ref{MghgMgghg} below. The proof of the estimate for~$h$ is analogous.
\end{proof}
\begin{remark}
In contrast to actual Margulis invariants, the values $M_{gh}(g)$ and $M_{gh}(h)$ \emph{do} depend on our choice of canonizing maps. Choosing other canonizing maps would force us to subtract some constant from the former and add it to the latter.
\end{remark}

\begin{remark}
\label{clean_proof}
The fourth decomposition step above, namely \eqref{eq:Pi_decomposition}, is what makes the whole proof much cleaner. In~\cite{Smi16}, we had a map called~$\overline{P_1}$, which was ``almost a quasi-translation'' (Lemma~8.9 in~\cite{Smi16}). In the current proof, this map decomposes into two pieces
\begin{equation}
\label{eq:P1_new_decomposition}
\overline{P_1} = \overline{\chi_1} \circ \overline{\Delta_1},
\end{equation}
that are much easier to deal with: $\overline{\chi_1}$ is now a \emph{bounded} quasi-translation just like $\overline{\psi_1}$, and thus falls in with the ``algebraic'' part; while~$\overline{\Delta_1}$ is now almost the \emph{identity} (Lemma~\ref{P1_almost_Id}), and thus stays in the ``analytic'' part.
\end{remark}

\begin{lemma}
\label{MgghgMg}
Take any $x \in V^\sube_{\Aff, 0}$. Then the vector
\[M_{g, gh}(g) := \pi_{\transl}(\overline{g_{g, gh}}(x) - x)\]
does not, in fact, depend on the choice of~$x$, and satisfies
\[\|M_{g, gh}(g) - M(g)\| \lesssim_C 1.\]
\end{lemma}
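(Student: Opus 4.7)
The plan is to factor $\overline{g_{g, gh}}$ as a composition of five self-maps of $A^\sube_0$, each a quasi-translation, and then isolate the contribution of the central factor, which turns out to equal $M(g)$. Tracing through the lower portion of Diagram~\ref{fig:largediagcomm1}, one gets the identity
\begin{equation*}
\overline{g_{g, gh}} = \overline{\chi_1}^{-1} \circ \overline{\psi_1}^{-1} \circ \overline{g_\subap} \circ \overline{\psi_2} \circ \overline{\chi_2},
\end{equation*}
where $\overline{g_\subap} := \phi_g\, g\, \phi_g^{-1}|_{A^\sube_0}$ is the conjugation of the action of $g$ on $A^\subap_g$, while each of $\overline{\chi_i}, \overline{\psi_i}$ is the conjugation of one of the projections $\pi_{g, gh}, \pi_{hg, g}, \pi_g$ by an appropriate pair of canonizing maps drawn from $\{\phi_g, \phi_{gh}, \phi_{hg}, \phi_{g, gh}, \phi_{hg, g}\}$. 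Proposition~\ref{V=_translation} takes care of the quasi-translation property of $\overline{g_\subap}$, and four applications of Lemma~\ref{projections_commute}, with $A_1$ running over $A^{\subgap}_g, A^{\sublap}_g, A^{\sublap}_{gh}, A^{\subgap}_{hg}$ in turn (viewing the $A^\sublap$'s as affine parabolic spaces via $X_0 \mapsto -w_0(X_0)$ so that the corresponding $W_1$ is the relevant ideally contracting space), handle the other four factors.

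Next I would introduce the functional $\mu(Q) := \pi_\transl(Q(x) - x)$, defined on quasi-translations of $A^\sube_0$, and verify that it is independent of the choice of $x \in V^\sube_{\Aff, 0}$ and additive under composition: $\mu(Q_1 \circ Q_2) = \mu(Q_1) + \mu(Q_2)$. Both facts are immediate once one writes $Q = \tau_v \circ l$ with $l \in L$ and $v \in V^\sube_0$ and uses that $l$ fixes $V^\transl_0$ pointwise and preserves $V^\extra_0$, so that $\pi_\transl \circ l = \pi_\transl$. Since $\mu(\overline{g_\subap}) = M(g)$ by definition of the Margulis invariant, the factorization yields simultaneously the well-definedness of $M_{g, gh}(g)$ asserted in the lemma and the identity
\begin{equation*}
M_{g, gh}(g) - M(g) = -\mu(\overline{\chi_1}) - \mu(\overline{\psi_1}) + \mu(\overline{\psi_2}) + \mu(\overline{\chi_2}).
\end{equation*}

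Finally I would bound each of the four correction terms by a quantity depending only on $C$. Since $|\mu(Q)| \leq \|Q(p_0) - p_0\| \leq \|Q\| + 1$ for any quasi-translation $Q$, it suffices to bound the operator norms. Every canonizing map appearing has norm $\leq 2C$ by hypothesis combined with Remark~\ref{ggh_gh_2C}, while each of the three projections $\pi_g, \pi_{g, gh}, \pi_{hg, g}$ has norm $\lesssim_C 1$. For $\pi_g$ this follows from the pairwise orthogonality of $V^\subg_0, V^\sube_0, V^\subl_0, \mathbb{R} p_0$ recorded in Section~\ref{sec:regular_metric}, which forces $\phi_g \pi_g \phi_g^{-1}$ to be the orthogonal projection onto $A^\sube_0$; for $\pi_{g, gh}$ and $\pi_{hg, g}$ the same conclusion holds once one verifies, using Remark~\ref{ggh_gh_2C}, that the three subspaces along which each splits $A$ form a bounded-angle triple. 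The main subtlety of the proof lies precisely in this last norm estimate for the mixed projections; the rest is essentially a bookkeeping exercise once the factorization is in place.
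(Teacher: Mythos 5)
Your proof is correct and follows essentially the same route as the paper's: the same five-factor decomposition of $\overline{g_{g, gh}}$, with Lemma~\ref{projections_commute} supplying the quasi-translation property of the four outer factors and the non-degeneracy bounds of Remark~\ref{ggh_gh_2C} supplying the norm estimates. The differences are cosmetic: you make explicit the additivity of the $V^\transl_0$-component of quasi-translations, which the paper dismisses as ``not hard to check'', and you bound the raw projections $\pi_g$, $\pi_{g, gh}$, $\pi_{hg, g}$ directly instead of rewriting each conjugated factor as $\pi_0$ composed with a bounded product of canonizing maps.
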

\begin{proof}
We start (and this is a key part of this subsection) by invoking Lemma~\ref{projections_commute} on the four maps $\overline{\chi_1}^{-1}$, $\overline{\psi_1}^{-1}$, $\overline{\psi_2}$ and~$\overline{\chi_2}$. The lemma tells us that all four of them are quasi-translations. Hence the map~$\overline{g_{g, gh}} = \overline{\chi_1}^{-1} \circ \overline{\psi_1}^{-1} \circ \overline{g_{\subap}} \circ \overline{\psi_2} \circ \overline{\chi_2}$ is also a quasi-translation; this proves that $M_{g, gh}$ is well-defined.

Let us now show that the norms of these four maps are bounded by a constant that depends only on~$C$. It is not hard to check that the second part of the lemma follows from this. 
\begin{itemize}
\item Let us start with~$\overline{\chi_2}$. By definition, we have:
\begin{align}
\overline{\chi_2} &= \phi_{hg, g} \circ \pi_{hg, g} \circ \phi_{hg}^{-1} \nonumber \\
                  &= \left( \phi_{hg, g} \circ \pi_{hg, g} \circ \phi_{hg, g}^{-1} \right) \circ
                     \left( \phi_{hg, g} \circ \phi_{hg}^{-1} \right) \nonumber \\
                  &= \pi_0 \circ \left( \phi_{hg, g} \circ \phi_{hg}^{-1} \right),
\end{align}
where $\pi_0$ is the projection onto~$A^\sube_0$ parallel to~$V^\subg_0 \oplus V^\subl_0$. Now $\pi_0$ is actually an orthogonal projection, hence it has norm~$1$; and $\phi_{hg, g} \circ \phi_{hg}^{-1}$~is bounded by Remark~\ref{ggh_gh_2C}. 
\item Similarly, we have
\begin{equation}
\overline{\psi_2} = \pi_0 \circ \left( \phi_g \circ \phi_{hg, g}^{-1} \right),
\end{equation}
which is bounded.
\item To deal with $\overline{\psi_1}^{-1}$, note that $\pi_g: A^\subap_{g, gh} \to A^\subap_g$ and $\pi_{g, gh}: A^\subap_g \to A^\subap_{g, gh}$ are inverse to each other. We deduce that
\begin{align}
\overline{\psi_1}^{-1} &= \phi_{g, gh} \circ \pi_{g, gh} \circ \phi_g^{-1} \nonumber \\
                       &= \left( \phi_{g, gh} \circ \pi_{g, gh} \circ \phi_{g, gh}^{-1} \right) \circ
                          \left( \phi_{g, gh} \circ \phi_g^{-1} \right) \nonumber \\
                       &= \pi_0 \circ \left( \phi_{g, gh} \circ \phi_g^{-1} \right),
\end{align}
and we conclude as previously.
\item Similarly, we have
\begin{equation}
\overline{\chi_1}^{-1} = \pi_0 \circ \left( \phi_{gh} \circ \phi_{g, gh}^{-1} \right),
\end{equation}
and we conclude in the same way. \qedhere
\end{itemize}
\end{proof}

In order to prove Lemma~\ref{MghgMgghg}, we first need to prove that the maps $\overline{\Delta_1}$ and $\overline{\Delta_2}$ are, in an appropriate sense, close to the identity.

\begin{lemma}
\label{P1_almost_Id}
The estimates
\begin{hypothenum}
\item $\|\overline{\Delta_1}^{-1} - \Id\| \lesssim_C \alpha^\mathrm{Haus}\left( A^\subgap_{gh},\; A^\subgap_g \right)$;
\item $\|\overline{\Delta_2}^{-1} - \Id\| \lesssim_C \alpha^\mathrm{Haus}\left( A^\sublap_{hg},\; A^\sublap_g \right)$,
\end{hypothenum}
hold, as soon as the respective right-hand sides are smaller than some constant depending only on~$C$.
\end{lemma}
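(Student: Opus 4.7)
The plan is to focus on part~(i); part~(ii) follows by an entirely analogous argument, exchanging the roles of attracting and repelling ideal dynamical spaces throughout (which corresponds to replacing $g$ by $g^{-1}$ and swapping $\subgap$ with $\sublap$).

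First, I unwind the commutative Diagram~\ref{fig:largediagcomm1} to read off the explicit formula
\[
\overline{\Delta_1} \;=\; \phi_{g,gh} \circ P_1 \circ \phi_{gh}^{-1},
\qquad
\overline{\Delta_1}^{-1} \;=\; \phi_{gh} \circ P_1^{-1} \circ \phi_{g,gh}^{-1},
\]
where $P_1 \colon A^{\subap}_{gh} \to A^{\subap}_{g,gh}$ is the restriction of the linear projection $A \to A^{\subap}_{g,gh}$ parallel to $V^{\subgg}_{g} \oplus V^{\subll}_{g}$. By Remark~\ref{ggh_gh_2C}, the pairs canonized by $\phi_{gh}$ and $\phi_{g,gh}$ are $2C$-non-degenerate, hence $\|\phi_{gh}^{\pm 1}\|, \|\phi_{g,gh}^{\pm 1}\| \lesssim_C 1$. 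Writing
\[
\overline{\Delta_1}^{-1} - \Id_{A^\sube_0} \;=\; \phi_{gh} \circ \bigl(P_1^{-1} \circ \phi_{g,gh}^{-1} - \phi_{gh}^{-1}\bigr),
\]
I reduce to bounding the operator norm of $P_1^{-1} \circ \phi_{g,gh}^{-1} - \phi_{gh}^{-1}$ as maps $A^\sube_0 \to A$. Introducing the auxiliary identification $\psi := \phi_{g,gh}^{-1}\colon A^\sube_0 \to A^{\subap}_{g,gh}$, it suffices to establish, for every $x \in A^\sube_0$, the two estimates
\begin{enumerate}
\item[(a)] $\| P_1^{-1}(\psi(x)) - \psi(x) \| \lesssim_C \alpha^{\mathrm{Haus}}(A^{\subgap}_{gh}, A^{\subgap}_{g}) \cdot \|x\|$;
\item[(b)] $\| \phi_{gh}^{-1}(x) - \psi(x) \| \lesssim_C \alpha^{\mathrm{Haus}}(A^{\subgap}_{gh}, A^{\subgap}_{g}) \cdot \|x\|$.
\end{enumerate}

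For (a), the key observation is that for $y = \psi(x) \in A^{\subap}_{g,gh}$, the vector $P_1^{-1}(y) - y$ lies by construction in $V^{\subgg}_{g} \oplus V^{\subll}_{g}$ and connects two nearby affine subspaces $A^{\subap}_{g,gh}$ and $A^{\subap}_{gh}$, both contained in the common affine parabolic space $A^{\sublap}_{gh}$. The Hausdorff distance $\alpha^{\mathrm{Haus}}(A^{\subap}_{gh}, A^{\subap}_{g,gh})$ between these two parallel subspaces is in turn controlled by $\alpha^{\mathrm{Haus}}(A^{\subgap}_{gh}, A^{\subgap}_{g})$, by an elementary transversality argument using the $2C$-non-degeneracy of the pairs $(A^{\subgap}_{gh}, A^{\sublap}_{gh})$ and $(A^{\subgap}_{g}, A^{\sublap}_{gh})$ (both intersections share the second factor). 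Finally, the associated projection parallel to $V^{\subgg}_{g} \oplus V^{\subll}_{g}$ has uniformly bounded operator norm in the relevant region by the same non-degeneracy estimates, which converts the distance bound into the desired operator-norm bound. For (b), I compare the two optimal canonizing maps $\phi_{gh}^{-1}$ and $\phi_{g,gh}^{-1}$ of the two nearby pairs $(A^{\subgap}_{gh}, A^{\sublap}_{gh})$ and $(A^{\subgap}_{g}, A^{\sublap}_{gh})$: their ambiguity is described by Proposition~\ref{canonizing}, and a quantitative compactness/perturbation argument (using the $2C$-non-degeneracy to stay on a compact set of transverse pairs, and the smoothness of the assignment of optimal canonizing maps) gives the claimed Lipschitz bound in terms of $\alpha^{\mathrm{Haus}}(A^{\subgap}_{gh}, A^{\subgap}_{g})$.

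The main obstacle is step~(b): controlling the difference between two optimal canonizing maps quantitatively in terms of the Hausdorff distance of the pairs they canonize. The smallness hypothesis on $\alpha^{\mathrm{Haus}}$ in the lemma statement is used precisely here, to stay in the regime where all implicit transversality conditions (in particular, the splittings $A = V^{\subll}_{g} \oplus V^{\subgg}_{g} \oplus A^{\subap}_{\bullet}$ for $\bullet \in \{g, gh, g\text{-}gh\}$) remain quantitatively uniform in $C$, and where the obvious first-order perturbation estimates for the canonizing map assignment are valid.
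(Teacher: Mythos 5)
There is a genuine gap, and it starts with your unwinding of Diagram~\ref{fig:largediagcomm1}. The map $\phi_{g,gh}\circ P_1\circ \phi_{gh}^{-1}$ is not $\overline{\Delta_1}$: it is $\overline{\chi_1}\circ\overline{\Delta_1}$ (the map called $\overline{P_1}$ in the earlier papers, cf.\ Remark~\ref{clean_proof} and \eqref{eq:P1_new_decomposition}). Since $\overline{\chi_1}=\phi_{g,gh}\circ\pi_{g,gh}\circ\phi_{gh}^{-1}$, commutativity forces
\[
\overline{\Delta_1}\;=\;\phi_{gh}\circ\bigl(\pi_{g,gh}^{-1}\circ P_1\bigr)\circ\phi_{gh}^{-1},
\qquad
\overline{\Delta_1}^{-1}\;=\;\phi_{gh}\circ\bigl(P_1^{-1}\circ\pi_{g,gh}\bigr)\circ\phi_{gh}^{-1},
\]
i.e.\ the \emph{same} canonizing map $\phi_{gh}$ appears on both sides, and the whole content of the lemma is the estimate $\|P_1^{-1}(\pi_{g,gh}(x))-x\|\lesssim_C \alpha^{\mathrm{Haus}}(A^\subgap_{gh},A^\subgap_g)\,\|x\|$ for $x\in A^\subap_{gh}$, comparing two projections of $A^\subap_{gh}$ onto $A^\subap_{g,gh}$ (parallel to $V^\subgg_g\oplus V^\subll_g$ and to $V^\subgg_g\oplus V^\subll_{gh}$ respectively). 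With your identification the statement of the lemma would actually be false: $\overline{\chi_1}\circ\overline{\Delta_1}$ is in general a bounded quasi-translation that is nowhere near $\Id$, which is precisely why the proof factors it as $\overline{\chi_1}\circ\overline{\Delta_1}$, sending the bounded-but-not-small part $\overline{\chi_1}$ to Lemma~\ref{MgghgMg} (via Lemma~\ref{projections_commute}) and keeping only the near-identity part in $\overline{\Delta_1}$.

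The misidentification is what makes your step~(b) both necessary and unprovable. You need $\|\phi_{gh}^{-1}(x)-\phi_{g,gh}^{-1}(x)\|\lesssim_C\alpha^{\mathrm{Haus}}(A^\subgap_{gh},A^\subgap_g)\,\|x\|$, but optimal canonizing maps are not unique (Proposition~\ref{canonizing}: they are defined only up to left multiplication by $L_{X_0}\ltimes V^\sube_0$), there is no smooth or even continuous selection of them, and the maps $\phi_{gh}$, $\phi_{g,gh}$ were fixed arbitrarily at the start of the proof of Proposition~\ref{invariant_additivity_only}. Even if the two pairs $(A^\subgap_{gh},A^\sublap_{gh})$ and $(A^\subgap_g,A^\sublap_{gh})$ coincided, $\phi_{gh}\circ\phi_{g,gh}^{-1}$ could be a large (though $\lesssim_C 1$) element of the stabilizer of $(A^\subge_0,A^\suble_0)$; so the difference is bounded by a constant depending on $C$, never by $O(\alpha^{\mathrm{Haus}})$, and no ``perturbation of the optimal canonizing map'' argument can close this. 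Your step~(a), by contrast, is essentially the right kind of estimate (it corresponds to the second half of the correct proof, bounding $\|P_1^{-1}(z)-z\|$ via the angle between $A^\subap_{gh}$ and $A^\subap_{g,gh}$ using the $2C$-non-degeneracy); what is missing alongside it, once the formula is corrected, is the elementary bound $\|\pi_{g,gh}(x)-x\|\lesssim_C\alpha^{\mathrm{Haus}}(A^\subgap_{gh},A^\subgap_g)\,\|x\|$, obtained by pushing forward by the bounded map $\phi_{g,gh}$ and measuring the angle of $x$ with $A^\subap_{g,gh}$ --- not any comparison of the two canonizing maps.
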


\begin{corollary}
\label{P1_almost_Id_cor}
We have:
\begin{hypothenum}
\item $\|\overline{\Delta_1}^{-1} - \Id\| \lesssim_C s_{X_0}(g)$;
\item $\|\overline{\Delta_2}^{-1} - \Id\| \lesssim_C s_{X_0}(g^{-1})$;
\item $\|\ell(\overline{\Delta_1})^{-1} - \Id\| \lesssim_C \vec{s}_{X_0}(\ell(g))$;
\item $\|\ell(\overline{\Delta_2})^{-1} - \Id\| \lesssim_C \vec{s}_{X_0}(\ell(g))$.
\end{hypothenum}
\end{corollary}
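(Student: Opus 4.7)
The plan is to combine Lemma~\ref{P1_almost_Id} with Proposition~\ref{regular_product}, sometimes applied to the reversed pair $(h, g)$ in place of $(g, h)$ (which is legitimate because the hypotheses of Proposition~\ref{invariant_additivity_only} are symmetric in $g$ and~$h$). Points~(i) and~(ii) will follow directly by chaining two estimates. For~(i):
\[\|\overline{\Delta_1}^{-1} - \Id\| \;\lesssim_C\; \alpha^\mathrm{Haus}\!\left( A^\subgap_{gh},\; A^\subgap_g \right) \;\lesssim_C\; s_{X_0}(g),\]
the first inequality being Lemma~\ref{P1_almost_Id}~(i) and the second the first line of Proposition~\ref{regular_product}~(ii). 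For~(ii), applying the second line of Proposition~\ref{regular_product}~(ii) with the roles of $g$ and $h$ swapped yields $\alpha^\mathrm{Haus}(A^\sublap_{hg}, A^\sublap_g) \lesssim_C s_{X_0}(g^{-1})$; chaining with Lemma~\ref{P1_almost_Id}~(ii) then gives~(ii). In both cases, taking $s_{\ref{invariant_additivity_only}}(C)$ small enough will ensure that the smallness hypothesis of Lemma~\ref{P1_almost_Id} on the right-hand side is automatically satisfied.

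Points~(iii) and~(iv) are more delicate, because the desired bound $\vec{s}_{X_0}(\ell(g))$ is strictly sharper than what would follow from~(i) by simply restricting the operator norm to $V^\sube_0 \subset A^\sube_0$: indeed $\vec{s}_{X_0}(\ell(g)) \lesssim_C s_{X_0}(g)$ by Proposition~\ref{affine_to_intrinsic_linear}~(ii), and the inequality can be strict. The plan is therefore to first establish the following linear refinement of Lemma~\ref{P1_almost_Id}:
\[\|\ell(\overline{\Delta_1})^{-1} - \Id\| \;\lesssim_C\; \alpha^\mathrm{Haus}\!\left( V^\subgap_{gh},\; V^\subgap_g \right), \qquad \|\ell(\overline{\Delta_2})^{-1} - \Id\| \;\lesssim_C\; \alpha^\mathrm{Haus}\!\left( V^\sublap_{hg},\; V^\sublap_g \right).\]
From the decomposition $\overline{P_1} = \overline{\chi_1} \circ \overline{\Delta_1}$ of Remark~\ref{clean_proof} and the formula for $\overline{\chi_1}$ extracted in the proof of Lemma~\ref{MgghgMg}, one has $\overline{\Delta_1} = \phi_{gh} \circ (\pi_{g,gh}^{-1} \circ P_1) \circ \phi_{gh}^{-1}$, so $\ell(\overline{\Delta_1})$ is the conjugate by $\ell(\phi_{gh})$ of a linear endomorphism of $V^\subap_{gh}$ that depends solely on the linear subspaces $V^\subgg_g$, $V^\subll_g$, $V^\subll_{gh}$ and $V^\subap_{g,gh}$. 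The very projection-chasing argument used to prove Lemma~\ref{P1_almost_Id} will then yield the refinement with the linear Hausdorff distance replacing the affine one. Once the refinement is granted, I will chain it with the linear part of Proposition~\ref{regular_product}~(i), applied once to $(g,h)$ for (iii) and once to $(h,g)$ for (iv), obtaining $\alpha^\mathrm{Haus}(V^\subgap_{gh}, V^\subgap_g) \lesssim_C \vec{s}_{X_0}(\ell(g))$ and $\alpha^\mathrm{Haus}(V^\sublap_{hg}, V^\sublap_g) \lesssim_C \vec{s}_{X_0}(\ell(g))$ respectively.

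The main obstacle will be verifying the linear refinement cleanly. Conceptually it should be free: the operators $\pi_{g,gh}$ and $P_1$ differ only in the direction of projection (by $V^\subll_g$ versus $V^\subll_{gh}$), so the discrepancy $\pi_{g,gh}^{-1} \circ P_1 - \Id$ is intrinsically a linear object whose operator norm is governed purely by how the linear parabolic spaces attached to $gh$ deviate from those attached to~$g$, not by any translational offsets of the corresponding affine spaces. The technical task will consist in isolating this linear contribution inside the computation of the proof of Lemma~\ref{P1_almost_Id}, which should be routine since that proof proceeds by explicit linear-algebraic manipulation of projections.
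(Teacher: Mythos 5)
Your proposal is correct and follows essentially the paper's own argument: points (i)--(ii) are obtained exactly as in the paper by chaining Lemma~\ref{P1_almost_Id} with Proposition~\ref{regular_product}~(ii) (applied to $(g,h)$ and to $(h,g)$), and your ``linear refinement'' for (iii)--(iv) is precisely what the paper obtains by applying Lemma~\ref{P1_almost_Id} to the pair $(\ell(g),\ell(h))$ --- which satisfies the same hypotheses, by Lemma~\ref{affine_to_vector_weak} and Proposition~\ref{affine_to_intrinsic_linear} --- and then invoking Proposition~\ref{regular_product}~(i) for the two pairs. The only difference is packaging: instead of re-running the projection-chasing argument with the linear subspaces, the paper reuses the Lemma verbatim on the linear parts (with Proposition~\ref{affine_to_intrinsic_linear}~(ii) supplying the required smallness of the right-hand side), so the extra verification you anticipate as the main obstacle is not actually needed.
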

In light of Remark~\ref{clean_proof}, this corollary can be seen as a simpler version of Lemma~8.9 in~\cite{Smi16}. Indeed the old~$\overline{P_1}$ corresponds by~\eqref{eq:P1_new_decomposition} to the new $\overline{\chi_1} \circ \overline{\Delta_1}$, while the old~$\overline{P''_1}$ corresponds to~$\overline{\chi_1}$ alone.
\begin{proof}
Points (i) and~(ii) immediately follow from the Lemma combined with Proposition~\ref{regular_product}~(ii), provided that we take $s_{\ref{invariant_additivity_only}}(C)$ small enough.

For points (iii) and~(iv), simply apply the Lemma to the pair $(\ell(g), \ell(h))$. (It is easy to check that this pair still satisfies the hypotheses of Proposition~\ref{invariant_additivity_only}.) We then get:
\[\begin{cases}
\|\ell(\overline{\Delta_1})^{-1} - \Id\| \lesssim_C \alpha^\mathrm{Haus}\left( V^\subgap_{gh},\; V^\subgap_g \right) \lesssim_C \vec{s}_{X_0}(\ell(g)); \\
\|\ell(\overline{\Delta_2})^{-1} - \Id\| \lesssim_C \alpha^\mathrm{Haus}\left( V^\sublap_{hg},\; V^\sublap_g \right) \lesssim_C \vec{s}_{X_0}(\ell(g)).
\end{cases}\]
In each case, the second inequality follows from Proposition~\ref{regular_product}~(i). The first inequality is an application of the Lemma, which is licit (provided $s_{\ref{invariant_additivity_only}}(C)$~is small enough), since Proposition~\ref{affine_to_intrinsic_linear}~(ii) propagates the required upper bound to~$\vec{s}_{X_0}(\ell(g))$.
\end{proof}

\begin{proof}[Proof of Lemma~\ref{P1_almost_Id}]
This proof might seem slightly technical, but there is an easy intuition behind it. Essentially, the idea is that if you jump back and forth between two spaces that almost coincide, going both times in directions whose angle with the two spaces is not too shallow, then you can't end up very far from your starting point.
\begin{hypothenum}
\item By Remark~\ref{ggh_gh_2C}, we know that $\|\phi_{gh}^{\pm 1}\| \leq 2C$. Conjugating everything by this map, it is thus sufficient to show that for every $x \in A^\subap_{gh}$, we have
\begin{equation}
\label{eq:p1pigghx_close_to_x}
\left\| P_1^{-1}(\pi_{g, gh}(x)) - x \right\| \lesssim_C \alpha^\mathrm{Haus}\left( A^\subgap_{gh},\; A^\subgap_g \right)\|x\|
\end{equation}
(where by $P_1^{-1}$ we mean the inverse of the bijection $P_1: A^\subap_{gh} \to A^\subap_{g, gh}$).

\begin{itemize}
\item Let us first estimate the quantity $\|\pi_{g, gh}(x) - x\|$. To begin with, let us push everything forward by the map $\phi_{g, gh}$; writing $y = \phi_{g, gh}(x)$, we have:
\begin{align*}
\frac{\|\phi_{g, gh}(\pi_{g, gh}(x) - x)\|}{\|\phi_{g, gh}(x)\|}
&= \frac{\|\pi_0(y) - y\|}{\|y\|} \\
&= \sin \alpha(y, A^\sube_0) \\
&\leq \alpha(y, A^\sube_0).
\end{align*}
By Remark~\ref{ggh_gh_2C}, we know that $\|\phi_{g, gh}^{\pm 1}\| \leq 2C$; hence we may pull everything back by~$\phi_{g, gh}$ again:
\begin{align}
\frac{\|\pi_{g, gh}(x) - x\|}{\|x\|}
&\lesssim_C \alpha(x, A^\subap_{g, gh}) \nonumber \\
&\leq\phantom{_C} \alpha\left( A^\subap_{gh},\; A^\subgap_g \right) \nonumber \\
&\leq\phantom{_C} \alpha^\mathrm{Haus}\left( A^\subgap_{gh},\; A^\subgap_g \right).
\end{align}
We conclude that
\begin{equation}
\label{eq:pigghx_close_to_x}
\|\pi_{g, gh}(x) - x\| \lesssim_C \alpha^\mathrm{Haus}\left( A^\subgap_{gh},\; A^\subgap_g \right)\|x\|.
\end{equation}

\item Let us now estimate the quantity $\left\| P_1^{-1}(\pi_{g, gh}(x)) - \pi_{g, gh}(x) \right\|$. We introduce the notation~$z = \pi_{g, gh}(x)$, and we define $\xi$ to be the unique linear automorphism of~$A$ satisfying
\begin{equation}
\xi(x) =
\begin{cases}
\phi_{g, gh}(x) &\text{if } x \in A^\subgap_g \\
\phi_g(x) &\text{if } x \in V^\subll_g
\end{cases}
\end{equation}
(so that $\xi^{-1}(V^\subg_0) = V^\subgg_g$ and~$\xi^{-1}(V^\subl_0) = V^\subll_g$ but~$\xi^{-1}(A^\sube_0) = A^\subap_{g, gh}$). From the inequalities $\|\phi_g^{\pm 1}\| \leq C$ and~$\|\phi_{g, gh}^{\pm 1}\| \leq 2C$, it is easy to deduce that both norms $\|\xi\|$ and $\|\xi^{-1}\|$ are bounded by a constant that depends only on~$C$.

Now we obviously have $\xi \circ P_1 \circ \xi^{-1} = \pi_0$, hence
\[\frac{\|\xi(P_1^{-1}(z) - z)\|}{\|\xi(z)\|} = \tan \alpha(\xi(P_1^{-1}(z)), A^\sube_0).\]
Now we have
\begin{align*}
\alpha(\xi(P_1^{-1}(z)), A^\sube_0)
&\lesssim_C \alpha(P_1^{-1}(z), A^\subap_{g, gh}) &\text{since $\xi$ and $\xi^{-1}$ are bounded} \\
&\lesssim_C \alpha^\mathrm{Haus}\left( A^\subgap_{gh},\; A^\subgap_g \right) &\text{since $P_1^{-1}(z) \in A^\subap_{gh}$.}
\end{align*}
If the right-hand side is small enough, we may assume that $\alpha(\xi(z), A^\sube_0)$ is smaller than some fixed constant~$\alpha_0$ (say $\alpha_0 = \frac{\pi}{4}$). For every $\alpha \leq \alpha_0$, we obviously have
\[\tan \alpha \lesssim_{\alpha_0} \alpha.\]
It follows that
\begin{equation}
\frac{\|\xi(P_1^{-1}(z) - z)\|}{\|\xi(z)\|} \lesssim_C \alpha^\mathrm{Haus}\left( A^\subgap_{gh},\; A^\subgap_g \right).
\end{equation}
Now since $\xi$ and $\xi^{-1}$ are bounded, we deduce that
\[\|P_1^{-1}(z) - z\| \lesssim_C \alpha^\mathrm{Haus}\left( A^\subgap_{gh},\; A^\subgap_g \right)\|z\|.\]
It remains to estimate $\|z\|$ in terms of $\|x\|$. We have:
\[\|z\| \;\leq\; \|x\| + \|\pi_{g, gh}(x) - x\| \;\lesssim_C\; \|x\| + \alpha^\mathrm{Haus}\left( A^\subgap_{gh},\; A^\subgap_g \right)\|x\|\]
by \eqref{eq:pigghx_close_to_x}. Taking~$\alpha^\mathrm{Haus}\left( A^\subgap_{gh},\; A^\subgap_g \right)$ small enough, we may assume that $\|z\| \leq 2\|x\|$. We conclude that
\begin{equation}
\label{eq:p1z_close_to_z}
\left\| P_1^{-1}(\pi_{g, gh}(x)) - \pi_{g, gh}(x) \right\| \lesssim_C \alpha^\mathrm{Haus}\left( A^\subgap_{gh},\; A^\subgap_g \right)\|x\|.
\end{equation}
\end{itemize}
Adding together \eqref{eq:pigghx_close_to_x} and~\eqref{eq:p1z_close_to_z}, the desired inequality~\eqref{eq:p1pigghx_close_to_x} follows.

\item The proof is completely analogous. \qedhere
\end{hypothenum}
\end{proof}

\begin{lemma}
\label{MghgMgghg}
We have
\[\|M_{gh}(g) - M_{g, gh}(g)\| \lesssim_C s_{X_0}(g).\]
\end{lemma}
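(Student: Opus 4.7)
The plan is to exploit the commutative factorization $\overline{g_{gh}} = \overline{\Delta_1}^{-1} \circ \overline{g_{g, gh}} \circ \overline{\Delta_2}$ coming from Diagram~\ref{fig:largediagcomm1}, and to compare the two Margulis-like quantities by expanding and carefully tracking the affine versus linear contributions. Two key inputs from Lemma~\ref{MgghgMg} will be used throughout: $\overline{g_{g,gh}}$ is a bounded quasi-translation (its operator norm is $\lesssim_C 1$), and as a consequence $\pi_{\transl} \circ \ell(\overline{g_{g,gh}}) = \pi_{\transl}$, since $\ell(\overline{g_{g,gh}}) \in L$ fixes $V^\transl_0$ pointwise and preserves its orthogonal complement $V^\extra_0$.

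First, I would evaluate at $x = p_0 \in V^\sube_{\Aff, 0}$ and split the affine difference as
\[
\overline{g_{gh}}(p_0) - \overline{g_{g, gh}}(p_0) = (\overline{\Delta_1}^{-1} - \Id)\bigl(\overline{g_{g,gh}}(\overline{\Delta_2}(p_0))\bigr) + \ell(\overline{g_{g,gh}})\bigl(\overline{\Delta_2}(p_0) - p_0\bigr).
\]
After applying $\pi_{\transl}$, the first summand is bounded by $\|\overline{\Delta_1}^{-1} - \Id\| \cdot \|\overline{g_{g,gh}}(\overline{\Delta_2}(p_0))\| \lesssim_C s_{X_0}(g)$, using Corollary~\ref{P1_almost_Id_cor}(i) together with the boundedness of $\overline{g_{g,gh}}$ and of $\overline{\Delta_2}$. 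Applying $\pi_\transl$ to the second summand and using $\pi_{\transl} \circ \ell(\overline{g_{g,gh}}) = \pi_{\transl}$, it collapses to $\pi_{\transl}(\overline{\Delta_2}(p_0) - p_0) = \pi_{\transl}(t(\overline{\Delta_2}))$, where $t(\overline{\Delta_2}) \in V^\sube_0$ is the translation part of $\overline{\Delta_2}$ in the block decomposition $A^\sube_0 = V^\sube_0 \oplus \mathbb{R} p_0$.

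The main obstacle is then to prove $\|\pi_{\transl}(t(\overline{\Delta_2}))\| \lesssim_C s_{X_0}(g)$: the naive bound from Corollary~\ref{P1_almost_Id_cor}(ii) yields only $\lesssim_C s_{X_0}(g^{-1})$, which is too weak. To sharpen this, I would use the finer description $\overline{\Delta_2} = \overline{\pi_{hg,g}}^{-1} \circ \overline{P_2}$, where $\overline{\pi_{hg,g}}$ is the canonized version of the bijective projection $\pi_{hg,g}:A^\subap_{hg}\to A^\subap_{hg,g}$; by Lemma~\ref{projections_commute} this is a bounded quasi-translation, and in particular its Margulis-like contribution along $V^\transl_0$ cancels when one inverts it. Thus $\pi_\transl(t(\overline{\Delta_2}))$ is controlled only by the \emph{linear} deviation of $\overline{P_2}$ from $\overline{\pi_{hg,g}}$, which is governed by Corollary~\ref{P1_almost_Id_cor}(iv): namely $\|\ell(\overline{\Delta_2}) - \Id\| \lesssim_C \vec{s}_{X_0}(\ell(g))$, and this in turn is $\lesssim_C s_{X_0}(g)$ via Proposition~\ref{affine_to_intrinsic_linear}(iii) combined with the lower bound $\|\restr{g^{-1}}{A^\subap_g}\| \geq 1$ noted in the proof of Lemma~\ref{affine_to_vector_weak}. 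A careful bookkeeping of how this linear error propagates to the $V^\transl_0$-component of $t(\overline{\Delta_2})$---essentially arguing as in the proof of Lemma~\ref{P1_almost_Id} but tracking only the $V^\transl_0$-projection---delivers the required bound, and combining with the previous summand completes the proof.
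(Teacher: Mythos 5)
There is a genuine gap at the first estimate. You bound the first summand by $\|\overline{\Delta_1}^{-1}-\Id\|\cdot\|\overline{g_{g,gh}}(\overline{\Delta_2}(p_0))\|$ and declare the second factor $\lesssim_C 1$ on the grounds that Lemma~\ref{MgghgMg} makes $\overline{g_{g, gh}}$ a \emph{bounded} quasi-translation. That is not what Lemma~\ref{MgghgMg} gives, and it is false in general: the lemma only bounds the four conjugating maps $\overline{\chi_1}^{-1},\overline{\psi_1}^{-1},\overline{\psi_2},\overline{\chi_2}$, whereas the middle factor $\overline{g_{\subap}}=\phi_g\circ\restr{g}{A^\subap_g}\circ\phi_g^{-1}$ is a quasi-translation whose norm is \emph{not} controlled by $C$. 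In a swinging representation its linear part acts on $V^\subenz_0$ with eigenvalues $e^{\lambda(\jordan(g))}$, $\lambda\in\Omega^{w_0}\setminus\{0\}$, and its translation part in $V^\sube_0$ is arbitrary (this is exactly the phenomenon of Remark~\ref{quasi-translation-generalization}, and the reason the argument of~\cite{Smi16} had to be restructured). Accordingly the paper can only record $\|\overline{g_{g,gh}}^{\pm1}\|\lesssim_C \|\restr{g^{\mp1}}{A^\subap_g}\|$, cf.~\eqref{eq:y_estimation}, a quantity that may be arbitrarily large under the hypotheses of Proposition~\ref{invariant_additivity_only}. So your bound on the first summand does not follow; the missing boundedness is precisely the crux of the lemma.

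The paper's proof circumvents this by evaluating the quasi-translation at the special base point $x=\overline{\Delta_2}^{-1}\left(\overline{g_{g,gh}}^{-1}(p_0)\right)$ rather than at $p_0$: then the argument of $\overline{\Delta_1}^{-1}-\Id$ is exactly the unit vector $p_0$, and the only large vector, $y=\overline{g_{g,gh}}^{-1}(p_0)$ with $\|y\|\lesssim_C\|\restr{g^{-1}}{A^\subap_g}\|$, is ever hit only by $\ell(\overline{\Delta_2}^{-1})-\Id$; the resulting product $\vec{s}_{X_0}(\ell(g))\,\|\restr{g^{-1}}{A^\subap_g}\|$ is then absorbed into $s_{X_0}(g)$ by Proposition~\ref{affine_to_intrinsic_linear}~(iii) (the one place where the strong version of Lemma~\ref{de_part_et_d_autre} is needed), while the affine deviation $\overline{\Delta_2}^{-1}-\Id$ is applied only to $p_0$. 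Your route could be partially repaired by performing the same linear/affine splitting on $(\overline{\Delta_1}^{-1}-\Id)(z)$ with $z=\overline{g_{g,gh}}(\overline{\Delta_2}(p_0))$, but the relevant product is then $\vec{s}_{X_0}(\ell(g))\,\|\restr{g}{A^\subap_g}\|$, which Proposition~\ref{affine_to_intrinsic_linear}~(iii) applied to $g^{-1}$ only bounds by $s_{X_0}(g^{-1})$ --- enough for Proposition~\ref{invariant_additivity_only}, but not the stated estimate, and in any case not what you wrote. Finally, your closing paragraph, which tries to upgrade $\pi_{\transl}(\overline{\Delta_2}(p_0)-p_0)$ from $O_C(s_{X_0}(g^{-1}))$ to $O_C(s_{X_0}(g))$ using only Corollary~\ref{P1_almost_Id_cor}~(iv), is not a proof: that corollary concerns the linear part of $\overline{\Delta_2}$ and says nothing about its translation part, and the promised ``bookkeeping'' of the $V^\transl_0$-projection is never carried out.
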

The proof bears some similarity to the second half of the proof of Lemma~8.6 in~\cite{Smi16}.
\begin{proof}
We shall decompose the difference $M_{gh}(g) - M_{g, gh}(g)$ into two terms~\eqref{eq:Mgh_difference_decomposition}, and then estimate each term separately.

Recall that
\[M_{gh}(g) = \pi_{\transl}(\overline{g_{gh}}(x) - x),\]
where $x$ can be any element of the affine space~$V^\sube_{\Aff, 0}$. Let us take
\[x = \overline{\Delta_2}^{-1} \left( \overline{g_{g, gh}}^{-1}(p_0) \right).\]
We remind that $p_0$ is the point chosen as the origin of the affine space~$V_{\Aff}$ (see Subsection~\ref{sec:affine}), seen in the extended affine space~$A$ as a unit vector orthogonal to~$V$.

Since $\overline{g_{gh}} = \overline{\Delta_1}^{-1} \circ \overline{g_{g, gh}} \circ \overline{\Delta_2}$, we may then write
\begin{align*}
M_{gh}(g) &= \pi_{\transl}\left(\overline{\Delta_2}(x) - x\right) \;+\;
   \pi_{\transl}\left(\overline{g_{g, gh}}\left(\overline{\Delta_2}(x)\right) - \overline{\Delta_2}(x)\right) \;+\;
   \pi_{\transl}\left(\overline{\Delta_1}^{-1}\left(\overline{g_{g, gh}}\left(\overline{\Delta_2}(x)\right)\right) - \overline{g_{g, gh}}\left(\overline{\Delta_2}(x)\right)\right) \\
   &= \underbrace{\pi_{\transl}\left( \overline{g_{g, gh}}^{-1}(p_0) - \overline{\Delta_2}^{-1} \left( \overline{g_{g, gh}}^{-1}(p_0)\right) \right)}_{\displaystyle =: (I)} \;+\;
   \underbrace{\pi_{\transl}\left(p_0 - \overline{g_{g, gh}}^{-1}(p_0)\right)}_{\displaystyle =: (II)} \;+\;
   \underbrace{\pi_{\transl}\left(\overline{\Delta_1}^{-1}(p_0) - p_0\right)}_{\displaystyle =: (III)}.
\end{align*}
The middle term~$(II)$ is then by definition equal to~$M_{g, gh}(g)$, so that
\begin{equation}
\label{eq:Mgh_difference_decomposition}
M_{gh}(g) - M_{g, gh}(g) = (I) + (III).
\end{equation}
Now we have
\begin{align}
\label{eq:I_estimation}
\|(III)\| &\leq \|\overline{\Delta_1}^{-1}(p_0) - p_0\| \nonumber\\
          &\leq \|\overline{\Delta_1}^{-1} - \Id\|\|p_0\| \nonumber\\
          &=    \|\overline{\Delta_1}^{-1} - \Id\| \nonumber\\
          &\lesssim_C s_{X_0}(g)
                 &\text{ by Corollary~\ref{P1_almost_Id_cor}~(i).}
\end{align}
It remains to estimate~$(I)$. We set
\[y := \overline{g_{g, gh}}^{-1}(p_0).\]
Let us calculate the norm of this vector:
\begin{align}
\label{eq:y_estimation}
\|y\| &\leq\phantom{_C}\! \left\|\overline{g_{g, gh}}^{-1}\right\| \nonumber\\
      &=\phantom{_C}\!    \left\|\overline{\chi_2}^{-1} \circ \overline{\psi_2}^{-1} \circ \overline{g_\subap}^{-1} \circ \overline{\psi_1} \circ \overline{\chi_1}\right\| \nonumber\\
      &\lesssim_C \left\|\overline{g_\subap}^{-1}\right\| \nonumber\\
      &=\phantom{_C}\!    \left\|\phi_g \circ \restr{g^{-1}}{A^\subap_g} \circ \phi_g^{-1}\right\| \nonumber\\
      &\lesssim_C \left\|\restr{g^{-1}}{A^\subap_g}\right\|.
\end{align}
(To justify the third line, remember that we have seen in the proof of Lemma~\ref{MgghgMg} that the four maps $\overline{\chi_1}^{-1}$, $\overline{\psi_1}^{-1}$, $\overline{\psi_2}$ and~$\overline{\chi_2}$ are all bounded.) Now we have:
\begin{align}
\label{eq:III_estimation}
\|(I)\| &= \left\|\pi_t\left(y - \overline{\Delta_2}^{-1}(y)\right)\right\| \nonumber\\
        &\leq \left\|\overline{\Delta_2}^{-1}(y) - y\right\| \nonumber\\
        &= \mathrlap{\left\|\left(\ell(\overline{\Delta_2}^{-1}) - \Id\right)(y - p_0) +
                  \left(\overline{\Delta_2}^{-1} - \Id\right)(p_0)\right\|}
             &\text{ by definition of } p_0 \nonumber\\
        &\leq \left\|\ell(\overline{\Delta_2}^{-1}) - \Id\right\|\|y\| + \left\|\overline{\Delta_2}^{-1} - \Id\right\|\!\!\!\!\!\!\!\!\!\!\!\!\!\!\!\!\!\!\!\!\!
             &\text{ as } \|y\|^2 = \|y - p_0\|^2 + 1 \nonumber\\
        &\lesssim_C \vec{s}_{X_0}(\ell(g))\left\|\restr{g^{-1}}{A^\subap_g}\right\| + s_{X_0}(g)
             &\text{ by Corollary~\ref{P1_almost_Id_cor} (ii) and~(iv) and~\eqref{eq:y_estimation}} \nonumber\\
        &\lesssim_C s_{X_0}(g)
             &\text{ by Proposition~\ref{affine_to_intrinsic_linear}~(iii).}
\end{align}
Joining together \eqref{eq:I_estimation} and~\eqref{eq:III_estimation}, the conclusion follows.
\end{proof}

\section{Margulis invariants of words}
\label{sec:induction}

We have already studied how contraction strengths (Proposition~\ref{regular_product}) and Margulis invariants (Proposition~\ref{invariant_additivity_only}) behave when we take the product of two mutually $C$-non-degenerate, sufficiently contracting $\rho$-regular maps. The goal of this section is to generalize these results to words of arbitrary length on a given set of generators. This is a straightforward generalization of Section~9 in~\cite{Smi16} and of Section~5 in~\cite{Smi14}.

\begin{definition}
\label{cyclically_reduced_definition}
Take $k$ generators $g_1, \ldots, g_k$. Consider a word $g = g_{i_1}^{\sigma_1} \cdots g_{i_l}^{\sigma_l}$ with length~$l \geq 1$ on these generators and their inverses (for every $m = 1, \ldots, l$, we have $1 \leq i_m \leq k$ and~$\sigma_m = \pm 1$). We say that $g$ is \emph{reduced} if for every $m = 1, \ldots, l-1$, we have $(i_{m+1}, \sigma_{m+1}) \neq (i_m, -\sigma_m)$. We say that $g$ is \emph{cyclically reduced} if it is reduced and also satisfies $(i_1, \sigma_1) \neq (i_l, -\sigma_l)$.
\end{definition}

\begin{proposition}
\label{Schottky_group}
For every $C \geq 1$, there is a positive constant $s_{\ref{Schottky_group}}(C) \leq 1$ with the following property. Take any family of maps $g_1, \ldots, g_k \in G \ltimes V$ satisfying the following hypotheses:
\begin{enumerate}[label=\textnormal{(H\arabic*)}]
\item \label{itm:all_are_X0} Every $g_i$ is $\rho$-regular.
\item \label{itm:pairwise_C_non_deg} Any pair taken among the maps $\{g_1, \ldots, g_k, g_1^{-1}, \ldots, g_k^{-1}\}$ is $C$-non-degenerate, except of course if it has the form $(g_i, g_i^{-1})$ for some $i$.
\item \label{itm:all_are_contracting} For every $i$, we have $s_{X_0}(g_i) \leq s_{\ref{Schottky_group}}(C)$ and $s_{X_0}(g_i^{-1}) \leq s_{\ref{Schottky_group}}(C)$.
\end{enumerate}
Then every nonempty cyclically reduced word $g = g_{i_1}^{\sigma_1} \cdots g_{i_l}^{\sigma_l}$ (with $1 \leq i_m \leq k$, $\sigma_m = \pm 1$ for every $m$) has the following properties.
\begin{hypothenum}
\addtolength{\itemsep}{.5\baselineskip}
\item The map~$g$ is $\rho$-regular.
\item $\begin{cases}
       \alpha^\mathrm{Haus} \left(A^{\subgap}_{g},\;
                                  A^{\subgap}_{g_{i_1}^{\sigma_1}} \right)
          \lesssim_C 2 \left( 1 - 2^{-(l-1)} \right) s_{\ref{Schottky_group}}(C) \vspace{1mm} \\
       \alpha^\mathrm{Haus} \left(A^{\sublap}_{g},\;
                                  A^{\sublap}_{g_{i_l}^{\sigma_l}} \right)
          \lesssim_C 2 \left( 1 - 2^{-(l-1)} \right) s_{\ref{Schottky_group}}(C).
       \end{cases}$
\item $s_{X_0}(g) \leq 2^{-(l-1)} s_{\ref{Schottky_group}}(C)$.
\item $\displaystyle \left\| M(g) - \sum_{m=1}^{l} M(g_{i_m}^{\sigma_m}) \right\| \leq (l-1)k_{\ref{invariant_additivity_only}}(2C)$.
\item If $h = g_{i'_1}^{\sigma'_1} \cdots g_{i'_{l'}}^{\sigma'_{l'}}$ is another nonempty cyclically reduced word such that $gh$ is also cyclically reduced, the pair $(g, h)$ is $2C$-non-degenerate.
\end{hypothenum}
\end{proposition}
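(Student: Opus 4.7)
The plan is to proceed by induction on the length $l$, establishing all five conclusions simultaneously. At the base case $l = 1$: items (i), (iii) reduce to (H1), (H3); items (ii) and (iv) are vacuous (the coefficient $2(1-2^{0}) = 0$ and the sum in (iv) has a single term); item (v) for two single-letter words is exactly (H2), with the forbidden pair $(g_j, g_j^{-1})$ excluded by reducedness of $gh$.

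For the inductive step at length $l \geq 2$, I would write $g = g' \cdot g_{i_l}^{\sigma_l}$ and apply inductive (v) to the pair $(g', g_{i_l}^{\sigma_l})$ to obtain $2C$-non-degeneracy; combined with inductive (iii) for $g'$ giving $s_{X_0}(g') \leq 2^{-(l-2)} s_{\ref{Schottky_group}}(C)$ and (H3) for $g_{i_l}^{\sigma_l}$, this lets me invoke both Propositions~\ref{regular_product} and~\ref{invariant_additivity_only} on the product, provided $s_{\ref{Schottky_group}}(C) \leq \min(s_{\ref{regular_product}}(2C), s_{\ref{invariant_additivity_only}}(2C))$. Conclusion (i) is the $\rho$-regularity output of Proposition~\ref{regular_product}; (iii) comes from $s_{X_0}(g) \lesssim_C s_{X_0}(g')\, s_{X_0}(g_{i_l}^{\sigma_l}) \leq 2^{-(l-2)} s_{\ref{Schottky_group}}(C)^2$ combined with a smallness of $s_{\ref{Schottky_group}}(C)$ absorbing the implicit constant and halving the bound to $2^{-(l-1)} s_{\ref{Schottky_group}}(C)$; (ii) from Proposition~\ref{regular_product}(ii), which gives $\alpha^\mathrm{Haus}(A^\subgap_g, A^\subgap_{g'}) \lesssim_C s_{X_0}(g')$, plus a triangle inequality against inductive (ii) for $g'$; (iv) from Proposition~\ref{invariant_additivity_only} yielding $\|M(g) - M(g') - M(g_{i_l}^{\sigma_l})\| \leq k_{\ref{invariant_additivity_only}}(2C)$, plus a triangle inequality against inductive (iv), producing the claimed $(l-1)\, k_{\ref{invariant_additivity_only}}(2C)$.

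For conclusion (v) at the same length, with $g$, $h$, and $gh$ all cyclically reduced, item (ii) applied to both $g$ and $h$ will show that each of the spaces $A^\subgap_g$, $A^\sublap_g$, $A^\subgap_h$, $A^\sublap_h$ is $\lesssim_C s_{\ref{Schottky_group}}(C)$-close (in Hausdorff angle) to the analogous space of the appropriate extremal letter of $g$ or $h$. Each of the four extremal-letter pairs whose non-degeneracy must be verified is $C$-non-degenerate by (H2), with the forbidden pair $(g_j, g_j^{-1})$ excluded by cyclic reducedness of $gh$ for the two cross-pairs and by cyclic reducedness of $g$ (resp. $h$) for the two self-pairs. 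Lemma~\ref{continuity_of_non_degeneracy} will then upgrade closeness plus $C$-non-degeneracy to $2C$-non-degeneracy, provided $s_{\ref{Schottky_group}}(C)$ was chosen smaller than the threshold of that lemma (up to the implicit $C$-dependent factor).

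The main subtlety, rather than a conceptual obstacle, is that $g'$ may fail to be cyclically reduced even when $g$ is, so that the inductive (v) does not literally apply to the pair $(g', g_{i_l}^{\sigma_l})$. I expect the cleanest resolution to be to split $g$ as $g' \cdot g''$ at some index $m$ chosen so that both halves are cyclically reduced, which a short combinatorial analysis shows to always exist when $l \geq 2$ (the constraints obstructing the existence of such an $m$ turn out to conflict with either reducedness or cyclic reducedness of $g$); alternatively, one may strengthen the induction to cover all reduced words with a suitably adapted version of (v), and deduce the stated form as a specialization. Beyond this, the argument is pure bookkeeping: $s_{\ref{Schottky_group}}(C)$ must be fixed upfront so as to simultaneously meet the thresholds of Propositions~\ref{regular_product}, \ref{invariant_additivity_only}, and Lemma~\ref{continuity_of_non_degeneracy}, and so that the telescoping geometric estimates in (ii) and (iii) self-propagate with room to spare at each inductive step.
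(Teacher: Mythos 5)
Your proposal is correct and follows essentially the same route as the paper: a simultaneous induction over all five statements, applying Propositions~\ref{regular_product} and~\ref{invariant_additivity_only} to a two-factor decomposition, telescoping the estimates in (ii)--(iii), and deducing (v) from (ii) together with \ref{itm:pairwise_C_non_deg} and Lemma~\ref{continuity_of_non_degeneracy}. The subtlety you flag is resolved exactly as you anticipate: the paper splits $g$ into two nonempty cyclically reduced subwords by taking $m$ to be the smallest index with $(i_{m+1},\sigma_{m+1}) \neq (i_l,-\sigma_l)$, so that $g' = g_{i_1}^{\sigma_1}\left(g_{i_l}^{-\sigma_l}\right)^{m-1}$ is automatically cyclically reduced once reduced (and it inducts on $\max(l,l')$ so that (v) applies to both factors).
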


In the sequel, we will only make use of the properties (i), (iv) and the particular case $h = g$ of (v) (let us call these three properties the ``weak'' version). However the natural way to prove this proposition is to work by induction, with the induction step relying on all five properties (the ``full'' version).

This proof is exactly analogous to the proof of Proposition~5.2 in~\cite{Smi14} (whose formulation technically corresponds to the weak version only; but the proof is actually done by proving the full version and discarding the unnecessary statements). For ease of reading, we reproduce it here nevertheless.

\begin{proof}
We proceed by induction on $\max(l, l')$. Clearly all five statements are true for $l = l' = 1$.

Now let $l \geq 2$, and suppose that statements (i) through (v) are true for all cyclically reduced words of length $m$ with $1 \leq m \leq l-1$. Take any cyclically reduced word $g = g_{i_1}^{\sigma_1} \cdots g_{i_l}^{\sigma_l}$. Then we claim that it is possible to decompose $g$ into two cyclically reduced subwords
\[g' := g_{i_1}^{\sigma_1} \cdots g_{i_m}^{\sigma_m}
\quad \text{and} \quad
  g'' := g_{i_{m+1}}^{\sigma_{m+1}} \cdots g_{i_l}^{\sigma_l},\]
both nonempty (that is, $0 < m < l$).

For example, we may take~$m$ to be the smallest (positive) index such that
\[(i_{m+1}, \sigma_{m+1}) \neq (i_l, -\sigma_l).\]
Such an index always exists, and is at most equal to~$l-1$. Then the first subword is actually of the form
\[g' = g_{i_1}^{\sigma_1} \left( g_{i_l}^{-\sigma_l} \right)^{m-1},\]
and any word of this form is automatically cyclically reduced as soon as it is reduced. As for $g''$, it is cyclically reduced by construction.

By induction hypotheses (i) and (v), $g'$ and $g''$ are $\rho$-regular and form a $2C$-non-degenerate pair; by induction hypothesis (iii), we have $s_{X_0}(g') \leq 2^{-(m-1)} s_{\ref{Schottky_group}}(C) \leq s_{\ref{Schottky_group}}(C)$ and we may suppose that $s_{\ref{Schottky_group}}(C) \leq s_{\ref{regular_product}}(2C)$ (similarly for $g'^{-1}$, $g''$, $g''^{-1}$). Thus the pair $(g', g'')$ satisfies Proposition~\ref{regular_product} (with constant $2C$). Let us show that $g$~satisfies the properties (i) through (v).
\begin{itemize}
\item The property (i) (that $g$ is $\rho$-regular) is a direct consequence of Proposition~\ref{regular_product}.

\item Let us check the property (iii). From Proposition~\ref{regular_product} (iii), it follows that $s_{X_0}(g) \lesssim_C s_{X_0}(g')s_{X_0}(g'')$; we then have, by induction hypothesis (iii):
\begin{align*}
s_{X_0}(g) &\lesssim_C \left( 2^{-(m-1)} s_{\ref{Schottky_group}}(C) \right)\left( 2^{-(l-m-1)} s_{\ref{Schottky_group}}(C) \right) \\
     &=\;        s_{\ref{Schottky_group}}(C) \left( 2^{-(l-2)} s_{\ref{Schottky_group}}(C) \right).\intertext{
If we take $s_{\ref{Schottky_group}}(C)$ sufficiently small, we get
}s_{X_0}(g) &\leq 2^{-(l-1)} s_{\ref{Schottky_group}}(C).\end{align*}

\item Now we check (ii); it is enough to check the first inequality (the second one follows by substituting $g^{-1}$). Note that $\alpha^\mathrm{Haus}$ is a metric on the set of all vector subspaces of~$A$, so we have
\begin{align*}
  \alpha^\mathrm{Haus} \left(A^{\subgap}_{g},\;
                             A^{\subgap}_{g_{i_1}^{\sigma_1}} \right)
&\leq\quad
  \alpha^\mathrm{Haus} \left(A^{\subgap}_{g},\;
                             A^{\subgap}_{g'} \right)
+ \alpha^\mathrm{Haus} \left(A^{\subgap}_{g'},\;
                             A^{\subgap}_{g_{i_1}^{\sigma_1}} \right).\intertext{
Estimating the first term by Proposition~\ref{regular_product} (ii) and the second term by induction hypothesis (ii), we get:
} \alpha^\mathrm{Haus} \left(A^{\subgap}_{g},\;
                             A^{\subgap}_{g_{i_1}^{\sigma_1}} \right)
&\lesssim_C\, s_{X_0}(g') + 2 \left( 1 - 2^{-(m-1)} \right) s_{\ref{Schottky_group}}(C).\intertext{
Now by induction hypothesis (iii) we have $s_{X_0}(g') \leq 2^{-(m-1)} s_{\ref{Schottky_group}}(C)$, hence
} \alpha^\mathrm{Haus} \left(A^{\subgap}_{g},\;
                             A^{\subgap}_{g_{i_1}^{\sigma_1}} \right)
&\lesssim_C\, 2^{-(m-1)} s_{\ref{Schottky_group}}(C) + 2 \left( 1 - 2^{-(m-1)} \right) s_{\ref{Schottky_group}}(C)\\
&=\quad    2 \left( 1 - 2^{-m} \right) s_{\ref{Schottky_group}}(C) &\\
&\leq\quad 2 \left( 1 - 2^{-(l-1)} \right) s_{\ref{Schottky_group}}(C),
\end{align*}
since $m \leq l-1$. (Here the implicit multiplicative constant is the same as in Proposition~\ref{regular_product} (ii), and does not change after the induction step.)

\item Next we check (iv). By induction hypothesis (iv), we have
\[\begin{cases}
\displaystyle \left\| M(g') - \sum_{p=1}^{m} M(g_{i_p}^{\sigma_p}) \right\| \leq (m-1)k_{\ref{invariant_additivity_only}}(2C) \vspace{2mm} \\
\displaystyle \left\| M(g'') - \sum_{p=m+1}^{l} M(g_{i_p}^{\sigma_p}) \right\| \leq (l-m-1)k_{\ref{invariant_additivity_only}}(2C).
\end{cases}\]
If we take $s_{\ref{Schottky_group}}(C) \leq s_{\ref{invariant_additivity_only}}(2C)$, then $g'$ and $g''$ satisfy Proposition~\ref{invariant_additivity_only}, hence
\[\left\| M(g) - M(g') - M(g'') \right\| \leq k_{\ref{invariant_additivity_only}}(2C).\]
Adding these three inequalities together, we get the desired conclusion.

\item It remains to check (v): let $h = g_{i'_1}^{\sigma'_1} \cdots g_{i'_{l'}}^{\sigma'_{l'}}$ be another cyclically reduced word (with $1 \leq l' \leq l$) such that $gh$ is also cyclically reduced. We need to check that the four pairs $(A^\subgap_g, A^\sublap_g)$, $(A^\subgap_g, A^\sublap_h)$, $(A^\subgap_h, A^\sublap_g)$ and $(A^\subgap_h, A^\sublap_h)$ are $2C$-non-degenerate. This follows by Lemma~\ref{continuity_of_non_degeneracy} from the property (ii) (applied to both $g$ and $h$) and from the hypothesis \ref{itm:pairwise_C_non_deg}, provided we take $s_{\ref{Schottky_group}}(C)$ small enough. \qedhere
\end{itemize}
\end{proof}

\section{Construction of the group}
\label{sec:construction}

Here we prove the Main Theorem. The reasoning is similar to that of Section~6 in~\cite{Smi14}, and almost identical to that of Section~10 in~\cite{Smi16}. The main difference is the substitution of~$\mathfrak{a}'_{\rho, X_0}$ instead of~$\mathfrak{a}_{\rho, X_0}$, which in particular requires us to invoke Proposition~\ref{asymptotically_contracting_to_regular} (which had no equivalent in~\cite{Smi16}) in the final proof. Also, since we have now developed the purely linear theory in a systematic way (Section~\ref{sec:linear_regular_maps}), the relationship between linear properties and affine properties becomes clearer (in particular in the second bullet point of the final proof).

Let us recall the outline of the proof. We begin by showing (Lemma~\ref{properly_discontinuous}) that if we take a group generated by a $C$-non-degenerate family of sufficiently contracting $\rho$-regular maps that have suitable Margulis invariants, it satisfies all of the conclusions of the Main Theorem, except Zariski-density. We then exhibit such a group that is also Zariski-dense (and thus prove the Main Theorem).

The idea is to ensure that the Margulis invariants of all elements of the group lie almost on the same half-line. Obviously if $-w_0$ maps every element of~$V^\transl_0$ to its opposite, Proposition~\ref{invariant_inverse} makes this impossible. So we now exclude this case:
\begin{assumption}
\label{inverse_ok}
The representation~$\rho$ is such that the action of~$w_0$ on~$V^\transl_0$ is not trivial.
\end{assumption}
This is precisely condition~\ref{itm:main_condition} from the Main Theorem. More precisely, $V^\transl_0$ is the set of all vectors that satisfy~\ref{itm:main_condition}\ref{itm:fixed_by_l}, and what we say here is that some of them also satisfy~\ref{itm:main_condition}\ref{itm:not_fixed_by_w0}. See Example~10.2 in~\cite{Smi16} for examples of representations that do or do not satisfy this condition.

Thanks to Assumption~\ref{inverse_ok}, we choose once and for all some nonzero vector $M_C \in V^\transl_0$ that is a fixed point of $-w_0$ (which is possible since $w_0$~is an involution). This requirement still leaves us free to prescribe the norm of this vector; let us additionally assume that $\|M_C\| = 2k_{\ref{invariant_additivity_only}}(2C)$.
\begin{lemma}
\label{properly_discontinuous}
Take any family $g_1, \ldots, g_k \in G \ltimes V$ satisfying the hypotheses \ref{itm:all_are_X0}, \ref{itm:pairwise_C_non_deg} and \ref{itm:all_are_contracting} from Proposition~\ref{Schottky_group}, and also the additional condition
\begin{enumerate}[label=\textnormal{(H\arabic*)},start=4]
\item \label{itm:prescribed_marg_inv} For every $i$, $M(g_i) = M_C$.
\end{enumerate}
Then these maps generate a free group acting properly discontinuously on the affine space~$V_{\Aff}$.
\end{lemma}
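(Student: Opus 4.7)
The key computation is that, under (H4), every letter $g_i^{\pm 1}$ has Margulis invariant equal to $M_C$. Indeed, Proposition~\ref{invariant_inverse} gives $M(g_i^{-1}) = -w_0(M(g_i)) = -w_0(M_C) = M_C$, using that $M_C$ was chosen as a fixed point of $-w_0$. For any cyclically reduced word $g = g_{i_1}^{\sigma_1} \cdots g_{i_l}^{\sigma_l}$ of length $l \geq 1$, Proposition~\ref{Schottky_group}~(iv) then yields
\[\|M(g)\| \;\geq\; l \|M_C\| - (l-1)\, k_{\ref{invariant_additivity_only}}(2C) \;=\; (l+1)\, k_{\ref{invariant_additivity_only}}(2C) \;>\; 0,\]
using $\|M_C\| = 2 k_{\ref{invariant_additivity_only}}(2C)$. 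In particular $M(g) \neq 0$, so $g \neq e$ in $G \ltimes V$.

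Freeness is then immediate: every non-trivial element of the abstract free group on $k$ letters is conjugate, inside that free group, to some non-empty cyclically reduced word, and triviality in $G \ltimes V$ is a conjugation invariant. Hence the evaluation homomorphism from the free group to $\Gamma$ has trivial kernel, so $\Gamma$ is free of rank $k$.

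For proper discontinuity, I would follow the scheme of Section~10 in~\cite{Smi16} (itself patterned on Section~6 in~\cite{Smi14}). Fix a compact $K \subset V_{\Aff}$; we must show that only finitely many $\gamma \in \Gamma$ satisfy $\gamma K \cap K \neq \emptyset$. The main step is a uniform displacement estimate: for every cyclically reduced word $g$ of length $l$ and every $y$ in a suitably bounded region of $V_{\Aff}$,
\[\|g(y) - y\| \;\gtrsim_C\; l.\]
To prove this, I would take an optimal canonizing map $\phi$ for $g$, which by Proposition~\ref{Schottky_group}~(v) applied with $h = g$ satisfies $\|\phi^{\pm 1}\| \leq 2C$. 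In $\phi$-coordinates, the conjugate $\phi g \phi^{-1}$ acts on the reference dynamical spaces (Proposition~\ref{reference_spaces_invariant}) as strong expansion on $V^\subg_0$, strong contraction on $V^\subl_0$ (both by (H3)), and as a quasi-translation on $A^\sube_0$ whose $V^\transl_0$-component is precisely $M(g)$ by Definition~\ref{margulis_invariant}. Decomposing $y - x_g$ along $V^\subgg_g \oplus V^\subap_g \oplus V^\subll_g$ (with $x_g \in A^\subap_g \cap V_{\Aff}$), one sees that the projection of $g(y) - y$ onto $V^\transl_0$ is $\asymp \|M(g)\| \asymp l$ up to a bounded error coming from the expanding/contracting parts.

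Finally, any $\gamma \in \Gamma \setminus \{e\}$ is conjugate in the abstract free group to some non-empty cyclically reduced~$g$; writing $\gamma = w g w^{-1}$, the condition $\gamma K \cap K \neq \emptyset$ rewrites as $g(w^{-1} K) \cap w^{-1} K \neq \emptyset$. A careful organisation of this conjugation trick, combined with the displacement estimate for~$g$, rules out all but finitely many words. The main obstacle is controlling this conjugation step, since $w$ may have very large operator norm; the key adaptation compared to~\cite{Smi16} is that one must work with the cone $\mathfrak{a}'_{\rho, X_0}$ from~\eqref{eq:relevant_cone_def}, invoking Proposition~\ref{asymptotically_contracting_to_regular} whenever $\rho$-regularity of intermediate products is required. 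Otherwise the argument is nearly verbatim from~\cite{Smi16}, Section~10.
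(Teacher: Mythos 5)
Your freeness argument is correct and is essentially the paper's own (that is, the argument of Lemma~6.1 in~\cite{Smi14}, to which the paper defers): Proposition~\ref{invariant_inverse} and the choice of $M_C$ as a fixed point of $-w_0$ give $M(g_i^{\pm1})=M_C$, so Proposition~\ref{Schottky_group}~(iv) together with $\|M_C\|=2k_{\ref{invariant_additivity_only}}(2C)$ forces $\|M(g)\|\geq (l+1)k_{\ref{invariant_additivity_only}}(2C)>0$ for every nonempty cyclically reduced word, hence $g\neq e$, and conjugation inside the abstract free group finishes it.

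The properness half, however, has a genuine gap, and it is exactly the point you yourself flag as ``the main obstacle''. Two remarks. First, your displacement estimate is stated too weakly: the Margulis-invariant mechanism gives it for \emph{all} $y\in V_{\Aff}$, not just a bounded region. Indeed, writing $\hat\pi_{\transl}$ for the orthogonal projection of $A$ onto $V^\transl_0\oplus\mathbb{R}p_0$ (this is precisely the projection the paper's proof, i.e.\ the proof in~\cite{Smi14}, works with), one has the exact identity $\hat\pi_{\transl}(\phi_g(g(x)))-\hat\pi_{\transl}(\phi_g(x))=M(g)$ for every $x\in V_{\Aff}$, where $\phi_g$ is an optimal canonizing map of the cyclically reduced word $g$; since $\|\phi_g^{\pm1}\|\leq 2C$ by Proposition~\ref{Schottky_group}~(v) with $h=g$, this yields $\|g(y)-y\|\geq\frac{1}{2C}\|M(g)\|$ everywhere, with no error from the expanding and contracting parts. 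Second, and decisively, even this global bound combined with the rewriting $\gamma=wgw^{-1}$, $\gamma K\cap K\neq\emptyset\Leftrightarrow g(w^{-1}K)\cap w^{-1}K\neq\emptyset$, does not prove properness: it only gives $\frac{1}{2C}\|M(g)\|\leq\|\ell(w)^{-1}\|\operatorname{diam}(K)$, a bound on the length of the \emph{core} that degrades with $\|\ell(w)^{-1}\|$, and so it cannot exclude, say, the infinitely many conjugates $w g_1 w^{-1}$ of a single generator, whose cores all have length~$1$. Ruling these out is the actual content of the lemma; it requires the finer argument of Section~6 of~\cite{Smi14} (using the hypotheses \ref{itm:pairwise_C_non_deg} and \ref{itm:all_are_contracting} on the generators to control how $w^{-1}K$ sits relative to the dynamical frame of the core), and ``a careful organisation of this conjugation trick'' is precisely the step your proposal does not supply. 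Finally, your proposed ``key adaptation'' is misplaced: within this lemma the paper's proof is that of~\cite{Smi14} verbatim up to the notational dictionary ($\mu\leftrightarrow k_{\ref{invariant_additivity_only}}$, the projection $\hat\pi_{\transl}$ above), and $\rho$-regularity of the relevant words is already provided by Proposition~\ref{Schottky_group}~(i); the cone $\mathfrak{a}'_{\rho,X_0}$ and Proposition~\ref{asymptotically_contracting_to_regular} enter only later, in the proof of the Main Theorem, when the generators themselves are constructed.
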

\begin{proof}
The proof is exactly the same as the proof of Lemma~6.1 in~\cite{Smi14}, \emph{mutatis mutandis}. The role of Proposition~5.2 in~\cite{Smi14} is now played by points (i), (iv) and the case $h = g$ in (v) of Proposition~\ref{Schottky_group}. The constant denoted as $\mu(C)$ (or~$\mu(2C)$) in the earlier paper corresponds to what we now call $k_{\ref{invariant_additivity_only}}(C)$ (or respectively~$k_{\ref{invariant_additivity_only}}(2C)$). The (orthogonal) projection
\[\hat{\pi}_{\mathfrak{z}}: \hat{\mathfrak{g}} \to \mathfrak{z} \oplus \mathbb{R}_0
\quad\text{ parallel to } \mathfrak{d} \oplus \mathfrak{n}^+ \oplus \mathfrak{n}^-\]
(here $\mathbb{R}_0$ was my old notation for $\mathbb{R} p_0$) now becomes the (orthogonal) projection
\[\hat{\pi}_{\transl}: A \to V^\transl_0 \oplus \mathbb{R} p_0
\quad\text{ parallel to } V^\extra_0 \oplus V^\subg_0 \oplus V^\subl_0. \qedhere\]
\end{proof}

We may now finally prove the Main Theorem. We follow the same strategy as in the proof of the Main Theorems of~\cite{Smi14} and of~\cite{Smi16} (with a few additional tweaks).

\begin{proof}[Proof of Main Theorem]
Let $\rho$ be any representation that satisfies the hypotheses of the Main Theorem. Then it satisfies Assumption~\ref{inverse_ok} (which is just a reformulation of these hypotheses) and Assumption~\ref{zero_is_a_weight} (which follows from hypothesis~\ref{itm:main_condition}\ref{itm:fixed_by_l}), so all the work that we have done so far can be applied to $\rho$.

We will construct a positive constant $C \geq 1$ and a family of maps $g_1, \ldots, g_k \in G \ltimes V$ (with $k \geq 2$) that satisfy the conditions \ref{itm:all_are_X0} through \ref{itm:prescribed_marg_inv} and whose linear parts generate a Zariski-dense subgroup of $G$, then we will apply Lemma~\ref{properly_discontinuous}. We will start by constructing a family $\gamma_1, \ldots, \gamma_k$ that satisfies a different set of conditions, then modify it step by step to make it meet all the requirements.
\begin{itemize}
\needspace{\baselineskip}
\item To construct this ``preliminary'' family, we use a result of Benoist: we apply Lemma~4.3.a in~\cite{Ben97} to
\begin{itemize}
\item $\Gamma = G$;
\item $t = k+1$;
\item $\Omega_1 = \cdots = \Omega_k = \mathfrak{a}'_{\rho, X_0} \cap \mathfrak{a}^{++}$, as defined in (6.13*). Here Proposition~\ref{paper_non_vacuous} finally comes in handy: it ensures that this intersection is indeed a nonempty open convex cone.
\end{itemize}
This gives us, for any $k \geq 2$, a family of maps $\gamma_1, \ldots, \gamma_k \in G$ (which we shall see as elements of~$G \ltimes V$, by identifying~$G$ with the stabilizer of~$p_0$), such that:
\begin{hypothenum}
\item Every $\gamma_i$ is compatible with~$X_0$. In particular, by Proposition~\ref{asymptotically_contracting_to_regular}~(ii), it is $\rho$-regular (this is \ref{itm:all_are_X0}).
\item For any two indices $i$, $i'$ and signs $\sigma$, $\sigma'$ such that $(i', \sigma') \neq (i, -\sigma)$, the pair
\[(y_{\gamma_i^\sigma}^{X_0, +},\; y_{\gamma_{i'}^{\sigma'}}^{X_0, -}) \in G/P^+_{X_0} \times G/P^-_{X_0}\]
is transverse.
\item Any single $\gamma_i$ generates a Zariski-connected group.
\item All of the $\gamma_i$ generate together a Zariski-dense subgroup of $G$.
\end{hypothenum}
Since in our case $t$~is finite, Benoist's item~(v) is not relevant to us.

A comment about item~(i): Benoist's theorem only works with $\mathbb{R}$-regular elements; so it forces every $\gamma_i$~to be not only $\rho$-regular, but actually $\mathbb{R}$-regular (of which we make no use).

A comment about item~(ii): since we have taken Benoist's $\Gamma$ to be the whole group~$G$, we have $\theta = \Pi$, so that $Y_\Gamma$ is the full flag variety~$G/P^+$. So Benoist's theorem actually gives us the stronger property that the pair
\[(y_{\gamma_i^\sigma}^{X, +},\; y_{\gamma_{i'}^{\sigma'}}^{X, -}) \in G/P^+ \times G/P^-\]
(where $X$ is some element of the open Weyl chamber~$\mathfrak{a}^{++}$) is transverse in the \emph{full} flag variety. Once again, we only need the weaker version.

\item Now we choose the value of~$C$, so as to ensure condition~\ref{itm:pairwise_C_non_deg}. Using Remark~\ref{affine_flag_varieties}, condition (ii) above may be restated in the following way: for any two indices $i$, $i'$ and signs $\sigma$, $\sigma'$ such that $(i', \sigma') \neq (i, -\sigma)$, the pair of parabolic spaces
\[(V^\subgap_{\gamma_i^\sigma}, V^\sublap_{\gamma_{i'}^{\sigma'}})\]
is transverse. Clearly, every pair of transverse spaces is $C$-non-degenerate for some finite $C$. Since we have a finite number of such pairs, we can simply take as~$C$ the largest of these values.
\item Next we replace every $\gamma_i$ by some power $\gamma_i^N$, so as to ensure~\ref{itm:all_are_contracting}; in fact, we will even require that for every $i$, we have $s_{X_0}(\gamma_i^{\pm N}) \leq s_{\textnormal{Main}}(C)$ for an even smaller constant $s_{\textnormal{Main}}(C)$, to be specified in the next step. By Proposition~\ref{contraction_strength_limit}~(ii), we can indeed ensure these inequalities by choosing $N$ large enough. On the other hand, from condition~(iii) (Zariski-connectedness), it follows that any algebraic group containing some power $\gamma_i^N$ of some generator must actually contain the generator~$\gamma_i$ itself; so this substitution preserves condition (iv) (Zariski-density). Clearly, conditions (i), (ii) and (iii) are then preserved as well.
\item Finally, to satisfy \ref{itm:prescribed_marg_inv}, we replace the maps $\gamma_i$ by the maps
\begin{equation}
g_i := \tau_{\phi_i^{-1}(M_C)} \circ \gamma_i
\end{equation}
(for $1 \leq i \leq k$), where $\phi_i$ is a canonizing map for $\gamma_i$.

We need to check that this does not break the first three conditions. Indeed, for every $i$, we have $\gamma_i = \ell(g_i)$; even better, since the affine map~$\phi_i g_i \phi_i^{-1}$ has by construction canonical form, $g_i$ has the same geometry as $\gamma_i$ (meaning that $A^{\subgap}_{g_i} = A^{\subgap}_{\gamma_i} = V^{\subgap}_{\gamma_i} \oplus \mathbb{R} p_0$ and $A^{\sublap}_{g_i} = A^{\sublap}_{\gamma_i} = V^{\sublap}_{\gamma_i} \oplus \mathbb{R} p_0$). Hence the $g_i$ still satisfy the hypotheses \ref{itm:all_are_X0} and \ref{itm:pairwise_C_non_deg}, but now we have $M(g_i) = M_C$ (this is \ref{itm:prescribed_marg_inv}). As for contraction strength along~$X_0$, we have:
\begin{align}
s_{X_0}(g_i) &= 
\left\| \restr{g_i}     {V^{\subll}_{g_i}}  \right\|
\left\| \restr{g_i^{-1}}{A^{\subgap}_{g_i}} \right\| \nonumber \\
       &=
\left\| \restr{\gamma_i}     {V^{\subll}_{\gamma_i}}  \right\|
\left\| \restr{(\tau_{\phi_i^{-1}(M_C)} \circ \gamma_i)^{-1}}{A^{\subgap}_{g_i}} \right\| \nonumber \\
       &\leq s_{X_0}(\gamma_i) \left\| \phi_i^{-1} \circ \tau_{M_C}^{-1} \circ \phi_i \right\| \nonumber \\
       &\lesssim_C s_{\textnormal{Main}}(C)\|\tau_{M_C}^{-1}\|,
\end{align}
and similarly for $g_i^{-1}$. Recall that $\|M_C\| = 2k_{\ref{invariant_additivity_only}}(2C)$, hence the quantity
\[\|\tau_{M_C}\| = \|\tau_{M_C}^{-1}\| = \|\tau_{-M_C}\|\]
depends only on $C$: in fact it is equal to the norm of the 2-by-2 matrix $\bigl( \begin{smallmatrix} 1 & \|M_C\| \\ 0 & 1 \end{smallmatrix} \bigr)$. It follows that if we choose
\begin{equation}
s_{\textnormal{Main}}(C) \leq s_{\ref{Schottky_group}}(C) \left\| \begin{matrix} 1 & 2k_{\ref{invariant_additivity_only}}(2C) \\ 0 & 1 \end{matrix} \right\|^{-1},
\end{equation}
then the hypothesis \ref{itm:all_are_contracting} is satisfied.

We conclude that the group generated by the elements $g_1, \ldots, g_k$ acts properly discontinuously (by Lemma~\ref{properly_discontinuous}), is free (by the same result), nonabelian (since $k \geq 2$), and has linear part Zariski-dense in $G$. \qedhere
\end{itemize}
\end{proof}

\bibliographystyle{alpha}
\bibliography{/home/ilia/Documents/Travaux_mathematiques/mybibliography.bib}

\newcommand{\inprep}{in preparation}  \newcommand{\arx}[1]{arXiv:#1}
  \newcommand{\forthcm}{to appear}  \newcommand{\subm}{submitted}
  \newcommand{\noop}[1]{}
\begin{thebibliography}{DGK20}

\bibitem[Abe01]{AbSur}
H.~Abels.
\newblock Properly discontinuous groups of affine transformations, a survey.
\newblock {\em Geom. Dedicata}, 87:309--333, 2001.

\bibitem[AMS02]{AMS02}
H.~Abels, G.~A. Margulis, and G.~A. Soifer.
\newblock On the {Z}ariski closure of the linear part of a properly
  discontinuous group of affine transformations.
\newblock {\em J. Differential Geom.}, 60:315--344, 2002.

\bibitem[AMS11]{AMS11}
H.~Abels, G.~A. Margulis, and G.~A. Soifer.
\newblock The linear part of an affine group acting properly discontinuously
  and leaving a quadratic form invariant.
\newblock {\em Geom. Dedicata}, 153:1--46, 2011.

\bibitem[AMS12]{AMS12}
H.~Abels, G.~A. Margulis, and G.~A. Soifer.
\newblock The {A}uslander conjecture for dimension less than 7.
\newblock \arx{1211.2525}, \noop{2012}\subm \noop{12}.

\bibitem[Aus64]{Aus64}
L.~Auslander.
\newblock The structure of complete locally affine manifolds.
\newblock {\em Topology}, 3:131--139, 1964.

\bibitem[Ben96]{Ben96}
Y.~Benoist.
\newblock Actions propres sur les espaces homog{\`e}nes r{\'e}ductifs.
\newblock {\em Ann. of Math.}, 144:315--347, 1996.

\bibitem[Ben97]{Ben97}
Y.~Benoist.
\newblock Propri{\'e}t{\'e}s asymptotiques des groupes lin{\'e}aires.
\newblock {\em Geom. Funct. Anal.}, 7:1--47, 1997.

\bibitem[BQ16]{BenQui}
Y.~Benoist and J.-F. Quint.
\newblock {\em Random walks on reductive groups}.
\newblock Springer International Publishing, 2016.

\bibitem[BT65]{BT65}
A.~Borel and J.~Tits.
\newblock Groupes r{\'e}ductifs.
\newblock {\em Publ. Math. Inst. Hautes {\'E}tudes Sci.}, 27:55--151, 1965.

\bibitem[BT72]{BT72compl}
A.~Borel and J.~Tits.
\newblock Compl{\'e}ments {\`a} l'article `` {G}roupes r{\'e}ductifs ''.
\newblock {\em Publ. Math. Inst. Hautes {\'E}tudes Sci.}, 41:253--276, 1972.

\bibitem[DGK20]{DGK20}
J.~Danciger, F.~Gu{\'e}ritaud, and F.~Kassel.
\newblock Proper affine action of right-angled {C}oxeter groups.
\newblock {\em Duke Math J.}, 169:2231--2280, 2020.

\bibitem[Ebe96]{Ebe96}
P.~B. Eberlein.
\newblock {\em Geometry of Nonpositively Curved Manifolds}.
\newblock University of Chicago Press, 1996.

\bibitem[FG83]{FG83}
D.~Fried and W.~M. Goldman.
\newblock Three-dimensional affine crystallographic groups.
\newblock {\em Adv. in Math.}, 47:1--49, 1983.

\bibitem[Hal15]{Hall15}
B.~C. Hall.
\newblock {\em Lie Groups, {L}ie Algebras and Representations: An Elementary
  Introduction}.
\newblock Springer International Publishing, second edition, 2015.

\bibitem[Hel08]{Hel08}
S.~Helgason.
\newblock {\em Geometric Analysis on Symmetric Spaces}.
\newblock Amer. Math. Soc., second edition, 2008.

\bibitem[Hum75]{Hum75}
J.~Humphreys.
\newblock {\em Linear Algebraic Groups}.
\newblock Springer-Verlag, 1975.

\bibitem[Kna96]{Kna96}
A.~W. Knapp.
\newblock {\em Lie Groups Beyond an Introduction}.
\newblock Birkha{\"u}ser, 1996.

\bibitem[LFS18]{LFlSm}
B.~Le~Floch and I.~Smilga.
\newblock Action of {W}eyl group on zero-weight space.
\newblock {\em C. R. Math. Acad. Sci. Paris}, 356(8):852--858, 2018.

\bibitem[Mar83]{Mar83}
G.~A. Margulis.
\newblock Free properly discontinuous groups of affine transformations.
\newblock {\em Dokl. Akad. Nauk SSSR}, 272:785--788, 1983.

\bibitem[Mar87]{Mar87}
G.~A. Margulis.
\newblock Complete affine locally flat manifolds with a free fundamental group.
\newblock {\em J. Soviet Math.}, 36:129--139, 1987.

\bibitem[Mil77]{Mil77}
J.~Milnor.
\newblock On fundamental groups of complete affinely flat manifolds.
\newblock {\em Adv. in Math.}, 25:178--187, 1977.

\bibitem[Smi16]{Smi14}
I.~Smilga.
\newblock Proper affine actions on semisimple {L}ie algebras.
\newblock {\em Ann. Inst. Fourier}, 66(2):785--831, 2016.

\bibitem[Smi18]{Smi16}
I.~Smilga.
\newblock Proper affine actions in non-swinging representations.
\newblock {\em Groups Geom. Dyn.}, 12(2):449--528, 2018.

\bibitem[Tit71]{Tits71}
J.~Tits.
\newblock Repr{\'e}sentations lin{\'e}aires irr{\'e}ductibles d'un groupe
  r{\'e}ductif sur un corps quelconque.
\newblock {\em Journ. Reine Angew. Math.}, 247:196--220, 1971.

\bibitem[Tom16]{Tom16}
G.~Tomanov.
\newblock Properly discontinuous group actions on affine homogeneous spaces.
\newblock {\em Proc. Steklov Inst. of Math.}, 292:260--271, 2016.

\end{thebibliography}

\end{document}